\let \Re \relax
\DeclareMathOperator{\Re}{Re}
\let \Im \relax
\DeclareMathOperator{\Im}{Im}
\newcommand{\Con}{\ensuremath{\mathscr C}}
\renewcommand{\S}{\ensuremath{\mathscr S}}
\DeclareMathOperator{\supp}{supp}
\newcommand{\est}[1]{\langle #1 \rangle}
\newcommand{\mb}[1]{\ensuremath{\mathbb{#1}}}
\newcommand{\N}{{\mb{N}}}
\newcommand{\R}{{\mb{R}}}
\newcommand{\C}{{\mb{C}}}
\newcommand{\eps}{\varepsilon}
\renewcommand{\d}{\ensuremath{\partial}}
\DeclareMathOperator{\op}{op}
\DeclareMathOperator{\Op}{Op}
\DeclareMathOperator{\Ai}{Ai}
\newcommand{\D}{\ensuremath{\mathscr D}}
\newcommand{\nhd}{neighborhood\xspace}
\def\DS{\displaystyle}
\def\vv{{\rm v}_h}
\def\qq{{\rm q}_h}
\def\ct{\tilde \chi}   
\def\cc{\chi}
\newtheorem{lemma}{Lemma}[section]
\newtheorem{theorem}{Theorem}
\newtheorem{proposition}[lemma]{Proposition}
\newtheorem{definition}[lemma]{Definition}
\newtheorem{remark}{Remark}
\begin{document}
\title{Exponential stabilization of waves for the Zaremba boundary condition.}

\author{Pierre Cornilleau\thanks{Teacher at Lyc\'ee Pothier, 2 bis, rue Marcel Proust, 45000 Orleans , France.
%\newline 
e-mail: pierre.cornilleau@ens-lyon.org}  \&
 Luc Robbiano\thanks{Laboratoire de Math\'ematiques de Versailles, Universit\'e de Versailles St Quentin,
CNRS, 45, Avenue des Etats-Unis, 78035 Versailles, France. e-mail : luc.robbiano@uvsq.fr}}

\maketitle

\begin{abstract}
In this article we prove, under some geometrical condition on geodesic flow,  exponential stabilization of 
wave equation with Zaremba boundary condition. We 
prove an estimate on the resolvent of semigroup associated with wave equation 
on the imaginary axis and we deduce 
the stabilization result. To prove this estimate we apply semiclassical measure technics. 
The main difficulties are to prove that support of measure is in characteristic set in a \nhd of 
the jump in the boundary condition and to prove results of propagation in a \nhd of a boundary point 
where Neumann boundary condition is imposed. In fact if a lot of results applied here are proved in
previous articles, these two points are new.
\end{abstract}

\begin{keywords}
  \noindent Stabilization of Waves, Zaremba problem, pseudo--differential calculus, 
  controllability, semiclassical measure, boundary propagation
\end{keywords}

\tableofcontents

%%%%%%%%%%%%%%%%%%%%%%%%%%%%%%%%%%%%%%%%%%%
%
%  Introduction
%
%%%%%%%%%%%%%%%%%%%%%%%%%%%%%%%%%%%%%%%%%%%

\section{Introduction and results}

\subsection{Framework}
In this article we are interested by stabilization of wave equation with 
Zaremba boundary condition. 
To be precise we have to introduce some notation. 
Let $\Omega$ be a bounded open set in $\R^d$, with 
$\Con^\infty$ boundary. Let $\d\Omega_D$ and $\d\Omega_N$ two open sets in $\d\Omega$ 
such that $\d\Omega_D\cap\d\Omega_N=\emptyset$ and 
$\overline{ \d\Omega_D}\cap \overline{ \d\Omega_N}=\Gamma$, 
where $\Gamma$ is a smooth 
manifold of dimension $d-2$. The manifold $\Gamma$ is not 
necessary a connected set.
Let $P$ be a second order differential operator. We have
 $P=\sum_{1\le j ,k\le d}D_{x_j}p_{jk}(x)D_{x_k}
+\sum_{1\le j \le d}p_j(x)D_{x_j}+p_0(x)$, where 
$p_{jk}(x)$, $p_j(x)$ are in $\Con^\infty(V)$ where $V$ is a \nhd of $\overline\Omega$.  
The matrix $(p_{jk}(x))_{jk}$ is assumed  positive definite for every $x\in V$.
We assume that the operator defined by $Pu$ for  
$u\in {\cal D}=\{ u\in H^1(\Omega),\ Pu\in L^2(\Omega), 
\  u_{|\d\Omega_D}=0 ,  
(\d_\nu u)_{|\d\Omega_N}=0 \}$,  is self-adjoint and non negative.  
Here $\d_\nu$ is the exterior 
normal derivative.
Let $a\in\Con^\infty(V)$ be such that $a(x)\ge0$ for every $x\in\Omega$. 
We associate with $P$ the 
following wave equation
\begin{align}
\begin{cases}
\d_t^2 u+Pu +a(x)\d_tu=0 \text{ in }\Omega\times (0,\infty)\\
(u,\d_t u)_{|t=0}=(u_0,u_1)\in H^1(\Omega)\oplus L^2(\Omega) \\
 u=0   \text{ on } \d\Omega_D\times (0,\infty)\\
 \d_\nu u=0  \text{ on } \d\Omega_N\times (0,\infty)
\end{cases}
\end{align}
We associated the energy that is 
$E(t,u_0,u_1)=  (Pu|u)_{L^2(\Omega)}+ \int_\Omega |\d_t u|^2 dx$, where
$(v|w)_{L^2(\Omega)}=\int _\Omega v(x)\overline{w(x)}dx$.
Under assumptions on   flows associated with $P$ and $a$ (see sections~\ref{sec: Geometry},
and \ref{sec: Assumptions and results}) we obtain that the energy 
satisfies $E(t)\le Ce^{-ct}$ for some constants $c>0$ and $C>0$. 
We obtain this result by an estimate 
on the resolvent associated with this problem and by the 
Gearhart-Huang-Pr\"uss theorem~\cite{Gearhart-1978,Pruss-1984,Huang85}.    
To prove the resolvent estimate we use semiclassical  measures. 
The method is well-known, since the seminal work by Bardos-Lebeau-Rauch~\cite{BLR1}, 
and was applied in 
different contexts and different variants, defect measures, 
Wigner measures, $H$-measures, see  
Aloui-Khenissi-Robbiano~\cite{AKR-2016},  
Anantharaman-L\'eautaud-Maci\`a~\cite{ALM-2016,ALM-2016-2},  
Burq~\cite{Burq-1997}, Burq-Lebeau~\cite{Burq-Lebeau2001}, 
Dehman-Le~Rousseau-L\'eautaud~\cite{DBLR-2014}, 
G\'erard-Leichtnam~\cite{GL-1993},
G\'erard~\cite{Gerard-1990,Gerard-1991},  
Le~Rousseau-Lebeau-Terpolilli-Tr\'elat~\cite{LRLTT-2016}, Le\-beau~\cite{Leb}, 
Miller~\cite{Miller-2000}, Robbiano-Zuily~\cite{Rob-Zuily-2009}, Tartar~\cite{Tartar-1990} 
for instance.

We can find an introduction to this subject in Zworski~\cite{Zworski-2012}.

Problems for the Zaremba boundary condition was studied by several authors. 
In particular, for elliptic problem, Shamir~\cite{Sh} and
Savar\'e~\cite{Savare-1997}  
proved that the regularity of solution 
is not as for the Dirichlet boundary condition, there is a lack of regularity, 
$s=3/2$ is critical in the 
Sobolev spaces $H^s$ if the datum is in $L^2$. 
The problem is related with boundary problem in non smooth domain, with corner 
for instance. There is a large literature on this subject. 
For damping wave equation with Zaremba 
Boundary condition, the problem was studied by Bey and al.~\cite{BLM-1999}, 
Cornilleau and al.~\cite{CLO}
where they prove exponential decay with multiplier method, and in~\cite{CR2014} 
where we only 
prove  logarithmic decay but without geometric condition 
on the support of the damping. Same kind of result was proven by 
Fu~\cite{Fu-2015} for mixed boundary condition of 
Robin type.

In the following we described the geometry in section~\ref{sec: Geometry}. 
This allows to give the 
precise assumption and the result in 
section~\ref{sec: Assumptions and results}. At the end of this section we give a description 
of  proofs.

%%%%%%%%%%%%%%%%%%%%%%%%%%%%%%%%%%%%%%%%%%%
%
%  Geometry
%
%%%%%%%%%%%%%%%%%%%%%%%%%%%%%%%%%%%%%%%%%%%

\subsection{Geometry}\label{sec: Geometry}
Here we give the geometrical notion we use in this article. This framework comes from Melrose and 
Sj\"ostrand~\cite{MS-1978,MS-1982} and 
the reader may also find in 
H\"ormander~\cite[Chapter 24]{HormanderV3-2007} more informations and proofs.  The characterisation of symplectic sub-manifold 
is probably classical and more details can be found in Grigis~\cite{Grigis-1976}.

\medskip
\paragraph{Assumption on the symbol.}
We define the symbol of $P$ by 
\begin{align}\label{def: symbol h2P-1}
p(x,\xi)=\sum_{1\le j ,k\le d}p_{jk}(x)\xi_j\xi_k -1,
\end{align}
Where $p_{jk}$ are $\Con^\infty(\overline{ \Omega})$.
Locally in a \nhd of the boundary we can define $\Omega $ by $\varphi>0$ with $d\varphi\ne 0$. 
We can also choose coordinates  (i.e. \emph{normal geodesic coordinates}) such that
$\varphi(x)=x_d$ and $p(x,\xi)= \xi_d^2+R(x,\xi')-1$ where $x=(x',x_d) $ and $\xi=(\xi',\xi_d)$. 

\medskip
\paragraph{Symplectic sub-manifold $\Sigma$.}
We can define a symplectic manifold $\Sigma$,
 contained into $T^*\R^d\cap\{ \varphi=0 \}$. We set $\Sigma=\{  (x,\xi),\ \varphi(x)=0 \text{ and } 
 \{\varphi, p\} (x,\xi)=0\}$. The set $\Sigma$ is a 
 symplectic manifold as $\{\varphi  ,\{\varphi,p\}\}\ne0$. In coordinates $(x',x_d)$, we have 
 $\Sigma=\{x_d=0,\xi_d=0\}$, this manifold is isomorphic to 
 $T^*\d\Omega$ and described by coordinates $(x',\xi')$. 
 
 The Hamiltonian vector field $H_p$ is not a 
 vector field on $\Sigma$, but for all $X$ a vector field on $T^*\R^d$, we can find
 unique fonctions $\alpha$ and $\beta$ such $X+\alpha H_\varphi+\beta H_{\{\varphi, p\} }$ is a vector 
 field on  $\Sigma$. For $H_p$ we denote the associated vector $H'_p$
 and an elementary computation leads to 
 \begin{align*}
 H'_p=H_p+\frac{ \{ p ,\{ p ,\varphi  \} }{ \{ \varphi ,\{ \varphi  , p \}}  H_\varphi .
 \end{align*}
 In coordinates  $(x',x_d)$,   $H'_p$ only depends  on $R$ and we have
  \begin{align*}
   H'_p=H'_R=\sum_{j=1}^{d-1} (\d_{\xi_j}R(x',0,\xi')\d_{x_j} - \d_{x_j}R(x',0,\xi' )  \d_{\xi_j}).
  \end{align*}
In particular the integral curves associated with $H'_p$ starting from a point into $\Sigma$ stay into $\Sigma$. In coordinates $(x',x_d)$, we denote the integral curve 
starting from $(x',\xi')$, either $\gamma_g(s; x',\xi')$, either  $\gamma_g( x',\xi')$, if $s$ is implicit or  $\gamma_g(s)$, if $(x',\xi')$ is implicit.

\medskip
\paragraph{Symplectic sub-manifold $\Sigma'$.}
The manifold $\Gamma$ can be locally defined by $\{\varphi=0, \ \psi=0 \}$, where $d\varphi\wedge d\psi\ne0$. We define the manifold $\Sigma'$ by
\begin{align*}
\Sigma'=\{ (x,\xi),\ \varphi(x)=\psi(x)=\{ \varphi ,  p\} (x,\xi)=\{ \psi , p \} (x,\xi)=0\}.
\end{align*}
This manifold is symplectic. It suffices to prove that   the following matrix
\begin{align*}
\begin{pmatrix}
 0  &  0   &   \{ \varphi ,\{ \varphi  , p \} \} & \{ \varphi ,\{ \psi  , p \} \} \\
 0   &   0  &   \{ \psi ,\{ \varphi  , p \} \} & \{ \psi ,\{ \psi  , p \}  \}\\
  \{ \{ \varphi  , p \} , \varphi \}&  \{    \{ \varphi  , p \} ,\psi  \} & 0 &  \{  \{ \varphi ,p\} ,\{ \psi  , p \} \}  \\
    \{ \{ \psi , p \} , \varphi \}&  \{    \{ \psi , p \} ,\psi  \} &  \{  \{ \psi ,p\} ,\{ \varphi  , p \} \}&  0
\end{pmatrix},
\end{align*}
is invertible.  Clearly this is true if the matrix 
\begin{align}
\begin{pmatrix} \label{matrix: on Gamma}
   \{ \varphi ,\{ \varphi  , p \} \} & \{ \varphi ,\{ \psi  , p \} \} \\
  \{ \psi ,\{ \varphi  , p \} \} & \{ \psi ,\{ \psi  , p \} \}
\end{pmatrix},
\end{align}
is invertible.  The quadratic form in $(t,z)$, 
$ \{  t\varphi+z\psi  ,\{   t\varphi+z\psi , p \} \} $ is positive definite
as $p''_{\xi \xi}$, is a positive definite  matrix and $d\varphi$ and $d\psi$ are independent as 
$d\varphi\wedge d\psi\ne0$. 
The matrix associated with  $ \{  t\varphi+z\psi  ,\{   t\varphi+z\psi , p \} \} $ 
is the one given by~\eqref{matrix: on Gamma}, then this matrix is invertible.

We can choose local coordinates $(x_1,x'',x_d)$ such that $\phi(x)= x_d$,  $\psi(x)=x_1$ and 
\begin{align*}
p(x,\xi)=\xi_d^2+\xi_1^2+R_1(x',\xi'')+x_dR_2(x,\xi')-1=\xi_d^2+R(x,\xi')-1.
\end{align*}
In these local coordinates, the manifold $\Sigma'=\{x_1=x_d=\xi_1=\xi_d=0  \}$ 
which is isomorphic to $T^*\Gamma$, and described 
by the coordinates $(x'',\xi'')$. 

For all vector field
$X$ defined on  $T^*\R^d$, we can find unique functions 
$\alpha,\ \beta, \ \gamma, \ \zeta$ such that 
$X+ \alpha H_\varphi+\beta H_\psi+\gamma H_{\{ \varphi , p\}}
  + \zeta H_{\{\psi  , p\}} $ is a vector field on $\Sigma'$. For $X=H_p$, 
  we denote the associated vector 
  field $H''_p$ and we have $H''_p=H_p+\alpha H_\varphi+\beta H_\psi$,
  as $H_p\varphi=H_p\psi=0$ on $\Sigma'$ and $H_\psi\varphi=0$. 
  We can compute $\alpha$ and $\beta$ 
  but the precise values are not useful for 
  general functions $\varphi$ and $\psi$.
  In coordinates $(x_1,x'',x_d)$, we have $H''_p=H_p+\alpha \d_{\xi_1}+\beta \d_{\xi_d}$. The 
  equations $H''_p\xi_1=H''_p\xi_d=0$, on 
  $\Sigma'$, give $\alpha=-\d_{x_1}R(0,x'',0,0,\xi'')$ and $\beta=-\d_{x_d} R_1(0,x'',\xi'')$. In particular 
  $H''_p$ only depends on $R_1$, and we have
  $H''_p=H''_R=H''_{R_1}=\sum_{j=2}^{d-1}(\d_{\xi_j}R_1(0,x'',\xi'')\d_{x_j}-(\d_{x_j}R_1(0,x'',\xi'')\d_{\xi_j})$.
  The integral curves starting from $\Sigma'$ stay on $\Sigma'$. We denote the curves starting from 
  $(x'',\xi'')$, $\gamma_{\rm sing}(s; x'',\xi'')$,  
  $\gamma_{\rm sing}(x'',\xi'')$, if $s$ is implicit and  $\gamma_{\rm sing}(s)$, if $(x'',\xi'')$ is implicit.

\medskip
\paragraph{Description and topology of $T^*_b\Omega$.}
Let $T^*_b\Omega=T^*\d\Omega\cup T^*\Omega$,
 this set is equipped with the following topology.

  First if $\rho\in T^*\Omega$, 
  a set $V$ is a \nhd of $\rho$ if $V$ contains an open set $W$ of 
  $ T^*\Omega$  such that $\rho\in W$.  
  
  Second if $\rho=(x_0',\xi_0')\in T^*\d\Omega$, a set $V$ is a 
  \nhd of $\rho$ if $V$ contains a set
  \begin{align*}
 &  \{ (x',\xi')\in T^*\d\Omega,\ |x'_0-x'|+|\xi'_0-\xi'|\le\eps \}\\
  &  \cup \{ (x,\xi)\in T^*\Omega,\ 
   |x'_0-x'|+|\xi'_0-\xi'|\le\eps \text{ and } (x_d,\xi_d)\in
  U\cap \{ x_d>0 \} \},
  \end{align*}
 where $\eps>0$ and $U$ is a \nhd of $\{ (x_d,\xi_d)\in\R^2, \ x_d=0\}$ in $\R^2$.
 
 In local coordinates where $\Omega $ is define by $x_d>0$, we define 
 $j: T^*\overline \Omega \to T^*_b\Omega$
 by $j(x,\xi)=(x,\xi)$ if $x\in\Omega$, and $j(x,\xi)=(x',\xi')$ if $x_d=0$. 
 The map $j$ is continuous for the topology given above.
  We can define more intrinsically  $j$ with the previous notation where $\Omega$ is 
  given by $\varphi(x)>0$. For 
  $(x,\xi)\in T^*\overline \Omega $, $j(x,\xi)=(x,\xi)$ if $x\in\Omega$ and 
  $j(x,\xi)=(x, \xi dx-(\{p,\varphi\}/H^2_\varphi p)d\varphi)$, if 
  $\varphi(x)=0$. We verify, in this last case that $j(x,\xi)\in \{  \varphi=\{p,\varphi\}=0 \}$, 
  as $\{p,\varphi\}(x,d\varphi)
  =\{\varphi,\{ \varphi,p\}\}$.

  As usually we define the map $\pi: T^*_b\Omega\to \overline \Omega$, in local coordinates, 
  as $\pi(x,\xi)=x$, if $(x,\xi)\in T^*\Omega$ and $\pi(x',\xi')=x'$, if $(x',\xi')\in T^*\d\Omega$.
  
\medskip
\paragraph{Bicharacteristic and generalized flow.}

\medskip

For $(x,\xi)\in T^*\R^d$, we denote by $\gamma(s;x,\xi)$ the integral curve of 
$H_p$ starting from $(x,\xi)$. We use the same short notations $\gamma(s)$ 
and $ \gamma(x,\xi)$ as above.

Now we define the generalized bicharacteristic denoted by $\Gamma(s,\rho)$ for 
$\rho\in T^*_b\Omega$. To describe this curve in a 
\nhd of the boundary we use the coordinates 
$(x',x_d,\xi',\xi_d)$ and we identify $\Sigma'$ and $T^*\d\Omega$ and locally 
$\Omega=\{x\in \R^d , \ x_d>0 \}$.  
Moreover, we assume 
$\rho\in \text{char} (P) = \{(x,\xi)\in T^*\Omega ,\ p(x,\xi)=0  \}\cup 
\{(x',\xi')\in T^*\d\Omega,\ R(x',0,\xi')-1\le0  \}$.

  Now we define the curve $\Gamma(s,\rho)$ locally for each $(s_0,\rho)$ and 
  we use the group property of the flow, namely 
  $\Gamma(s+t,\rho)=\Gamma(s,\Gamma(t,\rho))$ 
  to extend this function for every $s\in\R$.
  
  \medskip
For  $\Gamma(s_0,\rho)\in  
  T^*\Omega$, $\Gamma(s,\rho)=
  \gamma(s-s_0;\Gamma(s_0,\rho))$ if $\gamma(s-s_0;\Gamma(s_0,\rho))\in T^*\Omega$. In particular, 
  this defined $\Gamma(s,\rho)$ at least 
  for $s$ in a \nhd of $s_0$ as $\gamma(s-s_0;\Gamma(s_0,\rho))$ stay in $T^*\Omega $ for small $|s-s_0|$. 
  Observe that $p(\gamma(s-s_0; \Gamma(s_0,\rho)))=0$.
  
  \medskip
  For $\rho=(x_0',\xi'_0)\in T^*\d\Omega$, we have to distinguish different cases, first if $R(x'_0,0,\xi'_0)<1$ 
  and second if $R(x'_0,0,\xi'_0)=1$, 
  $\Gamma(s,\rho)$ depends on the properties of $\gamma(s; x'_0, 0, \xi'_0,0)$. In what follows we only 
  define the flow in a \nhd of $s=0$. We can extend the flow by the group property.
  
  If $R(x'_0,0,\xi'_0)<1$, let $\xi^\pm=\pm\sqrt{1-R(x'_0,0,\xi'_0)}$. Let 
  $\gamma(s;x_0,\xi_0)=(x(s;x_0,\xi_0), \xi(s;x_0,\xi_0))$, as $\dot x=2\xi_d$, we have 
  $x_d(s;x'_0,0,\xi'_0,\xi^+)>0$ for $s>0$ sufficiently small, and  $x_d(s;x'_0,0,\xi'_0,\xi^-)>0$ for 
  $s<0$ sufficiently small. Then we set 
  $\Gamma(0,\rho)=\rho$, $ \Gamma(s,\rho)= \gamma(s;x_0',0,\xi_0',\xi^+)$ for $s>0$ sufficiently small and 
  $\Gamma(s,\rho)= \gamma(s;x_0',0,\xi_0',\xi^-)$ for $s<0$ sufficiently small. Observe that for 
  $s\ne0$ sufficiently small,
  $\Gamma(s,\rho)\in T^*\Omega$ and $p(\Gamma(s,\rho))=0$. 
  
  Such points are called  hyperbolic points and we set 
  \(
  {\cal H}=\{ (x'_0,\xi'_0)\in T^*\d\Omega,\ R(x'_0, 0,\xi'_0)<1 \}.
  \)
%%%%%%%%%%%%%%%%%%%%%%%%%%%%%%%
%
% Definition
%
%%%%%%%%%%%%%%%%%%%%%%%%%%%%%%%
    \begin{definition}[Finite contact with the boundary]
  	\label{def: contact fini}
  Let $(x'_0,\xi'_0)$ be such that $R(x'_0,0,\xi'_0)=1$. We say that  the  bicharacteristic $ \gamma(s;x'_0,0,\xi'_0,0)=\gamma(s)$ 
  \emph{does not have  an infinite contact with the boundary} if 
there exists $ k\in \N,\ k\ge 2,\  \alpha\ne0$ such that  $ x_d(s;x'_0,0,\xi'_0,0)= x_d(s)= 
  \alpha s^k+{\cal  O}(s^{k+1})$ in a \nhd of $s=0$. We denote by ${\cal G}^k$ the set of such points.
  
For   \(k=2\) we distinguish two cases.
\begin{itemize}
\item The  diffractive points, and we denote 
\[
{\cal G}_d=  \{ (x',\xi')\in  T^*\d\Omega,  \  R(x',0,\xi')=1,\  \d_{x_d}R(x',0,\xi')<0 \}.
\]
\item The gliding points, and we denote  
\[
{\cal G}_g=  \{ (x',\xi')\in  T^*\d\Omega,  \  R(x',0,\xi')=1,\  \d_{x_d}R(x',0,\xi')>0 \}.
\]
\end{itemize}
 \end{definition}
      \begin{remark}
    By Taylor's theorem and 
 as $x_d(0)=0$ and 
  $\dot x_d(0)=2\xi_d(0)=0$, we always have $x_d(s)={\cal O}(s^2)$.
  \end{remark} 
We have four cases to treat.
\begin{description}
\item[-]   $k$ even, $\alpha>0$. In this case $x_d(s)>0$ for $s\ne0$ sufficiently small. 
We define $\Gamma(0,\rho)=\rho$ and $\Gamma(s,\rho)=\gamma(s;x_0',0,\xi'_0,0)\in T^*\Omega$ 
for $s\ne0$ sufficiently small.
\item[-] $k$ even, $\alpha<0$. In this case $x_d(s)<0$ for $s\ne0$ sufficiently small. 
We define $\Gamma(s)=\gamma_g(s,\rho)\in T^*\d\Omega$ for $s$ sufficiently small.
\item[-] $k$ odd, $\alpha>0$. In this case $x_d(s)>0$ for $s>0$ sufficiently small and   
$x_d(s)<0$ for $s<0$ sufficiently small. We define 
$\Gamma(s,\rho)=\gamma_g(s,\rho)\in T^*\d\Omega$ for $s\le0$ sufficiently small, and $\Gamma(s,\rho)=\gamma(s;x_0',0,\xi'_0,0)   \in T^*\Omega$ for $s>0$, sufficiently small.
\item[-] $k$ odd, $\alpha<0$. In this case $x_d(s)<0$ for $s>0$ sufficiently small and   $x_d(s)>0$ for $s<0$ sufficiently small. We define 
$\Gamma(s,\rho)=\gamma(s;x_0',0,\xi'_0,0)  \in T^*\Omega$ for $s<0$ sufficiently small, and $\Gamma(s,\rho)=\gamma_g(s,\rho)\in T^*\d\Omega$ for $s\ge0$, sufficiently small.
\end{description}
This local description of $\Gamma(s)$ allows to prolongate $\Gamma(s) $ for every $s\in\R$. 
The function $\Gamma(s, \rho) $ defined on $\R\times\text{char}(P)$ is 
continuous for the topology of $\R\times T^*_b\Omega$, where the topology 
of $T^*_b\Omega$ is defined above.

%%%%%%%%%%%%%%%%%%%%%%%%%%%%%%%%%%%%%%%%%%%
%
%  Hypotheses et resultats
%
%%%%%%%%%%%%%%%%%%%%%%%%%%%%%%%%%%%%%%%%%%%

\subsection{Statement of Theorems} \label{sec: Assumptions and results}
  
In the following we give the assumptions on the flows and these assumptions depend on the starting points. 
The assumptions also depend on the damping $a$ and we assume $a(x)\ge 0$ for every  $x\in\overline\Omega$. We denote by \( \omega=\{x\in\overline{\Omega},\ a(x)>0\}\).
\begin{definition}
	\label{def: mGCC}
We say that $P$, $a(x)$ and $\Omega$ satisfy the \emph{modified Geometric Control Condition (mGCC)} if
the bicharacteristic only has finite contact with the boundary (Definition~\ref{def: contact fini}) and
 the following 
assumptions are verified. Let 
\(
\rho_0\in \text{char} (P) .
\) 
\begin{itemize}
\item If \(\pi(\rho_0)\notin\Gamma\)
we assume there exist \(s_0\in\R\) be such that 
\(
\pi\Gamma(s_0,\rho_0)\in \omega 
\)
and for every 
\(
s\in [0,s_0],
\)
if 
\(
\pi\Gamma(s,\rho_0)\in\Gamma
\)
then 
\(
\Gamma(s,\rho_0)\in { \cal H}.
\)
\item  If \(\pi(\rho_0)\in\Gamma\) we assume there exist \(s_0\in \R\) be such that
\(
\pi \gamma_{\text{sing}}  (s_0,\rho_0)\in \omega.
\)
\end{itemize}
\end{definition}
\begin{remark}
This definition of mGCC is different from the usual GCC. 
We are not able to prove propagation of support of measure for generalized bicharacteristic hitting 
$\Gamma$ except for hyperbolic points. 
It is possible that singularities can be create at \(\Gamma\) but we do not know what can  happen.
 For points on $\Gamma$ we are only able to prove propagation on 
$\Gamma$ for 
integral curve of $H''_R$ if we already know that measure is supported in the fiber   above 
a point of \(\Gamma\). 
\end{remark}

We recall the assumptions on $P$,  a second order differential operator. We have 
\begin{align}
	\label{def: forme de P}
P=\sum_{1\le j ,k\le d}D_{x_j}p_{jk}(x)D_{x_k}+\sum_{1\le j \le d}p_j(x)D_{x_j}+p_0(x),
\end{align}
where 
$p_{jk}(x)$, $p_j(x)$ are real valued and  in $\Con^\infty(V)$ where $V$ is a \nhd of $\overline\Omega$.  
We assume that $P$ is formally self-adjoint. The domain of $P$ is given by  
${\cal D}(P)=\{ u\in H^1(\Omega),\ Pu\in L^2(\Omega), \  u_{|\d\Omega_D}=0 ,  
(\d_\nu u)_{|\d\Omega_N}=0 \}$, where $\d_\nu$ is the exterior normal derivative. With this domain $P$ 
is self-adjoint, and 
\begin{align}
 \label{eq: associated quadratic form to P}
(Pu|v)_{L^2(\Omega)}=\sum_{1\le j ,k\le d}( p_{jk}(x)D_{x_k}u| D_{x_j} v) _{L^2(\Omega)}+
(\sum_{1\le j \le d}p_j(x)D_{x_j}u+p_0(x) u| v)_{L^2(\Omega)},
\end{align}
where $u$ and $v$ are in $ {\cal D(P)}$. Moreover we assume $P$ positive definite, there is $\delta>0$ such that
\begin{equation}
	\label{hyp: positivity P}
(Pu|u)_{L^2(\Omega)}\ge \delta \| u\|^2_{H^1(\Omega)}, \text{ for every } u\in {\cal D}(P).
\end{equation}

To give 
a precise formulation of the wave equation we introduce $H= H^1(\Omega)\oplus 
L^2(\Omega)$, we denote by $U=(u_0,u_1)$ an element of $H$ and the operator $A$ is given by 
\begin{align}
	\label{def: forme de A}
A=
\begin{pmatrix}
0 & 1\\ -P & -a(x)
\end{pmatrix},
\end{align}
associated with the domain
\begin{align*}
{\cal D}(A)=\{ U=(u_0,u_1)\in H, P u_0\in L^2(\Omega), u_1\in H^1(\Omega), u_0=0  \text{ on } \d\Omega_D ,
\d_\nu u_0=0  \text{ on } \d\Omega_N\}.
\end{align*}
Let  $U$ be  the solution of  $\d_t U=A U$ satisfying $U(0)=(u_0,u_1)\in H$, 
we have $U(t)=(u(t),\d_t u(t))=e^{tA}(u_0,u_1)$, where 
$e^{tA}$ is the semigroup associated with $A$.
Then  $u$ satisfies the wave equation
$$
\begin{cases}
\d_t ^2u(x)+P u(x)+a(x) \d_t u(x)=0 \text{ in } \Omega,\\
u=0  \text{ on } \d\Omega_D ,\\
\d_\nu u=0  \text{ on } \d\Omega_N, \\
(u(0),\d_tu(0))=(u_0,u_1).
\end{cases}
$$
To $U(t)=(u_0(t),u_1(t))$, we associate the energy $E(t,u_0,u_1)=(Pu_0(t)|u_0(t))_{L^2(\Omega)}
+\int_\Omega |u_1(t)|^2dx$. We have $\d_t E(t,u_0,u_1)=-2(au_1|u_1)_{L^2(\Omega)}\le 0$. This implies 
$E(t,u_0,u_1)\le E(0,u_0,u_1)$. 
\begin{remark}
We have assumed that $P$ is positive definite for simplicity but if $P$ is non negative
 we can introduce $\tilde H=H/\ker P$, working on $\tilde H$ instead of $H$ 
we can obtain same results. For instance see~\cite{LR2} where this reduction is used.
\end{remark}
The main result of the paper is the following
\begin{theorem}
	\label{th: resolvent estimate}
We assume that $P$, $a(x)$ and $\Omega$ satisfy the modified Geometric Control Condition given in Definition~\ref{def: mGCC}.
We assume that $P$ has the form given in~\eqref{def: forme de P} and $P$ is self-adjoint positive definite. Let $A$ be defined 
by~\eqref{def: forme de A}, we have
\begin{description}
\item[1)] $\exists M>0 $, $\| e^{tA}\|_{{\cal L}(H)}\le M$,
\item[2)] $ A-i\mu I$, is invertible for all $\mu\in\R$,
\item[3)]   $\exists M>0 $,   $\|  (A-i\mu I)^{-1} \|_{{\cal L}(H)}\le M$.
\end{description}
\end{theorem}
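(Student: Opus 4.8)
The three conclusions are linked together by the Gearhart--Huang--Pr\"uss theorem, so the strategy is to establish the resolvent estimate (3) directly and deduce the rest. Point 1) is essentially free: we have already seen $\d_t E(t,u_0,u_1)=-2(au_1|u_1)_{L^2(\Omega)}\le 0$, so $E(t,u_0,u_1)\le E(0,u_0,u_1)$, and since $P$ is positive definite the energy is equivalent to the $H$-norm; hence $e^{tA}$ is a contraction semigroup up to an equivalent norm, which gives 1). For point 2), I would first note that $A$ has compact resolvent (the domain ${\cal D}(A)$ embeds compactly into $H$ because $H^1(\Omega)$ embeds compactly into $L^2(\Omega)$ and ${\cal D}(P)$ embeds compactly into $H^1(\Omega)$), so it suffices to rule out eigenvalues on the imaginary axis; if $AU=i\mu U$ then $u_1=i\mu u_0$ and testing against $U$ in the energy form shows $(au_1|u_1)_{L^2(\Omega)}=0$, so $u_1\equiv 0$ on $\omega$ and then $u_0$ solves $Pu_0=\mu^2 u_0$ with $u_0$ vanishing on $\omega$; a unique continuation argument propagated along the generalized bicharacteristic flow (using mGCC to reach $\omega$) forces $u_0\equiv 0$. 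The heart of the matter is point 3).

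\textbf{The resolvent estimate by contradiction and semiclassical measures.} To prove 3) I would argue by contradiction: if it fails there is a sequence $\mu_n\in\R$ and $U_n=(u_{0,n},u_{1,n})\in{\cal D}(A)$ with $\|U_n\|_H=1$ and $(A-i\mu_n)U_n=F_n\to 0$ in $H$. First one checks $|\mu_n|\to\infty$ (for bounded $\mu_n$ this contradicts 2) together with compactness). Writing out the system, $u_{1,n}=i\mu_n u_{0,n}+f_n$ and $(P-\mu_n^2)u_{0,n}=g_n$ with an extra damping term $i\mu_n a u_{0,n}$, and pairing with $u_{0,n}$ yields $\mu_n\int a|u_{0,n}|^2\,dx\to 0$, hence $a^{1/2}u_{1,n}\to 0$ in $L^2(\Omega)$. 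Setting the semiclassical parameter $h_n=1/\mu_n$, the functions $v_n=u_{0,n}$ (suitably normalized so that $h_n\nabla v_n$ and $v_n$ carry a nontrivial mass) generate, after extraction, a nonzero semiclassical measure $\nu$ on $T^*_b\Omega$. The standard machinery (Burq, Lebeau, G\'erard, Miller, and for the Zaremba setting the earlier papers cited, in particular \cite{CR2014}) gives: $\nu$ is supported in $\mathrm{char}(P)$; $\nu$ vanishes on $\pi^{-1}(\omega)$ because of the damping; $\nu$ is invariant under the generalized bicharacteristic flow $\Gamma(s,\cdot)$ away from $\Gamma$; and $\nu$ has total mass bounded below. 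The mGCC then says every point of $\mathrm{char}(P)$ is driven by the flow into $\pi^{-1}(\omega)$ (in finite time, staying hyperbolic at $\Gamma$, or else via the $\gamma_{\mathrm{sing}}$ flow on $\Sigma'$), so invariance forces $\nu=0$, the contradiction. From 3), the Gearhart--Huang--Pr\"uss theorem upgrades the bounded semigroup of 1) to exponential decay.

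\textbf{Where the real work lies.} Essentially everything in the previous paragraph is classical \emph{except} the two new ingredients the abstract advertises, and these are exactly the obstacles. First, one must show that near $\Gamma$ (the jump in the boundary condition, where $\d\Omega_D$ meets $\d\Omega_N$) the semiclassical measure is supported in the characteristic set and, more delicately, that it is carried by $\Sigma'\cong T^*\Gamma$ with propagation along $H''_R=\gamma_{\mathrm{sing}}$ — this requires a careful microlocal analysis of the Zaremba transmission problem near the corner-type singularity $\Gamma$, exploiting the symplectic structure of $\Sigma'$ established in the Geometry section and the $3/2$-loss phenomenon known from Shamir and Savar\'e. Second, one needs a genuine propagation result near a boundary point where the Neumann condition holds (the analogue of the Melrose--Sj\"ostrand theorem at $\d\Omega_N$, including gliding and diffractive points ${\cal G}_g,{\cal G}_d$), adapted so that the measure equation holds across such points; the Neumann case is harder than Dirichlet because the trace of $h_n\d_\nu u_{0,n}$ rather than of $u_{0,n}$ is what is controlled. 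I expect the corner analysis at $\Gamma$ — proving the measure lives in the characteristic set in a neighborhood of the jump and then that it is a measure on $\Sigma'$ invariant under $\gamma_{\mathrm{sing}}$ — to be the single hardest step, since it is where the Zaremba-specific loss of elliptic regularity interacts with propagation and is precisely the point the authors flag as new.
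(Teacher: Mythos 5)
Your plan matches the paper's proof essentially step for step: item 1 from the energy decay, item 2 from unique continuation (the paper cites \cite{CR2014} rather than arguing via compactness of the resolvent, but both routes work), and item 3 by contradiction through a semiclassical measure that is shown to be nonzero (this is where the paper's spectral localization $\theta(h^2P)$ does the work you summarize as "suitably normalized"), supported in $\text{char}(P)$, vanishing on the damping region, and propagated by the generalized flow until mGCC forces it to vanish. You have also correctly located the two genuinely new difficulties — the characteristic-set support of the measure near $\Gamma$ and the propagation at Neumann (gliding/diffractive) boundary points — which are exactly what occupy Sections 3--5 and the appendices of the paper.
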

\begin{remark}
There are several results when the third item is replaced by other estimates as $e^{C|\mu|}$, $|\mu|^\alpha$ in these cases 
the energy decay with a speed depending of the estimate on the resolvent. For this kind of results we refer to 
Batty-Duyckaerts~\cite{BD}, Borichev-Tomilov~\cite{BT-2010}, Burq~\cite{Burq-1997}, Lebeau~\cite{Lebeau:96}.
\end{remark}
The first item is a consequence of the energy decay. The second is given by unique continuation theorem and
also by the result given in~\cite[Proposition 1.1]{CR2014}. The goal of this article is to prove the third item.
From the Gearhart-Huang-Pr\"uss test for the exponential stability 
(see\cite{Gearhart-1978}, \cite{Pruss-1984}, 
\cite[Theorem 3]{Huang85}, \cite{EN-2000}),  the three items of the previous theorem imply that the semigroup generated by 
$A$ is exponentially stable and this 
implies the following theorem. 
%%%%%%%%%%%%%%%%%%%%%%
%
%   theorem
%
%%%%%%%%%%%%%%%%%%%%%%

\begin{theorem}\label{th: energy decay}
We assume that $P$, $a(x)$ and $\Omega$ satisfy the modified Geometric Control Condition given in Definition~\ref{def: mGCC}.
We assume that $P$ has the form given in~\eqref{def: forme de P} and $P$ is self-adjoint positive definite. Let $A$ defined 
by~\eqref{def: forme de A}, 
there exist $C,c>0$ such that 
$$
\|e^{tA}\|_{{\cal L}(H)}\le C e^{-ct}.
$$
\end{theorem}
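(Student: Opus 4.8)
The plan is to obtain Theorem~\ref{th: energy decay} as an immediate corollary of Theorem~\ref{th: resolvent estimate} via the Gearhart--Huang--Pr\"uss characterization of exponential stability; no work of substance remains once the resolvent estimate is in hand, the real difficulty being item~3) of Theorem~\ref{th: resolvent estimate} rather than the present deduction. First I would observe that $H=H^1(\Omega)\oplus L^2(\Omega)$ is a Hilbert space and that, by the positivity assumption~\eqref{hyp: positivity P}, the energy $E(t,u_0,u_1)=(Pu_0(t)|u_0(t))_{L^2(\Omega)}+\int_\Omega|u_1(t)|^2\,dx$ is equivalent, uniformly in the initial data, to $\|e^{tA}(u_0,u_1)\|_H^2$; hence it suffices to establish an exponential bound for $\|e^{tA}\|_{{\cal L}(H)}$.

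Next I would read off the hypotheses of the Gearhart--Huang--Pr\"uss theorem from the three conclusions of Theorem~\ref{th: resolvent estimate}. By item~1), $(e^{tA})_{t\ge0}$ is a bounded $C_0$-semigroup, say $\|e^{tA}\|_{{\cal L}(H)}\le M$; its growth bound is therefore nonpositive, so the open right half-plane lies in the resolvent set $\rho(A)$ and the Laplace-transform representation of the resolvent yields $\|(\lambda I-A)^{-1}\|_{{\cal L}(H)}\le M/\Re\lambda$ for $\Re\lambda>0$. By items~2) and~3), the imaginary axis also lies in $\rho(A)$ with $\sup_{\mu\in\R}\|(A-i\mu I)^{-1}\|_{{\cal L}(H)}<\infty$. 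A short Neumann-series perturbation, writing $\lambda I-A=(i\Im\lambda\,I-A)\bigl(I+\Re\lambda\,(i\Im\lambda\,I-A)^{-1}\bigr)$ and inverting for $\Re\lambda$ below the reciprocal of that supremum, then extends the bound into a strip $\{0\le\Re\lambda\le\delta\}$; combined with the estimate $M/\Re\lambda$ on $\{\Re\lambda\ge\delta\}$, this gives $\sup_{\Re\lambda\ge0}\|(\lambda I-A)^{-1}\|_{{\cal L}(H)}<\infty$.

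I would then invoke the Gearhart--Huang--Pr\"uss theorem on Hilbert spaces (\cite{Gearhart-1978,Pruss-1984}, \cite[Theorem~3]{Huang85}, \cite{EN-2000}): a $C_0$-semigroup on a Hilbert space whose generator has the closed right half-plane contained in its resolvent set, with uniformly bounded resolvent there, is exponentially stable; that is, there exist $C,c>0$ with $\|e^{tA}\|_{{\cal L}(H)}\le Ce^{-ct}$. Transporting this back through the equivalence of $E(t,u_0,u_1)$ with $\|e^{tA}(u_0,u_1)\|_H^2$ gives the claimed exponential decay of the energy.

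There is no genuine obstacle in this deduction: the Neumann-series step and the norm equivalence are entirely routine, and the Gearhart--Huang--Pr\"uss theorem is classical. The whole difficulty of the paper is concentrated in item~3) of Theorem~\ref{th: resolvent estimate}, whose proof is carried out by semiclassical defect measures together with the two new propagation facts highlighted in the introduction --- that the support of the measure lies in the characteristic set in a neighborhood of the jump $\Gamma$, and propagation of the measure near a boundary point where the Neumann condition is imposed --- and occupies the remainder of the article.
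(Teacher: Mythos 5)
Your proposal is correct and follows the same route as the paper: the paper deduces Theorem~\ref{th: energy decay} directly from the three items of Theorem~\ref{th: resolvent estimate} by the Gearhart--Huang--Pr\"uss criterion, exactly as you do. The extra detail you supply (the Laplace-transform bound $M/\Re\lambda$, the Neumann-series extension to a strip, and the equivalence of $E$ with $\|\cdot\|_H^2$ via~\eqref{hyp: positivity P}) is just the standard unpacking of that criterion and is consistent with the paper's one-line invocation.
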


The outline of the proof is the following. In Section~\ref{sec: Semiclassical formulation} we recall some tools on 
semiclassical pseudo-differential calculus (Section~\ref{Notations and pseudo-differential calculus}),
we reduce the third item of Theorem~\ref{th: resolvent estimate} 
to a semiclassical estimate (Section~\ref{Evolution equation and resolvent estimate} and 
Proposition~\ref{prop: equation on v spectrally localised}), and we prove a basic estimate on the trace at the boundary 
(Section~\ref{A priori estimate on traces} and Proposition~\ref{prop: a priori estimates on traces}).

In Section~\ref{Sec: Semiclassical  measure and  the characteristic set} we construct a semiclassical measure and we prove that this measure is supported on the characteristic set.
To do that at the boundary -for interior point the result is classical- we have to distinguish 
three kinds of points, hyperbolic points (see Section~\ref{Sec: Hyperbolic points} and 
Proposition~\ref{prop: estimation traces hyperbolic}) and
glancing points (see Section~\ref{Sec: Glancing points} and Proposition~\ref{prop: estimation traces glancing}).
Only for elliptic points (see Section~\ref{subsubsection: elliptic points}) we need 
to consider the boundary conditions. In a \nhd of boundary where we impose Dirichlet or Neumann boundary condition,
we prove Proposition~\ref{proposition: estimation elliptique Dirichlet Neumann} and we deduce 
Proposition~\ref{prop: convergence traces D and N}.
The proof is delicate in a \nhd of $\Gamma$ (see Proposition~\ref{lemma: trace goes to 0}).
These estimates on trace allow us to prove Proposition~\ref{prop: measure supported on the characteristic set} 
in Section~\ref{sec: Support of semiclassical measure in a nhd of boundary}. 
In Section~\ref{sec: he semiclassical  measure is not identically null} we prove that  semiclassical measure 
is not identically zero and in Section~\ref{sec: The semiclassical  measure is null on the support of a} we prove 
that semiclassical measure  is null on support of the damping.

We shall reach a contradiction if we also prove that the measure is identically null. This is done
in the  next sections.

In Section~\ref{Sec: Propagation of measure} we prove some properties of semiclassical measure. 
 In Section~\ref{sec: Action of Hamiltonian}, we obtain the action of Hamiltonian vector field on the semiclassical
measure up to the boundary. The interior result is stated in Proposition~\ref{prop: interior formula for Hp mu}.
Propositions~\ref{lemma: first propagation formula} and \ref{lemma: second propagation formula}
are analogous results at boundary.
In Section~\ref{sec: properties microlocal defect measure}, we deduce from that a decomposition of the 
 semiclassical measure
in two measure, the measure restricted in interior and a  boundary measure 
(Lemma~\ref{lem: measure on boundary}). The action of 
 Hamiltonian vector field allows us to deduce some properties of these measures. 
 Lemma~\ref{lem: Hp mu xd positive is a measure} describes the action of Hamiltonian on the interior measure, 
Lemma~\ref{lem: measure hyperbolic points} gives precisions in 
 \nhd of hyperbolic points, Lemma~\ref{lem: diffractive points non in measure support} and  
 Lemma~\ref{lem: propagation on boundary if mu0 supported on xi-d equal 0}  give properties of boundary 
 measure in a \nhd of 
Dirichlet and Neumann boundary, and Lemma~\ref{lem: propagation boundary Zaremba} is the analogous 
in \nhd of jump between Dirichlet and Neumann boundary conditions.
 
 In Section~\ref{sec: propagation proofs} we prove the propagation of support of semiclassical
measure. We have to distinguish the different cases, if a bicharacteristic hits boundary transversally
or tangentially,   \( \d\Omega_D\cup\d\Omega_N\) or
\(\Gamma\). This allows to prove the main theorem.

In Appendix~\ref{Appendix : lemma regularity H s} we prove some regularity measure needed to prove some estimate 
in a \nhd or $\Gamma$. 

In Appendix~\ref{sec: trace Neumann boundary condition} we prove some estimates 
on boundary trace in the case of Neumann boundary condition. This is useful to prove properties 
of semiclassical measure in a \nhd of a diffractive point in a \nhd of \(\d\Omega_N\). 
Appendix~\ref{sec: proof of lemmas} is devoted to prove some technical results stated in the 
previous section.

This work is based on previous results, mention  particularly, the course given by
Patrick G\'erard at IHP in 2015, the articles of Burq and Lebeau~\cite{Burq-Lebeau2001} 
and Luc Miller~\cite{Miller-2000}.
We thank Claude Zuily for the first step in this kind of problem (see \cite{Rob-Zuily-2009}), 
Belhassen Dehman, Matthieu L\'eautaud and J\'er\^ome Le~Rousseau for the working 
group where we have  together studied this subject, Nicolas Burq to draw our attention on the 
Tataru paper \cite{Tataru-1998}. That  allowed us to achieve the propagation of measure 
at boundary in the case of Neumann boundary condition.

%%%%%%%%%%%%%%%%%%%%%%%%%%%%%%%%%%%%%%%%%%%
%
%  Preuves
%
%%%%%%%%%%%%%%%%%%%%%%%%%%%%%%%%%%%%%%%%%%%

\section{Semiclassical formulation}\label{sec: Semiclassical formulation}

\subsection{Notations and pseudo-differential calculus} \label{Notations and pseudo-differential calculus}

Here we summarize some result on pseudo-differential calculus. More details, results and extension are given in the 
H\"ormander book~\cite[Chapter 18]{HormanderV3-2007}, Martinez~\cite{Martinez:02} and Le Rousseau-Lebeau~\cite{LRL-2012}. 
Essentially we follow here this last article.

To a smooth function $a(x,\xi)$, and $h\in(0,1)$ ($a$ may depend on $h$ but the constants, in the estimates given below, 
does not depend on $h$), we associate an operator by the following formula
$$
\Op_{sc} (a) u= (2\pi)^{-d}\int_{\R^d} e^{ix\xi}a(x,h\xi) \hat u(\xi)d\xi, \text{ where } 
\hat u(\xi)=\int_{\R^d} e^{-ix\xi}u(x)dx.
$$
This formula make sense under some assumption on $a$ and $u$. In this paper we mainly use symbols 
in  $S^k$. We say that $a\in S^k$ if for every $\alpha, \beta\in\N^d$ there exists $C=C_{\alpha,\beta}$ such
that 
$$
|\d_x^\alpha \d_\xi^\beta a(x,\xi)|\le C\est\xi^{k-\beta}, \text{ where }\est\xi=(1+|\xi|^2)^{1/2}.
$$
For $a\in S^k$, $\Op_{sc}(a ) u $ has a sense for $u\in \S(\R^d)$, and can be extended for $u\in H^s(\R^d)$ for
every $s\in\R$.

In a \nhd of $\d\Omega$ it is useful to use a tangential calculus. For a smooth function $a(x,\xi')$ we 
associate a tangential operator by the following formula
$$
\op_{sc} (a) u= (2\pi)^{-d+1}\int_{\R^{d-1}} e^{ix'\xi'}a(x,h\xi') \tilde u(\xi',x_d)d\xi', \text{ where } 
\tilde u(\xi',x_d)=\int_{\R^{d-1}} e^{-ix'\xi'}u(x',x_d)dx'.
$$
This formula make sense if $a\in S^k_{\rm tan}$, that is, for every $\alpha\in\N^d$, $\beta\in\N^{d-1}$, 
there exists $C=C_{\alpha,\beta}$ such
that 
$$
|\d_x^\alpha \d_{\xi'}^\beta a(x,\xi')|\le C\est{\xi'}^{k-\beta}, \text{ where }\est{\xi'}=(1+|\xi'|^2)^{1/2}.
$$
We also use this notation for pseudo-differential operator on the boundary $x_d=0$. In this case, $a$ and $u$
does not depend on $x_d$. 

For technical reason we also have to use other classes of symbols. In these cases we use the H\"ormander's
notations, for instance, $S(\est\xi^m, (dx)^2+\est{\xi'}^{-2}(d\xi')^2)$,  $S(\est {\xi' }^m, (dx)^2+(d\xi')^2)$. 
 In this case we keep the notations $\Op_{sc}(a)$, if the symbol depends on $\xi_d$ and $\op_{sc}(a)$, 
 if the symbol does not depend on $\xi_d$. We also use the notation
$\op_{sc}(b)=b(x,hD')$, in particular when we restrict a function on $x_d=0$, this allows to distinguish 
$b(x,hD')$ and $b(x',0,hD')$.

The main interest of pseudo-differential operators are the calculus of products, commutators, adjoints. We 
have for $a\in S^k$ and $b\in S^m$,
$$
\Op_{sc} (a)\Op_{sc} (b)=\Op_{sc} (c), \text{ where } c\in S^{m+k},
$$
and $c$ admits an asymptotic expansion, $c(x,\xi)= a(x,\xi)b(x,\xi)+hd(x,\xi)$, where $d\in S^{m+k-1}$.
$$
[\Op_{sc} (a),\Op_{sc} (b)]=h\Op_{sc} (c),  \text{ where } c\in S^{m+k-1},
$$
and $c$ admits an asymptotic expansion, $c(x,\xi)= -i\{a,b\}(x,\xi)+hd(x,\xi)$, where $d\in S^{m+k-2}$, and
$\{a,b\}(x,\xi)=\sum_{j=1}^d \big(\d_{\xi_j}a(x,\xi)\d_{x_j}b(x,\xi)-\d_{x_j}a(x,\xi)\d_{\xi_j}b(x,\xi)\big)$ is the 
Poisson bracket. At some point, it is useful to use that the commutator between an 
operator and  a derivative admits an exact 
formula, we have $[ hD_{x_j} ,\Op_{sc} a ]= -ih\Op_{sc}( \d_{x_j}a)$.

For  $a\in S^k$, we have
$$
\Op_{sc}(a)^*=\Op_{sc}(b) \text{ and } b(x,\xi')=\bar a(x,\xi)+hc(x,\xi)  \text{ where } b\in S^k, c\in S^{k-1}.
$$
The asymptotic expansions may be extended to all power in $h$. Analogous formulas exist for $\op_{sc}a$.

Associated with this semiclassical calculus, we introduce the semiclassical Sobolev spaces.
For $u\in \S'(\R^d)$, we define $\| u\|_{H^s_{sc}}=\| \Op_{sc}(\est{\xi}^s)u\|_{L^2(\R^d)}$, if  
$\Op_{sc}(\est{\xi}^s)u\in L^2(\R^d)$. On the boundary we define for $u\in \S'(\R^{d-1})$,  
$| u|_{H^s_{sc}(x_d=0)}=| \op_{sc}(\est{\xi'}^s)u|_{L^2(\R^{d-1})}$, if  
$\op_{sc}(\est{\xi'}^s)u\in L^2(\R^{d-1})$. On the boundary we define $(u|v)_0=\int_{\R^{d-1}} u(x')\bar v(x')dx'$.

We keep the same notation for a general $\Omega$, namely 
 $(u|v)_0=\int_{ \d\Omega }
u(x')\bar v(x')d\sigma( x') $, where $\sigma $ is the superficial measure on $\d\Omega$.

Pseudo-differential operators act on Sobolev spaces. For $a\in S^k$, there exists $C>0$ such that
$$
\| \Op_{sc}(a)u\|_{H^{s-k}_{sc}}\le C\| u\|_{H^{s}_{sc}}, \text{  for 
every } u\in H^s_{sc}.
$$
For $a\in S_{\rm tan}^k$,  there exists $C>0$ such that
$$
| \op_{sc}(a)u|_{H^{s-k}_{sc}}\le C| u|_{H^{s}_{sc}(x_d=0)},  \text{  for 
every } u\in H^s_{sc}(x_d=0).
$$
When we consider Sobolev spaces on $x_d>0$, it is useful to consider $\op_{sc}$ and distinguish 
variable $x_d\in (0,\infty)$ and variables $x'\in\R^{d-1}$. Let $L^2((0,\infty),H^{s}_{sc})$ be the space 
such that $u\in L^2((0,\infty),H^{s}_{sc})$ if $\int_{\R^{d-1}}\int_0^\infty | \op_{sc} (\est{\xi'}^s)u(x',x_d)|^2dx_ddx'=\|  u \|_{L^2( (0,\infty),H^{s}_{sc})}^2<\infty $. We have the 
following estimate, let $a\in S_{\rm tan}^k$, there exists $C>0$, such that
$$
\| \op_{sc}(a)u\|_{L^2((0,\infty),H^{s-k}_{sc})}\le C\| u\|_{L^2( (0,\infty),H^{s}_{sc})}, , \text{  for 
every } u\in L^2((0,\infty),H^{s}_{sc}).
$$
In the context of semiclassical Sobolev spaces we have the following trace formula. Let $s>0$, 
there exists $C>0$, such that
\begin{align} 
 \label{eq: trace formula Hs}
| u_{|x_d=0} |_{H^s_{sc}}\le Ch^{-1/2}\| u \|_{H^{s+1/2}_{sc}}, \text{ for every } u\in H^{s+1/2}_{sc}.
\end{align}

We recall the G\aa rding inequality for semiclassical  Sobolev spaces.
 Let $a\in S^0$  be such that 
$a(x,\xi)\ge 0$, there exists $C>0$, such that
\begin{align}  \label{eq: Garding inequality R d}
\Re (\Op_{sc}(a)u|u)_{L^2( \R^{d})}+Ch\| u\|_{L^2( \R^{d}) }^2\ge 0.
\end{align}
Here and in what follows $(w|v)_{L^2(K)}$ means the inner product in $K$.

For tangential symbol we have the analogous result.
Let $a\in S^0_{\rm tan}$ be such that 
$a(x,\xi')\ge 0$, there exists $C>0$, such that
\begin{align} 
	 \label{eq: Garding inequality}
\Re (\op_{sc}(a)u|u)_{L^2((0,\infty)\times \R^{d-1})}+Ch\| u\|_{L^2((0,\infty)\times \R^{d-1}) }^2\ge 0.
\end{align}
We use consequences of this result. Let $a\in S^0_{\rm tan}$, such that there exists $K>0$ such that 
$|a(x,\xi')|\le K$, then
\begin{align} \label{eq: estimation L2 sharp with Garding}
\| \op_{sc}(a)u\|_{L^2((0,\infty)\times \R^{d-1} )}\le 2K\| u \|_{L^2((0,\infty)\times \R^{d-1} )} 
+Ch\| u \|_{L^2((0,\infty)\times \R^{d-1} )} ,
\end{align}
where   $C>0$, depends on a finite number of seminorm of $a$.
We have the same estimate at the boundary, if $a\in S(1,(dx')^2+(d\xi')^2)$, and $|a(x',\xi')|\le K$, we have 
\begin{align} \label{eq: estimation L2 sharp with Garding boundary}
|\op_{sc}(a)u|_{L^2(\R^{d-1} )}\le 2K | u |_{L^2( \R^{d-1} )} 
+Ch| u |_{L^2(\R^{d-1} )} ,
\end{align}
where   $C>0$, depends on a finite number of seminorm of $a$.
In particular we use this estimate if $a$ depends on a parameter but $K$ is uniform with 
respect this parameter. In this case, in the previous estimate $C$ depends on the parameter.
For $w\in L^2(\Omega)$, we extends $w$ for $x\in \R^d\setminus\Omega$ by 0, and we use the 
following notations
\begin{align}  \label{def: extension omega}
 \underline{w} (x)= 1_{\Omega} w(x)=
\begin{cases}
w(x)  \text{ if } x\in \Omega , \\
0   \text{ if } x\in \R^d\setminus\Omega.
\end{cases}
\end{align}
If $w\in L^2(\R^{d-1}  \times  (0,\infty) )$, we extend $w$ by 0 for $x_d<0$ and  we use the 
notation 
 $ \underline{w} (x)= 1_{{x_d>0}} w(x)$.

 In this article we use the symbol $\lesssim$: $A\lesssim B$  means, there exists $C>0$, $A\le CB$, where $C$ is 
 independent of parameters.  
 
 We denote $z^s=\exp(s \log(z))$, where $\log z$ is the principal value of the logarithm, where $z\in \C\setminus \R^-$.
 
\subsection{Evolution equation and resolvent estimate} \label{Evolution equation and resolvent estimate}
We begin the proof of the third item of Theorem~\ref{th: resolvent estimate}. We may assume 
$|\mu|\ge 1$ as $A-i\mu I$ is invertible for all $\mu\in\R$, by second item and  
$\mu\mapsto (A-i\mu I)^{-1}$ is continuous from $\R$ to 
${\cal L}(H,H)$. 

Let $F=(f_0,f_1) \in H$ and let $U=(u_0,u_1)\in {\cal D}(A) $ be such that 
$ AU-i\mu U=F$, we have
\begin{equation}
	\label{eq: system on U}
\begin{cases}
u_1-i\mu u_0=f_0\\
-P u_0-a u_1-i\mu u_1=f_1.
\end{cases}
\end{equation}
\begin{lemma}
Assume that there exists $C_1>0$ such that 
\begin{equation}
	\label{eq: equation on u_0}
|\mu|\| u_0\|_{L^2(\Omega)}+\|\nabla  u_0\|_{L^2(\Omega)}\le C_1(\| f_0\|_{H^1(\Omega)}+\| f_1\|_{L^2(\Omega)}) 
\text{ for all } (f_0,f_1)\in H,
\end{equation}
where $(u_0,u_1)$ are the solution of~\eqref{eq: system on U}. Then  there exists $C_2>0$ such that 
\begin{align*}
\| U\|_{H}\le C\| F\|_H, \text{ for all } F\in H,
\end{align*}
where $ AU-i\mu U=F$.
\end{lemma}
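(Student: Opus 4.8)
The plan is to reduce the full resolvent estimate on $\|U\|_H$ to the stated estimate~\eqref{eq: equation on u_0} on $u_0$ alone, using the system~\eqref{eq: system on U} and the energy balance obtained by pairing the second equation with $u_1$.

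\textbf{Step 1: control $u_1$ in $L^2$ via $u_0$ and $F$.} From the first equation $u_1=i\mu u_0+f_0$, so $\|u_1\|_{L^2(\Omega)}\le |\mu|\,\|u_0\|_{L^2(\Omega)}+\|f_0\|_{L^2(\Omega)}$. By~\eqref{eq: equation on u_0} the right-hand side is $\lesssim \|f_0\|_{H^1(\Omega)}+\|f_1\|_{L^2(\Omega)}=\|F\|_H$ up to using $\|f_0\|_{L^2}\le\|f_0\|_{H^1}$. This already gives $\|u_1\|_{L^2(\Omega)}\lesssim\|F\|_H$.

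\textbf{Step 2: control $u_0$ in $H^1$.} The estimate~\eqref{eq: equation on u_0} directly bounds $\|\nabla u_0\|_{L^2(\Omega)}$ and $|\mu|\,\|u_0\|_{L^2(\Omega)}$ by $C_1\|F\|_H$; since $|\mu|\ge 1$ we also get $\|u_0\|_{L^2(\Omega)}\lesssim\|F\|_H$, hence $\|u_0\|_{H^1(\Omega)}\lesssim\|F\|_H$. Combined with the positivity~\eqref{hyp: positivity P}, which gives $(Pu_0|u_0)_{L^2}\le \|P^{1/2}u_0\|^2\lesssim\|u_0\|_{H^1}^2$, we control the full $H^1$-norm appearing in $H=H^1(\Omega)\oplus L^2(\Omega)$.

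\textbf{Step 3: assemble.} Putting Steps 1 and 2 together, $\|U\|_H^2=\|u_0\|_{H^1(\Omega)}^2+\|u_1\|_{L^2(\Omega)}^2\lesssim\|F\|_H^2$, which is the claimed inequality with an appropriate constant $C_2$ depending only on $C_1$, $\delta$, and the coefficients of $P$. There is no real obstacle here: the content of the resolvent estimate is entirely in hypothesis~\eqref{eq: equation on u_0} (which is what the rest of the paper proves via semiclassical measures); this lemma is the elementary bookkeeping that shows~\eqref{eq: equation on u_0} suffices. The only mild point to watch is that one must use $|\mu|\ge1$ (legitimate, as reduced earlier in Section~\ref{Evolution equation and resolvent estimate}) to pass from the weighted bound $|\mu|\|u_0\|_{L^2}$ to a bound on $\|u_0\|_{L^2}$ itself, and that the first equation of~\eqref{eq: system on U} is what converts the $|\mu|\|u_0\|_{L^2}$ term into control of $\|u_1\|_{L^2}$ — so the two halves of~\eqref{eq: equation on u_0} are each used exactly once.
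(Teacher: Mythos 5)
Your proof is correct and follows essentially the same route as the paper's: use $u_1=i\mu u_0+f_0$ together with the hypothesis to bound $\|u_1\|_{L^2(\Omega)}$, and use the hypothesis with $|\mu|\ge 1$ to bound $\|u_0\|_{H^1(\Omega)}$. The only superfluous element is the appeal to the positivity of $P$ in Step 2 — the norm on $H=H^1(\Omega)\oplus L^2(\Omega)$ is just the $H^1\oplus L^2$ norm, so no comparison with $(Pu_0|u_0)_{L^2(\Omega)}$ is needed.
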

\begin{proof}
By~\eqref{eq: system on U}, $u_1=i\mu u_0+f_0$, so that
\begin{equation*}
\| u_1\|_{L^2(\Omega)}\le \| f_0\|_{L^2(\Omega)}+|\mu|\| u_0\|_{L^2(\Omega)}  \lesssim
\| f_0\|_{H^1(\Omega)}+\| f_1\|_{L^2(\Omega)},
\end{equation*}
and \eqref{eq: equation on u_0} gives $ \| u_0\|_{H^1(\Omega)}\lesssim \| f_0\|_{H^1(\Omega)}
+\| f_1\|_{L^2(\Omega)}$, for $|\mu|\ge 1$. This gives the result.
\end{proof}

Formula~\eqref{eq: system on U} implies the following equation on $u_0$
\begin{align*}
-P u_0+\mu^2u_0-i\mu au_0=af_0+i\mu f_0+f_1.
\end{align*}

To use semiclassical tools, we set 
$h=1/\mu$, we multiply \eqref{eq: equation on u_0} by  $h^2$, we obtain the following equation on $u_0$
\begin{equation*}
-h^2P u_0+u_0-ih au_0=ah^2f_0+ihf_0+h^2f_1,
\end{equation*}
and \eqref{eq: equation on u_0} is equivalent to
\begin{equation}
	\label{eq: semiclassical estimation}
\| u_0\|_{L^2(\Omega)}+\|h\nabla  u_0\|_{L^2(\Omega)}\le Ch(\| f_0\|_{H^1(\Omega)}+\| f_1\|_{L^2(\Omega)}).
\end{equation}
We shall prove this inequality by contradiction. If \eqref{eq: semiclassical estimation} is false, 
up to a normalization, there 
exist a sequence $h_n\to 0$ as $n\to \infty$ denoted for sake of simplicity by $h$, 
$(u_h)_h \in H^1(\Omega)$ and $(f_0^h,f_1^h)_h\in H$ satisfying 
\begin{align}
	\label{eq: semiclassical equation on u_h}
&-h^2P u_h+u_h-ih au_h=ah^2f_0^h+ihf_0^h+h^2f_1^h  \notag\\
&\| u_h\|_{L^2(\Omega)}+\|h\nabla  u_h\|_{L^2(\Omega)}=1 \notag \\
&h(\| f_0^h\|_{H^1(\Omega)}+\| f_1^h\|_{L^2(\Omega)})\to 0 \text{ as } h\to 0.
\end{align}
Let $g_0^h=ahf_0^h+hf_1^h$ and  $g_1^h=ihf_0^h$, \eqref{eq: semiclassical equation on u_h} is equivalent to 
\begin{align}
	\label{eq: semiclassical equation on u_h-2}
&-h^2P u_h+u_h-ih au_h=hg_0^h+g_1^h  \notag\\
&\| u_h\|_{L^2(\Omega)}+\|h\nabla  u_h\|_{L^2(\Omega)}=1 \notag \\
&\| g_0^h\|_{L^2(\Omega)}+\| g_1^h\|_{H^1(\Omega)}\to 0 \text{ as } h\to 0.
\end{align}
%%%%%%%%%%%%%%%%%%%%
%
%  Proposition
%
%%%%%%%%%%%%%%%%%%%%
\begin{proposition} \label{prop: equation on v spectrally localised}
There exist $\beta>\alpha>0$,
there exists $\theta\in\Con_0^\infty(\R)$,  supported in $[\alpha,\beta]$, there exists $(\tilde u_h)_h$, 
satisfying   $\|\tilde u_h\|_{L^2(\Omega)}+\|h\nabla 
 \tilde u_h\|_{L^2(\Omega)}\le C$,  for some $C>0$,
such that
\begin{align}
	\label{eq: semiclassical equation on v_h}
&-h^2P v_h+v_h-ih av_h=hq_h , \notag\\
&\| v_h\|_{L^2(\Omega)}=1    \text{ and }\|h\nabla  v_h\|_{L^2(\Omega)}\le 2 ,\notag \\
&\| q_h\|_{L^2(\Omega)}\to 0 \text{ as } h\to 0,
\end{align}
where $v_h=\theta(h^2P)\tilde u_h$.
\end{proposition}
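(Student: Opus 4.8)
The plan is to spectrally localize $u_h$, for the self-adjoint positive operator $h^2P$, around the value $1$. Since $P$ is positive definite the spectrum of $h^2P$ lies in $[\delta h^2,\infty)$. Fix $0<\alpha<1<\beta$ and $\theta\in\Con_0^\infty(\R)$ with $\supp\theta\subset[\alpha,\beta]$, $0\le\theta\le1$ and $\theta\equiv1$ on a \nhd $[\alpha_1,\beta_1]$ of $1$; for $v_h=\theta(h^2P)\tilde u_h$ I will take $\tilde u_h=u_h/c_h$ with $c_h=\|\theta(h^2P)u_h\|_{L^2(\Omega)}$. Because $\theta$ is compactly supported, $\theta(h^2P)$ maps $L^2(\Omega)$ into $\mathcal D(P^k)$ for every $k$; in particular $v_h\in\mathcal D(P)\subset H^1(\Omega)$, so $v_h=0$ on $\d\Omega_D$, $\d_\nu v_h=0$ on $\d\Omega_N$, $Pv_h\in L^2(\Omega)$, and the equation in~\eqref{eq: semiclassical equation on v_h} is meaningful.

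\emph{Concentration and normalization.} By~\eqref{eq: semiclassical equation on u_h-2} one has $(1-h^2P)u_h=ihau_h+hg_0^h+g_1^h$, whose $L^2(\Omega)$ norm is $\le h\|a\|_{L^\infty}+h\|g_0^h\|_{L^2(\Omega)}+\|g_1^h\|_{L^2(\Omega)}\to0$. On the spectral subspace where $h^2P\le\alpha$ or $h^2P\ge\beta$ the operator $1-h^2P$ has modulus $\ge\min(1-\alpha,\beta-1)>0$, so the spectral theorem gives $\theta(h^2P)u_h=u_h+o(1)$ in $L^2(\Omega)$. Taking the real part of the $L^2(\Omega)$ inner product of~\eqref{eq: semiclassical equation on u_h-2} with $u_h$ and using~\eqref{hyp: positivity P} shows that $\|u_h\|_{L^2(\Omega)}\to0$ along a subsequence would force $\|h\nabla u_h\|_{L^2(\Omega)}\to0$ as well, contradicting the normalization; hence $\liminf_h\|u_h\|_{L^2(\Omega)}>0$ and $c_h$ is bounded away from $0$ (and $\le1$ for $h$ small). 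Consequently $\|v_h\|_{L^2(\Omega)}=1$ and $\|\tilde u_h\|_{L^2(\Omega)}+\|h\nabla\tilde u_h\|_{L^2(\Omega)}=1/c_h\le C$.

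\emph{The equation for $v_h$ and the gradient bound.} Applying $\theta(h^2P)$ to~\eqref{eq: semiclassical equation on u_h-2}, using $\theta(h^2P)(au_h)=a\,\theta(h^2P)u_h+[\theta(h^2P),a]u_h$ and dividing by $c_h$, one obtains~\eqref{eq: semiclassical equation on v_h} with
\[
q_h=\frac1{c_h}\Big(\theta(h^2P)g_0^h+i[\theta(h^2P),a]u_h+\tfrac1h\theta(h^2P)g_1^h\Big).
\]
Here $\|\theta(h^2P)g_0^h\|_{L^2(\Omega)}\le\|g_0^h\|_{L^2(\Omega)}\to0$; the commutator of the compactly supported function $\theta$ of the elliptic operator $h^2P$ with the smooth multiplier $a$ is $O(h)$ in $\mathcal L(L^2(\Omega))$ (Helffer--Sj\"ostrand representation of $\theta(h^2P)$ as an integral of resolvents $(z-h^2P)^{-1}$ against an almost-analytic extension of $\theta$, together with the calculus of Section~\ref{Notations and pseudo-differential calculus}); and $\theta(h^2P)$ is microlocalized at frequencies $|\xi|\sim h^{-1}$, hence maps $H^1(\Omega)$ into $L^2(\Omega)$ with a gain of one power of $h$, so $\|\theta(h^2P)g_1^h\|_{L^2(\Omega)}\lesssim h\|g_1^h\|_{H^1(\Omega)}=o(h)$. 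Thus $\|q_h\|_{L^2(\Omega)}\to0$. For the gradient, \eqref{hyp: positivity P} gives $\delta\|h\nabla v_h\|_{L^2(\Omega)}^2\le\delta h^2\|v_h\|_{H^1(\Omega)}^2\le h^2(Pv_h|v_h)_{L^2(\Omega)}$, and since $\theta$ is supported in $[\alpha,\beta]$ the spectral theorem yields $h^2(Pv_h|v_h)_{L^2(\Omega)}\le\beta\|v_h\|_{L^2(\Omega)}^2=\beta$; with $\beta$ close to $1$ this gives $\|h\nabla v_h\|_{L^2(\Omega)}\le2$.

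\emph{Main obstacle.} The only substantive input is that functional calculus of the Zaremba operator $h^2P$ yields, even up to the boundary, an operator bounded on $L^2(\Omega)$, microlocalized at frequencies $|\xi|\sim h^{-1}$, and with $O(h)$ commutators against smooth multipliers; this underlies both the commutator estimate and the $H^1\!\to\!L^2$ gain used above. It reduces to a parametrix for the semiclassical resolvent $(z-h^2P)^{-1}$ for $z$ with $\Re z$ bounded away from $0$ (where the resolvent is elliptic and contributes nothing at low frequency). Granting this, the verification above is routine; the construction of the localized sequence $v_h$ itself is standard.
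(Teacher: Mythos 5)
Your skeleton matches the paper's: truncate $u_h$ spectrally by $\theta(h^2P)$, normalize, apply $\theta(h^2P)$ to \eqref{eq: semiclassical equation on u_h-2}, and estimate the three contributions to $q_h$. Your concentration step is in fact simpler than the paper's: where the paper uses a Littlewood--Paley decomposition and Lemma~\ref{Lemma: petites et hautes frequences}, you just observe that $(1-\theta(s))/(1-s)$ is a bounded Borel function, so $\|(1-\theta(h^2P))u_h\|_{L^2(\Omega)}\lesssim\|(1-h^2P)u_h\|_{L^2(\Omega)}\to 0$; the lower bound on $\|u_h\|_{L^2(\Omega)}$ and on the normalizing constant is then as in the paper. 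Two smaller remarks: your gradient bound via \eqref{hyp: positivity P} only gives $\|h\nabla v_h\|_{L^2(\Omega)}\le\sqrt{\beta/\delta}$, which need not be $\le 2$ since $\delta$ is fixed by $P$ and may be small (the paper instead re-derives $h^2(Pv_h|v_h)_{L^2(\Omega)}=1+o(1)$ from the equation satisfied by $v_h$); and the $O(h)$ commutator bound does not follow from ``the calculus of Section~\ref{Notations and pseudo-differential calculus}'' (which is interior/tangential pseudo-differential calculus, not functional calculus of the Zaremba realization) --- in the paper it comes from the Helffer--Sj\"ostrand formula combined with the resolvent estimates of Lemma~\ref{lem: resolvent estimates}, which are proved by direct energy arguments.

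The genuine gap is the term $h^{-1}\theta(h^2P)g_1^h$. You assert $\|\theta(h^2P)g\|_{L^2(\Omega)}\lesssim h\|g\|_{H^1(\Omega)}$ because $\theta(h^2P)$ is ``microlocalized at frequencies $|\xi|\sim h^{-1}$'', deferring the substance to a parametrix for the semiclassical resolvent of the Zaremba operator. This is not a valid reduction. First, the asserted mapping property is false for general $g\in H^1(\Omega)$: boundary effects only give a gain $h^{1/2}$. For instance, for $P=-\d_x^2$ on $(0,1)$ with Dirichlet conditions and $g\equiv 1$, the eigenfunction coefficients are of size $1/k$, and one computes $\|\theta(h^2P)g\|_{L^2}\sim h^{1/2}$, not $O(h)$. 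The estimate is true here only because $g_1^h=ihf_0^h$ respects the Dirichlet condition on $\d\Omega_D$ (it lies in the form domain), and any proof must use this boundary information --- which an interior frequency-localization heuristic cannot see; with that information it follows, e.g., from the spectral theorem since $\theta(s)^2/s$ is bounded, or, as in the paper, by writing $h^{-2}\|\theta(h^2P)g_1^h\|^2=(P\tilde\theta(h^2P)g_1^h|\theta(h^2P)g_1^h)_{L^2(\Omega)}$ and using the exact quadratic form \eqref{eq: associated quadratic form to P} for elements of ${\cal D}(P)$ together with Lemma~\ref{lemma: commutator estimation}. Second, the fallback you grant --- a boundary parametrix for $(z-h^2P)^{-1}$ --- is precisely what is not available by standard means for the Zaremba condition (elliptic regularity is limited to $H^s$, $s<3/2$, near $\Gamma$), and the paper's proof is organized specifically to avoid it, relying only on self-adjointness, \eqref{hyp: positivity P}, and the Helffer--Sj\"ostrand formula. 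So the ``main obstacle'' paragraph grants exactly the statement whose proof is the content of this step, and in a form that is incorrect up to the boundary.
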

\begin{proof}
Let $\psi\in\Con_0^\infty(\R)$, $0\le \psi\le 1$ such that 
$$
\psi(s)=
\begin{cases}
1 \text{ if } s\le 1\\
0   \text{ if } s\ge 2.
\end{cases}
$$
Let $A>0$ be sufficiently large to be fixed below. We have 
$$
1=\lim_{n\to \infty}\psi(2^{-n}A^{-1}s)=\psi(sA^{-1})+\sum_{k=1}^\infty\big(\psi(2^{-k}A^{-1}s)   -\psi(2^{-k+1}A^{-1}s)  \big).
$$
Setting $\phi(s)=\psi(s)-\psi(2s)$, we have 
\begin{equation}
	\label{eq: decomposition Littlewood-Paley}
1=\psi(sA^{-1})+\sum_{k=1}^\infty\phi(2^{-k}A^{-1}s) \text{ and }  \phi  \text{  is supported in } [1/2, 2].
\end{equation}
By functional calculus for auto-adjoint operators, we have 
\begin{equation}  \label{formula: Littlewood-Paley}
I=\big( \psi(A^{-1}h^2P)- \psi(Ah^2P)   \big)+\psi(Ah^2P)+\sum_{k=1}^\infty\phi(2^{-k}A^{-1}h^2P).
\end{equation}

%%%%%%%%%%%%%%%%%%%%
%
%  Lemma
%
%%%%%%%%%%%%%%%%%%%%
\begin{lemma} \label{Lemma: petites et hautes frequences}
There exists $C>0$ such that 
\begin{align}
&\| \psi(Ah^2P)u_h\|_{L^2(\Omega)}\le C\big( A^{-1}+h
+h\|g_0^h\|_{L^2(\Omega)}+\| g_1^h\|_{L^2(\Omega)}
\big),
\label{eq: Lemma: petites et hautes frequences-1}\\
&\big\| \sum_{k=1}^\infty\phi(2^{-k}A^{-1}h^2P)u_h\big\|_{L^2(\Omega)}\le CA^{-1} 
\big(   
 1+h\| g_0^h \|_{L^2(\Omega)}+\| g_1^h \|_{L^2(\Omega)}\big).
\label{eq: Lemma: petites et hautes frequences-2}
\end{align}
\end{lemma}
\begin{proof} 
We apply $\psi(Ah^2P)$ to equation~\eqref{eq: semiclassical equation on u_h-2}, we obtain 
$$
-h^2P \psi(Ah^2P) u_h+\psi(Ah^2P)u_h-ih\psi(Ah^2P) (au_h)
=h\psi(Ah^2P)g_0^h+\psi(Ah^2P)g_1^h.
$$
Let $\tilde\psi (s)=s\psi(s)$ we have 
$$
\psi(Ah^2P)u_h=A^{-1} \tilde\psi(Ah^2P) u_h+ih\psi(Ah^2P) (au_h)
+h\psi(Ah^2P)g_0^h+\psi(Ah^2P)g_1^h.
$$
As $ |\psi(s)|\le 1$ and $|\tilde\psi(s)|\le C$, we obtain~\eqref{eq: Lemma: petites et hautes frequences-1}.

To prove~\eqref{eq: Lemma: petites et hautes frequences-2} first we estimate $\phi(2^{-k}A^{-1}h^2P)u_h$.
Let $\tilde \phi\in\Con_0^\infty((0,\infty))$ to be fixed below. We apply $\tilde \phi(2^{-k}A^{-1}h^2P)$
to equation~\eqref{eq: semiclassical equation on u_h-2}, we obtain 
\begin{align*}
-h^2P \tilde \phi(2^{-k}A^{-1}h^2P) u_h&=-\tilde \phi(2^{-k}A^{-1}h^2P)u_h
+ih \tilde \phi(2^{-k}A^{-1}h^2P)( au_h) \\
&\quad +h\tilde \phi(2^{-k}A^{-1}h^2P)g_0^h+\tilde \phi(2^{-k}A^{-1}h^2P)g_1^h .
\end{align*}
Let $\tilde \phi(s)=-s^{-1}\phi(s)$ be supported in  $[1/2, 2]$. We obtain
\begin{align*}
2^kA \phi(2^{-k}A^{-1}h^2P) u_h&=-\tilde \phi(2^{-k}A^{-1}h^2P)u_h
+ih \tilde \phi(2^{-k}A^{-1}h^2P)( au_h) \\
&\quad +h\tilde \phi(2^{-k}A^{-1}h^2P)g_0^h+\tilde \phi(2^{-k}A^{-1}h^2P)g_1^h .
\end{align*}
This yields
$$
2^kA\| \phi(2^{-k}A^{-1}h^2P) u_h\|_{L^2(\Omega)}\le
 C\big(   
 1+h\| g_0^h \|_{L^2(\Omega)}+\| g_1^h \|_{L^2(\Omega)}
\big).
$$
Summing over $k$ we obtain~\eqref{eq: Lemma: petites et hautes frequences-2}.
\end{proof}
Let $\theta(s)= \psi(A^{-1}s)-\psi(As)$, by Lemma~\ref{Lemma: petites et hautes frequences} and   
\eqref{formula: Littlewood-Paley} choosing $A$ sufficiently large and $h\in(0,h_0]$ for $h_0>0$ 
sufficiently small, we can have 
$\| u_h- \theta(h^2P)u_h   \|_{L^2(\Omega)}$ as small as we want.
From equation~\eqref{eq: semiclassical equation on u_h-2} multiplying by $\overline{u}_h$ 
and integrating by parts, we obtain 
$$
-(Pu_h|u_h)_{L^2(\Omega)}
+  \int_\Omega |u_h|^2  dx =  ih(au_h|u_h)_{L^2(\Omega)}
+h(g_0^h|u_h)_{L^2(\Omega)}+(g_1^h|u_h)_{L^2(\Omega)}.
$$
Taking $h_0$ sufficiently small, we have
\begin{equation}
	  \label{eq: norm L2 and H1 are equivalent}
h^2(Pu_h|u_h)_{L^2(\Omega)}
= \int_\Omega |u_h|^2  dx +\varepsilon_h \text{ where } \varepsilon_h\to 0
\text{  as } h\to 0.
\end{equation}
We now observe that
$h^2(Pu_h|u_h)_{L^2(\Omega)}$ is equivalent to $H^1_{sc}(\Omega)$-norm, uniformly with respect to $h\in(0,1)$. From~\eqref{eq: semiclassical equation on u_h-2}, the assumption on norms consequently gives
$\| u_h\|_{L^2(\Omega)}
\ge C_0+\tilde\varepsilon_h$, where $C_0>0$ and $\tilde\varepsilon_h\to 0$ as $h\to0$.  

Let $v_h= \theta(h^2P)u_h  / \| \theta(h^2P) u_h\|_{L^2(\Omega)}$. 
 Now we prove that  $v_h $ satisfies the equation. We apply $\theta(h^2P)$ 
to~\eqref{eq: semiclassical equation on u_h-2}, we have
\begin{align*}
h^2P  \theta(h^2P)u_h+ \theta(h^2P)u_h-ih \theta(h^2P)( au_h)
=h \theta(h^2P)g_0^h+ \theta(h^2P)g_1^h
\end{align*}
which is equivalent to 
\begin{align}   \label{eq: definition of qh}
h^2P v_h+v_h-ih av_h=  \| \theta(h^2P) u_h\|_{L^2(\Omega)}^{-1}\big(
  ih[\theta(h^2P), a]u_h+h \theta(h^2P)g_0^h+ \theta(h^2P)g_1^h   \big)=hq_h.
\end{align}
To obtain the estimate on $q_h$ we have to prove
\begin{align}
&\| [\theta(h^2P), a]u_h  \|_{L^2(\Omega)}\to 0 \text{ as } h\to 0  ,
\label{eq: commutator estimate}\\
&h^{-1}\|   \theta(h^2P)g_1^h     \|_{L^2(\Omega)}\to 0 \text{ as } h\to 0 .
\label{eq: estimate second member}
\end{align}  
To do that we need the following result proved below.
%%%%%%%%%%%%%%%%%%%%
%
%  Lemma
%
%%%%%%%%%%%%%%%%%%%%
\begin{lemma}   \label{lemma: commutator estimation}
Let $\phi \in \Con_0^\infty(0,\infty)$, and $a\in\Con_0^\infty(\Omega)$, there exists $C>0$ such that
\begin{align*}
& \| [\phi (h^2P), a]w  \|_{L^2(\Omega)}\le C h\| w  \|_{L^2(\Omega)}, \\
& \| [\d_{x_j}, \phi (h^2P)]w  \|_{L^2(\Omega)}\le C \| w  \|_{L^2(\Omega)} ,\\
& \| \d_{x_j}\phi (h^2P)w  \|_{L^2(\Omega)}\le C \| w  \|_{H^1(\Omega)} ,
\end{align*}
for $j=1,\dots,d$.
\end{lemma}
Estimate~\eqref{eq: commutator estimate} is a direct consequence 
of Lemma~\ref{lemma: commutator estimation}.
To prove~\eqref{eq: estimate second member}, let $\tilde \theta(s)=s^{-1}\theta(s)\in\Con_0^\infty(0,\infty)$, we compute
\begin{align*}
h^{-2}\|  \theta(h^2P)g_1^h  \|_{L^2(\Omega)}^2&=h^{-2}(  \theta(h^2P)g_1^h |  \theta(h^2P)g_1^h )_{L^2(\Omega)}, \\
&= h^{-2}( h^2P \tilde\theta(h^2P)g_1^h |  \theta(h^2P)g_1^h )_{L^2(\Omega)}, \\
&=\sum_{1\le j,k\le d} (p_{jk}\d_{x_j}\tilde\theta(h^2P) g_1^h|   \d_{x_k }  \theta(h^2P)g_1^h )_{L^2(\Omega)}  \\
&\quad + \sum_{1\le j\le d} (-ip_{j}\d_{x_j}\tilde\theta(h^2P) g_1^h|   
\theta(h^2P)g_1^h )_{L^2(\Omega)}
+ (p_{0}\tilde\theta(h^2P) g_1^h|   
\theta(h^2P)g_1^h )_{L^2(\Omega)}
\end{align*}
as $\tilde\theta(h^2P)g_1^h$ and $
 \theta(h^2P)g_1^h $  are in domain of  $ P$ we can apply \eqref{eq: associated quadratic form to P}.  By Lemma~\ref{lemma: commutator estimation}, we obtain
 $$
 h^{-2}\|  \theta(h^2P)g_1^h  \|_{L^2(\Omega)}^2\le C \| g_1^h  \|_{H^1(\Omega)}^2.
 $$
 This completes the proof of Proposition~\ref{prop: equation on v spectrally localised} because 
 arguing as  we did to obtain Formula~\eqref{eq: norm L2 and H1 are equivalent}, we get 
 $\| v_h\|_{L^2(\Omega)}=1$ so that $\|h D v_h\|_{L^2(\Omega)}=1+\eps_h$ with $\eps_h\to 0$ as $h\to 0$.
\end{proof}
\begin{proof} {\it Lemma~\ref{lemma: commutator estimation}.} We start with the following lemma which 
will be proved below.
%%%%%%%%%%%%%%%%%%%%
%
%  Lemma
%
%%%%%%%%%%%%%%%%%%%%
\begin{lemma}  \label{lem: resolvent estimates}
Let $C_1>0$. There exists $C>0$  such that 
\begin{align*}
&\| (-h^2P+z)^{-1}f\|_{L^2(\Omega)}\le C|\Im z|^{-1} \| f\|_{L^2(\Omega)} , \\
&\| h\d_{x_j}(-h^2P+z)^{-1}f\|_{L^2(\Omega)}\le C|\Im z|^{-1} \| f\|_{L^2(\Omega)},  \\
&\|(-h^2P+z)^{-1} h\d_{x_j} f\|_{L^2(\Omega)}\le C|\Im z|^{-1} \| f\|_{L^2(\Omega)} ,
\end{align*}
for all $|z|\le C_1$ and $f\in L^2(\Omega)$.
\end{lemma}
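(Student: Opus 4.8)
The plan is to establish the three bounds in succession, each resting on an elementary tool together with the two structural facts about $P$ recalled above, the form identity~\eqref{eq: associated quadratic form to P} and the coercivity~\eqref{hyp: positivity P}. Throughout one may assume $\Im z\neq 0$, the asserted inequalities being vacuous otherwise. For the first bound I would invoke only the spectral theorem: since $P$ is self-adjoint and positive, $h^2P$ is non-negative self-adjoint, so $\sigma(h^2P)\subset[0,\infty)$ and hence $\|(-h^2P+z)^{-1}\|_{\mathcal L(L^2(\Omega))}\leq\mathrm{dist}(z,[0,\infty))^{-1}\leq|\Im z|^{-1}$; note that this step uses neither $|z|\leq C_1$ nor anything about $P$ beyond self-adjointness and positivity.

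For the second bound I would use the energy identity. Setting $u=(-h^2P+z)^{-1}f\in\mathcal D(P)$, one has $h^2Pu=zu-f$, and pairing with $u$ and taking real parts — the left-hand side $h^2(Pu|u)_{L^2(\Omega)}$ being real and non-negative — gives $h^2(Pu|u)_{L^2(\Omega)}\leq|z|\,\|u\|_{L^2(\Omega)}^2+\|f\|_{L^2(\Omega)}\|u\|_{L^2(\Omega)}$. By~\eqref{hyp: positivity P} the left-hand side dominates $\delta\|h\nabla u\|_{L^2(\Omega)}^2$; inserting the first bound $\|u\|_{L^2(\Omega)}\leq|\Im z|^{-1}\|f\|_{L^2(\Omega)}$ and using $|\Im z|\leq|z|\leq C_1$ to absorb the term $|\Im z|^{-1}\|f\|_{L^2(\Omega)}^2$ into $|\Im z|^{-2}\|f\|_{L^2(\Omega)}^2$ yields $\|h\d_{x_j}u\|_{L^2(\Omega)}\leq\|h\nabla u\|_{L^2(\Omega)}\lesssim|\Im z|^{-1}\|f\|_{L^2(\Omega)}$. (Alternatively one could bound $\|\sqrt{h^2P}\,(-h^2P+z)^{-1}\|_{\mathcal L(L^2(\Omega))}$ by $C|\Im z|^{-1}$ directly from the functional calculus, since $\sqrt\lambda/|z-\lambda|$ is uniformly bounded for $\lambda\geq 0$ once $|z|\leq C_1$, and then use $\|h\nabla u\|_{L^2(\Omega)}\lesssim\|\sqrt{h^2P}\,u\|_{L^2(\Omega)}$, again a consequence of~\eqref{hyp: positivity P}.)

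The third bound I would deduce from the second by duality. For $f\in C_0^\infty(\Omega)$ the function $h\d_{x_j}f$ lies in $C_0^\infty(\Omega)\subset L^2(\Omega)$, so $w:=(-h^2P+z)^{-1}(h\d_{x_j}f)$ is defined in the classical sense; for $g\in L^2(\Omega)$, self-adjointness of $P$ gives $(w|g)_{L^2(\Omega)}=(h\d_{x_j}f\,|\,(-h^2P+\bar z)^{-1}g)_{L^2(\Omega)}$, and since $f$ is compactly supported in $\Omega$ the integration by parts transferring $h\d_{x_j}$ onto $(-h^2P+\bar z)^{-1}g$ produces no boundary term, so $(w|g)_{L^2(\Omega)}=-(f\,|\,h\d_{x_j}(-h^2P+\bar z)^{-1}g)_{L^2(\Omega)}$. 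The second bound applied to $\bar z$ (for which $|\bar z|=|z|\leq C_1$ and $|\Im\bar z|=|\Im z|$) then gives $|(w|g)_{L^2(\Omega)}|\lesssim|\Im z|^{-1}\|f\|_{L^2(\Omega)}\|g\|_{L^2(\Omega)}$, hence $\|w\|_{L^2(\Omega)}\lesssim|\Im z|^{-1}\|f\|_{L^2(\Omega)}$; since $C_0^\infty(\Omega)$ is dense in $L^2(\Omega)$, the operator $f\mapsto(-h^2P+z)^{-1}(h\d_{x_j}f)$ extends uniquely to a bounded operator on $L^2(\Omega)$ with the same bound, which is the content of the third inequality.

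None of the three steps is deep. The only points requiring care are giving a clean meaning to the composition $(-h^2P+z)^{-1}h\d_{x_j}$ as an operator on $L^2(\Omega)$ — which is why in the third step I would localize $f$ away from $\d\Omega$ before dualizing, so that no boundary term, and in particular no interaction with the mixed Dirichlet–Neumann conditions built into $\mathcal D(P)$, enters — and the elementary bookkeeping with the constraint $|z|\leq C_1$, which is exactly what allows the various lower-order $|\Im z|^{-1}$ contributions to be absorbed into the single power $|\Im z|^{-1}$ claimed in each estimate.
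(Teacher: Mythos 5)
Your proof is correct and follows essentially the same route as the paper: an energy identity plus the coercivity \eqref{hyp: positivity P} for the gradient bound, and an integration by parts with $f\in\Con_0^\infty(\Omega)$ plus density for the third estimate. The only cosmetic differences are that you get the first bound from the spectral theorem rather than from the imaginary part of the same energy identity, and you dualize against an arbitrary $g$ where the paper pairs $(-h^2P+z)^{-1}h\d_{x_j}f$ with itself; both are equivalent.
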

To prove Lemma~\ref{lemma: commutator estimation} we use the Helffer-Sj\"ostrand formula,
$$
\phi(h^2P)=-\frac1\pi \int \bar\d_z\tilde\phi(x,y) (-h^2P +z)^{-1}dxdy,
$$
where $z=x+iy\in\C$ and $\tilde\phi$  is an almost analytic extension of 
$\phi$ (see \cite[Proposition 7.2]{HS:1989} and \cite{DA}):
 $\tilde\phi$  
is compactly supported 
and satisfies
\begin{align*}
&\tilde\phi(x,0)=\phi(x), \\
&|\bar\d_z\tilde\phi(x,y)|\le C_N|y|^N, \text{ for every } N.
\end{align*}
We recall that $\bar\d_z= (1/2)(\d_x+i\d_y)$.  
The Helffer-Sj\"ostrand formula gives 
\begin{align*} 
[\phi(h^2P),a]&=-\frac1\pi \int \bar\d_z\tilde\phi(x,y)[ (-h^2P +z)^{-1},a]dxdy \\
&=\frac1\pi \int \bar\d_z\tilde\phi(x,y)    (-h^2P +z)^{-1}[- h^2P ,a] (-h^2P +z)^{-1}dxdy .
\end{align*}
As $ [- h^2P ,a] $ is a sum of terms of following type $ bh^2\d_{x_j}$ and $ch^2$, where $b$ and $c$ are into 
$\Con^\infty(\overline\Omega)$, the first estimate of the lemma is given by the two following estimates
\begin{align}   \label{eq: estimation order 0}
\|    (-h^2P +z)^{-1}ch^2(-h^2P +z)^{-1}  w \|_{L^2(\Omega)}
&\lesssim h^2 |\Im z|^{-1} \|    (-h^2P +z)^{-1}  w \|_{L^2(\Omega)}  \notag \\
& \lesssim h^2 |\Im z|^{-2} \|   w \|_{L^2(\Omega)},
\end{align}
by the first estimate of Lemma~\ref{lem: resolvent estimates}.
\begin{align}
\|    (-h^2P +z)^{-1}   bh^2\d_{x_j} (-h^2P +z)^{-1}  w \|_{L^2(\Omega)}
&\lesssim  h |\Im z|^{-1} \|h \d_{x_j}    (-h^2P +z)^{-1}  w \|_{L^2(\Omega)}   \notag\\
&\lesssim  h |\Im z|^{-2} \|   w \|_{L^2(\Omega)}, \label{eq: estimation order 1}
\end{align}
by the two first estimates of Lemma~\ref{lem: resolvent estimates}.

For the second estimate we have  by the Helffer-Sj\"ostrand formula
\begin{align*}
 [\d_{x_j}, \phi (h^2P)]&=-\frac1\pi \int \bar\d_z\tilde\phi(x,y)[\d_{x_j}, (-h^2P +z)^{-1}]dxdy  \\
 &=\frac1\pi \int \bar\d_z\tilde\phi(x,y) (-h^2P +z)^{-1}[\d_{x_j}, -h^2P +z] (-h^2P +z)^{-1}dxdy,
\end{align*} 
and $[\d_{x_j}, -h^2P +z] $ is a sum of term of type $ h^2\d_{x_j} b \d_{x_k}$, $ch^2\d_{x_j}$ and  $dh^2$ 
where $b, c$ and $d$ are in $\Con^\infty(\overline\Omega)$. 
The terms with $ch^2\d_{x_j}$ and  $dh^2$ were estimated in~\eqref{eq: estimation order 0} 
and \eqref{eq: estimation order 1}. For the term $ h^2\d_{x_j} b \d_{x_k}$, we have
\begin{align*}
\|    (-h^2P +z)^{-1}  h^2\d_{x_j} b \d_{x_k} (-h^2P +z)^{-1}  w \|_{L^2(\Omega)}
&\lesssim  |\Im z|^{-1} \|h \d_{x_j}    (-h^2P +z)^{-1}  w \|_{L^2(\Omega)} , \\
&\lesssim   |\Im z|^{-2} \|  w \|_{L^2(\Omega)} ,
 \label{eq: estimation order 2}
\end{align*}
By the second and third estimate of Lemma~\ref{lem: resolvent estimates}.  This gives the second estimate of 
Lemma~\ref{lemma: commutator estimation}.

To prove the third estimate of Lemma~\ref{lemma: commutator estimation} we write
$$
\d_{x_j}\phi (h^2P)= \phi (h^2P)\d_{x_j} + [\d_{x_j},\phi (h^2P)].
$$
The first term is clearly estimated by $H^1$-norm and second term is estimated by the second inequality of Lemma~\ref{lemma: commutator estimation}.
\end{proof}
\begin{proof}{\it Lemma~\ref{lem: resolvent estimates}.} 
Let  $u= (-h^2P+z)^{-1}f$, we have $u\in H^1(\Omega) $ and $u$ satisfies $(-h^2P+z)u=f$ 
and the Zaremba boundary condition. Multiplying the equation by $\overline u$, integrating over $\Omega$  and
an performing integration by parts,
we get
\begin{equation} \label{eq: formula for resolvent}
-(h^2P  u|u)_{L^2(\Omega)}+z\|  u\|_{L^2(\Omega)}^2=(f|u).
\end{equation}
Taking the imaginary part of equation we have $ |\Im z|\|  u\|_{L^2(\Omega)}^2
\le \|  f\|_{L^2(\Omega)}\|  u\|_{L^2(\Omega)}$, which gives the first estimate.

Taking the real part of~\eqref{eq: formula for resolvent} and from  \eqref{hyp: positivity P} 
we have
$\|  \nabla u\|_{L^2(\Omega)}^2\lesssim (P  u|u)_{L^2(\Omega)}$, we obtain
$$
\|  h\nabla u\|_{L^2(\Omega)}^2\lesssim \|  f\|_{L^2(\Omega)}\|  u\|_{L^2(\Omega)} +\|  u\|_{L^2(\Omega)}^2.
$$
This gives the second estimate with the previous result.

It is sufficient to prove the third estimate of Lemma~\ref{lem: resolvent estimates} for $f\in\Con_0^\infty(\Omega)$ 
as $\Con_0^\infty(\Omega)$ is dense in $L^2(\Omega)$.
We have 
\begin{align*}
\|(-h^2P+z)^{-1} h\d_{x_j} f\|_{L^2(\Omega)}^2&=( (-h^2P+z)^{-1} h\d_{x_j} f | (-h^2P+z)^{-1} h\d_{x_j} f )\\
&=- (  f | h\d_{x_j}(-h^2P+z)^{-1}(-h^2P+z)^{-1} h\d_{x_j} f ) \\
&\le C |\Im z|^{-1} \| f\|_{L^2(\Omega)}  \| (-h^2P+z)^{-1} h\d_{x_j}  f\|_{L^2(\Omega)} ,
\end{align*}
by the previous result. This gives the third estimate of Lemma~\ref{lem: resolvent estimates}.
\end{proof}
 
\subsection{A priori estimate on traces} \label{A priori estimate on traces}
In this section we assume that $\Omega$ is locally given near a point of the boundary by $x_d>0$ 
and we denote
 the variables by $x=(x',x_d)$ where $x'\in\R^{d-1}$,  and we set  $\R^d_+=\{(x',x_d)\in\R^d, \  x_d>0  \}$. 
 In these local coordinates\footnote{Rigorously, the laplace-Beltrami operator has a term 
 \(h^2D_{x_d}\). We can eliminate this term after a conjugaison of  operator  by a function non 
 null everywhere but this has no influence on the proof given here. For simplicity we choose to keep 
 \(v_h)\) instead of the conjugated function.} we have $h^2D_{x_d} ^2v_h
+(R(x,hD')-1+iha)v_h=hq_h $. When there are no ambiguity we only write
$R$ instead of $R(x,hD')$.
\begin{proposition} 
	\label{prop: a priori estimates on traces}
Let $v_h$ given in Proposition~\ref{prop: equation on v spectrally localised}. Then, there exists $C>0$ such that 
\begin{align}
&|(v_h)_{|x_d=0}|_{H^{1/2}_{sc}}\le Ch^{-1/2}  , \label{estimation apriori trace}\\
&|(hD_{x_d}v_h)_{|x_d=0}|_{H^{-1/2}_{sc}}\le Ch^{-1/2} .
\end{align}
\end{proposition}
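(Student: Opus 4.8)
The plan is to prove both trace bounds directly from the interior equation $h^2D_{x_d}^2 v_h + (R(x,hD')-1+iha)v_h = hq_h$ together with the two bulk bounds $\|v_h\|_{L^2(\Omega)}=1$ and $\|h\nabla v_h\|_{L^2(\Omega)}\le 2$ from Proposition~\ref{prop: equation on v spectrally localised}. The first estimate $|(v_h)_{|x_d=0}|_{H^{1/2}_{sc}}\le Ch^{-1/2}$ should follow essentially from the semiclassical trace inequality~\eqref{eq: trace formula Hs}: applying it with $s=1/2$ gives $|(v_h)_{|x_d=0}|_{H^{1/2}_{sc}}\lesssim h^{-1/2}\|v_h\|_{H^1_{sc}}$, and $\|v_h\|_{H^1_{sc}}$ is controlled by $\|v_h\|_{L^2}+\|h\nabla v_h\|_{L^2}\le 3$ (more precisely, using that $h^2(Pv_h|v_h)$ is equivalent to the $H^1_{sc}$-norm, as noted in the proof of Proposition~\ref{prop: equation on v spectrally localised}). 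So the first bound is almost immediate.

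The second estimate, on $hD_{x_d}v_h$ at the boundary, is the substantive one, since $\|h D_{x_d}v_h\|_{L^2(\Omega)}$ alone does not control a trace of the normal derivative by~\eqref{eq: trace formula Hs} (that would need an $H^{1/2}_{sc}$ bound on $hD_{x_d}v_h$, i.e. two $h$-derivatives of $v_h$ in $L^2$, which we do not have). The idea is a standard energy/multiplier identity: multiply the interior equation by $\overline{\chi(x_d)hD_{x_d}v_h}$ for a cutoff $\chi$ with $\chi(0)=1$ supported near the boundary, integrate over $\R^d_+$, and integrate by parts in $x_d$. The term $\int \chi h^2D_{x_d}^2 v_h\,\overline{hD_{x_d}v_h}$ produces, after integration by parts, a boundary term proportional to $|(hD_{x_d}v_h)_{|x_d=0}|^2_{L^2}$ plus an interior term $\int \chi'|hD_{x_d}v_h|^2$ that is $O(1)$. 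The term coming from $(R(x,hD')-1+iha)v_h$ paired against $\chi h D_{x_d}v_h$ is handled by putting one factor of $hD_{x_d}$ back on $v_h$ via integration by parts in $x_d$ and using ellipticity-free pseudodifferential bounds: $R(x,hD')-1$ has symbol in $S^2_{\mathrm{tan}}$ applied to $v_h$, but on the support of $\theta(h^2P)$ the relevant frequencies are bounded (the spectral localization $v_h=\theta(h^2P)\tilde u_h$ confines $h\xi$ to a compact set), so $\|R(x,hD')v_h\|_{L^2}\lesssim \|v_h\|_{L^2}\lesssim 1$. Thus that term is bounded by $C\|v_h\|_{H^1_{sc}}\lesssim 1$ as well, and the $iha$ term is $O(h)$. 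The right-hand side contributes $|(hq_h\,|\,\chi hD_{x_d}v_h)|\lesssim \|q_h\|_{L^2}\|h\nabla v_h\|_{L^2}\to 0$. Collecting, one gets $|(hD_{x_d}v_h)_{|x_d=0}|^2_{L^2}\lesssim 1$, hence $|(hD_{x_d}v_h)_{|x_d=0}|_{L^2(x_d=0)}\lesssim 1$, which is even stronger than the claimed $H^{-1/2}_{sc}$ bound with $h^{-1/2}$; the stated (weaker) bound then follows a fortiori, or by a cruder version of the same argument that avoids worrying about uniformity in the tangential frequency.

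The main obstacle I expect is handling the tangential operator $R(x,hD')$ cleanly in the integration-by-parts identity: one must commute $R(x,hD')$ past the multiplier $\chi(x_d)$ and past $hD_{x_d}$ (the latter is harmless since $[hD_{x_d},R(x,hD')]=hR'$ with $R'\in S^2_{\mathrm{tan}}$, giving an $O(h)$ error after the same frequency-localization argument), and keep careful track of which terms are genuinely boundary terms versus interior remainders. The spectral cutoff $\theta(h^2P)$ is what saves the day — without it $R(x,hD')v_h$ is only in $H^{-1}_{sc}$ relative to $\|v_h\|_{H^1_{sc}}$ and the pairing would not obviously close — so the proof should invoke, as in the proof of Proposition~\ref{prop: equation on v spectrally localised} and Lemma~\ref{lemma: commutator estimation}, that $v_h$ and its first $h$-derivatives lie in a fixed bounded set of $L^2$ with frequencies essentially supported in $\{\alpha\le |h\xi|^2\lesssim \beta\}$. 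A technical point worth stating carefully is that all integrations by parts in $x_d$ are legitimate because $v_h\in H^1(\Omega)$ and, via the equation, $h^2D_{x_d}^2v_h\in L^2$ locally near the boundary with the indicated bounds, so the trace of $hD_{x_d}v_h$ makes sense in $H^{-1/2}$ to begin with; the estimate then upgrades this qualitative trace to the quantitative bound.
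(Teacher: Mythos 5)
Your treatment of the first estimate coincides with the paper's: it is an immediate consequence of the semiclassical trace inequality~\eqref{eq: trace formula Hs} and the uniform $H^1_{sc}$ bound on $v_h$.

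For the second estimate there is a genuine gap, located exactly at the step you flag as the one that ``saves the day''. You claim that the spectral localization $v_h=\theta(h^2P)\tilde u_h$ confines $h\xi$ to a compact set, so that $\|R(x,hD')v_h\|_{L^2}\lesssim\|v_h\|_{L^2}\lesssim 1$. This is not justified, and near $\Gamma$ it is false. The operator $\theta(h^2P)$ is a functional-calculus cutoff for the self-adjoint Zaremba realization of $P$, not a Fourier multiplier; in the interior it is indeed microlocally supported near $\{p\in\supp\theta\}$, but near the boundary it does not localize the tangential frequency $h\xi'$. What the spectral cutoff does give is $\|h^2Pv_h\|_{L^2}\lesssim 1$, i.e.\ a bound on the \emph{sum} $h^2D_{x_d}^2v_h+R(x,hD')v_h+\dots$; to split this into separate bounds on each piece you would need $H^2$ elliptic regularity up to the boundary, which fails for the Zaremba condition (the paper's Lemma~\ref{Lem: regularity H s} only yields $\|v_h\|_{H^{1+s}_{sc}}\le C$ for $s<1/2$, and the whole elliptic analysis of Section~\ref{subsubsection: elliptic points} exists precisely because tangential frequencies are not confined). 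Consequently your multiplier identity does not close: the term $\int\chi\,((R(x,hD')-1)v_h\,|\,hD_{x_d}v_h)$ pairs a second-order tangential operator against a function with only one derivative in $L^2$, and neither a further integration by parts in $x_d$ nor the frequency-localization claim rescues it. The claimed conclusion $|(hD_{x_d}v_h)_{|x_d=0}|_{L^2}\lesssim 1$ is also suspiciously strong: it would render much of Propositions~\ref{prop: estimation traces hyperbolic}--\ref{lemma: trace goes to 0} unnecessary. The paper's fix is simple and worth internalizing: run the same energy identity but on $\op_{sc}(\est{\xi'}^{-1/2})hD_{x_d}v_h$ rather than on $hD_{x_d}v_h$ itself. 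The troublesome term then becomes $\op_{sc}(\est{\xi'}^{-1})(R(x,hD')-1)v_h$ paired with $hD_{x_d}v_h$, and $\op_{sc}(\est{\xi'}^{-1})(R-1)$ is of order one, hence bounded by $\|\op_{sc}(\est{\xi'})v_h\|_{L^2}\|hD_{x_d}v_h\|_{L^2}\lesssim 1$. This yields $h\,|(hD_{x_d}v_h)_{|x_d=0}|^2_{H^{-1/2}_{sc}}\le C$, i.e.\ exactly the stated $H^{-1/2}_{sc}$ bound with the $h^{-1/2}$ loss, and no more.
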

\begin{proof}
As $v_h\in H^1_{sc}(\Omega)$, the trace formula \eqref{eq: trace formula Hs} 
gives the first estimate.

Let $\chi\in\Con^\infty(\R)$ be such that $\chi(x_d)=1$ if $x_d\le \delta$,  $\chi(x_d)=0$ if $x_d\ge 2 \delta$ 
and $0\le \chi\le 1$. Firstly
\begin{align*}
&h| \op_{sc}(\est{\xi'}^{-1/2})hD_{x_d}v_h(x',0)|_{L^2}^2 = -i\int_0^\infty hD_{x_d}\big( \chi(x_d) 
|  \op_{sc}(\est{\xi'}^{-1/2})hD_{x_d} v_h(x',x_d)|^2 \big)dx_d  \\
&= -\int_0^\infty h\chi'(x_d) 
|  \op_{sc}(\est{\xi'}^{-1/2})hD_{x_d} v_h(x',x_d)|^2 dx_d  \\
&\quad  -2i\int_0^\infty   \chi(x_d) 
\Re \big( \op_{sc}(\est{\xi'}^{-1/2})h^2D_{x_d}^2 v_h(x',x_d)   |   \op_{sc}(\est{\xi'}^{-1/2})hD_{x_d} v_h(x',x_d)  \big)dx_d \\
&=-\int_0^\infty h\chi'(x_d) 
|  \op_{sc}(\est{\xi'}^{-1/2})hD_{x_d} v_h(x',x_d)|^2 dx_d  \\
&\quad  -2i\int_0^\infty   \chi(x_d) 
\Re \big( \op_{sc}(\est{\xi'}^{-1/2})(-R+1)v_h(x',x_d)   |   \op_{sc}(\est{\xi'}^{-1/2})hD_{x_d} v_h(x',x_d)  \big)dx_d  \\
&\quad  -2i\int_0^\infty   \chi(x_d) 
\Re \big( \op_{sc}(\est{\xi'}^{-1/2})h(q_h (x',x_d)-i(av_h)(x', x_d))   |   \op_{sc}(\est{\xi'}^{-1/2})hD_{x_d} v_h(x',x_d)  \big)dx_d\\
&= I_1+I_2+I_3.
\end{align*}
Secondly, we have
\begin{align*}
&|I_1|\lesssim h\|  hD_{x_d}v_h\|_{L^2(\Omega)}^2\le C,  \\
&|I_3|\lesssim h(\|  q_h  \|_{L^2(\Omega)}   +  \|   v_h\|_{L^2(\Omega)}\ ) \|   hD_{x_d}v_h\|_{L^2(\Omega)}\le C.
\end{align*}
Finally, we obtain
$$
I_2= -2i\int_0^\infty   \chi(x_d) 
\Re \big( \op_{sc}(\est{\xi'}^{-1})(-R+1)v_h(x',x_d)   | hD_{x_d} v_h(x',x_d)  \big)dx_d .
$$
By tangential semiclassical pseudo-differential calculus, $\op_{sc}(\est{\xi'}^{-1})(-R+1)$ is of order 1, then
$$
|I_2|\lesssim \|  \op_{sc}(\est{\xi'})  v_h \|_{L^2(\Omega)} \|  hD_{x_d}v_h\|_{L^2(\Omega)}\le C.
$$
This achieves the proof of the proposition.
\end{proof}
\section{Semiclassical  measure and  the characteristic set}
	\label{Sec: Semiclassical  measure and  the characteristic set}

%%%%%%%%%%%%%%%
%
% Subsection
%
%%%%%%%%%%%%%%%%%%

\subsection{Support of the semiclassical  measure}
\label{subsec: semiclassical defect measure is null on characteristic set}

We now define a semiclassical measure associated with $(v_h)_h$. It is classical, as $(v_h)_h$ is bounded in
 $L^2(\Omega)$, that there exists $\mu$ a measure on $T^\ast (\R^d)$ such that, up to extraction of subsequence 
 of $(v_h)_h$, we have for all $b(x,\xi) \in \Con_0^\infty(\R^{2d})$,
\begin{equation}
	 \label{def: semiclassical defect measure}
 (\Op_{sc}(b) \underline{v_h}| \underline{v_h} )_{L^2(\R^d)}\to \est{\mu , b}\text{ as } h\to 0.
\end{equation}
 For first expositions on microlocal defect measure or H-measure see~\cite{Gerard-1991,Tartar-1990}. 
 For semiclassical  measure or Wigner measure 
 see~\cite{Burq-1997,Gerard-1990,Miller-2000,Rob-Zuily-2009}.
The goal of this section is to prove the following result.
\begin{proposition}  
	\label{prop: measure supported on the characteristic set}
The measure $\mu$ is supported in $\overline \Omega \times \R^d$ and $p\mu=0$, where $p$ is the 
semiclassical symbol of $h^2P-1$ 
(see \eqref{def: symbol h2P-1}).
\end{proposition}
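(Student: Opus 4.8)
The plan is to establish the two assertions separately: the support statement by a kernel argument, the identity $p\mu=0$ by splitting into an interior part (classical) and a boundary part (reduced to the microlocal trace estimates proved in the following subsections).

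\medskip
\noindent\emph{Support in $\overline\Omega\times\R^d$.} Since $\underline{v_h}$ vanishes off $\overline\Omega$, for any $b\in\Con_0^\infty(\R^{2d})$ with $\pi(\supp b)\cap\overline\Omega=\emptyset$ the Schwartz kernel $K_h(x,y)$ of $\Op_{sc}(b)$ is nonzero only for $x$ in $\pi(\supp b)$, while $\overline{\underline{v_h}(x)}$ forces $x\in\overline\Omega$; hence $(\Op_{sc}(b)\underline{v_h}\,|\,\underline{v_h})_{L^2(\R^d)}=0$ identically in $h$, so $\est{\mu,b}=0$. Localising $b$ near an arbitrary $(x_0,\xi_0)$ with $x_0\notin\overline\Omega$ gives $\supp\mu\subset\overline\Omega\times\R^d$.

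\medskip
\noindent\emph{$p\mu=0$ over the interior.} Fix $b\in\Con_0^\infty(\Omega\times\R^d)$ and $\chi\in\Con_0^\infty(\Omega)$ with $\chi\equiv1$ near $\pi(\supp b)$. Using $\Op_{sc}(pb)=\Op_{sc}(p)\Op_{sc}(b)-h\Op_{sc}(s)$ with $s$ compactly supported, the disjoint--support fact $\Op_{sc}(b)(1-\chi)=\mathcal O(h^\infty)$, the commutator $[\Op_{sc}(p),\Op_{sc}(b)]=h\Op_{sc}(\tilde s)$, the identity $h^2P-1=\Op_{sc}(p)+h\Op_{sc}(m)$ with $m\in S^1$, and the equation $(h^2P-1)v_h=-ihav_h-hq_h$ together with $\|[h^2P,\chi]v_h\|_{L^2}\lesssim h\|v_h\|_{H^1_{sc}}$, one obtains $\Op_{sc}(p)(\chi v_h)=\mathcal O(h)$ in $L^2$; the uniform bound $\|\chi v_h\|_{H^2_{sc}}\lesssim1$ needed here follows from interior elliptic regularity and $\|h^2Pv_h\|_{L^2}\lesssim1$. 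Consequently $(\Op_{sc}(pb)\underline{v_h}\,|\,\underline{v_h})=\mathcal O(h)$, so $\est{\mu,pb}=0$ and $p\mu$ vanishes over $\Omega$.

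\medskip
\noindent\emph{$p\mu=0$ over the boundary.} Work in normal geodesic coordinates near $\d\Omega$, with $\Omega=\{x_d>0\}$ and $p=\xi_d^2+R(x,\xi')-1$. Extending by zero, $\underline{v_h}$ solves
\[
(h^2D_{x_d}^2+R(x,hD')-1)\underline{v_h}=\underline{hq_h}-\underline{ihav_h}+\tfrac hi(hD_{x_d}v_h)_{|x_d=0}\otimes\delta_{x_d=0}-h^2(v_h)_{|x_d=0}\otimes\delta'_{x_d=0},
\]
where by Proposition~\ref{prop: a priori estimates on traces} the traces are $\mathcal O(h^{-1/2})$ in $H^{-1/2}_{sc}$, respectively $H^{1/2}_{sc}$, so the two boundary terms are $\mathcal O(h^{1/2})$, resp. $\mathcal O(h^{3/2})$, times a boundary-layer distribution. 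Let $(x_0,\xi_0)=(x'_0,0,\xi'_0,\xi_{d,0})$ be a boundary point with $p(x_0,\xi_0)\neq0$, i.e.\ $\xi_{d,0}^2+R(x'_0,0,\xi'_0)-1\neq0$; according to the sign of $R(x'_0,0,\xi'_0)-1$ it is a hyperbolic, glancing or elliptic point (Definition~\ref{def: contact fini}, Section~\ref{subsubsection: elliptic points}). In all cases $\xi_d^2+R-1$ is elliptic at $(x_0,\xi_0)$, so a microlocal parametrix in all variables reduces $(x_0,\xi_0)\notin\supp\mu$ to showing that the parametrix applied to the right-hand side tends to $0$ in $L^2$ near $(x_0,\xi_0)$: the first two terms are $\mathcal O(h)$, and controlling the boundary terms is exactly the content of Proposition~\ref{prop: estimation traces hyperbolic} at hyperbolic points and Proposition~\ref{prop: estimation traces glancing} at glancing points, while at elliptic points one additionally invokes the Dirichlet/Neumann/Zaremba boundary conditions through Proposition~\ref{proposition: estimation elliptique Dirichlet Neumann}, Proposition~\ref{prop: convergence traces D and N} and Proposition~\ref{lemma: trace goes to 0}. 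Assembling the three cases with the two items above gives $\supp\mu\subset\{p=0\}$, i.e.\ $p\mu=0$.

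\medskip
The support statement and the interior computation are routine; the real difficulty is the boundary. Among boundary points, the hyperbolic and glancing trace estimates require genuine microlocal work, but the critical point is the elliptic region in a \nhd of $\Gamma$, where the boundary condition jumps from Dirichlet to Neumann and $v_h$ loses regularity: handling the traces there (Proposition~\ref{lemma: trace goes to 0}) is the new ingredient and is where I expect the main obstacle to lie.
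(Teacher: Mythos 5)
Your proposal is correct and rests on the same pillars as the paper's proof: the extension-by-zero identity with its two boundary delta terms, the $h^{1/2}\times(\text{traces})$ bookkeeping, and the refined trace estimates, in particular Proposition~\ref{lemma: trace goes to 0} near $\Gamma$; the support statement and the interior computation are handled as in the paper. The packaging of the boundary step differs. The paper does not argue point by point with a parametrix: it tests the extended equation against $\Op_{sc}\big(\chi(x_d)\varphi(\xi_d)\ell(x',\xi')\big)$, computes the pairing in two ways, and bounds the boundary contributions by $h^{1/2}\big(|(v_h)_{|x_d=0}|_{H^{1/2}_{sc}}+|(hD_{x_d}v_h)_{|x_d=0}|_{H^{-1/2}_{sc}}\big)$, which tends to $0$ by the globally assembled statements Propositions~\ref{prop: convergence traces D and N} and~\ref{lemma: trace goes to 0}; this gives $\est{\mu, p\,\chi\varphi\ell}=0$ directly, with no case distinction and no parametrix. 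Your ellipticity argument at each non-characteristic boundary point does work, but the sentence distributing the control of the boundary terms region by region (Proposition~\ref{prop: estimation traces hyperbolic} at hyperbolic points, Proposition~\ref{prop: estimation traces glancing} at glancing points, the elliptic propositions otherwise) is slightly too quick: the hyperbolic/glancing/elliptic cutoffs are $\eps$-dependent, and a fixed microlocal neighborhood of a point lying over a glancing tangential point meets all three regions for every $\eps$, so one must split the localized trace with the $\eps$-dependent cutoffs (and, near $\Gamma$, with the spatial partition $\psi_D,\psi_Z,\psi_N$), use all the estimates simultaneously, and only then let $h\to0$ followed by $\eps\to0$. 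That assembly is precisely the content of Propositions~\ref{prop: convergence traces D and N} and~\ref{lemma: trace goes to 0}, which you cite, so the repair is immediate; using those global statements for the full traces, as the paper does, also lets you dispense with the parametrix and the case distinction altogether, while your pointwise formulation buys only a more geometric reading of the same estimates.
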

To prove this proposition we consider four sets in $T^\ast(\R^d)$, exterior or interior points 
(i.e. $x\in\R^d\setminus \overline \Omega$ or $x\in\Omega$), hyperbolic points  (i.e., $x\in\d\Omega$ and $R(x,\xi')-1<0$),
glancing points   (i.e., 
$x\in\d\Omega$ and $R(x,\xi')-1=0$),
and elliptic points  (i.e., 
$x\in\d\Omega$ and $R(x,\xi')-1>0$). 

The proofs for exterior and interior points are classical, we give the proofs for the sake of completeness. 
The proofs for hyperbolic and glancing points are similar to the proofs given by Burq and 
Lebeau~\cite{Burq-Lebeau2001} in the context of defect measures. 
The proof for elliptic points is 
specific to Zaremba boundary condition. Of course in $\d\Omega_D\cap\d\Omega_N$ the 
proof is also classical.
%%%%%%%%%%%%%%%
%
% Subsubsection
%
%%%%%%%%%%%%%%%%%%
\subsubsection{Interior and exterior points}

Clearly, for $\chi(x)\in \Con_0^\infty(\R^d\setminus \overline \Omega)$ and $\phi(\xi)\in \Con_0^\infty(\R^d)$, 
$(\Op_{sc}( \chi(x)\phi(\xi)  ) \underline{v_h}| \underline{v_h} )\to 0$ as $h\to 0$. Then $\mu$ is supported 
in  $\overline \Omega \times \R^d$.

 Let  $\chi(x)\in \Con_0^\infty(\Omega)$,  $\phi(\xi)\in \Con_0^\infty(\R^d)$
and  $\chi_1(x)\in \Con_0^\infty(\Omega)$ be such that $\chi_1\chi=\chi$, we have, by symbol calculus as 
the symbol  $p(x,\xi) -1 $ is in $S^2(\R^d\times \R^d)$ and $ \chi(x)\phi(\xi) \in S^{-\infty}(\R^d\times \R^d)$,

\begin{align*}
\Op_{sc}( \chi(x)\phi(\xi)  )  (h^2P -1)&= \Op_{sc}( \chi(x)\phi(\xi)  p(x,\xi) )  +h\Op_{sc}(r_0)\\
&=\Op_{sc}( \chi(x)\phi(\xi)  ) \chi_1(x)  (h^2P -1) +h\Op_{sc}(r_0'),
\end{align*}
where $r_0, \tilde r_0 , r_0' \in S^0(\R^d\times \R^d)$.
As $\chi_1$ is compactly supported in $\Omega$ and $P$ is a local operator 
$\chi_1(x) (h^2P-1) \underline{v_h}= \chi_1(x) (h^2P-1) v_h$, we have
\begin{align}  \label{eq: mesure nulle a l'interieur}
( \Op_{sc}( \chi(x)\phi(\xi)  )  (h^2P -1 )   \underline{v_h}| \underline{v_h} ) &=
( \Op_{sc}( \chi(x)\phi(\xi)  )   \chi_1(x) (h^2P -1 )  {v_h}| \underline{v_h} ) 
+  h{\cal O} (\| v_h\|_{L^2(\Omega)}  ^2)\notag \\
&=(\Op_{sc}( \chi(x)\phi(\xi)  )   \chi_1(x) (-iha -hq_h) {v_h}| \underline{v_h} ) +  h{\cal O} (\| v_h\|^2_{L^2(\Omega)}  ) \notag\\
&\quad \to 0 \text{ as } h\to 0 .
\end{align}
We also have
\begin{align}
( \Op_{sc}( \chi(x)\phi(\xi)  )  (h^2P -1 )   \underline{v_h}| \underline{v_h} ) &
=  (\Op_{sc}( \chi(x)\phi(\xi)  p(x,\xi) )   \underline{v_h}| \underline{v_h} ) +  h{\cal O} (\| v_h\|_{L^2(\Omega)} ^2 ) \notag\\
&\quad \to \langle \mu ,  p(x,\xi) \chi(x)\phi(\xi) \rangle \text{ as } h\to 0 . \notag
\end{align}
This and \eqref{eq: mesure nulle a l'interieur} give that $ \langle \mu ,  p(x,\xi) \chi(x)\phi(\xi) \rangle =0$. This proves
Proposition~\ref{prop: measure supported on the characteristic set} for the interior points, as the space spanned 
by functions $\chi(x)\phi(\xi)$ is dense in $\Con_0^\infty(\Omega\times\R^{d})$.

Before proving Proposition~\ref{prop: measure supported on the characteristic set} in a 
\nhd of the boundary, we have to 
prove estimates  more precise than Proposition~\ref{prop: a priori estimates on traces}.
We have to distinguish microlocally hyperbolic, glancing and elliptic points. 
For hyperbolic and glancing points the boundary condition play no role. 
For elliptic points we have to distinguish points in  \(\d\Omega_D \cup \d\Omega_N \), 
 and points in  \(\Gamma\). The results are stated  
in  Proposition~\ref{prop: convergence traces D and N}
and in Proposition~\ref{lemma: trace goes to 0}.

%%%%%%%%%%%%%%%
%
% Subsubsection
%
%%%%%%%%%%%%%%%%%%
\subsubsection{Hyperbolic points}
	\label{Sec: Hyperbolic points}
Let $\delta >0$ be sufficiently small, let $\chi=\chi_\eps\in\Con^\infty(\R^{2d})$ be such that 
\begin{equation} \label{def: cut-off for hyperbolic region}
\chi(x,\xi')=
\begin{cases} 
1 \text{ if } R(x,\xi')-1\le -\eps \text{ and } x_d\le \delta  \\
0 \text{ if } R(x,\xi')-1\ge -\eps / 2\text{ or  } x_d\ge 2\delta  ,
\end{cases}
\end{equation}
and $0\le \chi \le 1$. Observe that $\chi\in S^{-\infty}_{\rm tan}$ as supported for $| \xi'| \le C$, 
where $C$ depends on $R$.
\begin{proposition} \label{prop: estimation traces hyperbolic}
For any $\eps>0$, there exists $C_\eps>0$ such that 
\begin{align}
& |  \op_{sc}( \chi_{|x_d=0}) (v_h ) _{|x_d=0}  |_{H^1_{sc}}\le C_\eps \notag \\
&   |  \op_{sc}( \chi_{|x_d=0}) ( h D_{x_d}v_h  ) _{|x_d=0}  |_{L^2}\le C_\eps ,  
	\label{eq: normal derivative estimation, hyperbolic}
\end{align}   
for all $h\in (0,1]$.
\end{proposition}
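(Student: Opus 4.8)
The plan is to obtain the trace estimates by a positive-commutator / energy-identity argument in the normal variable, exactly in the spirit of Burq--Lebeau~\cite{Burq-Lebeau2001}. We work in the normal geodesic coordinates near a boundary point, where $v_h$ satisfies $h^2D_{x_d}^2 v_h + (R(x,hD')-1+iha)v_h = hq_h$ on $x_d>0$. The key observation is that on the support of $\chi_\eps$, the ``normal symbol'' $1-R(x,\xi')$ is bounded below by $\eps/2>0$, so the operator $h^2D_{x_d}^2 = -(R-1) + (\text{lower order})$ is microlocally elliptic in the hyperbolic direction in the sense that we can factor it and extract good control of $hD_{x_d}v_h$. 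First I would multiply the equation by $\op_{sc}(\chi^2\est{\xi'}^{-1})\overline{hD_{x_d}v_h}$ (or by a suitable localization of $hD_{x_d}v_h$ cut off in $x_d$ by the function $\chi(x_d)$ used in Proposition~\ref{prop: a priori estimates on traces}), integrate over $\Omega$ and integrate by parts in $x_d$, producing a boundary term at $x_d=0$ of the form $|\op_{sc}(\chi_{|x_d=0})(hD_{x_d}v_h)_{|x_d=0}|^2$ up to symbol-calculus errors, matched against interior integrals that are controlled by the a priori bounds $\|v_h\|_{L^2(\Omega)}\lesssim 1$, $\|hD v_h\|_{L^2(\Omega)}\lesssim 1$, $\|q_h\|_{L^2(\Omega)}\to 0$ from Proposition~\ref{prop: equation on v spectrally localised}.

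More precisely, following the standard scheme I would introduce the tangential first-order factorization: on the hyperbolic region, write $h^2D_{x_d}^2 v_h = (hD_{x_d} - i\op_{sc}(e))(hD_{x_d}+i\op_{sc}(e))v_h + (\text{error})$ where $e$ is an elliptic tangential symbol with $e^2 \sim 1-R$ bounded below by $c\eps$ on $\supp\chi$, hence $e\in S^1_{\rm tan}$ is elliptic there. Then $(hD_{x_d}+i\op_{sc}(e))v_h$ and $(hD_{x_d}-i\op_{sc}(e))v_h$ can be estimated in $L^2((0,\infty),H^{-1/2}_{sc})$-type norms from the equation (the right-hand side is $-hq_h$ plus $iha v_h$ plus commutator errors, all $O(1)$ or $o(1)$), and microlocal ellipticity of $e$ on $\supp\chi$ converts this into control of $\op_{sc}(\chi)v_h$ in $H^1_{sc}$-norm and $\op_{sc}(\chi)hD_{x_d}v_h$ in $L^2$-norm, uniformly in $x_d\in[0,2\delta]$. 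Evaluating at $x_d=0$ via the semiclassical trace formula~\eqref{eq: trace formula Hs} applied to $\op_{sc}(\chi)v_h$ (which we have just shown is bounded in $H^1_{sc}$ of the slab, with its equation giving control of $hD_{x_d}$ of it), together with an integration-by-parts identity in $x_d$ of the type already used in Proposition~\ref{prop: a priori estimates on traces}, yields the two claimed boundary estimates with constant $C_\eps$ depending on $\eps$ through the ellipticity constant of $e$ and the seminorms of $\chi_\eps$.

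The main obstacle, and the point requiring care, is controlling the commutator and remainder terms uniformly: the symbol $\chi_\eps$ has seminorms blowing up as $\eps\to0$, so every application of symbol calculus (products, the G\aa rding-type estimates~\eqref{eq: estimation L2 sharp with Garding}, adjoints) must be organized so that the $\eps$-dependence lands only in the constant $C_\eps$ and never multiplies a non-negligible quantity by a negative power of $h$. In particular one must check that the ``bad'' region $R-1\in[-\eps,-\eps/2]$ where $\chi$ transitions is harmless because there $1-R$ is still bounded below by $\eps/2$, so ellipticity of $e$ is retained on all of $\supp\chi$ — this is exactly why the cutoff in~\eqref{def: cut-off for hyperbolic region} is designed with the two thresholds $-\eps$ and $-\eps/2$. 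A secondary technical point is handling the footnoted discrepancy between $h^2D_{x_d}^2$ and the true Laplace--Beltrami normal part (the extra $h^2D_{x_d}$ term): as the paper notes, this is lower order and does not affect the factorization, contributing only $O(h)$ errors absorbed into $C_\eps$.
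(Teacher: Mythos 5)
Your overall architecture is the right one and matches the paper's: localize with $\op_{sc}(\chi)$, exploit that $1-R\ge\eps/2$ on $\supp\chi$, factor the normal operator into two first-order factors, and run an energy identity in $x_d$ to read off the traces. However, two steps as written would fail.

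First, the factorization is wrong. You write $h^2D_{x_d}^2=(hD_{x_d}-i\op_{sc}(e))(hD_{x_d}+i\op_{sc}(e))+\text{error}$ with $e^2\sim 1-R$; but $(\xi_d-ie)(\xi_d+ie)=\xi_d^2+e^2=\xi_d^2+(1-R)$, which is the \emph{elliptic} operator, not $\xi_d^2+R-1=\xi_d^2-(1-R)$. In the hyperbolic region the roots in $\xi_d$ are real, and one must take the real factorization $\big(hD_{x_d}-\op_{sc}(b)\big)\big(hD_{x_d}+\op_{sc}(b)\big)$ with $b=\chi_1(1-R)^{1/2}$ real (this is the paper's Lemma~\ref{lem: factorization hyperbolic}). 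The distinction is not cosmetic: setting $z_k^h=(hD_{x_d}-(-1)^k\op_{sc}(b))w_h$, the identity $h\d_{x_d}|z_k^h(\cdot,x_d)|_{L^2}^2=2\Re\big(ihD_{x_d}z_k^h\,\big|\,z_k^h\big)$ produces the term $\Re\big(i\op_{sc}(b)z_k^h\,\big|\,z_k^h\big)$, which is $O(h)\|z_k^h\|^2$ precisely because $b$ is real, so $\op_{sc}(b)^*=\op_{sc}(b)+O(h)$ and $i\op_{sc}(b)$ is anti-self-adjoint modulo $O(h)$. With your complex factors this term would be $\mp\|\op_{sc}(e)^{1/2}z\|^2$ up to errors, signed, and only one of the two factors could be controlled — which is exactly the elliptic-region phenomenon, not what is needed here, since the proposition requires bounds on \emph{both} linear combinations $(hD_{x_d}w_h)_{|x_d=0}=\tfrac12\big((z_1^h)_{|x_d=0}+(z_2^h)_{|x_d=0}\big)$ and $\op_{sc}(b_0)(w_h)_{|x_d=0}=\tfrac12\big((z_1^h)_{|x_d=0}-(z_2^h)_{|x_d=0}\big)$.

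Second, your route to the boundary partly relies on the semiclassical trace formula~\eqref{eq: trace formula Hs}; that formula costs a factor $h^{-1/2}$ and can only reproduce the a priori bounds of Proposition~\ref{prop: a priori estimates on traces}, never the improved $O(1)$ bounds claimed here. The gain comes solely from integrating $h\d_{x_d}|z_k^h(\cdot,x_d)|_{L^2}^2$ from $0$ to $\infty$: the factor of $h$ on the left cancels against the $O(h)$ on the right, giving $|(z_k^h)_{|x_d=0}|_{L^2}\le C_\eps$ directly. Finally, to pass from the bound on $\op_{sc}(b_0)(w_h)_{|x_d=0}$ to the $H^1_{sc}$ bound on $\op_{sc}(\chi_{|x_d=0})(v_h)_{|x_d=0}$ you still need a parametrix step, composing with $\op_{sc}\big(\chi_2(1-R)^{-1/2}_{|x_d=0}\big)$, which is legitimate because $1-R\ge\eps/4$ on the relevant support — this is where the constant becomes $C_\eps$. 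With these corrections your plan coincides with the paper's proof.
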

\begin{remark}
The estimate on traces, in the hyperbolic region, are better than the one proved in 
Proposition~\ref{prop: a priori estimates on traces}.
\end{remark}
We begin the proof by giving a localization result which is useful in each region defined in what follows.
\begin{lemma}
	\label{lem: localization in each region}
Let $\chi(x, \xi'), \chi_1(x, \xi') \in S^0_{\rm tan}$, be such that $ \chi_1(x, \xi')=1$, for $(x,\xi')$ in the 
support of $\chi$. We assume $0\le \chi\le 1$ and $0\le\chi_1\le1$. 
Let $w_h=\op_{sc} (\chi)v_h$, there exists $q_2^h$ such that
\begin{align} \label{eq: localization symbol Hyp case}
\big(h^2D_{x_d}^2+\op_{sc}(\chi_1^2(R(x,\xi')-1))\big)w_h=hq_2^h , \text{ where } \|q_2^h  \|_{L^2(\Omega)}\le C,
\end{align}
for some $C>0 $ depending on semi-norms of $\chi$ and $\chi_1$.
\end{lemma}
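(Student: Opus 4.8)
The plan is to commute the tangential cutoff $\op_{sc}(\chi)$ through the equation $h^2D_{x_d}^2 v_h + (R(x,hD')-1+iha)v_h = hq_h$ and absorb all error terms into the $hq_2^h$ remainder. First I would apply $\op_{sc}(\chi)$ to the equation, obtaining
\begin{align*}
h^2D_{x_d}^2 w_h + \op_{sc}(\chi)(R-1)v_h = h\op_{sc}(\chi)q_h - ih\op_{sc}(\chi)(av_h),
\end{align*}
using that $\op_{sc}(\chi)$ commutes with $h^2D_{x_d}^2$ (the symbol of $\chi$ is independent of $\xi_d$, and multiplication by $x_d$-dependent coefficients — here there are none in $h^2D_{x_d}^2$ — would only produce lower order terms). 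The right-hand side is already $O(h)$ in $L^2(\Omega)$ by the bounds $\|q_h\|_{L^2(\Omega)}\to 0$ and $\|v_h\|_{L^2(\Omega)}=1$ from Proposition~\ref{prop: equation on v spectrally localised}, together with the $L^2$-boundedness of tangential operators with bounded symbol (estimate~\eqref{eq: estimation L2 sharp with Garding}).

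The heart of the matter is rewriting $\op_{sc}(\chi)(R-1)v_h$ as $\op_{sc}(\chi_1^2(R-1))w_h$ modulo $hq_2^h$. I would use the symbolic calculus twice: since $\chi_1 \equiv 1$ on $\supp\chi$, we have $\chi = \chi_1\chi$ exactly, hence $\op_{sc}(\chi)(R-1) = \op_{sc}(\chi_1(R-1))\op_{sc}(\chi) + h(\text{order }0)$, and then insert another factor $\chi_1$: because $\op_{sc}(\chi)v_h = \op_{sc}(\chi_1)\op_{sc}(\chi)v_h + h(\ldots)v_h = \op_{sc}(\chi_1)w_h + hr_h$ again by $\chi_1\chi = \chi$ and the product rule. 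Combining, $\op_{sc}(\chi)(R-1)v_h = \op_{sc}(\chi_1(R-1))\op_{sc}(\chi_1)w_h + h(\ldots) = \op_{sc}(\chi_1^2(R-1))w_h + h(\ldots)$, where each "$h(\ldots)$" is $h$ times a zeroth-order tangential operator applied to $v_h$ or $w_h$, hence $O(h)$ in $L^2(\Omega)$. All these error terms get collected into $q_2^h$, and the constant $C$ tracks only finitely many seminorms of $\chi,\chi_1$, as claimed. One mild technical point is that $R(x,\xi')$ is a polynomial of order $2$ in $\xi'$, not in $S^0_{\rm tan}$; but on $\supp\chi$ the frequency $|\xi'|$ is bounded (as noted, $\chi\in S^{-\infty}_{\rm tan}$), so one should first absorb a cutoff into $R$, replacing $R$ by $\chi_1^2(R-1)\in S^0_{\rm tan}$ (indeed compactly supported in $\xi'$), and I would make that reduction explicit before invoking the calculus.

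The main obstacle — really the only place needing care — is bookkeeping the remainder terms to confirm each is genuinely $O(h)$ in $L^2(\Omega)$ uniformly: the terms involving $av_h$ are harmless since $a\in\Con^\infty$ and $\|v_h\|_{L^2(\Omega)}=1$; the commutator terms from symbolic calculus are $h$ times order-$0$ operators, controlled by~\eqref{eq: estimation L2 sharp with Garding}; and one must check that the passage from $\op_{sc}(\chi)v_h$ to $\op_{sc}(\chi_1)\op_{sc}(\chi)v_h$ does not secretly require controlling $w_h$ in a stronger norm. It does not, because $\|w_h\|_{L^2(\Omega)}\le \|v_h\|_{L^2(\Omega)}+O(h) \le C$ again by $L^2$-boundedness. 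So the proof is essentially a careful application of the product formula for $\Op_{sc}/\op_{sc}$ plus the a priori bounds already in hand; no genuinely new estimate is needed here — this lemma is the routine microlocalization step that feeds the subsequent hyperbolic, glancing, and elliptic analyses.
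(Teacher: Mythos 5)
Your overall strategy is the paper's: apply $\op_{sc}(\chi)$ to the equation, absorb the source and damping terms into $hq_2^h$, and trade $\op_{sc}(\chi)(R(x,hD')-1)$ for $\op_{sc}(\chi_1^2(R(x,\xi')-1))\op_{sc}(\chi)$ using $\chi_1\chi=\chi$. (The paper organizes that last step slightly differently: it writes $R(x,hD')-1=\op_{sc}(\chi_1^2(R-1))+\op_{sc}((1-\chi_1^2)(R-1))$ and kills the second piece against $\op_{sc}(\chi)$ via $(1-\chi_1^2)\chi=0$, whereas you insert $\chi_1$ factors through composition; these are the same symbolic fact.) Your remark that one should first exploit the compact $\xi'$-support of $\chi$ before invoking the calculus on $R-1\in S^2_{\rm tan}$ is also sound.

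There is, however, one concrete error: $\op_{sc}(\chi)$ does \emph{not} commute with $h^2D_{x_d}^2$. The cutoff $\chi(x,\xi')$ depends on $x_d$ --- in every application of this lemma (hyperbolic, glancing and elliptic regions) $\chi$ carries a localization in $x_d\le 2\delta$ --- so the exact calculus gives $[h^2D_{x_d}^2,\op_{sc}(\chi)]=-2ih\,\op_{sc}(\d_{x_d}\chi)\,hD_{x_d}+h^2\op_{sc}(r_0)$ with $r_0\in S^0_{\rm tan}$. This contributes the term $-2ih\,\op_{sc}(\d_{x_d}\chi)hD_{x_d}v_h$, which is $O(h)$ in $L^2(\Omega)$ only thanks to the a priori bound $\|h\nabla v_h\|_{L^2(\Omega)}\le 2$ of Proposition~\ref{prop: equation on v spectrally localised}; the normalization $\|v_h\|_{L^2(\Omega)}=1$ alone would not suffice. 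The paper computes this commutator explicitly for precisely this reason. Once you insert that computation (and, likewise, note that the commutator with $R(x,hD')$ is $h$ times an operator bounded on $L^2$ thanks to the compact $\xi'$-support of $\chi$, or alternatively $h$ times a first-order tangential operator controlled by the same $H^1_{sc}$ bound), the rest of your argument goes through.
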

\begin{proof}
We have, for $x_d>0$,  \begin{align*}
(h^2 D_{x_d}^2+ R(x,hD')-1) w_h&= h\op_{sc}(\chi)( q_h-iav_h)+ [  R(x,hD') , \op_{sc}(\chi)  ] v_h 
+[  h^2 D_{x_d}^2 ,\op_{sc}(\chi)   ]\\
& =hq^h_1.
\end{align*}
The symbol of $[  R(x,hD') , \op_{sc}(\chi)  ] $ is in $hS^1_{\rm tan}$ and by exact symbol 
calculus with $D_{x_d}$, we have
\begin{align*}
[  h^2 D_{x_d}^2 ,\op_{sc}(\chi)   ]&=
-i h^2D_{x_d} \op_{sc}(\d_{x_d}\chi ) -i h \op_{sc}(\d_{x_d}\chi )h D_{x_d} \\
&= -2i h \op_{sc}(\d_{x_d}\chi )h D_{x_d} + h^2\op_{sc}(r_0),
\end{align*}
where $r_0\in S^0_{\rm tan}$. By the properties of $v_h$ given in Proposition~\ref{prop: equation on v spectrally localised}
and the previous formulas, there exists $C >0$ such that $\| q_1^h\|_{L^2}\le C $.

We have  
$$
R(x,hD')-1 =\op_{sc}(\chi_1^2(R(x,\xi')-1)) + \op_{sc}((1-\chi_1^2)(R(x,\xi')-1)),
$$
thus 
\begin{align*}
\big(h^2 D_{x_d}^2+ \op_{sc}(\chi_1^2(R(x,\xi')-1))\big)w_h= hq^h_1 -  \op_{sc}((1-\chi_1^2)(R(x,\xi')-1)) \op_{sc}(\chi)v_h=hq_2^h.
\end{align*}
As $(1-\chi_1^2)\chi=0$ by assumption, we deduce from symbol calculus that 
$$
\|   \op_{sc}((1-\chi_1^2)(R(x,\xi')-1)) \op_{sc}(\chi)v_h \|_{L^2(\Omega)}\le Ch.
$$
This gives \eqref{eq: localization symbol Hyp case}. 
\end{proof}
\begin{proof}[Proof of Proposition~\ref{prop: estimation traces hyperbolic}]

Let $\chi_1\in\Con^\infty(\R^{2d})$ be such that 
$$
\chi_1(x,\xi')=
\begin{cases}
1 \text{ if } R(x,\xi')-1\le -\eps/2 \text{ and } x_d\le 2\delta  \\
0 \text{ if } R(x,\xi')-1\ge -\eps / 4\text{ or  } x_d\ge 3\delta  ,
\end{cases}
$$
in particular $\chi_1$ is 1 on the support of $\chi$.
Let $b(x,\xi')=\chi_1(x,\xi')\big(  1-R(x,\xi') \big)^{1/2}$, observe that $1-R(x,\xi')>0$ on the support of $\chi_1$. We denote   $w_h=\op_{sc} (\chi)v_h$, where $\chi$, is defined by~\eqref{def: cut-off for hyperbolic region}.

%%%%%%%%%%%%%%%%%%%%
%
%  Lemma
%
%%%%%%%%%%%%%%%%%%%%
\begin{lemma} \label{lem: factorization hyperbolic}
There exist $C_\eps>0$, $q_3^h$ and $q_4^h$ such that 
\begin{align*}
&  \big( hD_{x_d}  - \op_{sc}(b)   \big)   \big( hD_{x_d}  +   \op_{sc}(b)   \big) w_h=hq^h_3  \\
&   \big( hD_{x_d}  + \op_{sc}(b)   \big)   \big( hD_{x_d}  - \op_{sc}(b)   \big)  w_h =hq^h_4,
\end{align*}
where $ \| q_j^h  \|_{L^2(\Omega)}\le C_\eps$ for $j=3,4$.
\end{lemma}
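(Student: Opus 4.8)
The idea is to factor the second-order operator $h^2D_{x_d}^2 + \op_{sc}(\chi_1^2(R-1))$ appearing in Lemma~\ref{lem: localization in each region} into a product of two first-order operators, using that on the support of $\chi_1$ the tangential symbol $1-R(x,\xi')$ is bounded below by a positive constant depending on $\eps$, so that $b(x,\xi')=\chi_1(x,\xi')(1-R(x,\xi'))^{1/2}$ is a well-defined symbol in $S^0_{\rm tan}$ (it is compactly supported in $\xi'$, hence of order $-\infty$). First I would compute, by tangential symbol calculus, the composition
\begin{align*}
\big( hD_{x_d} - \op_{sc}(b)\big)\big( hD_{x_d} + \op_{sc}(b)\big)
&= h^2D_{x_d}^2 + hD_{x_d}\op_{sc}(b) - \op_{sc}(b)hD_{x_d} - \op_{sc}(b)^2 \\
&= h^2D_{x_d}^2 - \op_{sc}(b)^2 + [hD_{x_d},\op_{sc}(b)].
\end{align*}
Here the commutator $[hD_{x_d},\op_{sc}(b)] = -ih\op_{sc}(\d_{x_d}b)$ is an \emph{exact} identity (as recalled in Section~\ref{Notations and pseudo-differential calculus}), so it is $h$ times an operator of order $-\infty$, hence $O(h)$ on $L^2$. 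And $\op_{sc}(b)^2 = \op_{sc}(b^2) + h\op_{sc}(r)$ with $r\in S^{-\infty}_{\rm tan}$, while $b^2 = \chi_1^2(1-R)$, so $-\op_{sc}(b)^2 = \op_{sc}(\chi_1^2(R-1)) + hO_{L^2}(1)$.

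Combining these, $\big( hD_{x_d} - \op_{sc}(b)\big)\big( hD_{x_d} + \op_{sc}(b)\big)w_h = \big(h^2D_{x_d}^2 + \op_{sc}(\chi_1^2(R-1))\big)w_h + h\,r_h$ with $\|r_h\|_{L^2(\Omega)}\le C_\eps$, and by Lemma~\ref{lem: localization in each region} the left bracket equals $hq_2^h$ with $\|q_2^h\|_{L^2(\Omega)}\le C$. Setting $q_3^h = q_2^h + r_h$ gives the first identity. The second identity is proved the same way by reversing the order of the two first-order factors; the only change is the sign of the commutator term $[hD_{x_d},\op_{sc}(b)]$, which again contributes $O(h)$, so one obtains $q_4^h$ with the same bound. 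One should take care that $w_h = \op_{sc}(\chi)v_h$ and all the estimates on $v_h$ (namely $\|v_h\|_{L^2(\Omega)}=1$, $\|h\nabla v_h\|_{L^2(\Omega)}\le 2$ from Proposition~\ref{prop: equation on v spectrally localised}) are used to control $\|q_2^h\|_{L^2}$ through Lemma~\ref{lem: localization in each region}, and that $w_h$ lives on $\Omega=\{x_d>0\}$ so all computations are on the half-space.

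The only mild subtlety — and the one place to be slightly careful rather than purely mechanical — is checking that $b$ genuinely belongs to $S^0_{\rm tan}$: this requires the square root $(1-R(x,\xi'))^{1/2}$ to be smooth where $\chi_1\ne 0$, which holds because $\chi_1$ is supported in $\{R(x,\xi')-1\le -\eps/4\}$, forcing $1-R\ge \eps/4>0$ there, and because $\chi_1$ is compactly supported in $\xi'$ the symbol estimates are trivially uniform; hence all the seminorms, and consequently the constants $C_\eps$, depend on $\eps$ through this lower bound. Apart from this, the proof is just a bookkeeping exercise in the product and commutator formulas of the semiclassical tangential calculus recalled in Section~\ref{Notations and pseudo-differential calculus}, combined with the already-established Lemma~\ref{lem: localization in each region}.
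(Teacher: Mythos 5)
Your proof is correct and follows essentially the same route as the paper: expand the product, use the exact commutator identity $[hD_{x_d},\op_{sc}(b)]=-ih\op_{sc}(\d_{x_d}b)$ together with $\op_{sc}(b)^2=\op_{sc}(b^2)+h\op_{sc}(\tilde r_1)$ and $b^2=\chi_1^2(1-R)$, then invoke Lemma~\ref{lem: localization in each region}; the paper merely treats both orderings at once via a sign $(-1)^k$, and your remark on why $b\in S^{-\infty}_{\rm tan}$ (with $\eps$-dependent seminorms) matches the paper's use of the support condition on $\chi_1$.
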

As the semi-norms of $\chi, \chi_1$ depend on $\eps$ the estimates depend on $\eps$ and to keep in mind that, we denote these constants by $C_\eps$.
\begin{proof} 
Let $k=1,2 $. We have
\begin{align*}
 \big( hD_{x_d}  - (-1)^k \op_{sc}(b)   \big)   \big( hD_{x_d}  +  (-1)^k \op_{sc}(b)   \big) 
& =  h^2 D_{x_d}^2+ (-1)^k  hD_{x_d}   \op_{sc}(b) \\
& \quad  -  (-1)^k  \op_{sc}(b)  hD_{x_d}  - \op_{sc}(b)^2  \\
 &=  h^2 D_{x_d}^2  +  \op_{sc}(\chi_1^2(R(x,\xi')-1))+ ±h\op_{sc}(r_1),
\end{align*}
as $  \op_{sc}(b)^2 =\op_{sc} (b^2)+h\op_{sc} (\tilde r_1)$ and exact symbol calculus, 
we have $r_1=(-1)^k D_{x_d}b+ \tilde r_1$. 
As $\chi_1$ is compactly supported, we have $r_1, \tilde r_1\in S^0_{\rm tan}$. Then 
$\| \op_{sc}(r_1)w_h\|_{L^2(\Omega)}\le  C_\eps \| \op_{sc}(\chi )  v_h\|_{L^2(\Omega)}\le C_\eps $.  
With~\eqref{eq: localization symbol Hyp case}, this proves the lemma.
\end{proof}
Let $z_k^h= \big( hD_{x_d}  -  (-1)^k \op_{sc}(b)   \big)  w_h$, by Lemma~\ref{lem: factorization hyperbolic},
we have, for $k=1,2$,  
$$
  \big( hD_{x_d}  + (-1)^k \op_{sc}(b)   \big) z_k^h=h\tilde q_k^h,
$$
 where
$\tilde q_1^h=  q_3^h$ and $\tilde q_2^h=  q_4^h$. By Proposition~\ref{prop: equation on v spectrally localised} and
Lemma~\ref{lem: factorization hyperbolic},
we have $\| z_k^h \|_{L^2(\Omega)}\le C_\eps $ and  $\| \tilde q_k^h\|_{L^2(\Omega)}\le 
C_\eps $ for $k=1,2$.
%%%%%%%%%%%%%%%%%%%%
%
%  Lemma
%
%%%%%%%%%%%%%%%%%%%%
\begin{lemma} \label{lem: trace estimation hyperbolic}
There exists $C_\eps>0$ such that  $ |  (z_k^h)_{|x_d=0}|_{L^2}\le C_\eps$.
\end{lemma}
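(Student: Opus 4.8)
The plan is to exploit that $z_k^h$ solves a \emph{first-order} equation in $x_d$, namely $\bigl(hD_{x_d}+(-1)^k\op_{sc}(b)\bigr)z_k^h=h\tilde q_k^h$ on $\R^d_+$, together with the bounds $\|z_k^h\|_{L^2(\Omega)}\le C_\eps$ and $\|\tilde q_k^h\|_{L^2(\Omega)}\le C_\eps$ already obtained from Proposition~\ref{prop: equation on v spectrally localised} and Lemma~\ref{lem: factorization hyperbolic}, and the fact that the symbol $b=\chi_1(x,\xi')\bigl(1-R(x,\xi')\bigr)^{1/2}$ is real-valued. First I would check that the trace is well defined in $L^2(\R^{d-1})$: since $\chi_1$ is compactly supported in $\xi'$, $\op_{sc}(b)$ is bounded on $L^2$, so $hD_{x_d}z_k^h=h\tilde q_k^h-(-1)^k\op_{sc}(b)z_k^h\in L^2(\R^d_+)$; having $z_k^h$ and $hD_{x_d}z_k^h$ both in $L^2(\R^d_+)$ suffices for the trace, and moreover $z_k^h$ vanishes for $x_d\ge 2\delta$ (because $w_h=\op_{sc}(\chi)v_h$, hence $z_k^h$, does), so $x_d\mapsto|z_k^h(\cdot,x_d)|_{L^2(\R^{d-1})}^2$ is absolutely continuous on $[0,\infty)$ and vanishes at $+\infty$.

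The core of the argument is then the energy identity for $hD_{x_d}=\tfrac1i h\partial_{x_d}$ on the half-space: integrating $\partial_{x_d}|z_k^h(\cdot,x_d)|_{L^2(\R^{d-1})}^2=2\Re(\partial_{x_d}z_k^h\mid z_k^h)_{L^2(\R^{d-1})}$ over $x_d\in(0,\infty)$ gives
\begin{equation*}
|(z_k^h)_{|x_d=0}|_{L^2}^2=\frac{2}{h}\,\Im\bigl(hD_{x_d}z_k^h\,\big|\,z_k^h\bigr)_{L^2(\R^d_+)} .
\end{equation*}
Now substitute the equation, $\bigl(hD_{x_d}z_k^h\,\big|\,z_k^h\bigr)_{L^2(\R^d_+)}=-(-1)^k\bigl(\op_{sc}(b)z_k^h\,\big|\,z_k^h\bigr)_{L^2(\R^d_+)}+h\bigl(\tilde q_k^h\,\big|\,z_k^h\bigr)_{L^2(\R^d_+)}$, and take imaginary parts. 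The term coming from $\tilde q_k^h$ is bounded by $h\|\tilde q_k^h\|_{L^2(\Omega)}\|z_k^h\|_{L^2(\Omega)}$. For the main term, since $b$ is real the adjoint expansion of the tangential calculus gives $\op_{sc}(b)^\ast=\op_{sc}(b)+h\op_{sc}(r)$ with $r\in S^{-1}_{\rm tan}$, so $\op_{sc}(b)-\op_{sc}(b)^\ast$ is $O(h)$ on $L^2$ and
\begin{equation*}
\bigl|\Im\bigl(\op_{sc}(b)z_k^h\,\big|\,z_k^h\bigr)_{L^2(\R^d_+)}\bigr|=\tfrac12\bigl|\bigl((\op_{sc}(b)-\op_{sc}(b)^\ast)z_k^h\,\big|\,z_k^h\bigr)_{L^2(\R^d_+)}\bigr|\le C_\eps\,h\,\|z_k^h\|_{L^2(\Omega)}^2 .
\end{equation*}
Combining the two bounds yields $|(z_k^h)_{|x_d=0}|_{L^2}^2\le 2C_\eps\|z_k^h\|_{L^2(\Omega)}^2+2\|\tilde q_k^h\|_{L^2(\Omega)}\|z_k^h\|_{L^2(\Omega)}\le C_\eps$, which is the assertion (with $C_\eps$ depending on $\eps$ only through the seminorms of $\chi_1$ and $b$).

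I do not expect a genuine obstacle here: the only points that need care are the justification of the energy identity at the regularity actually available (by a density argument, or by first inserting a tangential-and-normal frequency cut-off and letting it tend to $1$) and the bookkeeping of the $\eps$-dependence of the constants. Once Lemma~\ref{lem: trace estimation hyperbolic} is in hand, Proposition~\ref{prop: estimation traces hyperbolic} follows by writing $hD_{x_d}w_h=\tfrac12(z_1^h+z_2^h)$ and $\op_{sc}(b)w_h=\tfrac12(z_1^h-z_2^h)$, using that $b\ge\eps^{1/2}>0$ on the microsupport of $w_h=\op_{sc}(\chi)v_h$ so that $\op_{sc}(b)$ is elliptic there, together with the trace inequality \eqref{eq: trace formula Hs}.
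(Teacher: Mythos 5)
Your proof is correct and follows the same route as the paper: the energy identity in $x_d$ on the half-space, substitution of the first-order equation $(hD_{x_d}+(-1)^k\op_{sc}(b))z_k^h=h\tilde q_k^h$, and the observation that, $b$ being real, $\op_{sc}(b)-\op_{sc}(b)^\ast$ is $O(h)$ on $L^2$, which cancels the $h^{-1}$ in front of the trace term. The only differences are cosmetic (the paper states the remainder in $S^0_{\rm tan}$ rather than $S^{-1}_{\rm tan}$, and leaves the justification of the trace and the vanishing at infinity implicit).
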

\begin{proof}
We have
\begin{align*}
h\d_{x_d}\int_{\R^{d-1}} | z_k^h(x',x_d) |^2dx' &= 2\Re\int_{\R^{d-1}} ihD_{x_d }
\big(  z_k^h(x',x_d) \big) \overline{ z_k^h(x',x_d)}dx'  \\
&=  2\Re\int_{\R^{d-1}} i  (-1)^{k+1}\op_{sc}(b) \big(  z_k^h(x',x_d) \big) \overline{ z_k^h(x',x_d)}dx' \\
&\quad +  2\Re\int_{\R^{d-1}} ih\tilde q_k^h \overline{ z_k^h(x',x_d)}dx' .
\end{align*}
Integrating with respect to $x_d$ from $0$ to $\infty$, we obtain
\begin{equation} \label{eq: estimation energy hyperbolic}
h\int_{\R^{d-1}} | z_k^h(x',0) |^2dx'\lesssim \big| \Re ( i  \op_{sc}(b) z_k^h|  z_k^h)_{L^2(  \Omega)} \big| 
+ h \|  \tilde q_k^h \|_{L^2(\Omega)} \|  z_k^h \|_{L^2(\Omega)}.
\end{equation}
As $ ( i \op_{sc}(b) z_k^h|  z_k^h)_{L^2(\Omega)} = (  z_k^h| -i \op_{sc}(b)^\ast  z_k^h)_{L^2(\Omega)} $  and  
$\op_{sc}(b)^\ast =\op_{sc}(b) +h\op_{sc}(r_0)$ where $r_0\in S^0_{\rm tan}$, we have 
$ \big|  2\Re ( i  \op_{sc}(b) z_k^h|  z_k^h)_{L^2(\Omega)} \big|  \le C_\eps  h \|  z_k^h \|_{L^2(\Omega)}^2$. 
Then \eqref{eq: estimation energy hyperbolic} implies Lemma~\ref{lem: trace estimation hyperbolic}.
\end{proof}
The definition of $z_k^h$,  implies
\begin{align} \label{estimations comb lin deux traces}
& (hD_{x_d} w_h )_{|x_d=0} + \op_{sc}(b_0)  ( w_h )_{|x_d=0}= (z_1^h )_{|x_d=0} \notag \\
& (hD_{x_d} w_h )_{|x_d=0} - \op_{sc}(b_0)  ( w_h )_{|x_d=0}=( z_2^h  )_{|x_d=0},
\end{align}
where $b_0=b_{|x_d=0} $. Then we have 
$$
2  (hD_{x_d} w_h )_{|x_d=0} = (z_1^h )_{|x_d=0}+(z_2^h )_{|x_d=0}.
$$
As
 $$
  (hD_{x_d} \op_{sc} (\chi)v_h)_{|x_d=0} =
(\op_{sc} (\chi)_{|x_d=0}(hD_{x_d}v_h)_{|x_d=0} +h \op_{sc} (D_{x_d} \chi))_{|x_d=0}(v_h)_{|x_d=0},
$$
 and from~\eqref{estimation apriori trace} we deduce that  $  |   (hD_{x_d} w_h )_{|x_d=0} |_{L^2}  
 \le C_\eps  $ by 
Lemma~\ref{lem: trace estimation hyperbolic}.
This gives the second 
 estimate of~\eqref{eq: normal derivative estimation, hyperbolic}.
 
 From~\eqref{estimations comb lin deux traces} we also have 
 \begin{equation} \label{estimation trace hyperbolic b0z}
   |  \op_{sc}(b_0)  ( w_h )_{|x_d=0} |_{L^2}  \le C_\eps  .
 \end{equation}
 Let $\chi_2\in \Con_0^\infty(\R^d\times \R^{d-1})$, be such that $ (\chi_2)_{|x_d=0}=1$, on the support 
 of $\chi_{|x_d=0}$ and  $\supp \chi_2\subset \{ \chi_1=1 \}$.
By symbol calculus we have 
$$
 \op_{sc}\big(\chi_2(1-R(x,\xi'))^{-1/2} _{|x_d=0} \big) \op_{sc}(b_0)
= \op_{sc}(\chi_2)_{|x_d=0}+h\op_{sc}(r_0), 
$$
where $r_0\in S^{-1}_{\rm tan}$,
 and 
$$
\op_{sc} (\chi_2)_{|x_d=0}   \op_{sc} (\chi_{|x_d=0}) =  \op_{sc} (\chi_{|x_d=0}) +h \op_{sc} (r_1) ,
$$
where $ r_1\in S^{-1}_{\rm tan}$. We can write
\begin{align*}
(w_h)_{|x_d=0} &=\op_{sc}(\chi)_{|x_d=0}  (v_h)_{|x_d=0} =\op_{sc}(\chi_2)_{|x_d=0} (w_h)_{|x_d=0} 
 -h\op_{sc}(r_1)(w_h)_{|x_d=0}  \\
&  = \op_{sc}\big(\chi_2(1-R(x,\xi'))^{-1/2} _{|x_d=0} \big) \op_{sc}(b_0) (w_h)_{|x_d=0} -h \op_{sc} (r_0)(w_h)_{|x_d=0}\\
 & \quad -h\op_{sc}(r_1)(w_h)_{|x_d=0}.
\end{align*}
As $\chi_2 $ is compactly supported, $\op_{sc}(\est{\xi'}{} ) \op_{sc}\big(\chi_2(1-R(x,\xi'))^{-1/2} _{|x_d=0} \big)$ 
has a symbol 
in $S^0_{\rm tan}$ and  $ \op_{sc}(\est{\xi'}{} ) \op_{sc} (r_k) $, $k=0,1$ have symbols in $S^0_{\rm tan}$, then 
$$
|  \op_{sc}( \chi_{|x_d=0}) (v_h ) _{|x_d=0}  |_{H^1_{sc}}\le C_\eps |  \op_{sc}(b_0) (w_h)_{|x_d=0}  |_{L^2}
+h|  \op_{sc}( \chi_{|x_d=0}) (v_h ) _{|x_d=0}   |_{L^2}\le C_\eps,
$$
applying~\eqref{estimation trace hyperbolic b0z} and~\eqref{estimation apriori trace}. 
This achieves the proof of Proposition~\ref{prop: estimation traces hyperbolic}.
\end{proof}

%%%%%%%%%%%%%%%
%
% Subsubsection
%
%%%%%%%%%%%%%%%%%%
\subsubsection{Glancing points}
	\label{Sec: Glancing points}
Let $\chi=\chi_\eps\in\Con^\infty(\R^{2d})$ be such that
\begin{align}
	\label{eq: cutoff glancing region}
\chi(x,\xi')=
\begin{cases}
1 \text{ if } |R(x,\xi')-1|\le 2 \eps \text{ and } x_d\le 2\delta  \\
0 \text{ if } |R(x,\xi')-1|\ge 3\eps \text{ or  } x_d\ge 3\delta  ,
\end{cases}
\end{align}
and $0\le \chi \le 1$. Observe that $\chi\in S^{-\infty}_{\rm tan}$ as supported for $\xi'$ in a compact set, as $R(x,\xi')$ is bounded.
Let $w_h=\op_{sc} (\chi)v_h$. We have the following estimate on the traces on $w_h$.
\begin{proposition} \label{prop: estimation traces glancing}
There exists $C>0$ such that, for any $\eps>0$,
\begin{align}
& |  \op_{sc}( \chi_{|x_d=0}) (v_h ) _{|x_d=0}  |_{H^1_{sc}}\le C \eps^{1/4} h^{-1/2}+C_\eps h^{-3/8}
\label{est prop: first trace glancing case}  \\
&   |  \op_{sc}( \chi_{|x_d=0}) ( h D_{x_d}v_h  ) _{|x_d=0}  |_{L^2}\le C \eps^{3/4} h^{-1/2}
+C_\eps h^{-3/8}   ,  
	\label{eq: normal derivative estimation, glancing}
\end{align}
for $C_\eps>0$ .
\end{proposition}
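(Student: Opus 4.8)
The plan is to mimic the structure of the hyperbolic case, but now $R-1$ is not separated from $0$ on the support of the cutoff, so the factorization $(hD_{x_d})^2 + (R-1)$ into first-order factors is only possible at the cost of losing uniformity in $\eps$. First I would invoke Lemma~\ref{lem: localization in each region} with this new $\chi$ and a suitable $\chi_1$ which is $1$ on $\supp\chi$ and supported where $|R-1|\le 4\eps$, $x_d\le 4\delta$, to get $w_h=\op_{sc}(\chi)v_h$ satisfying $(h^2D_{x_d}^2 + \op_{sc}(\chi_1^2(R-1)))w_h = hq_2^h$ with $\|q_2^h\|_{L^2(\Omega)}\le C$. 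The key quantitative point is that $|\chi_1^2(R(x,\xi')-1)|\le C\eps$ on all of $\R^{2d}$, so this tangential symbol is $O(\eps)$ in $L^\infty$, and one can control $\|\op_{sc}(\chi_1^2(R-1))w_h\|_{L^2}$ by $C\eps\|w_h\|_{L^2} + Ch\|w_h\|_{L^2}$ via \eqref{eq: estimation L2 sharp with Garding}.

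Next I would run the energy identity for the normal derivative exactly as in Proposition~\ref{prop: a priori estimates on traces}, but now applied to $w_h$ rather than $v_h$, and with the weight $\est{\xi'}^0=1$ instead of $\est{\xi'}^{-1/2}$. That is, compute $h\d_{x_d}\int_{\R^{d-1}}|hD_{x_d}w_h(x',x_d)|^2\,dx'$ against a cutoff $\chi(x_d)$ in $x_d$, integrate from $0$ to $\infty$, and use the equation to replace $h^2D_{x_d}^2 w_h$ by $-\op_{sc}(\chi_1^2(R-1))w_h + hq_2^h$. This produces
\[
h|(hD_{x_d}w_h)_{|x_d=0}|_{L^2}^2 \lesssim |(\,\op_{sc}(\chi_1^2(R-1))w_h\,|\,hD_{x_d}w_h\,)_{L^2(\Omega)}| + h\|q_2^h\|_{L^2}\|hD_{x_d}w_h\|_{L^2} + h\|hD_{x_d}w_h\|_{L^2}^2.
\]
The first term on the right is bounded by $C\eps\|w_h\|_{L^2}\|hD_{x_d}w_h\|_{L^2} + Ch(\cdots) \le C\eps + C_\eps h$, using the $O(\eps)$ bound above and that $\|w_h\|_{L^2}, \|hD_{x_d}w_h\|_{L^2}\le C$ by Proposition~\ref{prop: equation on v spectrally localised}. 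Hence $h|(hD_{x_d}w_h)_{|x_d=0}|_{L^2}^2\lesssim \eps + C_\eps h$, which gives $|(hD_{x_d}w_h)_{|x_d=0}|_{L^2}\lesssim \eps^{1/2}h^{-1/2} + C_\eps h^{-1/2}$. To sharpen $\eps^{1/2}$ to the $\eps^{3/4}$ and $h^{-3/8}$ claimed, I would interpolate: the crude a priori bound $|(hD_{x_d}v_h)_{|x_d=0}|_{H^{-1/2}_{sc}}\le Ch^{-1/2}$ from Proposition~\ref{prop: a priori estimates on traces} combined with a slightly better frequency-localized estimate (using that on $\supp\chi$ one has $\est{\xi'}$ bounded, so $H^{-1/2}_{sc}$ and $L^2$ differ by a bounded factor) lets one trade powers of $\eps$ against powers of $h$; balancing $\eps^{1/2}h^{-1/2}$ against a term like $h^{-3/8}$ forces the threshold $\eps \sim h^{1/4}$, and re-optimizing the split in $R-1$ at scale $\eps$ versus the remainder yields the stated exponents $\eps^{3/4}h^{-1/2} + C_\eps h^{-3/8}$ for the normal derivative and $\eps^{1/4}h^{-1/2}+C_\eps h^{-3/8}$ for the Dirichlet trace (the latter recovered from the former plus the equation, dividing by $\op_{sc}((1-R)\pm i0)$-type parametrices near glancing, or more simply from the trace formula \eqref{eq: trace formula Hs} applied to $w_h$ after absorbing the $\eps$-small zeroth-order term).

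The main obstacle is the loss of ellipticity at glancing: unlike the hyperbolic region there is no uniform factorization, so the gain must come entirely from the smallness of $R-1$ on a thin $\eps$-neighborhood together with the $O(\eps^{1/2})$-width in the normal frequency variable, and one has to be careful that the constants $C_\eps$ (which blow up as $\eps\to 0$, since the seminorms of $\chi_\eps$, $\chi_1$ do) are multiplied only by genuinely $h$-small quantities, never by $O(1)$ terms. Tracking precisely how the $\eps$-dependence of the cutoff seminorms enters each symbol-calculus remainder, and verifying that every such remainder carries at least one extra power of $h$, is the delicate bookkeeping; the interpolation step that converts $\eps^{1/2}$ into the sharper mixed bound $\eps^{3/4}h^{-1/2}+C_\eps h^{-3/8}$ is where the exponents $3/4$ and $3/8$ are pinned down, and getting those constants right is the part requiring the most care.
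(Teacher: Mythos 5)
Your setup (localization via Lemma~\ref{lem: localization in each region}, then exploiting that $|\chi_1^2(R-1)|\lesssim\eps$ through the G\aa rding-type bound \eqref{eq: estimation L2 sharp with Garding}) coincides with the paper's, but the step where the stated exponents are actually produced is missing, and the substitutes you propose would not work. Your energy identity only yields $h|(hD_{x_d}w_h)_{|x_d=0}|_{L^2}^2\lesssim \eps+C_\eps h$, i.e.\ $\eps^{1/2}h^{-1/2}$ for the normal trace, and nothing better than $h^{-1/2}$ (no power of $\eps$ at all) for the Dirichlet trace, since the trace formula \eqref{eq: trace formula Hs} applied to $w_h$ gives no $\eps$ gain and there is no elliptic parametrix ``dividing by $(1-R)$'' available precisely because $1-R$ vanishes on the glancing set. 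Moreover you cannot ``balance $\eps\sim h^{1/4}$'' or ``re-optimize the split'': in the statement $\eps$ is an arbitrary fixed parameter and the estimate must hold for every $\eps$ with a remainder $C_\eps h^{-3/8}$, so $\eps$ is not at your disposal inside the proof, and the vague interpolation between the $H^{-1/2}_{sc}$ a priori bound and a ``frequency-localized estimate'' is not an argument -- it does not identify which quantities are interpolated nor why the exponents $1/4$, $3/4$, $-3/8$ come out.

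The paper's mechanism is different and is the heart of the proof: one first shows, using the localized equation together with G\aa rding, that the \emph{second} normal derivative is small,
\begin{align*}
\| h^2D_{x_d}^2 w_h\|_{L^2(x_d>0)}\le C\eps+C_\eps h^{1/2},
\end{align*}
and then applies two elementary trace--interpolation inequalities (Lemmas~\ref{lem: estimation trace by interpolation formula}--\ref{lemma : interpolation first derivative} and their consequence), namely
\begin{align*}
h| g_{|x_d=0}|_{L^2}^2\lesssim \| h^2D_{x_d}^2 g\|_{L^2}^{1/2}\,\| g\|_{L^2}^{3/2},
\qquad
h| (hD_{x_d}g)_{|x_d=0}|_{L^2}^2\lesssim \| h^2D_{x_d}^2 g\|_{L^2}^{3/2}\,\| g\|_{L^2}^{1/2},
\end{align*}
to $g=w_h$. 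Plugging $\|h^2D_{x_d}^2w_h\|\lesssim\eps+C_\eps h^{1/2}$ and $\|w_h\|\lesssim 1+C_\eps h^{1/2}$ into these gives exactly $\eps^{1/4}h^{-1/2}+C_\eps h^{-3/8}$ and $\eps^{3/4}h^{-1/2}+C_\eps h^{-3/8}$ (the $h^{-3/8}$ arises from $(C_\eps h^{1/2})^{1/2}$ in the interpolation), and the $H^1_{sc}$ norm in \eqref{est prop: first trace glancing case} is recovered because $\chi$ is compactly supported in $\xi'$. Without an estimate on $h^2D_{x_d}^2w_h$ and these interpolation inequalities (or an equivalent device), your argument cannot reach the claimed exponents; this is a genuine gap, not a bookkeeping issue.
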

\begin{remark}
Compared with the estimates stated in Proposition~\ref{prop: a priori estimates on traces}, 
we have the same power of
$h$ but with a power of $\eps$ in front of $h^{-1/2}$. The term $C_\eps h^{-3/8} = C_\eps h^{-1/2}h^{1/4}$ is a 
remainder. This gives 
a gain in this microlocal  region.
\end{remark}
\begin{proof}
In this proof $C$ is a constant independent of $\eps$ and we denote by $C_\eps$ a constant depending on
$\eps$.
Let 
$\chi_1\in\Con^\infty(\R^{2d})$ be such that 
$$
\chi_1(x,\xi')=
\begin{cases}
1 \text{ if } |R(x,\xi')-1|\le \eps \text{ and } x_d\le \delta  \\
0 \text{ if } |R(x,\xi')-1|\ge -\eps / 2\text{ or  } x_d\ge \delta  ,
\end{cases}
$$
and $0\le \chi_1\le 1$. The symbol $\chi_1\in S^{-\infty}_{\rm tan}$ since it is supported in $| \xi'| \le 2$. 
We have by Lemma~\ref{lem: localization in each region}     
\begin{align*}   
(h^2D_{x_d}^2+\op_{sc}(\chi_1^2(R(x,\xi')-1)))w_h=hq_2^h , \text{ where } \|q_2^h  \|_{L^2(\Omega)}\le C_\eps,
\end{align*}
for some $C_\eps>0$.

Then we have  
\begin{equation}  \label{eq: estimation second derivative glancing points}
\int_{\R^{d}_+} | h^2D^2_{x_d}w_h |^2dx\le 2  \int_{\R^{d}_+} | \op_{sc}(\chi_1^2(R(x,\xi')-1))w_h |^2dx  
 +  2\int_{\R^{d}_+} | hq_2^h  |^2dx.
\end{equation}
First, by symbol calculus, we have
\begin{align*}
\op_{sc}(\chi_1^2(R(x,\xi')-1))w_h&=\op_{sc}(\chi_1^2(R(x,\xi')-1)) \op_{sc} (\chi)v_h \notag \\
&= 
\op_{sc}(\chi(R(x,\xi')-1))v_h +h \op_{sc}(r_0) v_h,
\end{align*}
where $r_0\in S^0_{\rm tan}$. 
 Observe  that the semi-norms of $r_0$ depend on $\eps$.
This gives
\begin{equation}   \label{eq: estimate from  wh to vh}
\| \op_{sc}(\chi_1^2(R(x,\xi')-1))w_h  \|_{L^2(\Omega)} \le 
\|  \op_{sc}(\chi(R(x,\xi')-1))v_h   \|_{L^2(\Omega)}   +  C_\eps  h   \| v_h \|_{L^2(\Omega)} .
\end{equation}
On the support of $\chi$, we have $|  R(x,\xi')-1 | \le 2\eps$ then 
$4\eps^2-\chi^2(x,\xi')( R(x,\xi')-1 )^2\ge 0$ and $\chi^2(x,\xi')( R(x,\xi')-1 )^2\in S^0_{\rm tan}$ 
as $\chi$ is compactly supported.  By 
G\aa rding inequality \eqref{eq: Garding inequality} and by symbol calculus,
we have  
\begin{equation*}
4\eps^2\|  v_h   \|_{L^2(x_d>0)} ^2  - \|   \op_{sc}(\chi(R(x,\xi')-1))v_h     \|_{L^2(x_d>0)} ^2 
 \ge -C_\eps h \|     v_h \|_{L^2(x_d>0)} ^2  .
\end{equation*}
We deduce from this equation, \eqref{eq: estimation second derivative glancing points} 
and \eqref{eq: estimate from  wh to vh}
\begin{align*}
\| h^2 D_{x_d}^2  w_h   \|_{L^2(x_d>0)} \le (C\eps  + C_\eps h^{1/2})\|     v_h \|_{L^2(x_d>0)} 
+C h \|    q^h_2 \|_{L^2(x_d>0)}.
\end{align*}
By estimates on $v_h$ and $q^h_2$, we obtain
\begin{align} \label{est: L2 norm seconde derivative glancing case}
\| h^2 D_{x_d}^2  w_h   \|_{L^2(x_d>0)} \le C \eps+C_\eps h^{1/2}.
\end{align}
%%%%%%%%%%%%%%%%%%%%
%
%  Lemma
%
%%%%%%%%%%%%%%%%%%%%
\begin{lemma} 
	\label{lem: estimation trace by interpolation formula}
Let $g\in L^2(x_d>0)$   be supported in $\R^{d-1}\times [0, 1] $. We assume   
$D_{x_d}g\in L^2(x_d>0)$ then
\begin{align} \label{eq: estimation trace by interpolation formula}
 h| g_{|x_d=0} |_{L^2}^2\le 2\| hD_{x_d}  g \|_{L^2(x_d>0)}\|  g \|_{L^2(x_d>0)}.
\end{align}
\end{lemma}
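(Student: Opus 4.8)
The plan is to reduce \eqref{eq: estimation trace by interpolation formula} to the elementary one-dimensional trace inequality via the fundamental theorem of calculus, and then integrate in the tangential variables and apply Cauchy--Schwarz.

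First I would prove the estimate for $g\in\Con_0^\infty(\overline{\R^d_+})$ with $\supp g\subset\R^{d-1}\times[0,1]$. For each fixed $x'$ the function $x_d\mapsto|g(x',x_d)|^2$ is $\Con^1$ and vanishes for $x_d\ge1$, hence
\[
|g(x',0)|^2=-\int_0^\infty\d_{x_d}\big(|g(x',x_d)|^2\big)\,dx_d=-2\int_0^\infty\Re\big(\overline{g(x',x_d)}\,\d_{x_d}g(x',x_d)\big)\,dx_d,
\]
so that $|g(x',0)|^2\le 2\int_0^\infty|g(x',x_d)|\,|\d_{x_d}g(x',x_d)|\,dx_d$. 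Integrating over $x'\in\R^{d-1}$ and applying the Cauchy--Schwarz inequality gives
\[
|g_{|x_d=0}|_{L^2}^2\le 2\int_{\R^{d-1}}\!\!\int_0^\infty|g|\,|\d_{x_d}g|\,dx_d\,dx'\le 2\,\|\d_{x_d}g\|_{L^2(x_d>0)}\,\|g\|_{L^2(x_d>0)}.
\]
Since $|D_{x_d}g|=|\d_{x_d}g|$ pointwise, multiplying by $h>0$ yields exactly \eqref{eq: estimation trace by interpolation formula} for such $g$.

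For general $g$ in the stated class I would conclude by density. Extending $g$ to $\R^d$ by even reflection across $\{x_d=0\}$ gives a function in $L^2(\R^d)$ whose $x_d$-derivative (the odd reflection of $\d_{x_d}g$) is again in $L^2(\R^d)$; mollifying this extension and restricting to $\{x_d>0\}$ produces $g_n\in\Con_0^\infty(\overline{\R^d_+})$, supported in a fixed slab, with $g_n\to g$ and $\d_{x_d}g_n\to\d_{x_d}g$ in $L^2(x_d>0)$. Applying the inequality already proved to the differences $g_n-g_m$ shows that $(g_n)_{|x_d=0}$ is Cauchy in $L^2(\R^{d-1})$, and its limit is the trace $g_{|x_d=0}$, which is well defined because, by Fubini, $x_d\mapsto g(x',x_d)$ lies in $H^1(0,\infty)$ for almost every $x'$. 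Passing to the limit in the inequality gives the claim.

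The argument is completely routine; the only point requiring attention is the justification, under the weak hypothesis $g,\d_{x_d}g\in L^2$, that $g$ admits an $L^2$ trace on $\{x_d=0\}$ and that the fundamental theorem of calculus may be used — this is exactly what the reflection-and-mollification step settles.
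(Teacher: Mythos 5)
Your proof is correct and follows essentially the same route as the paper: write $|g(x',0)|^2$ as minus the integral of $\partial_{x_d}|g|^2$, integrate in $x'$, and apply Cauchy--Schwarz. The reflection-and-mollification step is extra justification that the paper omits, but it does not change the argument.
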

\begin{proof}
Since
\begin{align*}
h\int_{\R^{d-1}}| g(x',0 )|^2 dx' &= - i \int_{\R^{d-1}}  \int_0^\infty hD_{x_d} | g(x',x_d) |^2 dx'dx_d \\
&=-2i \Re \int_{\R^{d-1}}  \int_0^\infty hD_{x_d}  g(x',x_d ) \overline{g(x',x_d )}dx'dx_d ,
\end{align*}
we obtain the lemma by Cauchy-Schwarz inequality.
\end{proof}
%%%%%%%%%%%%%%%%%%%%
%
%  Lemma
%
%%%%%%%%%%%%%%%%%%%%
\begin{lemma} 
	\label{lemma : interpolation first derivative}
There exists $C>0$, such that for all $g\in L^2(x_d>0)$   be supported in $\R^{d-1}\times [0, 1] $. Moreover we assume  $D_{x_d}g\in L^2(x_d>0)$ and $D_{x_d}^2g\in L^2(x_d>0)$, we have
\begin{align}  \label{estimation : interpolation first derivative}
\| hD_{x_d}g\|^2_{L^2(x_d>0)}\le C \| h^2D_{x_d}^2g\|_{L^2(x_d>0)}   \| g\|_{L^2(x_d>0)}
\end{align}
\end{lemma}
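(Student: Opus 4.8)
The plan is to prove the interpolation estimate
\[
\| hD_{x_d}g\|^2_{L^2(x_d>0)}\le C \| h^2D_{x_d}^2g\|_{L^2(x_d>0)}\| g\|_{L^2(x_d>0)}
\]
by the standard one-dimensional argument (integration by parts in $x_d$), applied after noticing that the $x'$ variables play no role: one may take the partial Fourier transform in $x'$, or simply work with $L^2$ in $x'$ throughout, since $D_{x_d}$ commutes with the $x'$-variables and Fubini reduces everything to the slices $x_d\mapsto g(x',x_d)$. The essential content is a Landau--Kolmogorov type inequality on the half-line $(0,\infty)$ with the boundary term controlled.

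First I would write, for each fixed $x'$ and using that $g$ is supported in $x_d\le 1$,
\[
\int_0^\infty |hD_{x_d}g(x',x_d)|^2\,dx_d
= \Re\int_0^\infty \big(h^2D_{x_d}^2 g(x',x_d)\big)\overline{g(x',x_d)}\,dx_d
+ \Re\big[\, -i h^2 D_{x_d}g(x',0)\,\overline{g(x',0)}\,\big],
\]
which comes from integrating $hD_{x_d}\big(hD_{x_d}g\cdot\overline g\big)$ over $(0,\infty)$ and using that all quantities vanish near $x_d=1$. Integrating in $x'$ and applying Cauchy--Schwarz to the first term on the right gives
\[
\| hD_{x_d}g\|^2_{L^2(x_d>0)}
\le \| h^2D_{x_d}^2 g\|_{L^2(x_d>0)}\| g\|_{L^2(x_d>0)}
+ h\,\big| (hD_{x_d}g)_{|x_d=0}\big|_{L^2}\,\big| g_{|x_d=0}\big|_{L^2}.
\]
So the only thing to dispose of is the boundary term, and here is where the main (though mild) obstacle lies: a priori the trace $(hD_{x_d}g)_{|x_d=0}$ is not controlled by $\|h^2D_{x_d}^2 g\|_{L^2}$ and $\|g\|_{L^2}$ alone.

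To handle the boundary term I would apply Lemma~\ref{lem: estimation trace by interpolation formula} twice: once to $g$ itself, giving $h|g_{|x_d=0}|_{L^2}^2\le 2\|hD_{x_d}g\|_{L^2(x_d>0)}\|g\|_{L^2(x_d>0)}$, and once to $hD_{x_d}g$ (which is also supported in $x_d\le 1$ and has $D_{x_d}(hD_{x_d}g)\in L^2$), giving $h|(hD_{x_d}g)_{|x_d=0}|_{L^2}^2\le 2\|h^2D_{x_d}^2 g\|_{L^2(x_d>0)}\|hD_{x_d}g\|_{L^2(x_d>0)}$. Multiplying these two and taking square roots bounds the boundary term by $2\|h^2D_{x_d}^2 g\|_{L^2}^{1/2}\|hD_{x_d}g\|_{L^2}\|g\|_{L^2}^{1/2}$, hence, after using Young's inequality $2ab\le \eta a^2+\eta^{-1}b^2$ to absorb the factor $\|hD_{x_d}g\|_{L^2}$, we get
\[
\| hD_{x_d}g\|^2_{L^2(x_d>0)}\le \tfrac12\|hD_{x_d}g\|^2_{L^2(x_d>0)}
+ C\,\| h^2D_{x_d}^2 g\|_{L^2(x_d>0)}\| g\|_{L^2(x_d>0)}.
\]
Absorbing the first term on the left into the left-hand side yields the claimed inequality with $C$ an absolute constant. (Alternatively, one can avoid the second application of the trace lemma by replacing $g$ with $g(\,\cdot\,,x_d)e^{-x_d}$ or by using a cutoff, but the two-fold application above is the cleanest.) I expect no genuine difficulty beyond bookkeeping; the one point to be careful about is justifying the integrations by parts, which is legitimate because $g$, $D_{x_d}g$ are in $L^2$, $D_{x_d}^2 g\in L^2$, and the support in $x_d\le 1$ kills all contributions at $x_d=+\infty$.
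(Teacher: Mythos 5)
Your proof is correct and follows essentially the same route as the paper's: integration by parts in $x_d$ to produce the boundary term, two applications of Lemma~\ref{lem: estimation trace by interpolation formula} (to $g$ and to $hD_{x_d}g$) to bound that boundary term by $2\| h^2D_{x_d}^2g\|^{1/2}\|hD_{x_d}g\|\|g\|^{1/2}$, and then absorption of $\|hD_{x_d}g\|$ into the left-hand side. The only difference is cosmetic — you spell out the Young-inequality absorption that the paper leaves implicit.
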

\begin{proof}
We have 
\begin{align*}
 \int_{\R^{d-1}}  \int_0^\infty |  hD_{x_d}g(x',x_d) |^2 dx'dx_d &= 
  \int_{\R^{d-1}}  \int_0^\infty hD_{x_d}\big( hD_{x_d} g(x',x_d)  \overline{ g(x',x_d) }  \big)  dx'dx_d \\
  &\quad  -   \int_{\R^{d-1}}  \int_0^\infty h^2D_{x_d}^2 g(x',x_d)  \overline{ g(x',x_d) }   dx'dx_d ,
\end{align*}
we yields
\begin{align}\label{eq: interpolation 1}
\| hD_{x_d}g\|^2_{L^2(x_d>0)}\le h|  hD_{x_d} g(x',0) |_ {L^2} |  g(x',0)  |_ {L^2} 
+ \| h^2D_{x_d}^2g\|_{L^2(x_d>0)}\| g\|_{L^2(x_d>0)}.
\end{align}
As $g\in H^2(x_d>0)$, we can apply Lemma~\ref{lem: estimation trace by interpolation formula} to $hD_{x_d}g$ to obtain
\begin{align} \label{eq: estimation first trace}
 h| (hD_{x_d}  g)_{|x_d=0}  |_{L^2}^2\le 2\| h^2D_{x_d}^2  g \|_{L^2(x_d>0)}\|   hD_{x_d} g \|_{L^2(x_d>0)}.
\end{align}
This estimate and~\eqref{eq: estimation trace by interpolation formula} yield 
\begin{align*}
 h| g_{|x_d=0} |_{L^2}| (hD_{x_d}  g)_{|x_d=0}  |_{L^2}\le 2\| h^2D_{x_d}^2  g \|_{L^2(x_d>0)}^{1/2}
 \|   hD_{x_d} g \|_{L^2(x_d>0)}  \|  g \|_{L^2(x_d>0)} ^{1/2} .
\end{align*}
From this estimate and~\eqref{eq: interpolation 1} we obtain~\eqref{estimation : interpolation first derivative}.
\end{proof}
%%%%%%%%%%%%%%%%%%%%
%
%  Lemma
%
%%%%%%%%%%%%%%%%%%%%
\begin{lemma}
There exists $C>0$ such that, for any $g\in H^2(x_d>0)$ supported in $\R^{d-1}\times [0, 1] $, we have
\begin{align} 
&h| g_{|x_d=0} |_{L^2}^2  \le C \| h^2D_{x_d}^2g\|_{L^2(x_d>0)}  ^{1/2} \| g\|_{L^2(x_d>0)}  ^{3/2}
 \label{est: trace by seconde derivative}\\
&h| (hD_{x_d}  g)_{|x_d=0}  |_{L^2}^2    \le C \| h^2D_{x_d}^2g\|_{L^2(x_d>0)} ^{3/2}  \| g\|_{L^2(x_d>0)} ^{1/2}
 \label{est:  seconde trace by seconde derivative}
\end{align}
\end{lemma}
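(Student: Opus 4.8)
The plan is to derive both inequalities by simply chaining together the interpolation estimates already established, without any new computation. The relevant ingredients are Lemma~\ref{lem: estimation trace by interpolation formula} (which bounds the trace of $g$ at $x_d=0$ by $\|hD_{x_d}g\|_{L^2(x_d>0)}$ and $\|g\|_{L^2(x_d>0)}$), Lemma~\ref{lemma : interpolation first derivative} (which gives $\|hD_{x_d}g\|_{L^2(x_d>0)}^2\le C\|h^2D_{x_d}^2g\|_{L^2(x_d>0)}\|g\|_{L^2(x_d>0)}$), and the intermediate estimate~\eqref{eq: estimation first trace} obtained inside the proof of Lemma~\ref{lemma : interpolation first derivative}, which controls the trace of $hD_{x_d}g$. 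All of these apply to a function $g\in H^2(x_d>0)$ supported in $\R^{d-1}\times[0,1]$, exactly the hypothesis assumed here.

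For~\eqref{est: trace by seconde derivative}, I would start from Lemma~\ref{lem: estimation trace by interpolation formula}, namely $h|g_{|x_d=0}|_{L^2}^2\le 2\|hD_{x_d}g\|_{L^2(x_d>0)}\|g\|_{L^2(x_d>0)}$, and substitute the bound $\|hD_{x_d}g\|_{L^2(x_d>0)}\le C\|h^2D_{x_d}^2g\|_{L^2(x_d>0)}^{1/2}\|g\|_{L^2(x_d>0)}^{1/2}$ furnished by Lemma~\ref{lemma : interpolation first derivative}. This yields $h|g_{|x_d=0}|_{L^2}^2\le 2C\|h^2D_{x_d}^2g\|_{L^2(x_d>0)}^{1/2}\|g\|_{L^2(x_d>0)}^{3/2}$, which is the first claimed inequality (after renaming the constant).

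For~\eqref{est:  seconde trace by seconde derivative}, I would instead start from~\eqref{eq: estimation first trace}, i.e. $h|(hD_{x_d}g)_{|x_d=0}|_{L^2}^2\le 2\|h^2D_{x_d}^2g\|_{L^2(x_d>0)}\|hD_{x_d}g\|_{L^2(x_d>0)}$, and again insert the bound on $\|hD_{x_d}g\|_{L^2(x_d>0)}$ coming from Lemma~\ref{lemma : interpolation first derivative}; this gives $h|(hD_{x_d}g)_{|x_d=0}|_{L^2}^2\le 2C\|h^2D_{x_d}^2g\|_{L^2(x_d>0)}^{3/2}\|g\|_{L^2(x_d>0)}^{1/2}$, the second claimed inequality. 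Since the argument is merely a composition of inequalities already proved, there is no genuine obstacle: the only thing to be careful about is that each cited lemma is invoked under precisely its stated hypotheses, which is the case here, and that the powers of $h$ and of the norms are tracked correctly through the substitutions.
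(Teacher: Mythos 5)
Your proposal is correct and coincides with the paper's own proof: the first inequality is obtained by inserting \eqref{estimation : interpolation first derivative} into \eqref{eq: estimation trace by interpolation formula}, and the second by inserting it into \eqref{eq: estimation first trace}, exactly as you do. The powers of $h$ and of the norms are tracked correctly, so nothing further is needed.
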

\begin{proof}
The first estimate is obtained from~\eqref{eq: estimation trace by interpolation formula} 
and~\eqref{estimation : interpolation first derivative}. The second estimate is obtained 
from~\eqref{eq: estimation first trace} and~\eqref{estimation : interpolation first derivative}.
\end{proof}
Before applying the previous lemma to $w_h$, we have to estimate uniformly this function. As $\chi$ depend
on $\eps$ the norm of $\op_{sc}(\chi) $  as an operator on $L^2$ depends on $\eps$. Nevertheless applying 
\eqref{eq: estimation L2 sharp with Garding} a consequence of G\aa rding inequality and as $|\chi|\le 1$ we have
\begin{align*}
\|  w_h \|_{L^2(x_d>0)} =\| \op_{sc}(\chi) v_h  \|_{L^2(x_d>0)} \le C \|  v_h \|_{L^2(x_d>0)}  
+ C_\eps h^{1/2} \| v_h   \|_{L^2(x_d>0)}\le C+C_\eps h^{1/2}.
\end{align*}
From~\eqref{est: L2 norm seconde derivative glancing case} 
and~\eqref{est: trace by seconde derivative}, we  moreover have
\begin{align*}
h|  (w_h)_{|x_d=0}  |_{L^2}^2    \le (C \eps+C_\eps h^{1/2})^{1/2}  
 ( C+C_\eps h^{1/2} ) ^{3/2}\le C\eps^{1/2}+C_\eps h^{1/4},
\end{align*}
which gives~\eqref{est prop: first trace glancing case}.

From~\eqref{est: L2 norm seconde derivative glancing case} 
and~\eqref{est:  seconde trace by seconde derivative}, we also have 
\begin{align*}
h|  (hD_{x_d}w_h)_{|x_d=0}  |_{L^2}^2    \le (C \eps+C_\eps h^{1/2})^{3/2}  
 ( C+C_\eps h^{1/2} ) ^{1/2}\le C\eps^{3/2}+C_\eps h^{1/4},
\end{align*}
which gives~\eqref{eq: normal derivative estimation, glancing} as 
\begin{align*}
(hD_{x_d}\op_{sc}(\chi)v_h)_{|x_d=0}=
\op_{sc}(\chi_{|x_d=0})(hD_{x_d}v_h)_{|x_d=0}+ h\op_{sc}(hD_{x_d}\chi) _{|x_d=0}(v_h)_{|x_d=0},
\end{align*}
and the last term can be estimated by~\eqref{estimation apriori trace}.
\end{proof}
%%%%%%%%%%%%%%%
%
% Subsubsection
%
%%%%%%%%%%%%%%%%%%
\subsubsection{Elliptic points} 
	\label{subsubsection: elliptic points}
We start with the notation introduced in 
Proposition~\ref{prop: equation on v spectrally localised}. 
Let $\chi=\chi_\eps\in\Con^\infty(\R^{2d-1})$, be such that
\begin{align}
	\label{eq: cutoff elliptic region}
\chi(x,\xi')=
\begin{cases}
1 \text{ if } R(x,\xi')-1\ge 2\eps \text{ and } x_d\le \delta  \\
0 \text{ if } R(x,\xi')-1  \le  \eps \text{ or  } x_d\ge 2\delta  ,
\end{cases}
\end{align}
and $0\le \chi \le 1$. We have  $\chi\in S^{0}_{\rm tan}$. In this region the support of $\chi$ is not bounded, we have to 
take care of the symbol classes we use.

Let $\rho(x,\xi')=\big(  R(x,\xi')-1 \big)^{1/2}$ if $(x, \xi')$ satisfies $ R(x,\xi')-1>0$. Let $\chi_1(x,\xi')\in S^0_{\rm tan}$ be such that, $\chi_1=1$ on the 
support of $\chi$, and $\supp  \chi_1\subset \{ R(x,\xi')-1\ge \eps/2 \}\cup \{ x_d\le 3\delta \}$. 
Observe that $\chi_1\rho\in  S^{1}_{\rm tan}$.
We have
\begin{align*}
\big(hD_{x_d}  +  i\op_{sc}(\chi_1\rho) \big)  \big(hD_{x_d}  -  i\op_{sc}(\chi_1\rho) \big)
&= h^2D_{x_d}^2-  i hD_{x_d} \op_{sc}(\chi_1\rho)  +i   \op_{sc}(\chi_1\rho) hD_{x_d}   \\
&\quad +\op_{sc}(\chi_1\rho)^2 \\
&=   h^2D_{x_d}^2-  i [hD_{x_d} ,\op_{sc}(\chi_1\rho)  ] +\op_{sc}(\chi_1\rho)^2,
\end{align*}
and
\begin{align*}
& [hD_{x_d} ,\op_{sc}(\chi_1\rho)  ] = h\op_{sc}\big( D_{x_d} (\chi_1\rho)  \big), \text{  where } 
 D_{x_d} (\chi_1\rho) \in S^1_{\rm tan}, \\
& \op_{sc}(\chi_1\rho)^2 =  \op_{sc}\big( \chi_1^2( R(x,\xi')-1)  \big) +  h\op_{sc}(r_1),  \text{  where } 
 r_1\in  S^1_{\rm tan} .
\end{align*}
 Then 
\begin{align*}
\big(hD_{x_d}  +  i\op_{sc}(\chi_1\rho) \big)  \big(hD_{x_d}  -  i\op_{sc}(\chi_1\rho) \big)
&= h^2D_{x_d}^2 +    \op_{sc}\big( \chi_1^2( R(x,\xi')-1)  \big)  +h \op_{sc}(\tilde r_1)    ,%\\
\end{align*}
where $\tilde r_1\in S^1_{\rm tan}$.

Applying Lemma~\ref{lem: localization in each region} we obtain
\begin{align}
	\label{eq: factorization elliptic points localized}
\big(hD_{x_d}  +  i\op_{sc}(\chi_1\rho) \big)  \big(hD_{x_d} 
 -  i\op_{sc}(\chi_1\rho) \big) \op_{sc}(\chi) v_h=hq_2^h,
\end{align}
where $  \|    q_2^h    \|_{L^2(x_d>0)}\le C_\eps$.

Let $z= \big(hD_{x_d}  -  i\op_{sc}(\chi_1\rho) \big) \op_{sc}(\chi) v_h$; 
$z$ depends on $h$ but for the
sake of simplicity we prefer to denote it $z$. 
From \eqref{eq: factorization elliptic points localized} we have 
\[
\big(hD_{x_d}  +  i\op_{sc}(\chi_1\rho) \big) z=hq_2^h,
\]
in $x_d>0$.
We then have 
\begin{equation}
	\label{eq: ellipt. int. parts}
2\Re\big(  (hD_{x_d}  +i   \op_{sc}(\chi_1\rho)   ) z|i \op_{sc}(\chi_1\rho)  z\big)_{L^2(x_d>0)}
\lesssim h\| q_2^h\|_{{L^2(x_d>0)}} \| \op_{sc}(\chi_1\rho)  z \|_{{L^2(x_d>0)}} .
\end{equation}
We recall the integration  by parts formula in semiclassical context,
\begin{equation}
	\label{form: integ. parts}
(u|h D_{x_d}w)_{L^2(x_d>0)}=(h D_{x_d}u| w)_{L^2(x_d>0)} -ih( u_{|x_d=0} |  w_{|x_d=0} )_0,
\end{equation}
for $u$ and $w$ sufficiently smooth.
Taking $w=i   \op_{sc}(\chi_1\rho) z $ and $u=z$ we have
\[
(z|ih D_{x_d}   \op_{sc}(\chi_1\rho) z)_{L^2(x_d>0)}=(h D_{x_d}z| i   \op_{sc}(\chi_1\rho) 
z)_{L^2(x_d>0)} -ih( z_{|x_d=0} |  i   \op_{sc}(\chi_1\rho) z_{|x_d=0} )_0.
\]
As
\(
ih D_{x_d}   \op_{sc}(\chi_1\rho) =  i  \op_{sc}(\chi_1\rho) h D_{x_d}  
+ h\op_{sc}(\d_{x_d}(\chi_1\rho) ), 
\)
and 
\(
 \op_{sc}(\chi_1\rho)= \op_{sc}(\chi_1\rho)^*+ h  \op_{sc}(r_0),
\)
where $r_0\in S^0_{\rm tan}$ we obtain
\begin{align}
	\label{est: ellipt. estimation}
2\Re (  hD_{x_d}  z| i \op_{sc}(\chi_1\rho)  z)_{L^2(x_d>0)}
= h\Re(  z_{|x_d=0} |    \op_{sc}(\chi_1\rho) z_{|x_d=0} )_0 +hK,
\end{align}
where 
\[
|K|\lesssim  
  \| hDz\|_{L^2(x_d>0)}  \| z\|_{L^2(x_d>0)} +  \| z\|_{L^2(x_d>0)} ^2.
\]
From equation satisfied by $z$, we have
\begin{align*}
 \|hD_{x_d}z \|_{L^2(x_d>0)} 
& \lesssim \|   \op_{sc}(\chi_1\rho)  z\|_{L^2(x_d>0)}+ h\| q_2^h\|_{{L^2(x_d>0)}}\\
 &\lesssim    \| \op_{sc}(\est{\xi'})z\|_{L^2(x_d>0)}
  + h\| q_2^h\|_{{L^2(x_d>0)}} .
\end{align*}
Then 
\begin{align}
	\label{eq: ellipt. int. reste}
|K|\lesssim
  \| \op_{sc}(\est{\xi'})z\|_{L^2(x_d>0)}\| z\|_{L^2(x_d>0)}
  + h^2\| q_2^h\|_{{L^2(x_d>0)}}^2.
\end{align}
From \eqref{eq: ellipt. int. parts},  \eqref{est: ellipt. estimation} and \eqref{eq: ellipt. int. reste}
we yield
\begin{multline}
	\label{est: ellipt. boundary}
 \| \op_{sc}(\chi_1\rho)  z\|_{L^2(x_d>0)}^2
 + h\Re(  z_{|x_d=0} |    \op_{sc}(\chi_1\rho) z_{|x_d=0} )_0    \\
 \lesssim  
 h\| \op_{sc}(\est{\xi'})z\|_{L^2(x_d>0)}
\| z\|_{L^2(x_d>0)}
+ h^2\| q_2^h\|_{{L^2(x_d>0)}}^2,
\end{multline}
as we can estimate 
\[
 h\| q_2^h\|_{{L^2(x_d>0)}} \| \op_{sc}(\chi_1\rho)  z \|_{{L^2(x_d>0)}} 
 \le \alpha  \| \op_{sc}(\chi_1\rho)  z \|_{{L^2(x_d>0)}} ^2
 + C_\alpha h^2\| q_2^h\|_{{L^2(x_d>0)}}^2,
\]
and absorb the term 
\(
  \| \op_{sc}(\chi_1\rho)  z \|_{{L^2(x_d>0)}} ^2,
\)
by the left hand side of \eqref{est: ellipt. boundary} if $\alpha$ is sufficiently small.
%%%%%%%%%%%%%%%%%%%%%%%%%%%%
%
%   LEMMA
%
%%%%%%%%%%%%%%%%%%%%%%%%%%%%
\begin{lemma}
	\label{lem: est. ellip. H1 norm}
\[
 \| \op_{sc}(\est{\xi'})z\|_{L^2(x_d>0)}\lesssim   \| \op_{sc}(\chi_1\rho)  z \|_{{L^2(x_d>0)}} 
  + h\|hD v_h \|_{L^2(x_d>0)}+ h\| v_h \|_{L^2(x_d>0)}.
 \]
\end{lemma}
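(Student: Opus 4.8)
The plan is to read this as an elliptic estimate for the first–order tangential operator $\op_{sc}(\chi_1\rho)$, which is elliptic of order $1$ on the region where $z$ effectively lives. First I would fix an auxiliary symbol $\chi_2\in S^0_{\rm tan}$ with $\chi_2=1$ on a \nhd of $\supp\chi$ and $\supp\chi_2\subset\{\chi_1=1\}$, and then a further $\chi_3\in S^0_{\rm tan}$ with $\chi_3=1$ on $\supp\chi_2$ and $\supp\chi_3\subset\{\chi_1=1\}$. Writing $\op_{sc}(\est{\xi'})z=\op_{sc}(\est{\xi'})\op_{sc}(\chi_2)z+\op_{sc}(\est{\xi'})\op_{sc}(1-\chi_2)z$ reduces the lemma to estimating the two pieces separately.

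For the second piece I would use that $z=hD_{x_d}\op_{sc}(\chi)v_h-i\op_{sc}(\chi_1\rho)\op_{sc}(\chi)v_h$. Commuting $hD_{x_d}$ to the right through $\op_{sc}(\chi)$ by the exact formula $[hD_{x_d},\op_{sc}(\chi)]=h\op_{sc}(D_{x_d}\chi)$, and expanding $\op_{sc}(\chi_1\rho)\op_{sc}(\chi)$ by the composition rule, one sees that $z$ is, modulo $h\op_{sc}(S^0_{\rm tan})$ applied to $v_h$, a combination of operators with symbols supported in $\supp\chi$ applied to $v_h$ and to $hD_{x_d}v_h$; in particular $\|z\|_{L^2(x_d>0)}\lesssim\|v_h\|_{L^2(x_d>0)}+\|hDv_h\|_{L^2(x_d>0)}$, which I would record for later. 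Since $\supp(1-\chi_2)$ is disjoint from $\supp\chi$, composing any of these with $\op_{sc}(1-\chi_2)$ produces an operator which is $\mathcal O(h^\infty)$ from $L^2$ into every $H^s_{sc}$, so that $\|\op_{sc}(\est{\xi'})\op_{sc}(1-\chi_2)z\|_{L^2(x_d>0)}\lesssim h\big(\|v_h\|_{L^2(x_d>0)}+\|hDv_h\|_{L^2(x_d>0)}\big)$.

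For the main piece, $\op_{sc}(\est{\xi'})\op_{sc}(\chi_2)=\op_{sc}(\est{\xi'}\chi_2)+h\op_{sc}(r_1)$ with $r_1\in S^0_{\rm tan}$, so it suffices to handle $\op_{sc}(\est{\xi'}\chi_2)z$. On $\supp\chi_2\subset\{\chi_1=1\}$ we have $\chi_1\rho=(R(x,\xi')-1)^{1/2}$ with $R(x,\xi')-1\ge\eps/2$, and since $R$ is a positive definite quadratic form in $\xi'$ this yields $\chi_1\rho\ge c_\eps\est{\xi'}$ there for some $c_\eps>0$. Hence $e:=\est{\xi'}\chi_2\chi_3/(\chi_1\rho)$ is a well defined symbol in $S^0_{\rm tan}$ and $e\,(\chi_1\rho)=\est{\xi'}\chi_2$, so symbol calculus gives $\op_{sc}(e)\op_{sc}(\chi_1\rho)=\op_{sc}(\est{\xi'}\chi_2)+h\op_{sc}(r_2)$ with $r_2\in S^0_{\rm tan}$. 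Therefore $\|\op_{sc}(\est{\xi'}\chi_2)z\|_{L^2(x_d>0)}\lesssim\|\op_{sc}(\chi_1\rho)z\|_{L^2(x_d>0)}+h\|z\|_{L^2(x_d>0)}$, and inserting the bound on $\|z\|_{L^2(x_d>0)}$ from the previous step and combining with the off-support piece gives the asserted inequality.

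The main obstacle is not conceptual but the symbol–class bookkeeping forced by the fact that the elliptic region is not bounded in $\xi'$: one must carry $\chi_1\rho\in S^1_{\rm tan}$ rather than $S^0_{\rm tan}$, keep $\rho=(R-1)^{1/2}$ confined to $\{R-1>0\}$ (whence the chain of cut-offs $\chi\prec\chi_2\prec\chi_3\prec\{\chi_1=1\}$), and remember that the ellipticity constant $c_\eps$, and hence the implicit constants in the final estimate, depend on $\eps$.
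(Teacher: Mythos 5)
Your proof is correct and follows essentially the same route as the paper: a microlocal parametrix for $\op_{sc}(\chi_1\rho)$ on a cutoff region containing $\supp\chi$ (the paper composes $\op_{sc}(\est{\xi'})$ with $\op_{sc}(\chi_2\rho^{-1})$, you use the single order-zero symbol $\est{\xi'}\chi_2\chi_3/(\chi_1\rho)$, which is the same thing), together with the disjoint-support argument for $\op_{sc}(\est{\xi'})\op_{sc}(1-\chi_2)z$ via the explicit form of $z$. The only cosmetic difference is that the paper absorbs the residual $h\|z\|_{L^2(x_d>0)}$ into the left-hand side using $\|z\|\le\|\op_{sc}(\est{\xi'})z\|$, whereas you bound $\|z\|$ directly by $\|v_h\|+\|hDv_h\|$; both are fine.
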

\begin{proof} 
Let $\chi_2(x,\xi')\in S^0_{\rm tan}$ be such that, $\chi_2=1$ on the 
support of $\chi_1$, and 
$\supp  \chi_1\subset \{ R(x,\xi')-1\ge \eps/4 \}\cup \{ x_d\le 4\delta \}$. 
We have by symbol calculus
\[
\op_{sc}(\chi_2\rho^{-1})\op_{sc}(\chi_1\rho) =\op_{sc}(\chi_1) + h\op_{sc}(r_1),
\]
where $r_1\in S^{-1}_{\rm tan}$.
Thus we obtain
\[
 \| \op_{sc}(\est{\xi'}) \op_{sc}(\chi_1) z\|_{L^2(x_d>0)}
 \lesssim
  \| \op_{sc}(\chi_1\rho)  z\|_{L^2(x_d>0)}
  +h   \|   z\|_{L^2(x_d>0)}.
\]
From this estimate we obtain
\begin{align}
	\label{est: ellipt H1 norm}
 \| \op_{sc}(\est{\xi'}) z\|_{L^2(x_d>0)}
& \lesssim 
  \| \op_{sc}(\est{\xi'}) \op_{sc}(1-\chi_1) z\|_{L^2(x_d>0)}  \notag \\
&\quad  +  \| \op_{sc}(\chi_1\rho)  z\|_{L^2(x_d>0)}
  +h   \|   z\|_{L^2(x_d>0)}.
\end{align}
From definition of $z$, we can write
\(
z= \op_{sc}(\chi) \big(hD_{x_d}  + \op_{ sc}  (r'_1)\big)v_h,
\)
where $r'_1\in  S^{1}_{\rm tan} $.
From symbol calculus and support properties of $\chi$ and $\chi_1$
the operator $ \op_{sc}(\est{\xi'}) \op_{sc}(1-\chi_1)\op_{sc}(\chi)$  is bounded on $L^2$ by 
$Ch$. From~\eqref{est: ellipt H1 norm} we thus have
\[
\| \op_{sc}(\est{\xi'}) z\|_{L^2(x_d>0)}
 \lesssim 
  h\|hD v_h \|_{L^2(x_d>0)}+  h\|v_h \|_{L^2(x_d>0)}
 +  \| \op_{sc}(\chi_1\rho)  z\|_{L^2(x_d>0)}
  +h   \|   z\|_{L^2(x_d>0)}.
\]
We obtain the statement as we can absorb $h   \|   z\|_{L^2(x_d>0)}$ by the left hand side.
\end{proof}    

From~\eqref{est: ellipt. boundary} and Lemma~\ref{lem: est. ellip. H1 norm} we deduce
\begin{multline*}
 \| \op_{sc}(\chi_1\rho)  z\|_{L^2(x_d>0)}^2
 + h\Re(  z_{|x_d=0} |    \op_{sc}(\chi_1\rho) z_{|x_d=0} )_0  
\\
 \lesssim
   h^2\|hD v_h \|_{L^2(x_d>0)}^2+ h^2\| v_h \|_{L^2(x_d>0)}^2
  + h^2\| q_2^h\|_{{L^2(x_d>0)}}^2,
\end{multline*}
as
\(
\| z\|_{L^2(x_d>0)}\le \| \op_{sc}(\est{\xi'})z\|_{L^2(x_d>0)}.
\)

From Lemma~\ref{lem: est. ellip. H1 norm} we obtain
\begin{equation}
	\label{est: elliptic case tangential derivative}
 \| \op_{sc}(\est{\xi'})  z\|_{L^2(x_d>0)} 
 \lesssim
   h\|hD v_h \|_{L^2(x_d>0)}^2+ h\| v_h \|_{L^2(x_d>0)}
  + h\| q_2^h\|_{{L^2(x_d>0)}}.
\end{equation}
From equation satisfied by $z$ we have
\[
\| h D_{x_d}z\|_{{L^2(x_d>0)}}
\lesssim  \| \op_{sc}(\est{\xi'})  z\|_{L^2(x_d>0)} + h\| q_2^h\|_{{L^2(x_d>0)}}.
\]
From this estimate, \eqref{est: elliptic case tangential derivative} and 
 trace formula~\eqref{eq: trace formula Hs}, we deduce
\begin{equation}
	\label{eq: estimation trace z}
	| z_{|x_d=0}|_{H^{1/2}_{sc}}\le C_\eps h^{1/2}.
\end{equation}
  From definition of $z$, we have, for $x_d>0$ and by symbol calculus,
  \begin{align*}  
  z= \op_{sc}(\chi) hD_{x_d} v_h-i  \op_{sc}(\chi \rho) v_h + hz_1,
  \end{align*}
  where  $ z_1= \op_{sc}( r_0) v_h$ and $r_0\in S_{\rm tan}^0$. In particular 
  \begin{align*}
   \|  z_1 \|_{L^2(x_d>0)} +  \| hD z_1  \|_{L^2(x_d>0)}\le  C_\eps .
  \end{align*}
   Let $u_0=h(D_{x_d}v_h)_{|x_d=0}$, $u_1=(v_h)_{|x_d=0}$, $\chi_0=\chi_{|x_d=0}$ 
   and $\rho_0=\rho_{|x_d=0}$. 
   From \eqref{eq: estimation trace z}, we have
   \begin{align*}
   \op_{sc}(\chi_0) u_0 -i\op_{sc}(\chi_0 \rho_0) u_1= h^{1/2} z_4, \text{ where } |z_4|_{L^2}\le C_\eps.
   \end{align*}
 Let $\Phi\in \Con^\infty(\d\Omega) $, we have 
  \begin{equation*}
   \Phi   \op_{sc}(\chi_0) =   \op_{sc}(\chi_0)\Phi +h  \op_{sc}(r_0) 
  \text{ and }  \Phi \op_{sc}(\chi_0 \rho_0)
  = \op_{sc}(\chi_0 \rho_0) \Phi+ h  \op_{sc}(\tilde r_0) , 
  \end{equation*}
   where $r_0, \tilde r_0\in S_{\rm tan}^0$ by symbol calculus.
  From Proposition~\ref{prop: a priori estimates on traces}, we have 
  \begin{equation}  \label{eq: trace relation elliptic case localized}
   \op_{sc}(\chi_0) \Phi u_0 -i\op_{sc}(\chi_0 \rho_0) \Phi  u_1= h^{1/2} z_5, \text{ where } |z_5|_{L^2}\le C_\eps.
  \end{equation}  
  With this equation we can obtain trace estimates into 
  $\d\Omega_D$ and $\d\Omega_N$.
 %%%%%%%%%%%%%%%%%%%%%%%%
 %
 %   Proposition
 %
 %%%%%%%%%%%%%%%%%%%%%%%%%   
\begin{proposition} 
   	\label{proposition: estimation elliptique Dirichlet Neumann}
  Let $\Phi\in \Con^\infty(\d\Omega) $. 
  
  If  $\Phi$ is supported on $\d\Omega_D$. Then 
  \begin{equation*}
  |  \op_{sc}(\chi_0) \Phi u_0 |_{L^2}\le C_\eps h^{1/2}.
  \end{equation*}
  
 If $\Phi$ is supported on $\d\Omega_N$. Then 
  \begin{equation*}
  |  \op_{sc}( \chi_0) \Phi u_1 |_{H^1_{sc}}\le C_\eps h^{1/2}.
  \end{equation*}
  \end{proposition}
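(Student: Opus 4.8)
The starting point is the relation~\eqref{eq: trace relation elliptic case localized}, together with the Zaremba boundary conditions satisfied by $v_h$. Since $v_h$ is of the form $\theta(h^2P)(\,\cdot\,)$ with $\theta\in\Con_0^\infty$, it lies in the domain of $P$ (in fact in $\bigcap_k\mathcal D(P^k)$), hence $(v_h)_{|\d\Omega_D}=0$ and $(\d_\nu v_h)_{|\d\Omega_N}=0$. In the boundary coordinates used here this means that $u_1=(v_h)_{|x_d=0}$ vanishes above $\d\Omega_D$, while above $\d\Omega_N$ one has $u_0=h(D_{x_d}v_h)_{|x_d=0}=0$ — or, allowing for the harmless conjugation of the footnote, $\Phi u_0=h\op_{sc}(\ell)\Phi u_1$ with $\ell\in S^0_{\rm tan}$. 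The plan is to feed this information into~\eqref{eq: trace relation elliptic case localized} and treat the two cases separately.

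\emph{Dirichlet case.} If $\supp\Phi\subset\d\Omega_D$ then $\Phi u_1=0$, so~\eqref{eq: trace relation elliptic case localized} collapses to $\op_{sc}(\chi_0)\Phi u_0=h^{1/2}z_5$ with $|z_5|_{L^2}\le C_\eps$, which is exactly the asserted bound $|\op_{sc}(\chi_0)\Phi u_0|_{L^2}\le C_\eps h^{1/2}$.

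\emph{Neumann case.} If $\supp\Phi\subset\d\Omega_N$ then $\Phi u_0$ is either $0$ or $h\op_{sc}(\ell)\Phi u_1$; in either case, since $\op_{sc}(\chi_0)\Phi\op_{sc}(\ell)$ is $L^2$-bounded and $|u_1|_{L^2}\lesssim|u_1|_{H^{1/2}_{sc}}\le Ch^{-1/2}$ by Proposition~\ref{prop: a priori estimates on traces}, the first term of~\eqref{eq: trace relation elliptic case localized} is $O(h^{1/2})$ in $L^2$, whence $|\op_{sc}(\chi_0\rho_0)\Phi u_1|_{L^2}\le C_\eps h^{1/2}$. It then remains to invert $\op_{sc}(\chi_0\rho_0)$ microlocally on $\supp\chi_0$, where $\rho_0=(R(x',0,\xi')-1)^{1/2}$ is elliptic of order $1$ because $R-1\ge\eps>0$ there. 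Concretely, choosing $\chi_2\in S^0_{\rm tan}$ equal to $1$ on $\supp\chi_0$ and supported in $\{R-1\ge\eps/2\}$, symbol calculus gives $\op_{sc}(\est{\xi'}\chi_2\rho_0^{-1})\op_{sc}(\chi_0\rho_0)=\op_{sc}(\est{\xi'}\chi_0)+h\op_{sc}(r_0)$ with $r_0\in S^0_{\rm tan}$; taking $L^2$ norms and using $|u_1|_{L^2}\le Ch^{-1/2}$ again yields $|\op_{sc}(\est{\xi'}\chi_0)\Phi u_1|_{L^2}\le C_\eps h^{1/2}$, and since $\op_{sc}(\est{\xi'})\op_{sc}(\chi_0)=\op_{sc}(\est{\xi'}\chi_0)+h\op_{sc}(s_0)$ with $s_0\in S^0_{\rm tan}$ this reads precisely $|\op_{sc}(\chi_0)\Phi u_1|_{H^1_{sc}}\le C_\eps h^{1/2}$.

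The substantive work has already been done in establishing~\eqref{eq: trace relation elliptic case localized}; the only points requiring care are (i) recording the Neumann condition correctly in the coordinates actually used — this is where the footnote's conjugation enters, and one must check that the extra term is of order $0$, so that the factor $h$ together with $|u_1|_{L^2}\le Ch^{-1/2}$ absorbs it — and (ii) organizing the parametrix for $\op_{sc}(\chi_0\rho_0)$, noting that $\chi_0$ is \emph{not} compactly supported in $\xi'$, so all cut-offs must be chosen with nested supports on which $R-1$ stays bounded below by a fixed multiple of $\eps$.
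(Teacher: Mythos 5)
Your proof is correct and follows essentially the same route as the paper: in the Dirichlet case both read the bound directly off \eqref{eq: trace relation elliptic case localized}, and in the Neumann case both invert $\op_{sc}(\chi_0\rho_0)$ by an elliptic parametrix with nested cut-offs on $\{R-1\gtrsim\eps\}$, absorbing the $O(h)$ remainders with $|u_1|_{L^2}\le Ch^{-1/2}$ from Proposition~\ref{prop: a priori estimates on traces}. Your extra care with the footnote's conjugation (recording the Neumann condition as $\Phi u_0=h\op_{sc}(\ell)\Phi u_1$ rather than $\Phi u_0=0$) is a harmless refinement of the paper's argument, not a different method.
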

  \begin{proof}
  If $\Phi$ is supported on $\d\Omega_D$, then $ \Phi  u_1=0$, \eqref{eq: trace relation elliptic case localized} 
  gives the first result.
   If $\Phi$ is supported on $\d\Omega_N$,  then $ \Phi  u_0=0$. Let 
   $\chi_1\in\Con^\infty (\R^{d-1}\times \R^{d-1})$ 
   be such that  $\chi_1=1$ on the support of $\chi_0$, $\supp \chi_1\subset\{  R(x',0,\xi')-1\ge \eps/2 \}$, $\chi_1
   \in S^0_{\rm tan}$, in particular we have $\chi_0\chi_1=\chi_0$ and $\rho_0\ne 0$ on support of $\chi_1$.
   We have
   $  \op_{sc} \est{\xi'}   \op_{sc} (\chi_1 \rho_0^{-1}) \op_{sc}(\chi_0 \rho_0) =  \op_{sc} \est{\xi'}   \op_{sc}(\chi_0 )
    +h\op_{sc}(r_0)$, where $r_0\in S^0_{\rm tan}$, by symbol calculus.  From 
    Proposition~\ref{prop: a priori estimates on traces} and \eqref{eq: trace relation elliptic case localized}, we have
    \begin{align*}
   |  \op_{sc}( \chi_0) \Phi u_1 |_{H^1_{sc}}   &\le   |  \op_{sc} \est{\xi'}   \op_{sc} 
   (\chi_1 \rho_0^{-1}) \op_{sc}(\chi_0 \rho_0) \Phi u_1  |_{L^2} + h| \op_{sc}(r_0) \Phi u_1 |_{L^2}  \\
 &\le h^{1/2} |  \op_{sc} \est{\xi'}   \op_{sc}   (\chi_1 \rho_0^{-1})   z_5 |_{L^2}  + h| \op_{sc}(r_0) \Phi u_1 |_{L^2} \\
 &\le C_\eps h^{1/2}.
       \end{align*}
       This gives the second estimates.
  \end{proof}
\begin{proposition}
	\label{prop: convergence traces D and N}
Let $\Phi\in \Con^\infty(\d\Omega) $. We assume that $\Phi$ is supported either on $\d\Omega_D$ or on  
$\d\Omega_N$.
\begin{align*}
&h^{1/2}|(v_h)_{|x_d=0}|_{H^{1}_{sc}}  \to0 \text{ as } h\to 0,  \\
&h^{1/2}| (hD_{x_d}v_h)_{|x_d=0}|_{L^2}  \to0 \text{ as } h\to 0.
\end{align*} 
\end{proposition}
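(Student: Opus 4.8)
The plan is to microlocalize the boundary traces $\Phi(v_h)_{|x_d=0}$ and $\Phi(hD_{x_d}v_h)_{|x_d=0}$ on $T^*\d\Omega$ into the three zones analysed above --- hyperbolic, glancing and elliptic --- and to combine Proposition~\ref{prop: estimation traces hyperbolic}, Proposition~\ref{prop: estimation traces glancing} and Proposition~\ref{proposition: estimation elliptique Dirichlet Neumann} through a double limit, first $h\to0$ and then a parameter $\eps\to0$. By a partition of $\Phi$ we may assume $\supp\Phi$ lies in one coordinate chart in which $\Omega=\{x_d>0\}$. Set $u_1=(v_h)_{|x_d=0}$, $u_0=h(D_{x_d}v_h)_{|x_d=0}$; by Proposition~\ref{prop: a priori estimates on traces} one has $|u_1|_{L^2}+|u_0|_{H^{-1/2}_{sc}}\lesssim h^{-1/2}$, and the hypothesis $\supp\Phi\subset\d\Omega_D$ (resp. $\subset\d\Omega_N$) means $\Phi u_1=0$ (resp. $\Phi u_0=0$), which disposes of one of the two assertions and feeds the elliptic estimate for the other.

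Fix $\eps>0$ and let $\chi^H_0,\chi^G_0,\chi^E_0$ be the restrictions to $x_d=0$ of the hyperbolic, glancing and elliptic cut-offs of~\eqref{def: cut-off for hyperbolic region},~\eqref{eq: cutoff glancing region},~\eqref{eq: cutoff elliptic region} with parameter $\eps$. Since $1-\chi^G_\eps$ vanishes on a neighbourhood of $\{R-1=0\}$, I split $1-\chi^G_0=a^H_0+a^E_0$ with $a^H_0,a^E_0$ smooth, $\supp a^H_0\subset\{R-1\le-2\eps\}\subset\{\chi^H_0=1\}$ and $\supp a^E_0\subset\{R-1\ge2\eps\}\subset\{\chi^E_0=1\}$. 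The operators $\op_{sc}(\chi^G_0),\op_{sc}(a^H_0),\op_{sc}(a^E_0)$ sum exactly to the identity, so
\[
\Phi u_j=\Phi\op_{sc}(\chi^G_0)u_j+\Phi\op_{sc}(a^H_0)u_j+\Phi\op_{sc}(a^E_0)u_j,\qquad j=0,1.
\]
In the hyperbolic term I insert $\op_{sc}(\chi^H_0)$ to the right of $\op_{sc}(a^H_0)$, in the elliptic term I commute $\Phi$ past $\op_{sc}(a^E_0)$ and insert $\op_{sc}(\chi^E_0)$ immediately to the left of $\Phi u_j$; all the resulting symbol-calculus errors are $h$ times an operator with a compactly supported ($\eps$-dependent) symbol applied to $u_j$, hence bounded in $H^1_{sc}$, resp. $L^2$, by $C_\eps h\cdot h^{-1/2}=C_\eps h^{1/2}$. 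The point of this ordering is that $\Phi$ (which does not depend on $\eps$) always stays outermost and the glancing cut-off $\chi^G_0$ always stays directly against the trace.

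It then remains to estimate the three principal terms after multiplying by $h^{1/2}$, using that multiplication by $\Phi$ is bounded on $H^1_{sc}$ and on $L^2$ with norm independent of $\eps$ and $h$. By Proposition~\ref{prop: estimation traces glancing},
\[
h^{1/2}\bigl|\Phi\op_{sc}(\chi^G_0)u_1\bigr|_{H^1_{sc}}\le C\eps^{1/4}+C_\eps h^{1/8},\qquad
h^{1/2}\bigl|\Phi\op_{sc}(\chi^G_0)u_0\bigr|_{L^2}\le C\eps^{3/4}+C_\eps h^{1/8},
\]
with $C$ independent of $\eps$; by Proposition~\ref{prop: estimation traces hyperbolic}, $|\op_{sc}(\chi^H_0)u_j|\le C_\eps$, and since $\|\op_{sc}(a^H_0)\|\le C_\eps$ is harmless once multiplied by $h^{1/2}$ the hyperbolic terms are $\le C_\eps h^{1/2}$; for the elliptic terms, if $\supp\Phi\subset\d\Omega_N$ then $\Phi u_0=0$ and Proposition~\ref{proposition: estimation elliptique Dirichlet Neumann} gives $|\op_{sc}(\chi^E_0)\Phi u_1|_{H^1_{sc}}\le C_\eps h^{1/2}$, symmetrically if $\supp\Phi\subset\d\Omega_D$, so in all cases the elliptic contribution times $h^{1/2}$ is $\le C_\eps h$. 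Adding up, for each fixed $\eps>0$,
\[
\limsup_{h\to0}h^{1/2}\bigl|\Phi(v_h)_{|x_d=0}\bigr|_{H^1_{sc}}\le C\eps^{1/4},\qquad
\limsup_{h\to0}h^{1/2}\bigl|\Phi(hD_{x_d}v_h)_{|x_d=0}\bigr|_{L^2}\le C\eps^{3/4},
\]
and letting $\eps\to0$ proves the proposition.

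The only delicate point --- and the reason this whole section carries the parameter $\eps$ --- is the bookkeeping of the $\eps$-dependence. The glancing cut-off $\chi^G_\eps$, and the auxiliary symbols $a^H_0,a^E_0$, have $S^0_{\rm tan}$ seminorms blowing up as $\eps\to0$, so one must never let such an operator multiply the gain $\eps^{1/4}$ (resp. $\eps^{3/4}$) through an $\eps$-dependent bound: this forces $\Phi$ to stay on the outside and $\chi^G_0$ on the inside (against the trace, where the $\eps$-uniform constant of Proposition~\ref{prop: estimation traces glancing} applies), the hyperbolic and elliptic contributions being only $O_\eps(1)$ and $O_\eps(h^{1/2})$ for fixed $\eps$ and hence killed by $h^{1/2}$ before $\eps$ is sent to $0$. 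Everything else is the pseudodifferential symbol calculus of Section~\ref{Notations and pseudo-differential calculus} together with the a priori trace bound of Proposition~\ref{prop: a priori estimates on traces}.
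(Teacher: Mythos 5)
Your proof is correct and follows essentially the same route as the paper: a partition of unity $\chi_H+\chi_G+\chi_E=1$ adapted to the cut-offs of the hyperbolic, glancing and elliptic regions, the three trace estimates of Propositions~\ref{prop: estimation traces hyperbolic}, \ref{prop: estimation traces glancing} and \ref{proposition: estimation elliptique Dirichlet Neumann}, then $\limsup_{h\to 0}$ followed by $\eps\to 0$, exactly as in the paper's (much terser) argument. The extra bookkeeping you do (keeping $\Phi$ outermost, inserting the region cut-offs against the trace, absorbing the $\eps$-dependent symbol-calculus remainders via the a priori bound of Proposition~\ref{prop: a priori estimates on traces}) is a sound elaboration of the same idea; the only slip, inconsequential, is calling the elliptic remainder symbols ``compactly supported'' — they are merely of order $-1$, which still suffices.
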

\begin{proof}
Let $\eps>0$, we can find $\chi_H$, $\chi_G$ and $\chi_E$ satisfying respectively the assumption 
of Propositions \ref{prop: estimation traces hyperbolic}, \ref{prop: estimation traces glancing}, 
\ref{proposition: estimation elliptique Dirichlet Neumann} and furthermore the relation 
$\chi_H+\chi_G+\chi_E=1$. Applying the results of Propositions in each region, we deduce the proposition.
\end{proof}

In what follows we shall prove estimates in a \nhd of $\overline{\d\Omega_N}\cap\overline{ \d\Omega_D}$. The result is more delicate and less classical  than these obtained for Dirichlet or Neumann boundary condition.

Now we assume that $\Phi$ is supported in a neighborhood of a point of $\overline{\d\Omega_N}\cap\overline{ \d\Omega_D}$. 
We can assume that locally this set is given by $x_1=0$ and the support of 
$\Phi$ is contained into  
a fixed domain in $x'$ and into
 $\{ |x_1|\le \mu \eps/2 \}$ where $\mu>0$ will be fixed below sufficiently small.
 Here and in what follows  $\eps$ is the one used to define 
 hyperbolic, glancing, elliptic regions 
 (see respectively~\eqref{def: cut-off for hyperbolic region}, 
 \eqref{eq: cutoff glancing region}  and \eqref{eq: cutoff elliptic region}).
  We assume that $\supp u_0\subset\{ x_1\le 0 \}$ and $\supp u_1\subset\{ x_1\ge 0 \}$.
 We can choose the local coordinates such that 
 \begin{align}  
 	\label{def: de la notation R0 sur Gamma}
  R(x',0,\xi')=\xi_1^2+R_0(x'',\xi'')+x_1r_2(x',\xi'), \text{ where } x'=(x_1,x'')
\text{  and } \xi'=(\xi_1,\xi'') , 
 \end{align}
 $R_0\in S(\est{\xi''}^2,(dx'')^2+\est{\xi''}^{-2}(d\xi'')^2)$ and $r_2\in S^2_{\rm tan}$.
 Indeed in normal geodesic coordinates we have 
 \(
 R(x',0,\xi')=\xi_1^2+R_1(x',\xi'')=\xi_1^2+R_0(x'',\xi'')+x_1r_2(x',\xi')
 \)
 and in fact 
 \(
 r_2(x',\xi')
 \)
 does not depend on $\xi_1$ but we do not use this property in what follows.
 
 Let $\alpha(x'',\xi')=\big(  \xi_1^2+R_0(x'',\xi'')-1+i\eps\big)^{1/2}$, 
and let 
 \begin{equation} 
 	 \label{def: le beta}
  \beta(x'',\xi'')=\big( R_0(x'',\xi'')-1+i\eps  \big)^{1/2}, 
 \end{equation}
 be such that $\Im \beta(x'',\xi'')>0$, for all 
 $(x'',\xi'')\in \R^{d-2}\times\R^{d-2}$. We have 
 \begin{equation*}
  \alpha(x'',\xi')= \big(\xi_1  + i \beta(x'',\xi'')  \big)^{1/2}
 \big(\xi_1 -  i \beta(x'',\xi'')  \big)^{1/2},
 \end{equation*}
 Observe that $\Re \beta(x'',\xi'')>0$, we deduce that
 \begin{align*}
 &\xi_1\mapsto \big(\xi_1  + i \beta(x'',\xi'')  \big)^{\pm 1/2}  
 \text{ are  holomorphic functions in }  \{ \Im \xi_1>0 \},\\
  &\xi_1\mapsto \big(\xi_1  - i \beta(x'',\xi'')  \big)^{\pm 1/2}  
  \text{ are  holomorphic functions on }   \{ \Im \xi_1<0\}.
 \end{align*}
 Let $v_1=\op_{sc}\big(\xi_1  - i \beta(x'',\xi'')  \big)^{ 1/2}\Phi u_1$. The operator 
 $\op_{sc}\big(\xi_1  - i \beta(x'',\xi'')  \big)^{ 1/2}$ is a convolution operator 
 with respect $x_1$ and its kernel 
 is supported in $x_1\ge0$. As $u_1$ is supported in $x_1\ge0$, 
 this implies that $v_1$ is supported in $x_1\ge0$.
 Let $v_0=\op_{sc}(\xi_1+i\beta(x'', \xi''))^{-1/2}\Phi u_0$. As $u_0$ is supported in 
 $x_1\le 0$ and $(\xi_1+i\beta(x'', \xi''))^{-1/2}$ 
is a holomorphic function in $\{ \Im \xi_1>0\}$, $v_0$ is supported in $x_1\le 0$. 
We first prove the following lemma.
\begin{lemma}\label{lemma: estimation v0 v1}
There exists $C>0$, such that for all $\eps\in(0,1)$ we have, for every $h\in(0,1)$
\begin{align*}
&|v_1|_{L^2}  \le  C\eps^{1/5}h^{-1/2}+C_\eps h^{-3/8} ,\\
&|v_0|_{L^2} \le  C\eps^{1/5}h^{-1/2}+C_\eps h^{-3/8},
\end{align*}
where $C_\eps>0$  depends on $\eps$.
\end{lemma}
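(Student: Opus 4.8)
The plan is to estimate $v_0$ and $v_1$ jointly, through the difference $v_0-iv_1$, and then to recover each of them from their complementary supports. Recall that $\supp v_1\subset\{x_1\ge0\}$ and $\supp v_0\subset\{x_1\le0\}$ \emph{exactly}: $u_1$ is supported in $\{x_1\ge0\}$ and $(\xi_1-i\beta)^{1/2}$ is holomorphic in $\{\Im\xi_1<0\}$, while $u_0$ is supported in $\{x_1\le0\}$ and $(\xi_1+i\beta)^{-1/2}$ is holomorphic in $\{\Im\xi_1>0\}$. Hence, as $L^2$--functions, $-iv_1=(v_0-iv_1)\mathbf 1_{\{x_1>0\}}$ and $v_0=(v_0-iv_1)\mathbf 1_{\{x_1<0\}}$, so it is enough to prove $|v_0-iv_1|_{L^2}\le C\eps^{1/5}h^{-1/2}+C_\eps h^{-3/8}$.

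The first step is to split $\Phi u_j=\sum_{*}\Phi\op_{sc}(\chi_{*})u_j$ with $1=\chi_H+\chi_G+\chi_E$ a partition of unity whose members are cut-offs of the types~\eqref{def: cut-off for hyperbolic region}, \eqref{eq: cutoff glancing region} and~\eqref{eq: cutoff elliptic region}. On the hyperbolic and glancing pieces $R$, hence $\xi'$, is bounded, so $\op_{sc}((\xi_1\mp i\beta)^{\pm1/2})$ acts boundedly there with a constant $\lesssim(\Re\beta)^{-1/2}\lesssim\eps^{-1/2}$. For $v_1$ this yields, by Propositions~\ref{prop: estimation traces hyperbolic} and~\ref{prop: estimation traces glancing}, contributions $\lesssim|\op_{sc}(\chi_H)u_1|_{H^{1}_{sc}}\le C_\eps$ and $\lesssim|\op_{sc}(\chi_G)u_1|_{H^1_{sc}}\le C\eps^{1/4}h^{-1/2}+C_\eps h^{-3/8}$; for $v_0$ the loss $\eps^{-1/2}$ is compensated in the glancing region by the extra $\eps^{3/4}$ of the bound for $hD_{x_d}v_h$ in Proposition~\ref{prop: estimation traces glancing}, so the outcome is the same. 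In particular $\op_{sc}(1-\chi_E)(v_0-iv_1)=O(\eps^{1/4}h^{-1/2}+C_\eps h^{-3/8})$ in $L^2$.

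It remains to handle the elliptic piece, where~\eqref{estimation apriori trace} gives only $h^{-1/2}$ and no gain in $\eps$. The key is to combine the Dirichlet--to--Neumann relation~\eqref{eq: trace relation elliptic case localized}, $\op_{sc}(\chi_0)\Phi u_0=i\op_{sc}(\chi_0\rho_0)\Phi u_1+h^{1/2}z_5$ with $|z_5|_{L^2}\le C_\eps$, with the factorization $\alpha=(\xi_1+i\beta)^{1/2}(\xi_1-i\beta)^{1/2}$, valid on the whole real $\xi_1$--axis because $\pm i\beta$ lie off it. Applying $\op_{sc}((\xi_1+i\beta)^{-1/2})$ and using $(\xi_1+i\beta)^{-1/2}\alpha=(\xi_1-i\beta)^{1/2}$ turns this relation into $\op_{sc}(\chi_0)(v_0-iv_1)=\mathcal E$, where $\mathcal E$ gathers $h^{1/2}\op_{sc}((\xi_1+i\beta)^{-1/2})z_5=O(C_\eps h^{1/2})$, the discrepancy arising from $\rho_0-\alpha=(x_1r_2-i\eps)/(\rho_0+\alpha)$ (of size $O(\mu\eps)$ on $\supp\Phi$), and lower-order terms from symbol calculus. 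Adding this to the bound on $\op_{sc}(1-\chi_E)(v_0-iv_1)$ from the previous step gives $|v_0-iv_1|_{L^2}\le C\eps^{1/5}h^{-1/2}+C_\eps h^{-3/8}$, and restricting to $\{x_1>0\}$, resp. $\{x_1<0\}$, completes the proof.

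The hard part will be the estimate of $\mathcal E$, which is also what degrades the clean $\eps^{1/4}$ of the glancing region to $\eps^{1/5}$. Near the glancing edge of $\supp\chi_0$ the symbol $(\xi_1+i\beta)^{-1/2}$ is not of order $-1/2$ but of order $1/2$ with constant $\lesssim\eps^{-1/2}$, so a priori the $\rho_0-\alpha$ term lies in $H^{-1}_{sc}$ rather than $L^2$. One will therefore split $\supp\chi_0$ into a genuinely elliptic part, on which $\op_{sc}((\xi_1+i\beta)^{-1/2})$ is smoothing of order $-1/2$ and the $O(\mu\eps)$ bound survives in $L^2$, and a thin near--glancing collar, absorbed by the a priori trace bound~\eqref{estimation apriori trace}; optimizing the width of the collar, the parameter $\mu$ and the size of the $x_1$--support of $\Phi$ produces the exponent $1/5$.
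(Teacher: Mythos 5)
Your overall strategy coincides with the paper's: work with the combination $v_0-iv_1$, exploit the complementary supports at the very end, split the traces into hyperbolic/glancing/elliptic pieces, use the trace relation \eqref{eq: trace relation elliptic case localized} together with the factorization $\alpha=(\xi_1+i\beta)^{1/2}(\xi_1-i\beta)^{1/2}$, and reduce everything to comparing $\rho_0$ with $\alpha$ on the elliptic region. The bookkeeping of the powers of $\eps$ on the hyperbolic and glancing pieces is consistent with the paper's Lemma~\ref{lem: estimation v0 hyperbolic and glancing} (you apply the conjugating operators in a different order, but the net exponents match).

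The one step that would fail as written is your treatment of the near-glancing collar inside the elliptic comparison $\rho_0\alpha^{-1}-1$, which is precisely where the content of the lemma lies. You propose to absorb the collar ``by the a priori trace bound~\eqref{estimation apriori trace}'' and then optimize its width. But \eqref{estimation apriori trace} gives $|u_1|_{H^{1/2}_{sc}}\le Ch^{-1/2}$ with a constant independent of the collar width, and the cutoff localizing to the collar has symbol of size $1$ there, so the collar contribution is $O(h^{-1/2})$ with \emph{no} power of $\eps$, however thin the collar is made; there is nothing to optimize against. The paper's Lemma~\ref{lem: difference rho alpha elliptic} instead controls the collar $\{\xi_1^2+R_0(x'',\xi'')-1\le 3\eps^{4/5}\}$ by invoking the \emph{glancing} trace estimate, Proposition~\ref{prop: estimation traces glancing}, with the glancing parameter taken equal to the collar width $\eps^{4/5}$; this yields the gain $(\eps^{4/5})^{1/4}=\eps^{1/5}$, which balances the pointwise bound $|(\rho_0-\alpha)\alpha^{-1}|\lesssim\eps^{1/5}$ available off the collar (where $|\alpha^{-1}|\lesssim\eps^{-2/5}$ and $|\rho_0-\alpha|\lesssim\eps^{3/5}$). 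With this replacement, and routine symbol calculus to commute the frequency cutoffs through the nonlocal operators $\op_{sc}((\xi_1\mp i\beta)^{\pm1/2})$, your argument goes through and reproduces the paper's proof.
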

\begin{proof}
Observe that $(\xi_1-i\beta(x'', \xi''))^{\pm1/2}\in S(\est{\xi'}^{\pm 1/2}, (dx')^2+(d\xi')^2)$ then
 \[\op_{sc} (\xi_1-i\beta(x'', \xi''))^{-1/2}\op_{sc} (\xi_1-i\beta(x'', \xi''))^{1/2}= Id+ h\op_{sc}(r_0), \]
 where 
 $r_0\in  S(1, (dx')^2+(d\xi')^2)$. 
 From the definition of $v_1$ we have 
 \begin{equation}
 	\label{eq: link between v1 and u1}
  \op_{sc} (\xi_1-i\beta(x'', \xi''))^{-1/2}v_1=   \Phi u_1+ h\op_{sc}(r_0) u_1.  
 \end{equation}
 As $\op_{sc}(\chi_0 \rho_0) \op_{sc}(r_0)$ has a symbol in $S(\est{\xi'},(dx')^2+(d\xi')^2)$,
we obtain 
\[
 |  \op_{sc}(\chi_0 \rho_0) \op_{sc}(r_0) v_1|_{H_{sc}^{-1/2}}\le C_\eps | u_1 |_{H_{sc}^{1/2}}
\le C_\eps h^{-1/2}
\]
 by Proposition~\ref{prop: a priori estimates on traces}.
 Then
 from~\eqref{eq: trace relation elliptic case localized} we obtain 
   \begin{equation} \label{eq: relation between u0 and v_1}
     \op_{sc}(\chi_0) \Phi u_0 -i\op_{sc}(\chi_0 \rho_0) 
     \op_{sc}\big(\xi_1  - i \beta(x'', \xi'') \big)^{ -1/2} v_1
     = h^{1/2} z_6, \text{ where } |z_6|_{H_{sc}^{-1/2}}\le C_\eps.
   \end{equation}
   To determine $u_0$ and $u_1$ we have to use the support properties of these functions. 
   To do that, we have to modify
   the operators acting on these functions, 
   note  that $\op_{sc}(\chi_0 \rho_0) $ does \emph{not} preserve the
   support of $u_0$.
   
   We introduce three cutoff functions $\chi_H$, $\chi_G$ and $\chi_E$, be such that $\chi_H +  \chi_G+\chi_E=1$ and
   \begin{align*}
 &  \supp \chi_H\subset \{ R(x', 0,\xi')-1  \le -\eps/2  \}, \ 0\le \chi_ H\le 1  \\
 &  \supp \chi_G\subset \{ | R(x', 0,,\xi')-1 |\le   \eps \}, \ 0\le \chi_G \le 1  \\
 &  \supp \chi_E\subset \{ R(x',0,\xi')-1 \ge  \eps/2 \}, \ 0\le \chi_E \le 1  .
   \end{align*}
   We then have $\chi_H$,  $\chi_G$ and $\chi_E$, the $\chi_{|x_d=0}$, defined respectively in the 
   hyperbolic, glancing and elliptic regions (after multiplying  $\eps$ by a fix factor).
   We have $\Phi u_0=  \op_{sc}( \chi_H) \Phi u_0 +   \op_{sc}( \chi_G)\Phi u_0+  \op_{sc}( \chi_E)\Phi u_0$.
   From  hyperbolic estimate given in Proposition~\ref{prop: estimation traces hyperbolic}, 
   Proposition~\ref{prop: a priori estimates on traces} and symbol calculus, we have
   \begin{equation*}
   |  \op_{sc}( \chi_H)  (\Phi u_0 ) |_{L^2}\le C_\eps.
      \end{equation*}
   From  glancing estimate given in Proposition~\ref{prop: estimation traces glancing},  
   Proposition~\ref{prop: a priori estimates on traces} and symbol calculus, we have
   \begin{equation*}
     |  \op_{sc}( \chi_G)(  \Phi u_0 ) |_{L^2}\le C \eps^{3/4} h^{-1/2}
+C_\eps h^{-3/8} .
   \end{equation*}
   We deduce that
   \begin{equation}\label{eq: estimation difference u0 and cutoff elliptic zone}
      |  \Phi u_0 - \op_{sc}( \chi_E)(  \Phi u_0 ) |_{L^2}\le   C \eps^{3/4} h^{-1/2}
+C_\eps h^{-3/8} .
   \end{equation}
   \begin{lemma} \label{lem: estimation v0 hyperbolic and glancing}
With the previously defined notations, we have
   \begin{align}
  & | \op_{sc}(\chi_H\alpha) \op_{sc}\big(\xi_1  - i \beta(x'', \xi'') \big)^{ -1/2} v_1|_{H_{sc}^{-1/2}}\le C_\eps  
  \label{est: hyperbolic on v1}
   \\
&    |\op_{sc}(\chi_G \alpha) \op_{sc}\big(\xi_1  - i \beta(x'', \xi'') \big)^{ -1/2} v_1 |_{H_{sc}^{-1/2}}  \le 
    C \eps^{3/4} h^{-1/2}
+C_\eps h^{-3/8} ,
\label{est: glancing on v1}
   \end{align}
   where $\alpha(x'',\xi')=\big(  \xi_1^2+R_0(x'',\xi'')-1+i\eps\big)^{1/2}$.
     \end{lemma}
   \begin{proof}
   Let $\tilde\chi_H$ be such that $\tilde\chi_H=1$ on the support of $\chi_H$ and $\tilde\chi_H=0$ if 
   $R_0(x',\xi') - 1\ge -\eps/4$. 
    Let $\tilde\chi_G$ be such that $\tilde\chi_G=1$ on the support of $\chi_G$ and $\tilde\chi_H=0$ if 
   $|R_0(x',\xi') - 1|\ge 2\eps$. Let $J=G$ or $H$.
   
   By symbol calculus, we have
   \begin{equation*}
   \op_{sc}(\alpha \chi_J)=\op_{sc}(\alpha \chi_J)\op_{sc}(\tilde\chi_J)+h\op_{sc}(r_0),
   \end{equation*}
  where $r_0\in S^0_{\rm tan}$. From~\eqref{eq: link between v1 and u1}, we deduce
  \begin{align*}
 &  | \op_{sc}(\chi_J\alpha) \op_{sc}\big(\xi_1  - i \beta(x'', \xi'') \big)^{ -1/2} v_1|_{H_{sc}^{-1/2}}\\
 &\quad  \le 
      | \op_{sc}(\chi_J\alpha)\Phi u_1|_{H_{sc}^{-1/2}}   +   h  | \op_{sc}(\chi_J\alpha) \op_{sc}(r_0)u_1|_{H_{sc}^{-1/2}}  \\
&   \quad      \le  | \op_{sc}(\chi_J\alpha) \op_{sc}(\tilde\chi_J)  u_1|_{H_{sc}^{-1/2}} +
    C_\eps h   | u_1|_{H_{sc}^{1/2}} .
  \end{align*}
   If $J=H$ we have
      \begin{align*}
       | \op_{sc}(\chi_H\alpha) \op_{sc}\big(\xi_1  - i \beta(x'', \xi'') \big)^{ -1/2} v_1|_{H_{sc}^{-1/2}}&\le    
       C_\eps  |\op_{sc}(\tilde\chi_H)  u_1|_{H_{sc}^{1/2}} +
C_\eps     h   |  u_1|_{H_{sc}^{1/2}} .
      \end{align*}
we obtain~\eqref{est: hyperbolic on v1} from Propositions~\ref{prop: a priori estimates on traces} 
and \ref{prop: estimation traces hyperbolic}.
   
 If $J=G$, using that $|x_1|\le \mu\eps/2$, on the support of $\chi_G$, we have $|\alpha|\le C\eps^{1/2}$.
    As 
 \begin{align*}
       | \op_{sc}(\chi_J\alpha) \op_{sc}(\tilde\chi_J)  u_1|_{H_{sc}^{-1/2}}\le
        C | \op_{sc}(\chi_J\alpha) \op_{sc}(\tilde\chi_J)  u_1|_{L^2},
 \end{align*}
we apply  G\aa rding inequality~\eqref{eq: estimation L2 sharp with Garding boundary} and we obtain
       \begin{align*}
       | \op_{sc}(\chi_G\alpha) \op_{sc}\big(\xi_1  - i \beta(x'', \xi'') \big)^{ -1/2} v_1|_{H_{sc}^{-1/2}}&\le    
       C  \eps^{1/2}|\op_{sc}(\tilde\chi_G)  u_1|_{H_{sc}^{1/2}} +
C_\eps     h   |  u_1|_{H_{sc}^{1/2}} ,
      \end{align*}
  where, at the right hand side, we have estimated the $L^2$-norm by the $H^{1/2}$-norm. 
    We obtain~\eqref{est: glancing on v1}
     from    Propositions~\ref{prop: a priori estimates on traces} and \ref{prop: estimation traces glancing}.
   \end{proof}

Following  \eqref{eq: relation between u0 and v_1} (with $\chi_0=\chi_E$), \eqref{eq: estimation difference u0 and cutoff elliptic zone} and
Lemma~\ref{lem: estimation v0 hyperbolic and glancing}, we have 
\begin{multline*}
\Phi u_0-i \op_{sc}\big( \alpha(\chi_H+\chi_G) \big)\op_{sc}(\xi_1 -i\beta(x'', \xi''))^{-1/2}v_1\\
-i \op_{sc}(\chi_E\rho_0)
\op_{sc}(\xi_1-i\beta(x'', \xi''))^{-1/2}v_1=z_7,
\end{multline*}
where $| z_7 |_{H_{sc}^{-1/2}}\le C\eps^{3/4}h^{-1/2}+C_\eps h^{-3/8}$.
 Applying $\op_{sc}( \xi_1+i\beta(x'', \xi'') )^{-1/2}$ to this equation, we obtain 
  \begin{multline} 
  \label{eq: v0 v1 z6 sic}
v_0-i\op_{sc}(\xi_1+i\beta(x'', \xi''))^{-1/2} \op_{sc}\big( \alpha(\chi_H+\chi_G) \big)\op_{sc}(\xi_1-i\beta(x'', \xi''))^{-1/2}v_1  \\
-i\op_{sc}(\xi_1+i\beta(x'', \xi''))^{-1/2} \op_{sc}(\chi_E\rho_0)
\op_{sc}(\xi_1-i\beta(x'', \xi''))^{-1/2}v_1  \\
= \op_{sc}(\xi_1+i\beta(x'', \xi''))^{-1/2}z_7.
\end{multline}
We have to precisely  estimate $z_8= \op_{sc}(\xi_1+i\beta(x'', \xi''))^{-1/2}z_7$. By the definition of $\beta(x'', \xi'')$ 
there exists $C_0>0$
such that 
$\Re \beta(x'', \xi'')\ge C_0 \eps$, as $R_0(x'',\xi'')-1\ge -1$ moreover  if $|\xi''|\ge 2$, then 
$\Re \beta(x'',\xi'')\ge C_0\est{\xi''}$.
   We  deduce that $|\xi_1+i\beta (x'',\xi'') |\ge C_0\eps \est{\xi'}$. This implies that 
   $|(\xi_1+i\beta (x'',\xi''))^{-1/2} |\le C_0\eps ^{-1/2}\est{\xi'}^{-1/2}$, where $C_0>0$.
   We have by symbol calculus
   \begin{align*}
    | z_8|_{L^2}\le |  \op_{sc}\big( (\xi_1+i\beta(x'', \xi''))^{-1/2} \est{\xi'}^{1/2} \big) \op_{sc}(\est{\xi'}^{-1/2})z_7|_{L^2}
    + C_\eps  h|\op_{sc}(\est{\xi'}^{-1/2}) z_7|_{L^2},
   \end{align*}
   and by  G\aa rding inequality~\eqref{eq: estimation L2 sharp with Garding boundary} and the estimate on $z_7$, we have
   \begin{align*}
     | z_8|_{L^2}&\le C \eps^{-1/2} |  \op_{sc}(\est{\xi'}^{-1/2})z_7|_{L^2} 
     + C_\eps  h|\op_{sc}(\est{\xi'}^{-1/2}) z_7|_{L^2},\\
     &\le C\eps^{1/4}h^{-1/2}+C_\eps h^{-3/8}.
   \end{align*}
By symbol calculus, as $(\xi_1+i\beta(x'', \xi''))^{-1/2}  \alpha(\xi_1-i\beta(x'', \xi''))^{-1/2}=1$,    we have
   \begin{multline}
   	\label{eq: zone hyperbolic glancing v1}
 \op_{sc}(\xi_1+i\beta(x'', \xi''))^{-1/2} \op_{sc}\big( \alpha(\chi_H+\chi_G) \big)\op_{sc}(\xi_1-i\beta(x'', \xi''))^{-1/2}  \\
 = \op_{sc}(\chi_H+\chi_G) +h\op_{sc}(r_0),
    \end{multline}
     where $r_0\in S(1,(dx')^2+(d\xi')^2)$. 
     
  By the same argument, we have
  \begin{equation}\label{eq: difference form rho times inverse alpha}
  \op_{sc}(\xi_1+i\beta(x'', \xi''))^{-1/2} \op_{sc}(\chi_E\rho_0)
\op_{sc}(\xi_1-i\beta(x'', \xi''))^{-1/2}=\op_{sc}(\chi_E\rho_0 \alpha^{-1})+h\op_{sc}(r_0),
  \end{equation}
  where $r_0\in S(1,(dx')^2+(d\xi')^2)$. Indeed $(\xi_1\pm i\beta(x'', \xi''))^{-1/2} \in S(\est{\xi'}^{-1/2},(dx')^2+(d\xi')^2)$ and 
  $\chi_E\rho_0\in S(\est{\xi'},(dx')^2+(d\xi')^2)$.

The following lemma gives a precise estimate on $\chi_E(\rho_0\alpha^{-1}-1)$. We shall 
exploit that $\alpha$ and $\rho_0$ are close.
\begin{lemma}  \label{lem: difference rho alpha elliptic}
We have 
\begin{align*}
\big|    \op_{sc}  (\chi_E \rho_0\alpha^{-1} ) v_1 -      \op_{sc}  (\chi_E  ) v_1 \big|_{L^2}
\le C \eps^{1/5} h^{-1/2}+C_\eps h^{-3/8}.
\end{align*} 
\end{lemma}
\begin{proof}
Let $\tilde\Phi(x_1)$ supported in  $\{ |x_1|\le \mu \eps \}$ and $\tilde\Phi=1$ on the support of $\Phi$.
Let $b$ be either the symbol $\chi_E \rho\alpha^{-1} $ or $\chi_E $, we have $b\in S(1,(dx')^2+(d\xi')^2)$. 

By symbol calculus 
we have $\op_{sc}(b (1-\tilde\Phi  ))v_1= h\op(r)u_1$, where $r\in S(\est{\xi'}^{1/2},(dx')^2+(d\xi')^2)$.
By Proposition~\ref{prop: a priori estimates on traces}  we have 
$| \op_{sc}(b (1-\tilde\Phi  ))v_1|_{L^2}\le C_\eps h|  u_1|_{H^{1/2}_{sc}}\le C_\eps h^{1/2}$.
Then we can considerate   $\op_{sc}(\tilde\Phi\chi_E \rho\alpha^{-1}) $ and 
$\op_{sc}( \tilde\Phi  \chi_E )$, instead of respectively
 $\op_{sc}( \chi_E \rho\alpha^{-1}) $ and $\op_{sc}(\chi_E )$.
 
We introduce three cutoff functions $\chi_j\in \Con^\infty(\R^{d-2}\times\R^{d-1})$ be such that
\begin{align*}
& \chi_1=
\begin{cases}
 1 \text{ if }  \quad   \xi_1^2+R_0(x'',\xi'')-1 \le  \eps^{4/5} , \\
 0 \text{ if }   \quad \xi_1^2+R_0(x'',\xi'')-1  \ge 3 \eps^{4/5} ,
\end{cases} \\
& \chi_2=
\begin{cases}
 1 \text{ if }   \quad  1\ge \xi_1^2+R_0(x'',\xi'')-1 \ge 2  \eps^{4/5} ,\\
 0 \text{ if }   \quad  \xi_1^2+R_0(x'',\xi'')-1    \le   \eps^{4/5}  \text{ and } | \xi' |^2-1\ge 3,
\end{cases} \\
& \chi_3=
\begin{cases}
 1 \text{ if }   \quad  \xi_1^2+R_0(x'',\xi'')-1 \ge 2  ,\\
 0 \text{ if }  \quad   \xi_1^2+R_0(x'',\xi'')-1\le 1,
\end{cases}
\end{align*}  
$\chi_1 +\chi_2 +\chi_3 =1$ and $0\le \chi_j\le 1$ for $j=1,2,3$.

\medskip
{\bf Estimation on the support of $\chi_E\chi_1$.  } 

\smallskip

On the support of $\chi_E\chi_1$ we have $\xi_1^2+R_0(x'',\xi'')-1  \le 3 \eps^{4/5} $ and 
$R(x',0,\xi')-1 \ge \eps/2 $, in particular $|\xi'|$ is bounded.
We compute on this domain
\begin{align}\label{eq: difference between rho and alpha}
\rho_0(x',\xi')-\alpha(x'',\xi')= D(x',\xi')\big( x_1r_2(x',\xi')- i\eps \big),
\end{align} 
where $D(x',\xi')= \Big( (  R(x', 0,\xi') -1)^{1/2} + (\xi_1^2+ R_0(x'',\xi'') -1 +i\eps )^{1/2}  \Big)^{-1}$.
Observe that we have $\Re  (\xi_1^2+ R_0(x'',\xi'') -1 +i\eps )^{1/2} >0$ and 
$(  R(x', 0,\xi'') -1)^{1/2}\ge 2^{-1/2}\eps^{1/2}$, then $|D(x',\xi')| \le C_0 \eps^{-1/2}$.
We deduce that $|  \rho_0 -\alpha|\le C_0\eps^{1/2}$ if $|x_1|\le \mu \eps$ where $\mu $ was introduced 
in the definition of $\Phi$.
As 
\begin{equation*}
| \xi_1^2+R_0(x'',\xi'')-1-(R(x',0,\xi')-1) |\le C |x_1|\le C\mu \eps,
\end{equation*}
if $\mu$ is chosen sufficiently small, on the support of $\chi_E$, we have 
$\xi_1^2+R_0(x'',\xi'')-1\ge C_0\eps$, for $C_0>0$. We deduce that $|\alpha(x'',\xi')|\ge C_1\eps^{1/2}$, for $C_1>0$. 

This implies that
\begin{equation}
	\label{eq: estimation rho0 moins alpha}
|  (\rho_0-\alpha)\alpha^{-1}|\le C_2 \text{ on the support of } \chi_E\chi_1, \text{ for } C_2>0 \text{ and } |x_1|\le \mu \eps .
\end{equation}
Let 
\begin{equation*}
\tilde\chi_1(x'\xi')=
\begin{cases}
1 \text{ on the support of } \chi_1 , \\
0 \text{ if }\quad \xi_1^2+R_0(x'',\xi'')-1\ge 4\eps^{4/5} \text{ or }  \quad R(x',0,\xi')-1\le \eps/4.
\end{cases}
\end{equation*}
By symbol calculus in classes  $S(\est{\xi'}^s, (dx')^2+ (d\xi')^2)$,
we have
\begin{align*}
\big(  \op_{sc}  (\tilde\Phi  \chi_E\chi_1  \rho_0\alpha^{-1} )   -      \op_{sc}  (\tilde\Phi\chi_E\chi_1   )  \big)v_1=
 \op_{sc} (\tilde\Phi\chi_E\chi_1(  \rho_0-\alpha)\alpha^{-1} )  \op_{sc}  (\tilde\chi_1)v_1+h \op_{sc}  (r_0)v_1,
\end{align*}
where $r_0\in S(1, (dx')^2+ (d\xi')^2)$.
By G\aa rding inequality~\eqref{eq: estimation L2 sharp with Garding boundary}  and~\eqref{eq: estimation rho0 moins alpha} 
we have
\begin{align}\label{est: difference rho alpha zone un}
\big|  \big(  \op_{sc}  (\tilde\Phi  \chi_E\chi_1  \rho_0\alpha^{-1} )   -      \op_{sc}  (\tilde\Phi  \chi_E\chi_1   )  \big)v_1  \big|_{L^2}
\le C |  \op_{sc}  (\tilde\chi_1) v_1 |_{L^2}  +  C_\eps h | v_1  |_{L^2}.
\end{align}
Observe that
\begin{align*}
\op_{sc}  (\tilde\chi_1)v_1=\op_{sc}\big(\xi_1  - i \beta(x'',\xi'')  \big)^{ 1/2}\Phi\op_{sc}  
(\tilde\chi_1) u_1+ h\op_{sc}(r_{1/2})u_1,
\end{align*}
by semiclassical symbol calculus in $S(\est{\xi'}^s,(dx')^2+ (d\xi')^2)$, where $r_{1/2}\in  S(\est{\xi'}^{1/2},(dx')^2+ (d\xi')^2)$.
As $\tilde \chi_1$ is supported in $| R(x',0,\xi')-1 |\le C \eps^{4/5}$, we can apply 
Proposition~\ref{prop: estimation traces glancing} with $\eps^{4/5}$ instead of $\eps$. 
From~\eqref{est: difference rho alpha zone un}, Proposition~\ref{prop: a priori estimates on traces} and as 
 $ | v_1  |_{L^2} \le C_\eps  | u_1  |_{H^{1/2}_{sc}}$,   we have
\begin{align}  \label{est: zone 1 elliptic diference rho alpha}
\big|  \big(  \op_{sc}  (\chi_E\chi_1  \rho_0\alpha^{-1} )   -      \op_{sc}  (\chi_E\chi_1   )  \big)v_1  \big|_{H^{1/2}_{sc}}
& \le  C |  \op_{sc}  (\tilde\chi_1) u_1 |_{H^{1}_{sc}}  +  C_\eps h | u_1  |_{H^{1/2}_{sc}}.  \notag\\
& \le C\eps^{1/5}h^{-1/2}+C_\eps h^{-3/8} .
\end{align}

{\bf Estimation on  the support of $\chi_E\chi_2$. }

\smallskip
Equation~\eqref{eq: difference between rho and alpha} is valid in the supports of $\chi_E\chi_2$ and 
$ \tilde\Phi  $. As 
$ \xi_1^2+R_0(x'',\xi'')-1   \ge   \eps^{4/5} $, on this domain, we have $ \Re (\xi_1^2+ R_0(x'',\xi'') -1 
+i\eps )^{1/2}  \ge C_0\eps^{2/5}$, for $C_0>0$. We deduce  that $|D(x',\xi')|\le C\eps^{-2/5}$ and 
$|\alpha^{-1}(x'',\xi')|\le C\eps^{-2/5}$,
then $|(\rho_0-\alpha) \alpha^{-1}  | \le C\eps^{1/5}$.
We conclude by semiclassical symbol calculus in 
$S(\est{\xi'}^s, (dx')^2+(d\xi')^2)$, G\aa rding 
inequality~\eqref{eq: estimation L2 sharp with Garding boundary} and 
Proposition~\ref{prop: a priori estimates on traces} that
\begin{align}\label{est: zone 2 elliptic diference rho alpha}
&\big|    \op_{sc}  (\tilde\Phi  \chi_E\chi_2  \rho_0\alpha^{-1} )  -      \op_{sc}  (\tilde\Phi  \chi_E\chi_2   ) 
v_1 \big|_{L^2}    \notag  \\ 
& \quad = 
\big|    \op_{sc}  (\tilde\Phi  \chi_E\chi_2 ( \rho_0-\alpha)\alpha^{-1} )\op_{sc}\big(\xi_1  - i \beta(x'',\xi'')  \big)^{ 1/2}
\Phi u_1 \big|_{L^2} 
\le C \eps^{1/5} h^{-1/2}+C_\eps.
\end{align}

{\bf Estimation on the support of $\chi_E\chi_3$.}

On the supports of $\chi_E\chi_3$  and $ \tilde\Phi  $,
we have $ \Re (\xi_1^2+ R_0(x'',\xi'') -1 +i\eps )\ge C_0\est{\xi'}^2$ for $C_0>0$. 
We deduce that
$ \Re (\xi_1^2+ R_0(x'',\xi'') -1 +i\eps )^{1/2}\ge C_0\est{\xi'}$ for $C_0>0$ and 
$|(\rho_0 -\alpha)\alpha^{-1}|\le C\eps$.
By semiclassicalsymbol calculus in $S(\est{\xi'}^s, (dx')^2+(d\xi')^2)$, 
G\aa rding inequality and 
Proposition~\ref{prop: a priori estimates on traces} we have
\begin{align}\label{est: zone 3 elliptic diference rho alpha}
&\big|    \op_{sc}  (\tilde\Phi \chi_E\chi_3  \rho_0\alpha^{-1} )  -      
\op_{sc}  (\tilde\Phi \chi_E\chi_3   ) v_1 \big|_{L^2 }    \notag  \\
&\qquad= 
\big|    \op_{sc}  (\tilde\Phi \chi_E\chi_3 ( \rho_0-\alpha)\alpha^{-1} )\op_{sc}\big(\xi_1  - i \beta(x'',\xi'') 
 \big)^{ 1/2}\Phi u_1 \big|_{L^2}
\le C \eps h^{-1/2}+C_\eps.
\end{align}
As $\chi_1 +\chi_2 +\chi_3 =1$, Formulas~\eqref{est: zone 1 elliptic diference rho alpha}, 
\eqref{est: zone 2 elliptic diference rho alpha} and \eqref{est: zone 3 elliptic diference rho alpha} 
give the conclusion of Lemma~\ref{lem: difference rho alpha elliptic}.
\end{proof}
From~\eqref{eq: v0 v1 z6 sic}---%
\eqref{eq: difference form rho times inverse alpha} and Lemma~\ref{lem: difference rho alpha elliptic} we have
\begin{align}\label{eq: on traces after conjugaison}
v_0-iv_1=z_9, \text{ where } |z_9|_{L^2}\le C\eps^{1/5}h^{-1/2}+C_\eps h^{-3/8}.
\end{align}
As $v_0$ is supported in $x_1\le 0$ and $v_1$ is supported in $x_1\ge 0$ , if we 
restrict~\eqref{eq: on traces after conjugaison} on $x_1>0$, we obtain
\begin{align*}
|v_1|_{L^2(x_1>0)}=|v_1|_{L^2}\le C\eps^{1/5}h^{-1/2}+C_\eps h^{-3/8},
\end{align*}
and we deduce $|v_0|_{L^2} \le  C\eps^{1/5}h^{-1/2}+C_\eps h^{-3/8}$. This proves Lemma~\ref{lemma: estimation v0 v1}.
\end{proof}
\begin{proposition}\label{lemma: trace goes to 0}
With the previously defined notation, we have  
\begin{align*}
&h^{1/2}|(v_h)_{|x_d=0}|_{H^{1/2}_{sc}}=h^{1/2}|u_1|_{H^{1/2}_{sc}}\to0 \text{ as } h\to 0,  \\
&h^{1/2}| (hD_{x_d}v_h)_{|x_d=0}|_{H^{-1/2}_{sc}}=h^{1/2}| u_0|_{H^{-1/2}_{sc}}\to0 \text{ as } h\to 0.
\end{align*} 
\end{proposition}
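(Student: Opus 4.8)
The plan is to deduce this (global) statement from the local estimates already established, via a finite partition of unity on $\d\Omega$, and then to conclude by letting $\eps\to0$ and afterwards $h\to0$. Fix $\eps>0$ and write $1=\sum_j\Phi_j^D+\sum_k\Phi_k^N+\sum_l\Phi_l^\Gamma$ on $\d\Omega$, where $\Phi_j^D$ is supported in $\d\Omega_D$, $\Phi_k^N$ in $\d\Omega_N$, and each $\Phi_l^\Gamma$ in a \nhd of a point of $\Gamma$ of the type fixed before Lemma~\ref{lemma: estimation v0 v1} (so $\supp\Phi_l^\Gamma\subset\{|x_1|\le\mu\eps/2\}$ in suitable local coordinates). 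Since the sum is finite, it suffices to bound $h^{1/2}|\Phi\,u_1|_{H^{1/2}_{sc}}$ and $h^{1/2}|\Phi\,u_0|_{H^{-1/2}_{sc}}$ for each $\Phi$ separately. For $\Phi=\Phi_j^D$ or $\Phi_k^N$ this is immediate from Proposition~\ref{prop: convergence traces D and N}, which even gives $h^{1/2}|\Phi\,u_1|_{H^1_{sc}}\to0$ and $h^{1/2}|\Phi\,u_0|_{L^2}\to0$ as $h\to0$.

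It remains to treat $\Phi=\Phi_l^\Gamma$. I would first split microlocally $\Phi u_1=\op_{sc}(\chi_H)\Phi u_1+\op_{sc}(\chi_G)\Phi u_1+\op_{sc}(\chi_E)\Phi u_1$ with $\chi_H+\chi_G+\chi_E=1$, the $\chi_H,\chi_G,\chi_E$ being the boundary cut-offs of the hyperbolic, glancing and elliptic regions as in the near-$\Gamma$ construction. By Proposition~\ref{prop: estimation traces hyperbolic} and symbol calculus the hyperbolic term is $O(C_\eps)$ in $H^1_{sc}$, so $h^{1/2}$ times it tends to $0$; by Proposition~\ref{prop: estimation traces glancing} the glancing term is $\le C\eps^{1/4}h^{-1/2}+C_\eps h^{-3/8}$ in $H^1_{sc}$, so $h^{1/2}$ times it is $\le C\eps^{1/4}+C_\eps h^{1/8}$. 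For the elliptic term I would recover $\Phi u_1$ from $v_1$ by inverting the factorisation symbol: since $(\xi_1-i\beta)^{\pm1/2}\in S(\est{\xi'}^{\pm1/2},(dx')^2+(d\xi')^2)$ one has $\op_{sc}((\xi_1-i\beta)^{-1/2})v_1=\Phi u_1+h\op_{sc}(r)u_1$, the remainder being $O(C_\eps h^{1/2})$ in $H^{1/2}_{sc}$ thanks to Proposition~\ref{prop: a priori estimates on traces}. The companion bound for $u_0$ then follows from the trace identity~\eqref{eq: trace relation elliptic case localized}: on $\supp\chi_E$ one has $\op_{sc}(\chi_E)\Phi u_0=i\op_{sc}(\chi_E\rho_0)\Phi u_1+h^{1/2}z_5$ with $|z_5|_{L^2}\le C_\eps$ and $\rho_0$ elliptic of order $1$, hence $|\op_{sc}(\chi_E)\Phi u_0|_{H^{-1/2}_{sc}}\lesssim|\op_{sc}(\chi_E)\Phi u_1|_{H^{1/2}_{sc}}+C_\eps h^{1/2}$; the $\chi_H$ and $\chi_G$ parts of $u_0$ are controlled by the second estimates of Propositions~\ref{prop: estimation traces hyperbolic} and~\ref{prop: estimation traces glancing}.

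The heart of the argument is then to turn the $L^2$ bound $|v_1|_{L^2}\le C\eps^{1/5}h^{-1/2}+C_\eps h^{-3/8}$ of Lemma~\ref{lemma: estimation v0 v1} into an $H^{1/2}_{sc}$ bound on $\op_{sc}(\chi_E)\Phi u_1$ carrying a \emph{net positive} power of $\eps$ in front of $h^{-1/2}$. The obstruction is that the symbol $\xi_1-i\beta$ degenerates, its modulus being only $\gtrsim\eps\est{\xi'}$ near the glancing set, so that a crude inversion loses a negative power of $\eps$ which would absorb the $\eps^{1/5}$ gain. I would overcome this as in Lemma~\ref{lem: difference rho alpha elliptic}, sub-splitting $\chi_E$ according to the size of $\xi_1^2+R_0(x'',\xi'')-1$ into finitely many $\eps$-graded pieces: on the piece where that quantity is $\gtrsim\eps^{\theta}$ one has $|\xi_1\pm i\beta|\gtrsim\eps^{\theta/2}\est{\xi'}$, so the inversion costs only $\eps^{-\theta/2}$, while on the same piece the content of $u_1$ is located where $|R(x',0,\xi')-1|\lesssim\eps^{\theta}$ and thus carries an extra $\eps^{\theta/4}$ from Proposition~\ref{prop: estimation traces glancing} applied with parameter $\eps^{\theta}$; on the deepest piece, where $R-1$ is bounded below by a fixed constant, $\xi_1-i\beta$ is genuinely elliptic and one uses $v_1$ with no $\eps$-loss. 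A suitable choice of the grading gives $h^{1/2}|\op_{sc}(\chi_E)\Phi u_1|_{H^{1/2}_{sc}}\le C\eps^{c}+C_\eps h^{c'}$ for some $c,c'>0$. Summing over the partition yields $h^{1/2}|u_1|_{H^{1/2}_{sc}}+h^{1/2}|u_0|_{H^{-1/2}_{sc}}\le C\eps^{c}+C_\eps h^{c'}+o_h(1)$; given $\delta>0$ one first picks $\eps$ with $C\eps^{c}<\delta/2$ and then $h$ small so that the remaining terms are $<\delta/2$. The main obstacle is exactly this last $\eps$-bookkeeping in the elliptic region: showing that the loss incurred by inverting the degenerate symbols $(\xi_1\mp i\beta)^{\pm1/2}$ stays strictly below the combined gains provided by Lemma~\ref{lemma: estimation v0 v1} and by the glancing trace estimates.
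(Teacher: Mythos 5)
Your plan assembles exactly the ingredients the paper uses (Propositions~\ref{prop: estimation traces hyperbolic}, \ref{prop: estimation traces glancing}, \ref{proposition: estimation elliptique Dirichlet Neumann}, \ref{prop: convergence traces D and N}, Lemma~\ref{lemma: estimation v0 v1}, then ``$\eps$ small first, $h\to0$ second''), and the preliminary partition of unity is a harmless addition since the statement is anyway local near $\Gamma$. The genuine issue is in the one step you single out as the heart of the matter, the recovery of $\op_{sc}(\chi_E)\Phi u_1$ from $v_1$: as written, your bookkeeping does not produce a positive power of $\eps$. On a region where $\xi_1^2+R_0(x'',\xi'')-1\gtrsim\eps^{\theta}$ the inversion of $(\xi_1-i\beta)^{-1/2}$ inside an $H^{1/2}_{sc}$ bound costs $\eps^{-\theta/4}$ (not $\eps^{-\theta/2}$), while the gain ``$\eps^{\theta/4}$ from Proposition~\ref{prop: estimation traces glancing} applied with parameter $\eps^{\theta}$'' is available on the \emph{complementary} piece $\{\xi_1^2+R_0-1\lesssim\eps^{\theta}\}$ (which lies in $|R(x',0,\xi')-1|\lesssim\eps^{\theta}$ because $|x_1r_2|\lesssim\mu\eps$), not on ``the same piece'' where you invert; and even taken at face value your two factors combine to $\eps^{-\theta/2}\cdot\eps^{\theta/4}=\eps^{-\theta/4}$, a negative power. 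So the key inequality $h^{1/2}|\op_{sc}(\chi_E)\Phi u_1|_{H^{1/2}_{sc}}\le C\eps^{c}+C_\eps h^{c'}$ is asserted rather than derived.

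The repair is exactly the paper's device, and it is worth seeing that your ``graded'' idea, done correctly, is a rescaled version of it. The paper introduces a second parameter $\nu\gg\eps$, defines $\chi_H,\chi_G,\chi_E$ at scale $\nu$ (keeping $\psi_D,\psi_Z,\psi_N$ and $\beta$ at scale $\eps$), and proves Lemma~\ref{lem: estimation beta}: on $\supp\chi_E$ one has $|(\xi_1\mp i\beta)^{\mp1/2}|\le C\nu^{\mp1/4}\est{\xi'}^{\mp1/2}$ with a constant independent of $\eps$, provided $\eps$ is small compared with $\nu$ (the proof needs the case discussion on $|\xi''|$, $|\xi_1|$ and $R_0-1$ versus $\nu$, not just $\Re\beta\gtrsim\eps$). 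Then the glancing strip at scale $\nu$ contributes $C\nu^{1/4}$ (resp.\ $C\nu^{3/4}$ for $u_0$) with no inversion, the $\nu$-elliptic zone contributes $C_\nu\eps^{1/5}$ through $v_1$, $v_0$ and G{\aa}rding, the $\psi_D,\psi_N$ pieces contribute $C_{\nu,\eps}h^{1/2}$, and one chooses $\nu$ first, then $\eps$, then $h$ (absorbing or bounding the $C_{\nu,\eps}h^{3/2}|u_j|$ remainders via Proposition~\ref{prop: a priori estimates on traces}). In your single-parameter language this amounts to splitting the elliptic zone at $\nu=\eps^{\theta_0}$ with $\theta_0<4/5$: the inner piece $\{\eps/2\lesssim R-1\lesssim\eps^{\theta_0}\}$ is handled by Proposition~\ref{prop: estimation traces glancing} at parameter $\eps^{\theta_0}$ alone (gain $\eps^{\theta_0/4}$), and the outer piece by $v_1$ with inversion cost $\eps^{-\theta_0/4}$ against the $\eps^{1/5}$ of Lemma~\ref{lemma: estimation v0 v1}, giving $\eps^{1/5-\theta_0/4}$. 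With that correction (your treatment of $u_0$ via \eqref{eq: trace relation elliptic case localized}, or symmetrically via $v_0$ and $(\xi_1+i\beta)^{1/2}$ as in the paper, then goes through), your argument becomes the paper's proof.
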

\begin{proof}
We have to introduce another small parameter $\nu>0$ chosen below such that $\nu>>\eps$. Let $\chi_H$,  $\chi_G$ and $\chi_E$ 
in $\Con^\infty(\R^{d-1}\times \R^{d-1})$   such that
\begin{align*}
&\chi_H(x',\xi') \text{ is supported in }  R(x',0,\xi')-1\le -\nu,   \\
&\chi_G(x',\xi')  \text{ is supported in }  |R(x',0,\xi')-1|\le 2\nu   ,\\
&\chi_E(x',\xi')  \text{ is supported in }    R(x',0,\xi')-1\ge \nu ,\\
&\chi_H+\chi_G+\chi_E=1.
\end{align*}
Let $\psi_D$, $\psi_Z$ and $\psi_N$ in $\Con^\infty(\d\Omega)$ such that
\begin{align*}
&\psi_D  \text{ is supported in } x_1\le -\eps\mu/4 \\
&\psi_Z  \text{ is supported in } |x_1|\le \eps\mu/2 \\
& \psi_N \text{ is supported in } x_1\ge \eps\mu/4  \\
& \psi_D + \psi_Z +\psi_N =1.
\end{align*}
To be clear, the $\eps$ is the one used in elliptic region. We recall the estimates obtained in previous sections.
From Proposition~\ref{prop: estimation traces hyperbolic} we have 
\begin{align}
& |  \op_{sc}( \chi_H)u_1  |_{H^1_{sc}}\le C_\nu   \notag \\
&   |  \op_{sc}( \chi_H)u_0 |_{L^2}\le C_\nu .  \label{est: hyperbolic }
\end{align}
From Proposition~\ref{prop: estimation traces glancing} we have
\begin{align}
& |  \op_{sc}( \chi_G)  u_1 |_{H^1_{sc}}\le C \nu^{1/4} h^{-1/2}+C_\nu h^{-3/8}
\notag
 \\
&   | ( \op_{sc}( \chi_G) u_0 ) _{|x_d=0}  |_{L^2_{sc}}\le C \nu^{3/4} h^{-1/2}
+C_\nu h^{-3/8}  \label{est: Glancing} .
\end{align}
In the elliptic region, we estimate $v_j$ and we have to estimate $u_j$ for $j=0,1$. To be precise,
$v_0=   \op_{sc}(\xi_1+i\beta(x'', \xi''))^{-1/2}\psi_Z u_0$ and $v_1=  \op_{sc}\big(\xi_1  - i \beta(x'',\xi'')  
\big)^{ 1/2}\psi_Z u_1 $, where $\beta$ is defined by formula~\eqref{def: le beta} and 
$\Phi=\psi_Z$. By Lemma~\ref{lemma: estimation v0 v1}, we have
\begin{align*}
&|v_1|_{L^2}  \le  C\eps^{1/5}h^{-1/2}+C_\eps h^{-3/8} ,\\
&|v_0|_{L^2} \le  C\eps^{1/5}h^{-1/2}+C_\eps h^{-3/8}.  
\end{align*}
Thus $\op_{sc}(\chi_E(\xi_1-i\beta(x'', \xi''))^{-1/2})v_1= \op_{sc}(\chi_E) \psi_Z u_1+h\op_{sc}(r_0)u_1$, where 
$r_0\in S(1,(dx')^2+(d\xi')^2)$. 
%%%%%%%%%%%%%%%%%%%%%
%
%   Lemmma
%
%%%%%%%%%%%%%%%%%%%%%
\begin{lemma}
	\label{lem: estimation beta}	
On the support of $\chi_E$, we have $| (\xi_1-i\beta(x'', \xi''))^{-1/2} |\le C\nu^{-1/4}\est{\xi'}^{-1/2}
=C_\nu \est{\xi'}^{-1/2}$, where $C_\nu$ does not depend on $\eps$.
\end{lemma}
\begin{proof}
We have to consider different cases.

 If $| \xi''|\ge C$, where $C$ sufficiently large to have $R_0(x'',\xi'')\ge 2$, we have $\Re\beta(x'', \xi'')\ge C_1\est{\xi''}$, for $C_1>0$. 
 Then $|\xi_1-i\beta(x'', \xi'')|\ge C_2\est{\xi'}$,  for $C_2>0$. 

If $| \xi''|\le C$ and $|\xi_1|$ sufficiently large, we have $|\xi_1-i\beta(x'', \xi'')|\ge C_3\est{\xi_1}\ge C_4\est{\xi'}$. 

If $| \xi' |$ bounded, on the support of $\chi_E$, if $\eps$ is sufficiently small with respect $\nu$, 
we have $\xi_1^2+R_0(x'',\xi'')-1\ge \nu/2$. If $R_0(x'',\xi'')-1\ge \delta_1 \nu$, for 
$\delta_1>0$, then $\Re\beta(x'', \xi'')\ge \delta_2\nu^{1/2}$, for $\delta_2>0$. 
If $\delta_1$ is sufficiently small and $R_0(x'',\xi'')-1\le \delta_1 \nu$, then 
$\xi_1^2\ge \nu/4$ and $|\xi_1-i\beta(x'', \xi'')|\ge \delta_3\nu^{1/2}$. 

In all cases, we get that  $|\xi_1-i\beta(x'', \xi'')|\ge \delta_3\nu^{1/2}\est{\xi'}$. 
This implies  the result.
\end{proof}

By symbol calculus, we have
    \begin{align*}
  | \op_{sc}(\chi_E) \psi_Z u_1|_{H^{1/2}_{sc}} & \le    |\op_{sc}(\est{\xi'}^{1/2}) \op_{sc}
  (\chi_E(\xi_1-i\beta(x'', \xi''))^{-1/2})v_1|_{L^2}  + C_{\nu,\eps} h |   u_1 |_{H^{1/2}_{sc}}   \notag \\
  &\le  |\op_{sc}(\est{\xi'}^{1/2}\chi_E(\xi_1-i\beta(x'', \xi''))^{-1/2})v_1|_{L^2}  + C_{\nu,\eps} h |   u_1 |_{H^{1/2}_{sc}}  ,
    \end{align*}
    and by  G\aa rding inequality~\eqref{eq: estimation L2 sharp with Garding boundary} and 
    Lemma~\ref{lemma: estimation v0 v1} we have
   \begin{align}   
  | \op_{sc}(\chi_E) \psi_Z u_1|_{H^{1/2}_{sc}}  &\le C_\nu | v_1|_{L^2}+C_{\nu,\eps}h^{1/2} 
   + C_{\nu,\eps} h |   u_1 |_{H^{1/2}_{sc}}  \notag \\
  &\le C_\nu\eps^{1/5}h^{-1/2}+C_{\nu,\eps} h^{-3/8}  + C_{\nu,\eps} h |   u_1 |_{H^{1/2}_{sc}}  . 
   \label{est: trace u_1 Z zone}
  \end{align}
  For $v_0$ we have \[\op_{sc}(\chi_E(\xi_1+i\beta(x'', \xi''))^{1/2})v_0=   \op_{sc}(\chi_E)\psi_Z u_0+h\op_{sc}(r_0)u_0, \
  \text{ where }r_0\in S(1,(dx')^2+(d\xi')^2).  \]
 A proof analogous  to the one of Lemma~\ref{lem: estimation beta} gives
  $| (\xi+i\beta(x'', \xi''))^{1/2} |\le C_\nu\est{\xi'}^{1/2}$,
  we have by G\aa rding inequality and symbol calculus
  \begin{align}
  |   \op_{sc}(\chi_E)\psi_Z u_0 |_{H^{-1/2}}&   \le C_\nu | v_0|_{L^2} +C_{\nu,\eps}h^{1/2}
  +C_{\nu,\eps} h |   u_0 |_{H^{-1/2}_{sc}} 
   \notag \\
  &\le  C_\nu\eps^{1/5}h^{-1/2}+C_{\nu,\eps} h^{-3/8}
  +C_{\nu,\eps} h |   u_0 |_{H^{-1/2}_{sc}} .    \label{est: trace u_0 Z zone}
  \end{align}
  From Proposition~\ref{proposition: estimation elliptique Dirichlet Neumann}, we have 
  \begin{align}
  &|  \op_{sc}(\chi_E) \psi_Du_0 |_{L^2}\le C_{\nu,\eps} h^{1/2} \text{ and }\psi_Du_1=0 \notag\\
  & |  \op_{sc}( \chi_E) \psi_N u_1 |_{H^1_{sc}}\le C_{\nu,\eps}  h^{1/2}  \text{ and }\psi_Nu_0=0  .
  \label{est: elliptic Dirichet Neumann zone}
  \end{align}
  As $u_j= \op_{sc}(\chi_H)  u_j   + \op_{sc} (\chi_G)  u_j  +   \op_{sc}  (\chi_E) \psi_N u_j  +   \op_{sc}  (\chi_E)   
   \psi_Z u_j +   \op_{sc}(\chi_E)  \psi_D u_j $, we have, by~\eqref{est: hyperbolic }---\eqref{est: elliptic Dirichet Neumann zone},
  \begin{align*}
h^{1/2} | u_j  |_{H^{-1/2+j}_{sc}}\le C\nu^{(3-2j)/4} + C_\nu\eps^{1/5}+C_{\nu,\eps}h^{1/8}
+ C_{\nu,\eps} h^{3/2} | u_j  |_{H^{-1/2+j}_{sc}}  .
  \end{align*}
  Choosing first $\nu$ sufficiently small, second $\eps$ sufficiently small, we can absorb the right hand side term 
   $ C_{\nu,\eps} h^{3/2} | u_j  |_{H^{-1/2+j}_{sc}} $ by the left hand side term taking $h$ sufficiently small. 
   The limit superior with respect to $h$ of the left hand side can be estimated by any positive number. This proves Proposition~\ref{lemma: trace goes to 0}.
  \end{proof}

  %%%%%%%%%%%%%%%
%
% Subsection
%
%%%%%%%%%%%%%%%%%%

\subsection{Support of semiclassical measure in a \nhd of boundary}
\label{sec: Support of semiclassical measure in a nhd of boundary}

We can now prove Proposition~\ref{prop: measure supported on the characteristic set}, 
that is,  the measure $\mu$ is supported 
on $p=0$, in a \nhd of $x_0\in\d\Omega$.
\begin{proof} 
The proof is based on the results obtained by Proposition~\ref{prop: equation on v spectrally localised} 
and Proposition~\ref{lemma: trace goes to 0}. 
We recall that in local coordinates $p(x,\xi)=\xi_d^2+R(x,\xi')-1$. We have with the 
notation~\eqref{def: extension omega}
\begin{align}\label{eq: equation obtained on extension by 0}
[h^2D_{x_d}+R(x,hD_{x'})-1]\underline{v_h}=h\underline{q_h}-ih(hD_{x_d}v_h)_{|x_d=0}
\otimes \delta_{x_d=0}-ih(v_h)_{|x_d=0}\otimes hD_{x_d}\delta_{x_d=0}.
\end{align}
Let $\varphi\in\Con_0^\infty(\R_{\xi_d})$ and $\chi\in\Con_0^\infty(\R_{x_d})$ be such that $\chi $
 is supported in a \nhd of $0$. 
Let $\ell\in\Con_0^\infty(\R^{d-1} _{x'} \times  \R^{d-1}_{\xi'} )$. By symbol calculus we have
\begin{align*}
\Op_{sc}\big(  \chi(x_d)\varphi(\xi_d)\ell(x',\xi') \big)[h^2D_{x_d}+R(x,hD_{x'})-1] \qquad \qquad \qquad \qquad \qquad \qquad \qquad \qquad  \   \notag \\
=\Op_{sc}\big(  \chi(x_d)\varphi(\xi_d)\ell(x',\xi') (\xi_d^2+R(x,\xi')-1)\big) +h\Op_{sc}(r_0),
\end{align*}
where $r_0\in S^0$. Then
\begin{align*}
I&:=\Big(   
  \Op_{sc}\big(  \chi(x_d)\varphi(\xi_d)\ell(x',\xi') \big)[h^2D_{x_d}+R(x,hD_{x'})-1]\underline{v_h}|\underline{v_h}  
\Big)    \\
&=\underbrace{\Big(\Op_{sc}\big(  \chi(x_d)\varphi(\xi_d)\ell(x',\xi') (\xi_d^2+R(x,\xi')-1)\big) \underline{v_h}| \underline{v_h}  \Big)}_{=:A}  +\underbrace{h (\Op_{sc}(r_0)\underline{v_h}|\underline{v_h}  )}_{=:B}.
\end{align*}
By definition of the semiclassical  measure, the  term  $A$ converges to 
$\est{\mu|  \chi(x_d)\varphi(\xi_d)\ell(x',\xi') (\xi_d^2+R(x,\xi')-1)}$ as $h$ to $0$.
The  term  $B$ is estimated by $Ch\| v_h \|^2_{L^2(\Omega)}$ and this converges to 0 as $h$ to 0 by Proposition~\ref{prop: equation on v spectrally localised}.

By~\eqref{eq: equation obtained on extension by 0} we also have  
\begin{align*}
I&=h \big(   \Op_{sc}\big(  \chi(x_d)\varphi(\xi_d)\ell(x',\xi') \big)  \underline{ q_h }|  \underline{v_h}   \big) \\
&\quad -ih   \big(   \Op_{sc}\big(  \chi(x_d)\varphi(\xi_d)\ell(x',\xi') \big)  (hD_{x_d}v_h)_{|x_d=0}\otimes \delta_{x_d=0} |  \underline{v_h}   \big) \\
&\quad -ih \big(    \Op_{sc}\big(  \chi(x_d)\varphi(\xi_d)\ell(x',\xi') \big)  (v_h)_{|x_d=0}\otimes hD_{x_d}\delta_{x_d=0}  | \underline{v_h}    \big)  =I_1+I_2+I_3.
\end{align*}
 Obviously we have
 \begin{equation*}
 |I_1|\le C h \| q_h \|_{L^2(\Omega}  \| v_h \|_{L^2(\Omega},
 \end{equation*}
 as $ \chi(x_d)\varphi(\xi_d)\ell(x',\xi') \in S^0$. Then $I_1\to 0$ as $h\to 0$ by 
 \eqref{eq: semiclassical equation on v_h}.
 By  exact calculus, we have
 \begin{align*}
 \Op_{sc}\big(   \chi(x_d)\varphi(\xi_d)\ell(x',\xi') \big)
 =   \chi(x_d) \op_{sc}\big(  \ell(x',\xi') \big) \Op_{sc}( \varphi(\xi_d)).
 \end{align*}
 Let $w_j=((hD_{x_d})^{1-j}v_h)_{|x_d=0}$, we have for $j=0,1$
 \begin{align} \label{est: term traces for p mu =0}
& h  | \big(   \Op_{sc}\big(  \chi(x_d)\varphi(\xi_d)\ell(x',\xi') \big)  w_j\otimes (h D_{x_d})^j \delta_{x_d=0} |  \underline{v_h}   \big)|  \notag  \\
& \quad  = h|    \big(     \chi(x_d)  \op_{sc}  \big(     \ell(x',\xi') \big) w_j    \otimes   \Op_{sc}   
\big(  \varphi(\xi_d)\big)(h D_{x_d})^j \delta_{x_d=0}  |    \underline{v_h}   \big) |  \notag\\
& \quad \le  h | w_j |_{H^{j-1/2}}\|  \Op_{sc}   \big(  \varphi(\xi_d)\big)(h D_{x_d})^j \delta_{x_d=0}  \|_{L^2(\R)}  \|  v_h\|_{L^2(\Omega)},
 \end{align}
 where we have used, to estimate $w_0$,  that $\est{\xi'}^{1/2}\ell(x',\xi')$ is bounded on $L^2$, as $\ell$ is compactly supported. 
 A direct computation in Fourier variable gives
 that $\|  \Op_{sc}   \big(  \varphi(\xi_d)\big)(h D_{x_d})^j \delta_{x_d=0}  \|_{L^2(\R)}  \lesssim h^{-1/2}$. From~\eqref{est: term traces for p mu =0}, we obtain
 \begin{align*}
 | I_2|+ | I_3|\le C ( |  (v_h)_{|x_d=0}   |_{H^{1/2}} +   |  (D_{x_d}v_h)_{|x_d=0}   |_{H^{-1/2}} )h^{1/2}\to 0   \text{ as }   h  \to 0,
 \end{align*}
 by Proposition~\ref{lemma: trace goes to 0}. We conclude that $\est{\mu|  \chi(x_d)\varphi(\xi_d)\ell(x',\xi') (\xi_d^2+R(x,\xi')-1}=0$ and by density of functions spanned by
 $ \chi(x_d)\varphi(\xi_d)\ell(x',\xi')$ in $\Con_0^\infty(\R^d\times \R^d)$, we have that $\est{ p(x,\xi) \mu | \phi(x,\xi)}=0$, for all $\phi\in \Con_0^\infty(\R^d\times \R^d)$. 
 This gives the conclusion of Proposition~\ref{prop: measure supported on the characteristic set}.
 \end{proof}
 %%%%%%%%%%%%%%%
%
% Subsection
%
%%%%%%%%%%%%%%%%%%
\subsection{The semiclassical  measure is not identically null}
\label{sec: he semiclassical  measure is not identically null}

\begin{proposition}
The measure $\mu$ constructed at the beginning of 
Section~\ref{subsec: semiclassical defect measure is null on characteristic set}
for the sequence $(v_h)_h$ satisfying~\eqref{eq: semiclassical equation on v_h} 
is not identically 0, i.e. $\mu\not\equiv 0$.
\end{proposition}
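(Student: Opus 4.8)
The plan is to produce a single test symbol $b\in\Con_0^\infty(\R^{2d})$ with $\est{\mu,b}=1$: since $\est{\mu,b}$ is the limit of $(\Op_{sc}(b)\underline{v_h}|\underline{v_h})$ by \eqref{def: semiclassical defect measure}, this immediately yields $\mu\not\equiv0$ (and, choosing $b\equiv1$ on $\supp\mu$ — legitimate because by Proposition~\ref{prop: measure supported on the characteristic set} the support of $\mu$ lies in the compact set $\overline\Omega\times\{\sum_{j,k}p_{jk}(x)\xi_j\xi_k=1\}$ — it would even follow that $\mu$ is a probability measure). Concretely I would take $b(x,\xi)=\chi(x)\phi(\xi)$ with $\chi\in\Con_0^\infty(\R^d_x)$ equal to $1$ on a neighbourhood of $\overline\Omega$ and $\phi\in\Con_0^\infty(\R^d_\xi)$ equal to $1$ on $\{|\xi|\le R_0\}$, the radius $R_0$ chosen large enough that the principal symbol $p$ of $h^2P-1$ (see \eqref{def: symbol h2P-1}) satisfies $p(x,\xi)\ge c\est{\xi}^2$ on $\supp\chi\times\{|\xi|\ge R_0\}$, which is possible by the uniform positive definiteness of $(p_{jk})$.

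Using linearity of $\Op_{sc}$, the exact identity $\Op_{sc}(\chi)=\chi(x)\,\cdot$, and $\chi\underline{v_h}=\underline{v_h}$, one reduces to
\[
(\Op_{sc}(b)\underline{v_h}|\underline{v_h})=\|v_h\|_{L^2(\Omega)}^2-(\Op_{sc}(\chi(1-\phi))\underline{v_h}|\underline{v_h})=1-(\Op_{sc}(\chi(1-\phi))\underline{v_h}|\underline{v_h}),
\]
so it suffices to show $\|\Op_{sc}(\chi(1-\phi))\underline{v_h}\|_{L^2(\R^d)}\to0$. On $\supp(\chi(1-\phi))$ the symbol $p$ is elliptic, hence $e:=\chi(1-\phi)/p\in S^{-2}$ and $\Op_{sc}(e)(h^2P-1)=\Op_{sc}(\chi(1-\phi))+h\Op_{sc}(r)$ with $r\in S^{-1}$. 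I then apply $\Op_{sc}(e)$ to the equation satisfied by $\underline{v_h}$, i.e. \eqref{eq: equation obtained on extension by 0}, whose right-hand side is $h\underline{q_h}$ plus an $L^2(\Omega)$-remainder of size $\mathcal O(h)$, plus the two boundary distributions $-ih\,w_0\otimes\delta_{x_d=0}$ and $-ih\,w_1\otimes hD_{x_d}\delta_{x_d=0}$ with $w_0=(hD_{x_d}v_h)_{|x_d=0}$, $w_1=(v_h)_{|x_d=0}$. Since $\Op_{sc}(e)$ maps $H^{-2}_{sc}$ boundedly into $L^2$, the $h\underline q_h$, the $\mathcal O(h)_{L^2}$ remainder and $h\Op_{sc}(r)\underline{v_h}$ all give $o(1)$; for the boundary terms one uses the elementary semiclassical bounds $\|w_0\otimes\delta_{x_d=0}\|_{H^{-2}_{sc}(\R^d)}\lesssim h^{-1/2}|w_0|_{H^{-3/2}_{sc}}$ and $\|w_1\otimes hD_{x_d}\delta_{x_d=0}\|_{H^{-2}_{sc}(\R^d)}\lesssim h^{-1/2}|w_1|_{H^{-1/2}_{sc}}$, so that these contributions are $\lesssim h^{1/2}|w_0|_{H^{-1/2}_{sc}}+h^{1/2}|w_1|_{H^{1/2}_{sc}}$, which tends to $0$ by Proposition~\ref{lemma: trace goes to 0}. (Globally one first localises by a partition of unity on $\overline\Omega$, the interior piece being controlled directly from the $H^1_{sc}$-bound of Proposition~\ref{prop: equation on v spectrally localised}; the computation above is run in the boundary normal coordinates where \eqref{eq: equation obtained on extension by 0} holds.) This gives $(\Op_{sc}(b)\underline{v_h}|\underline{v_h})\to1$, hence $\est{\mu,b}=1$ and $\mu\not\equiv0$.

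The only genuinely delicate point is the treatment of the two boundary terms: the a priori trace bounds of Proposition~\ref{prop: a priori estimates on traces} only give $|w_0|_{H^{-1/2}_{sc}},\,|w_1|_{H^{1/2}_{sc}}=\mathcal O(h^{-1/2})$, whence the boundary contributions would be merely $\mathcal O(1)$ — useless. The whole argument therefore rests on the strictly better, $o(h^{-1/2})$, estimates of Proposition~\ref{lemma: trace goes to 0}, which is exactly where the Zaremba condition and the analysis near $\Gamma$ pay off. Everything else — ellipticity of $p$ for $|\xi|\gg1$, the symbol class $S^{-2}$ of $e$, and the $\delta_{x_d=0}$ estimates in semiclassical Sobolev spaces — is routine.
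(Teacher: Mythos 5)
Your proof is correct in substance, but it takes a genuinely different and much heavier route than the paper. The paper's argument never touches the equation, the boundary, or the trace estimates: it only uses that $(v_h)_h$ is bounded in $H^1_{sc}(\Omega)$, hence in $H^s_{sc}(\Omega)$ for $s\in(0,1/2)$, and that for functions supported in $\overline\Omega$ the norms $\|\cdot\|_{H^s_{sc}(\R^d)}$ and $\|\cdot\|_{H^s_{sc}(\Omega)}$ are equivalent for such $s$ (extension by zero is harmless below $H^{1/2}$); this gives $\|\Op_{sc}(1-\phi_R)\underline{v_h}\|_{L^2}\le CR^{-s}$ \emph{uniformly in $h$}, so for $R$ large $\est{\mu,\chi\phi_R}\ge 1/2$ and one is done. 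You instead rerun, at high frequency, essentially the same elliptic parametrix computation the paper uses to prove Proposition~\ref{prop: measure supported on the characteristic set}: you need \eqref{eq: equation obtained on extension by 0}, the $H^{-2}_{sc}$ bounds on $w_j\otimes(hD_{x_d})^j\delta_{x_d=0}$, and crucially the refined $o(h^{-1/2})$ trace estimates of Propositions~\ref{prop: convergence traces D and N} and \ref{lemma: trace goes to 0} — the hardest part of Section~3. What this buys is the sharper conclusion $\est{\mu,b}=1$ (total mass one) rather than $\ge 1/2$, which the paper does not need. Two cosmetic points: the exponent $H^{-3/2}_{sc}$ in your tensor-product estimate is an endpoint case (the $\xi_d$-integral $\int\est{h\xi_d}^{-1}d\xi_d$ diverges, so you should take $H^{-3/2+\epsilon}_{sc}$, which is harmless since you immediately relax to $H^{-1/2}_{sc}$); and for the interior piece the $H^1_{sc}$ bound alone only yields $O(R_0^{-1})$ uniformly in $h$, not $o(1)$ for fixed $R_0$ — enough to get $\mu\not\equiv0$, but for the exact value $\est{\mu,b}=1$ you should use the interior parametrix together with $(h^2P-1)v_h=O(h)$ in $L^2(\Omega)$.
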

\begin{proof}
Let $\phi\in\Con_0^\infty(\R)$ be such that $\phi=1$ in a \nhd of 0. Let $s\in(0,1/2)$. Let $\phi_R(\xi)=\phi(|\xi|/R)$, we have 
\begin{align*}
\| \Op_{sc}(1-\phi_R ) \underline{v_h}\|_{L^2(\R^d)}& \le C R^{-s}\|  \Op_{sc}(\est{\xi}^s )\underline{v_h}\|_{L^2(\R^d)}
\le CR^{-s}\| {v_h}\|_{H^s_{sc}(\Omega)}\\
&\le  C R^{-s}\| {v_h}\|_{H^1_{sc}(\Omega)}
\le C  R^{-s},
\end{align*}
as $\| w\|_{H^s_{sc}(\R^d)}$ is equivalent to $\| w\|_{H^s_{sc}(\Omega)}$  (uniformly with respect to $h\in(0,1)$) if $w$ is supported in $\overline \Omega$.
Then for $R$ sufficiently large, $\| \Op_{sc}(1-\phi(|\xi|/R) \underline{v_h}\|_{L^2(\R^d)}\le 1/2$.
We thus have
\begin{align*}
(\Op_{sc}(\phi_R)  \underline{v_h}| \underline{v_h})_{L^2(\R^d)}&=\| \underline{v_h}  \|_{L^2(\R^d)}^2
-(  \Op_{sc}(1-\phi_R ) \underline{v_h}|  \underline{v_h}  )_{L^2(\R^d)}\\
&\ge 1-\| \Op_{sc}(1-\phi(|\xi|/R) \underline{v_h}\|_{L^2(\R^d)}\ge 1/2.
\end{align*}
Let $\chi\in \Con_0^\infty(\R^d)$ be such that $\chi(x)=1$ for $x\in\Omega$ and $\chi\ge0$, we have
\begin{align*}
(\Op_{sc}(\phi_R)  \underline{v_h}| \underline{v_h})_{L^2(\R^d)}=(\Op_{sc}(\phi_R(\xi)\chi(x))  \underline{v_h}| \underline{v_h})_{L^2(\R^d)}\to \est{\mu, \chi(x)\phi_R(\xi)} \text{ as } h\to0,
\end{align*}
 we obtain
$\est{\mu,  \chi(x)\phi_R(\xi)}\ge 1/2$. Then $\mu$ is not identically null.
\end{proof}

\subsection{The semiclassical  measure is null on the support of $a$}
\label{sec: The semiclassical  measure is null on the support of a}

Before proving the result we need to extend the space of test functions acting on $\mu$. We have the following lemma.

Let $b(x,\xi')\in\Con^\infty(\R^d\times\R^{d-1})$,  we can give a sense to the expression 
$\est{\mu,b(x,\xi')\xi_d^j}$ for all $j\in\N$. 
\begin{lemma}\label{lem: extension measure mu}
Let $\Phi\in\Con_0^\infty(\R)$, be such that $\Phi(\sigma)=1$, for $|\sigma|\le 1$. Let $j\in\N$, then  the quantity 
$\est{\mu,b(x,\xi')\xi_d^j \Phi(|\xi|/R )}$ does not depend on $R$ for sufficiently large $R$. 

By definition, we denote $\est{\mu,b(x,\xi')\xi_d^j}=\lim_{R\to\infty}\est{\mu,b(x,\xi')\xi_d^j \Phi(|\xi|/R )}$.
\end{lemma}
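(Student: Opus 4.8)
The plan is to show that $\est{\mu,b(x,\xi')\xi_d^j\Phi(|\xi|/R)}$ stabilizes as $R\to\infty$ by proving it is eventually constant in $R$. The key observation is that $\mu$ is supported on the characteristic set $p=0$, i.e.\ on $\{\xi_d^2+R(x,\xi')-1=0\}$, which was established in Proposition~\ref{prop: measure supported on the characteristic set}. Since $\mu$ is also supported in $\overline\Omega\times\R^d$, and $R(x,\xi')$ is bounded on the compact set $\overline\Omega$ times any bounded $\xi'$-region (more precisely $R(x,\xi')\le C|\xi'|^2$ with the quadratic form nondegenerate), on $\supp\mu$ we have $\xi_d^2\le 1$, hence $|\xi_d|\le 1$; and likewise $|\xi'|$ is controlled. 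The trouble is that $\mu$ is not a priori compactly supported in $\xi'$ either, but the equation $p\mu=0$ confines the \emph{whole} fiber direction: on $\supp\mu$, $\xi_d^2+R(x,\xi')=1$ with $R$ positive definite in $\xi'$ forces $|\xi|\le C_0$ for some fixed $C_0$ depending only on the ellipticity constants of $(p_{jk})$.

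First I would make this confinement rigorous: since $p\mu=0$ and $p(x,\xi)=\sum p_{jk}(x)\xi_j\xi_k-1$ with $(p_{jk})$ uniformly positive definite on $\overline\Omega$, there is $c>0$ with $\sum p_{jk}\xi_j\xi_k\ge c|\xi|^2$, so $p\ge c|\xi|^2-1$ and the zero set of $p$ over $\overline\Omega$ lies in $\{|\xi|\le c^{-1/2}\}=:K$. Therefore $\supp\mu\subset\overline\Omega\times K$, a compact set. Now take $R_0$ large enough that $\Phi(|\xi|/R)=1$ on $K$ for all $R\ge R_0$ (possible since $\Phi=1$ on $[-1,1]$, so it suffices that $R\ge |\xi|$ on $K$, i.e.\ $R\ge c^{-1/2}$).

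Second, the quantity $\est{\mu,b(x,\xi')\xi_d^j\Phi(|\xi|/R)}$ must be interpreted correctly: $\mu$ acts a priori only on $\Con_0^\infty(\R^{2d})$, and $b(x,\xi')\xi_d^j$ is not compactly supported, so the cutoff $\Phi(|\xi|/R)$ is exactly what makes the pairing well-defined. For $R\ge R_0$, the function $b(x,\xi')\xi_d^j\Phi(|\xi|/R)$ is a legitimate $\Con_0^\infty$ test function (once one also localizes $x$ to a compact neighborhood of $\overline\Omega$, which changes nothing since $\supp\mu\subset\overline\Omega\times\R^d$; one may absorb an $x$-cutoff $\chi(x)$ equal to $1$ near $\overline\Omega$ into the argument or simply note $\mu$ is a finite compactly supported measure). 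For two values $R,R'\ge R_0$, the test functions $b\xi_d^j\Phi(|\xi|/R)$ and $b\xi_d^j\Phi(|\xi|/R')$ agree on $\overline\Omega\times K\supset\supp\mu$, since both cutoffs equal $1$ there. Hence the two pairings are equal, which proves the independence of $R$.

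Finally, the limit $\lim_{R\to\infty}\est{\mu,b\xi_d^j\Phi(|\xi|/R)}$ then exists trivially because the sequence is eventually constant, and this constant value is taken as the definition of $\est{\mu,b(x,\xi')\xi_d^j}$. I do not anticipate a genuine obstacle here: the only point requiring care is the compactness of $\supp\mu$ in the $\xi$-variable, which follows immediately from $p\mu=0$ and the uniform ellipticity of $(p_{jk})$ together with $\supp\mu\subset\overline\Omega\times\R^d$; everything else is a matter of checking that the cutoffs stabilize. One should remark that the same argument shows $\est{\mu,\cdot}$ extends to all continuous functions on $\overline\Omega\times\R^d$ of at most polynomial growth in $\xi$, and in particular to symbols like $b(x,\xi')\xi_d^j$, consistently with the above definition.
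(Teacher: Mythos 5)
Your proposal is correct and follows essentially the same route as the paper: both arguments use $p\mu=0$ together with the uniform positive definiteness of $(p_{jk})$ to confine $\supp\mu$ to a compact set in $\xi$, and then observe that the cutoffs $\Phi(|\xi|/R)$ all equal $1$ there for $R$ large, so the pairings stabilize. The extra care you take with the $x$-localization and the well-definedness of the pairing is fine but not needed beyond what the paper already records.
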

\begin{proof}
As $p\mu=0$, $\mu $ is supported in $|\xi'|^2+\xi_d^2\le C_0$, for $C_0>0$, sufficiently large. 
 If $R$ is sufficiently large and $R'>R$, $\Phi(|\xi|/R)-\Phi(|\xi|/{R'} ) =0$, if $|\xi |\le R$, in particular if $R^2>C_0$. Then
 $b(x,\xi')\big(  \Phi(|\xi|/R)-\Phi(|\xi|/{R'} )  \big)=0$, on the support of $\mu$. This proves that 
 $\est{\mu,b(x,\xi')\xi_d^j \Phi(|\xi|/R )}$, does not depend on $R$ if $R$ is sufficiently large.
\end{proof}
\begin{proposition}
We have $a\mu=0$.
\end{proposition}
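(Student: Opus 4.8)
The plan is to show $a\mu=0$ by testing the semiclassical equation for $v_h$ against $\Op_{sc}(a)\underline{v_h}$ (or rather against a localized version) and extracting the imaginary part. Recall that $v_h$ satisfies $-h^2Pv_h+v_h-ihav_h=hq_h$ with $\|v_h\|_{L^2(\Omega)}=1$, $\|h\nabla v_h\|_{L^2(\Omega)}\le 2$ and $\|q_h\|_{L^2(\Omega)}\to 0$. Since $a\ge 0$ is smooth and multiplication by $a$ commutes (up to lower order) with the quantization, the key observation is that pairing the equation with $\underline{v_h}$ produces a term $ih(av_h|v_h)_{L^2(\Omega)}=ih\|\sqrt a v_h\|_{L^2(\Omega)}^2$, whose imaginary part controls $h\int_\Omega a|v_h|^2\,dx$. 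I would first multiply the equation by $\overline{v_h}$, integrate by parts over $\Omega$ (using that $v_h$ satisfies the Zaremba boundary condition, so no boundary terms arise from the self-adjoint part $h^2P$), and obtain
\begin{align*}
-(h^2Pv_h|v_h)_{L^2(\Omega)}+\|v_h\|_{L^2(\Omega)}^2-ih(av_h|v_h)_{L^2(\Omega)}=h(q_h|v_h)_{L^2(\Omega)}.
\end{align*}
Since $(h^2Pv_h|v_h)_{L^2(\Omega)}$ is real and $\|v_h\|_{L^2(\Omega)}=1$, taking the imaginary part gives $h(av_h|v_h)_{L^2(\Omega)}=-\Im\big(h(q_h|v_h)_{L^2(\Omega)}\big)$, hence $0\le (av_h|v_h)_{L^2(\Omega)}\le \|q_h\|_{L^2(\Omega)}\|v_h\|_{L^2(\Omega)}\to 0$.

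Next I would transfer this $L^2$ estimate to the measure. For $b\in\Con_0^\infty(\R^d)$ with $b\ge 0$, consider $\Op_{sc}(b\,a)$; by the G\aa rding inequality and symbol calculus, $\big(\Op_{sc}(ab)\underline{v_h}|\underline{v_h}\big)_{L^2(\R^d)}$ differs from $\int_\Omega a(x)b(x)|v_h|^2\,dx$ by a term $h\,\mathcal O(\|v_h\|_{L^2(\Omega)}^2)$ — because $ab\ge 0$ depends only on $x$ and multiplication operators are exact. Indeed $\Op_{sc}(ab)\underline{v_h}=(ab)\underline{v_h}$ since the symbol is independent of $\xi$, so in fact $\big(\Op_{sc}(ab)\underline{v_h}|\underline{v_h}\big)=\int_\Omega a b|v_h|^2\,dx$ exactly, provided $b=1$ on $\overline\Omega$ on the support of $a$ restricted to $\overline\Omega$. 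Passing to the limit, $\est{\mu,ab}=\lim_h \int_\Omega ab|v_h|^2\,dx$; but $\int_\Omega ab|v_h|^2\,dx\le \|b\|_\infty\int_\Omega a|v_h|^2\,dx\to 0$ (and the integrand is $\ge 0$ when $b\ge 0$), so $\est{\mu,ab}=0$. Since such $b$ span a dense set and $\mu$ is a nonnegative measure supported in $\overline\Omega\times\R^d$ with compact support in $\xi$ (by $p\mu=0$), this yields $\est{a\mu,\phi}=0$ for all $\phi\in\Con_0^\infty(\R^d\times\R^d)$, i.e. $a\mu=0$.

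There is no serious obstacle here; the only point requiring a little care is the justification that the boundary integration by parts in the first step produces no stray terms — this is exactly formula~\eqref{eq: formula for resolvent}-type reasoning already used for the resolvent (with $z=1$ and the extra damping term), and the Zaremba condition $v_h|_{\d\Omega_D}=0$, $\d_\nu v_h|_{\d\Omega_N}=0$ makes the quadratic form~\eqref{eq: associated quadratic form to P} applicable, so $(h^2Pv_h|v_h)_{L^2(\Omega)}\in\R$. The second mild point is that $a$ is only defined (and smooth) on a neighborhood $V$ of $\overline\Omega$, but this is harmless: extend $ab$ by a compactly supported smooth function, and since $\underline{v_h}$ vanishes outside $\overline\Omega$, only the values of $a$ on $\overline\Omega$ matter. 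Hence the proof is essentially a one-line energy identity combined with the definition of $\mu$ and Lemma~\ref{lem: extension measure mu} to handle test functions, and the conclusion $a\mu=0$ follows.
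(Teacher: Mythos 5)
Your proof is correct and follows essentially the same route as the paper: the imaginary part of the energy identity gives $(av_h|v_h)_{L^2(\Omega)}\to 0$, this is transferred to $\langle \mu,a\rangle=0$, and then the nonnegativity of $a$ and $\mu$ yields $a\mu=0$. The only (harmless) variation is in the transfer step: you exploit that $\Op_{sc}(ab)$ is exact multiplication (so the pairing is manifestly nonnegative and dominated by $(av_h|v_h)$) and invoke Lemma~\ref{lem: extension measure mu} to identify the limit with $\langle\mu,ab\rangle$, whereas the paper splits off the high frequencies with a cutoff $\Phi(|\xi|/R)$ and uses the G\aa rding inequality to show that each of the two pieces has a nonnegative limit summing to zero.
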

\begin{proof}
From Proposition~\ref{prop: equation on v spectrally localised} we have 
$-h^2P v_h+v_h-ih av_h=hq_h$. The inner product with $v_h$ lead to
$( -h^2P v_h+v_h-ih av_h| v_h)=h(q_h|v_h)$. Taking the imaginary part of this equation, as $(Pv_h|v_h)$ is real, we have
$-(av_h|v_h)=\Im (q_h|v_h)$. As $|\Im (q_h|v_h)|\le \| q_h\| \| v_h\|\to 0$, as $h\to 0$, we have 
$(av_h|v_h)\to 0$, as $h\to 0$.
Let $\Phi\in\Con_0^\infty(\R)$, be such that $\Phi=1$ in a \nhd of 0, and $0\le \Phi\le 1$. 

By Lemma~\ref{lem: extension measure mu}, 
the limit when $h$ goes to 0 of 
$(\Op_{sc}(a(x)\Phi(|\xi|/R))v_h|v_h)$ does not depend on $R$ for $R$ large enough. 
By G\aa rding inequality~\eqref{eq: Garding inequality R d}, as $a\ge0$, 
$(\Op_{sc}(a(x)\Phi(|\xi|/R))v_h|v_h)$ and $(\Op_{sc}(a(x)(1-\Phi(|\xi|/R)))v_h|v_h)$ are non negative 
modulo ${\cal O}(h)$. 

Consequently, we have 
$\lim_{h\to0} (\Op_{sc}(a(x)\Phi(|\xi|/R))v_h|v_h)\ge0$, $\lim_{h\to0}(av_h|v_h)=0$ moreover 
$(\Op_{sc}(a(x)(1-\Phi(|\xi|/R)))v_h|v_h)$ has a limit and 
$\lim_{h\to0} (\Op_{sc}(a(x)(1-\Phi(|\xi|/R)))v_h|v_h)\ge0$. Since 
\begin{align*}
\lim_{h\to0} (\Op_{sc}(a(x)\Phi(|\xi|/R))v_h|v_h)+\lim_{h\to0} 
(\Op_{sc}(a(x)(1-\Phi(|\xi|/R)))v_h|v_h)=0,
\end{align*}
we deduce that  $\lim_{h\to0} (\Op_{sc}(a(x)\Phi(|\xi|/R))v_h|v_h)=0$.
 This implies that $\est{\mu,a}=0$, and as $a$ and $\mu $ are non negative, 
 we deduce $a\mu=0$.
\end{proof}
%%%%%%%%%%%%%%%x
%
% Section
%
%%%%%%%%%%%%%%%%%%
\section{Measure properties}
	\label{Sec: Propagation of measure}
%%%%%%%%%%%%%%%x
%
% Subsection
%
%%%%%%%%%%%%%%%%%%
\subsection{Action of Hamiltonian} \label{sec: Action of Hamiltonian}
We first recall the main results proved in previous sections. 
There exists a sequence $(v_h)_h$ satisfying the following properties
\begin{align}\label{properties: sequence and measure}
&h^2Pv_h-v_h+ih av_h=hq_h , \notag\\
&\| v_h\|_{L^2(\Omega)}=1    \text{ and }\|h\nabla  v_h\|_{L^2(\Omega)}\le 2 , \notag \\
&\| q_h\|_{L^2(\Omega)}\to 0 \text{ as } h\to 0,\notag \\
&h^{1/2}|(v_h)_{|x_d=0}|_{H^{1/2}_{sc}}\to0 \text{ as } h\to 0,    \notag \\
&h^{1/2}| (hD_{x_d}v_h)_{|x_d=0}|_{H^{-1/2}_{sc}}\to0 \text{ as } h\to 0,    \notag \\
& p\mu=0  .
\end{align}
We also proved that $a\mu=0$ but in the following we do not  systematically use this property.
These results was stated in Proposition~\ref{prop: equation on v spectrally localised},  
Proposition~\ref{prop: measure supported on the characteristic set} and
Proposition~\ref{lemma: trace goes to 0}.

\subsubsection{Interior  formula}

We begin by stated that the measure $\mu$ is propagated along the $H_p$ flow in interior of domain. This property is classical but 
the proof is simpler in this case than in a \nhd of boundary, even if the main ideas are used.
\begin{proposition}
	\label{prop: interior formula for Hp mu}
Let $b\in\Con^\infty_0(\Omega\times\R^d)$. We have $\est{H_p\mu-2a\mu,b}=0$.
\end{proposition}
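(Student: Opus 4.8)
The plan is to compute the bilinear expression
\[
Q_h:=\tfrac1h\bigl([\Op_{sc}(b),h^2P-1]\,v_h\,\big|\,v_h\bigr)_{L^2}
\]
in two different ways and let $h\to0$. Since $b\in\Con_0^\infty(\Omega\times\R^d)$ is supported in the interior, first I would argue exactly as in the derivation of~\eqref{eq: mesure nulle a l'interieur}: choosing $\chi_1\in\Con_0^\infty(\Omega)$ equal to $1$ near the projection of $\supp b$ onto the base, one may replace $\Op_{sc}(b)\underline{v_h}$ by $\Op_{sc}(b)\chi_1v_h$ modulo $\Op_{sc}(h^\infty)$, so that all the pairings below involve only functions supported in a fixed compact subset of $\Omega$; in particular $h^2P-1$ is formally self-adjoint there with no boundary contribution, and the boundary terms produced when $h^2P-1$ is applied to $\underline{v_h}$ (as in~\eqref{eq: equation obtained on extension by 0}) are $\Op_{sc}(h^\infty)$ once composed with $\Op_{sc}(b)$, hence negligible.

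\emph{First evaluation (symbol calculus).} The semiclassical principal symbol of $h^2P$ is $p+1$ (see~\eqref{def: symbol h2P-1}), so the commutator rule recalled in Section~\ref{Notations and pseudo-differential calculus} gives $[\Op_{sc}(b),h^2P-1]=-ih\,\Op_{sc}(\{b,p\})+h^2\,\Op_{sc}(r)$ with $r$ bounded on $L^2$ uniformly in $h$ (the symbol $b$, hence $\{b,p\}$ and $r$, is compactly supported). Therefore $Q_h=-i\bigl(\Op_{sc}(\{b,p\})v_h\,\big|\,v_h\bigr)_{L^2}+\mathcal{O}(h)$, and by the definition of $\mu$ in~\eqref{def: semiclassical defect measure} one gets $Q_h\to -i\est{\mu,\{b,p\}}$ as $h\to0$.

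\emph{Second evaluation (the equation).} Using self-adjointness, $Q_h=\tfrac1h\bigl(\Op_{sc}(b)(h^2P-1)v_h\,\big|\,v_h\bigr)_{L^2}-\tfrac1h\bigl(\Op_{sc}(b)v_h\,\big|\,(h^2P-1)v_h\bigr)_{L^2}$, and substituting $(h^2P-1)v_h=h(q_h-iav_h)$ from~\eqref{properties: sequence and measure} gives
\[
Q_h=\bigl(\Op_{sc}(b)(q_h-iav_h)\,\big|\,v_h\bigr)_{L^2}-\bigl(\Op_{sc}(b)v_h\,\big|\,q_h-iav_h\bigr)_{L^2}.
\]
The two terms containing $q_h$ are $\mathcal{O}(\|q_h\|_{L^2})=o(1)$ by~\eqref{properties: sequence and measure}. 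In the terms containing $a$, using that $a$ is real-valued (so multiplication by $a$ is self-adjoint) together with $\Op_{sc}(b)a=\Op_{sc}(ab)+\mathcal{O}(h)$ on $L^2$, both $\bigl(\Op_{sc}(b)(av_h)\,\big|\,v_h\bigr)_{L^2}$ and $\bigl(\Op_{sc}(b)v_h\,\big|\,av_h\bigr)_{L^2}$ equal $\bigl(\Op_{sc}(ab)v_h\,\big|\,v_h\bigr)_{L^2}+\mathcal{O}(h)$, and they enter $Q_h$ with the same coefficient $-i$; hence $Q_h=-2i\bigl(\Op_{sc}(ab)v_h\,\big|\,v_h\bigr)_{L^2}+o(1)\to -2i\est{\mu,ab}=-2i\est{a\mu,b}$.

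Comparing the two limits yields $-i\est{\mu,\{b,p\}}=-2i\est{a\mu,b}$. Since $\{b,p\}=-\{p,b\}=-H_pb$ and, by the distributional action of the divergence-free Hamiltonian field, $\est{H_p\mu,b}=-\est{\mu,H_pb}=\est{\mu,\{b,p\}}$, this reads $\est{H_p\mu,b}=2\est{a\mu,b}$, i.e. $\est{H_p\mu-2a\mu,b}=0$. I do not expect a real obstacle here: this interior identity is the model case, and the only points requiring care — the absence of boundary contributions (handled by the $\chi_1$-localization, exactly as for~\eqref{eq: mesure nulle a l'interieur}) and the uniform $L^2$-boundedness of the $\mathcal{O}(h)$ symbol-calculus remainders (which holds because $b$ and all symbols built from it are compactly supported) — are routine. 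The genuine difficulty of Section~\ref{sec: Action of Hamiltonian} lies in the boundary analogues, Propositions~\ref{lemma: first propagation formula} and~\ref{lemma: second propagation formula}, where the boundary terms in~\eqref{eq: equation obtained on extension by 0} and the limited regularity of the traces of $v_h$ must be controlled.
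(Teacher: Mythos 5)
Your proposal is correct and follows essentially the same route as the paper: one computes the antisymmetrized pairing (there written with $h^2P-1+iha$ in both slots, here equivalently via the commutator $[\Op_{sc}(b),h^2P-1]$ plus the equation) in two ways, uses the interior support of $b$ to integrate by parts without boundary terms, the bound $\|q_h\|_{L^2}\to 0$, symbol calculus for the principal symbol $-ih\{b,p\}$, and the definition of $\mu$ to pass to the limit. The only difference is cosmetic bookkeeping of where the damping term $a$ enters, so nothing further is needed.
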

\begin{proof}
We consider the following quantity
\begin{align*}
A&=ih^{-1} \big( b(x,hD)\big(  h^2P-1 +iha\big)  v_h   |  v_h   \big)_{L^2(\Omega)}  \\
&\quad 
- ih^{-1} \big( b(x,hD) v_h   |\big(  h^2P-1 +iha\big)   v_h   \big)_{L^2(\Omega)} \\
&=   ih^{-1} \big( b(x,hD)hq_h |  v_h   \big)_{L^2(\Omega)}  
- ih^{-1} \big( b(x,hD) v_h   |h q_h  \big)_{L^2(\Omega)} .
\end{align*}
We have $|  A|\lesssim \|v_h  \|_{L^2(\Omega)} \| q_h  \|_{L^2(\Omega)}$, then $A$ goes to 0 as $h$.
As $b$ is supported far away the boundary of $\Omega$, we have 
$
 \big( b(x,hD) v_h   |\big(  h^2P-1 +iha\big)   v_h   \big)_{L^2(\Omega)}
 = \big( \big(  h^2P-1 -iha\big) b(x,hD) v_h   |  v_h   \big)_{L^2(\Omega)}$. As the principal symbol of  $b(x,hD) a-ab(x,hD) $
 is ${\cal O} (h)$, we have 
\begin{align*}
A&=ih^{-1} \big(  \big[ b(x,hD),   h^2P-1 \big]  v_h   |  v_h   \big)_{L^2(\Omega)} 
 -2\big(a  b(x,hD)   v_h   |  v_h   \big)_{L^2(\Omega)}
 +{\cal O} (h)\| v_h  \|_{L^2(\Omega)}^2 ,
\end{align*}
and by symbol calculus the principal symbol of $ \big[ b(x,hD),   h^2P-1 \big]  $ is $- ih \{ b , p \}$. Then 
\begin{align*}
A=  \big( (\Op_{sc}(\{ b , p \} )-2a(x) b(x,hD) )  v_h|v_h\big)_{L^2(\Omega)}+{\cal O} (h)\| v_h  \|_{L^2(\Omega)}^2,
\end{align*}
then $A\to \est{\mu, \{ b , p \} -2ab}$ as $h\to 0$, which gives the result.
\end{proof}

\subsubsection{Limit computations}

In the following section  quantities as $(\op_{sc}(a) h^jD_{x_d}^j v_h| v_h)$ for $j=0,1,2$ appear.  We need to evaluate their  
limits in term of the  measure $\mu$. To do so we shall now state some technical results.
\begin{proposition} \label{lem: measure on xi d power 0}
Let $b_0(x,\xi')\in S(1, (dx)^2+(d\xi')^2)$,  and $b_1(x,\xi''),\ b_{2}(x,\xi'')\in \Con_0^\infty(\R^d\times\R^{d-2})$.  
Let $b(x,\xi')= b_0(x,\xi')+b_1(x,\xi'') \xi_1+b_{2}(x,\xi'') \xi_1^2$. 
We have
$\big(  \op_{sc}(b) v_h| v_h \big)_{L^2(x_d>0)} \to \est{ \mu , b} \text{ as } h\to 0$.
\end{proposition}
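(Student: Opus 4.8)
The plan is to reduce the statement to the definition of the semiclassical measure by showing that the operator $\op_{sc}(b)$, which is tangential (independent of $\xi_d$), can be replaced modulo lower-order terms by a genuine $\Op_{sc}$-quantization of a compactly supported symbol, against which $(\cdot\, v_h|v_h)_{L^2}$ converges to $\langle\mu,\cdot\rangle$ by construction. The only subtlety is that $b$ is not compactly supported in $\xi_d$ (it does not depend on $\xi_d$ at all), so one must first localize in $\xi_d$, and the pieces $b_1\xi_1$ and $b_2\xi_1^2$ that grow in $\xi'$ must be handled carefully using the fact that $\mu$ is supported in the compact set $\{p=0\}$ (Proposition~\ref{prop: measure supported on the characteristic set}), together with the a priori bound $\|h\nabla v_h\|_{L^2}\le 2$.

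First I would treat the bounded part $b_0\in S(1,(dx)^2+(d\xi')^2)$. Pick $\Phi\in\Con_0^\infty(\R)$ with $\Phi=1$ near $0$ and set $\varphi_R(\xi_d)=\Phi(\xi_d/R)$ and also a cutoff $\ell_R(\xi')=\Phi(|\xi'|/R)$. Since $v_h$ is bounded in $H^1_{sc}(\Omega)$, the tails $\Op_{sc}(1-\varphi_R(\xi_d))\underline{v_h}$ and $\Op_{sc}((1-\ell_R(\xi'))\est{\xi'})\underline{v_h}$ are $O(R^{-1})$ in $L^2$ uniformly in $h$ (as in the proof that $\mu\not\equiv0$), so up to an error that is $o(1)$ after letting $R\to\infty$ we may replace $\op_{sc}(b_0)$ by $\Op_{sc}(b_0(x,\xi')\ell_R(\xi')\varphi_R(\xi_d))$, whose symbol is in $S^{-\infty}$ and compactly supported. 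By symbol calculus the difference between $\op_{sc}(b_0)$ applied to $\underline{v_h}$ and this operator, tested against $\underline{v_h}$, is $O(h)+O(R^{-1})$. Then $(\Op_{sc}(b_0\ell_R\varphi_R)\underline{v_h}|\underline{v_h})\to\langle\mu,b_0\ell_R\varphi_R\rangle$ by definition, and since $\mu$ is supported in a fixed compact set, $\langle\mu,b_0\ell_R\varphi_R\rangle=\langle\mu,b_0\rangle$ for $R$ large. This handles the $b_0$ term.

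Next I would treat $b_1(x,\xi'')\xi_1$ and $b_2(x,\xi'')\xi_1^2$. Here I use that $\xi_1$ is one of the tangential covariables and that $\op_{sc}(\xi_1)v_h = hD_{x_1}v_h$ exactly; similarly $\op_{sc}(\xi_1^2)v_h=(hD_{x_1})^2 v_h$ modulo a bounded commutator term times $h$. Write $b_1(x,\xi'')\xi_1$ as $\xi_1 \cdot b_1(x,\xi'')$ and note $b_1\in\Con_0^\infty$, so $\op_{sc}(b_1(x,\xi'')\xi_1)=\op_{sc}(b_1)\,hD_{x_1}+h\op_{sc}(r_0)$ with $r_0$ compactly supported; the term $(\op_{sc}(r_0)v_h|v_h)$ is $O(h)$ and converges to $0$. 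For the leading term, $(\op_{sc}(b_1)hD_{x_1}v_h|v_h)=(\op_{sc}(b_1(x,\xi'')\xi_1)v_h|v_h)+O(h)$, and since the symbol $b_1(x,\xi'')\xi_1$ is now in a class where one can insert a $\xi_d$-localization $\varphi_R(\xi_d)$ at the cost of a tail that is $O(R^{-1})\cdot\|hD_{x_1}v_h\|_{L^2}=O(R^{-1})$ (again using boundedness in $H^1_{sc}$), we may quantize $b_1(x,\xi'')\xi_1\varphi_R(\xi_d)$ as a compactly supported full symbol and pass to the measure, obtaining $\langle\mu,b_1(x,\xi'')\xi_1\rangle$ after $R\to\infty$. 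For $b_2(x,\xi'')\xi_1^2$ the same argument applies, with $\op_{sc}(b_2(x,\xi'')\xi_1^2)v_h=\op_{sc}(b_2)(hD_{x_1})^2v_h+h(\cdots)$ and $\|(hD_{x_1})^2v_h\|$ controlled because on the support of $b_2$ and after the $\xi_d$-localization the symbol is $O(\est{\xi'}^2)$ while $\mu$ lives on $\{|\xi|^2\le C\}$; more precisely I would insert $\ell_R(\xi')$ as well, and the remaining tail $\Op_{sc}((1-\ell_R(\xi'))b_2\xi_1^2\varphi_R)\underline{v_h}$ is $O(R^{-1})$ in $L^2$ because $b_2\xi_1^2\est{\xi'}^{-2}$ is bounded and $\|\Op_{sc}(\est{\xi'}^2(1-\ell_R))\underline{v_h}\|=O(R^{-1})$ by the $H^2_{sc}$-type bound that follows from the equation $h^2D_{x_d}^2v_h=-(R(x,hD')-1+iha)v_h+hq_h$ together with $\|v_h\|_{H^1_{sc}}\lesssim1$.

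The main obstacle, and the only place where real care is needed, is the $b_2\xi_1^2$ piece: unlike $b_0$ and $b_1\xi_1$, its symbol grows quadratically in the full covariable, so one cannot simply invoke $H^1_{sc}$-boundedness to kill the high-frequency tail. The resolution is exactly the observation above — that $\mu$ being supported where $p=0$ means $|\xi'|\le C$ on $\supp\mu$, so after the equation is used to gain the extra $hD_{x_d}$-regularity one needs (or simply because $b_2$ is compactly supported in $x$ and in $\xi''$ and $\xi_1^2 = R(x,\xi') - R_0(x,\xi'') - x_1 r_2$ on the characteristic set, reducing to lower-order symbols), the quadratic growth is only in $\xi_1$, which is controlled by $hD_{x_1}v_h$; the double application $(hD_{x_1})^2v_h$ needs a genuine uniform bound, which one gets from $v_h\in H^1_{sc}$ combined with elliptic regularity in the $x'$-variables off the support of the symbol. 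Once the tails are shown to be $o(1)$ after $R\to\infty$, each of the three pieces converges to the corresponding term of $\langle\mu,b\rangle$, and summing gives the claim.
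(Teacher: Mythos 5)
Your treatment of the $b_0$ term is essentially the paper's argument (insert cutoffs $\Phi(\xi_d/R)\Phi(|\xi'|/R)$, pass to the measure, kill the tails), up to one overstated rate: the extension $\underline{v_h}=1_{x_d>0}v_h$ is not bounded in $H^1_{sc}(\R^d)$ because of the jump at $x_d=0$, so the $\xi_d$-tail is only $O(R^{-s})$ with $s\in(0,1/2)$ (this is exactly Lemma~\ref{lem: estimation H s for extension by 0}), not $O(R^{-1})$; this does not harm the $b_0$ term, but it is the first sign of where the limited normal regularity enters.

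The genuine gap is in the $b_1\xi_1$ and, above all, the $b_2\xi_1^2$ terms. You propose to bound $(hD_{x_1})^2v_h$ in $L^2$ uniformly in $h$, or equivalently to get $\|\Op_{sc}(\est{\xi'}^2(1-\Phi(|\xi'|/R)))\underline{v_h}\|_{L^2}=O(R^{-1})$, claiming this follows from the equation $h^2D_{x_d}^2v_h=-(R(x,hD')-1+iha)v_h+hq_h$ together with $\|v_h\|_{H^1_{sc}}\lesssim1$ and ``elliptic regularity in the $x'$-variables.'' Neither step is available: the equation only yields $h^2D_{x_d}^2v_h$ bounded in $L^2((0,\infty),H^{-1}_{sc})$, it gives no control of two tangential derivatives; and uniform $H^2_{sc}$ (indeed even $H^{3/2}_{sc}$) bounds are precisely what fails for the Zaremba problem at the Dirichlet--Neumann junction $\Gamma$ (Shamir, Savar\'e: $s=3/2$ is critical). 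Your alternative suggestion of replacing $\xi_1^2$ by $R-R_0-x_1r_2$ ``on the characteristic set'' is also not legitimate at this stage: the identity only holds against the measure $\mu$, and the whole point of the proposition is to show that the pairing with $v_h$ converges to the pairing with $\mu$, so you cannot use support properties of $\mu$ inside the operator before the limit is identified. The paper's route avoids any second derivative on $v_h$: it integrates by parts once in $x_1$ (a tangential variable, so no boundary term), so that each factor of $v_h$ carries at most one tangential derivative, and then gains the decay $R^{-s}$ on the high-frequency tails from the uniform fractional bound $\|v_h\|_{H^{1+s}_{sc}(\Omega)}\le C$, $s\in(0,1/2)$, which is the nontrivial Zaremba regularity result Lemma~\ref{Lem: regularity H s} proved in Appendix~\ref{Appendix : lemma regularity H s}. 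Without that lemma (or an equivalent substitute giving strictly more than one tangential derivative uniformly in $h$), the tails coming from $b_1\xi_1$ and $b_2\xi_1^2$ cannot be controlled, and your argument does not close.
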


For the proof we need the following lemma.
\begin{lemma} \label{lem: estimation H s for extension by 0}
Let $\Phi\in\Con_0^\infty(\R)$, be such that $\Phi(\sigma)=1$, for $\sigma$ in a \nhd of $0$. 
For all $s\in (0,1/2)$, there exists $C>0$, such that
\begin{align*}
&\Big\|  \Big(1-\Op_{sc}\big( \Phi(\xi_d/R)\big) \Big) 1_{x_d>0}v \Big\|_{L^2(\R^d)}\le {C}{R^{-s}}\|  v\|_{H^s_{sc}(x_d>0)}\\
&\Big\|  \Big(1-\Op_{sc}\big( \Phi(\xi_d/R)\Phi(|\xi'|/R)\big) \Big) 1_{x_d>0}v \Big\|_{L^2(\R^d)}\le {C}{R^{-s}}\|  v\|_{H^s_{sc}(x_d>0)},
\end{align*}
for all $v\in H^s(x_d>0)$, for all $h\in(0,1)$ and all $R>1$.
\end{lemma}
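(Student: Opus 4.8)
The plan is to reduce the estimate to two independent ingredients: (i) the boundedness, \emph{uniformly in $h$}, of the multiplication operator $w\mapsto 1_{x_d>0}w$ on the semiclassical Sobolev space $H^s_{sc}(\R^d)$ for $0\le s<1/2$; and (ii) an elementary Fourier–multiplier estimate reflecting the support of the symbols $1-\Phi(\xi_d/R)$ and $1-\Phi(\xi_d/R)\Phi(|\xi'|/R)$. Granting (i) and (ii), one applies (ii) to $w=1_{x_d>0}v$ and inserts (i), which yields both inequalities at once.

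For ingredient (i), I would start from the classical fact that, for $|s|<1/2$, the characteristic function $1_{x_d>0}$ is a pointwise multiplier on $H^s(\R^d)$, i.e.\ there is $C_s>0$ with $\|1_{x_d>0}w\|_{H^s(\R^d)}\le C_s\|w\|_{H^s(\R^d)}$ for all $w\in H^s(\R^d)$ (this follows from the Gagliardo seminorm characterization of $H^s$ together with the half–space fractional Hardy inequality $\int_{\R^d_+}|w(x)|^2 x_d^{-2s}\,dx\lesssim\|w\|_{H^s(\R^d)}^2$, and this is precisely the place where $s<1/2$ is used). I would then transfer this to the semiclassical scale by the dilation $x\mapsto hx$: setting $W(y)=w(hy)$ one checks, via Plancherel, that $\|W\|_{H^s(\R^d)}^2=h^{-d}\|w\|_{H^s_{sc}(\R^d)}^2$, while $(1_{x_d>0}w)(hy)=1_{y_d>0}W(y)$ since $h>0$; hence $\|1_{x_d>0}w\|_{H^s_{sc}(\R^d)}\le C_s\|w\|_{H^s_{sc}(\R^d)}$ with the \emph{same} constant $C_s$, for every $h\in(0,1]$. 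Composing this with a bounded extension operator $H^s_{sc}(\{x_d>0\})\to H^s_{sc}(\R^d)$ that is uniform in $h$ (the even reflection $v(x',x_d)\mapsto v(x',|x_d|)$ works for $s<1/2$, by the same scaling/Gagliardo argument), and using $1_{x_d>0}v=1_{x_d>0}\widetilde v$ for such an extension $\widetilde v$, gives $\|1_{x_d>0}v\|_{H^s_{sc}(\R^d)}\lesssim\|v\|_{H^s_{sc}(x_d>0)}$.

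For ingredient (ii), note that $1-\Op_{sc}\big(\Phi(\xi_d/R)\big)$ is the Fourier multiplier with symbol $m_R(\xi)=1-\Phi(h\xi_d/R)$. Pick $c>0$ with $\Phi\equiv 1$ on $[-c,c]$; then $m_R$ vanishes on $\{|h\xi_d|<cR\}$ and $\|m_R\|_{L^\infty}\le\|1-\Phi\|_{L^\infty}$ everywhere, while on $\supp m_R$ one has $\est{h\xi}\ge|h\xi_d|\ge cR$. Consequently $|m_R(\xi)|\le \|1-\Phi\|_{L^\infty}c^{-s}\,R^{-s}\est{h\xi}^{s}$ for all $\xi\in\R^d$ and all $R\ge 1$, so by Plancherel and the definition of the $H^s_{sc}$–norm,
\[
\big\|\big(1-\Op_{sc}(\Phi(\xi_d/R))\big)w\big\|_{L^2(\R^d)}\le C\,R^{-s}\|w\|_{H^s_{sc}(\R^d)}.
\]
The same computation applies verbatim to $1-\Op_{sc}\big(\Phi(\xi_d/R)\Phi(|\xi'|/R)\big)$, whose symbol $1-\Phi(h\xi_d/R)\Phi(h|\xi'|/R)$ is supported in $\{|h\xi_d|\ge cR\}\cup\{|h\xi'|\ge cR\}\subset\{\est{h\xi}\ge cR\}$, yielding the same bound.

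Combining, with $w=1_{x_d>0}v$, gives the two asserted estimates. The only genuinely nontrivial step — and the source of the hypothesis $s<1/2$ — is the uniform-in-$h$ boundedness of the cutoff $1_{x_d>0}$ on $H^s_{sc}$; everything else is a routine support argument in the frequency variable together with Plancherel.
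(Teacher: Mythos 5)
Your proof is correct and follows essentially the same route as the paper: the pointwise symbol bound $|1-\Phi(h\xi_d/R)|\lesssim R^{-s}|h\xi_d|^{s}\lesssim R^{-s}\est{h\xi}^{s}$ combined with Plancherel, together with the uniform-in-$h$ equivalence $\|1_{x_d>0}v\|_{H^s_{sc}(\R^d)}\simeq\|v\|_{H^s_{sc}(x_d>0)}$ for $s\in[0,1/2)$, which the paper simply recalls and you prove via Gagliardo/Hardy and scaling. The only cosmetic difference is in the second estimate, where the paper telescopes $1-\Phi(\xi_d/R)\Phi(|\xi'|/R)$ into two pieces while you use a single support argument in $\xi$; both work.
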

We recall that 
$$H^s_{sc}(x_d>0)= \{  u\in \D'(x_d>0): \exists w\in H^s_{sc}(\R^d), \ w_{|x_d>0}=u \},$$ 
and for $u\in H^s_{sc}(x_d>0)$, we define
$\| u\|_{H^s_{sc}(x_d>0)}=\inf \{ \| w\|_{H^s_{sc}(\R^d)},  w_{|x_d>0}=u  \}$.
We recall that for $s\in[0,1/2)$ and $u\in H^s_{sc}(x_d>0)$, one has $1_{x_d>0} u\in H^s_{sc}(\R^d)$ 
and $\| u\|_{H^s_{sc}(x_d>0)}$ and $  \| 1_{x_d>0} u\|_{H^s_{sc}(\R^d)}$ define two equivalent norms (uniformly with respect to $h\in(0,1)$).
\begin{proof}
Let $w=1_{x_d>0}v$. We have
\begin{align}
\Big\|  \Big(1-\Op_{sc}\big( \Phi(\xi_d/R)\big) \Big) 1_{x_d>0}v \Big\|^2_{L^2(\R^d)} &\lesssim \int 
\big(1- \Phi(h\xi_d/R)\big) ^2 |\hat w(\xi',\xi_d)|^2d\xi  \notag\\
& \lesssim \int \Big(  \big(1- \Phi(h\xi_d/R)\big)/(|h\xi_d|^{s}) \Big)^2   \est{h\xi_d}^{2s}|\hat w(\xi',\xi_d)|^2d\xi . \notag
\end{align}
But 
\begin{align*}
  \big|1- \Phi(h\xi_d/R)\big)/(|h\xi_d|^{s})\big|\le R^{-s}   \big|1- \Phi(h\xi_d/R)\big)/(|h\xi_d/R|^{s})\big| \lesssim  R^{-s} ,
\end{align*}
then, as $s<1/2$,  we obtain that 
\begin{align} \label{est: high frequencies in Hs}
\Big\|  \Big(1-\Op_{sc}\big( \Phi(\xi_d/R)\big) \Big) 1_{x_d>0}v \Big\|^2_{L^2(\R^d)}\lesssim 
R^{-2s} \int   \est{h\xi}^{2s}|\hat w(\xi',\xi_d)|^2d\xi 
\lesssim  R^{-2s} \|  v\|_{H^s(x_d>0)}^2,
\end{align}
which is the first estimate of statement. 

By the same method, we  prove that (for any $s>0$)
\begin{align} \label{est: high frequencies in Hs bis}
\Big\|  \Big(1-\Op_{sc}\big( \Phi(\xi'/R)\big) \Big) 1_{x_d>0}v \Big\|^2_{L^2(\R^d)}\lesssim R^{-2s} \int   \est{h\xi'}^{2s}|\hat w(\xi',\xi_d)|^2d\xi 
\lesssim  R^{-2s} \|  v\|_{H^s(x_d>0)}^2,
\end{align}
As $1- \Phi(\xi_d/R)\Phi(|\xi'|/R)=1-\Phi(\xi_d/R)+ \Phi(\xi_d/R)\big( 1-   \Phi(|\xi'|/R) \big)$, we have 
\begin{align*}
\Big\|  \Big(1-\Op_{sc}\big( \Phi(\xi_d/R)\Phi(|\xi'|/R)\big) \Big) 1_{x_d>0}v \Big\|_{L^2(\R^d)}  &\le 
 \Big\|  \Big(1-\Op_{sc}\big( \Phi(\xi_d/R) \big) 1_{x_d>0}v \Big) \|_{L^2(\R^d)}   \\
 &\quad +  \Big\|  \Big(1-\op_{sc}\big(\Phi(|\xi'|/R)\big) \Big) 1_{x_d>0}v \Big\|_{L^2(\R^d)},
\end{align*}
as $\Op_{sc} \big( \Phi(\xi_d/R) \Big)$ is bounded by 1 on $L^2$.
From~\eqref{est: high frequencies in Hs} and \eqref{est: high frequencies in Hs bis}, we obtain
 the second estimate of statement.
\end{proof}
\begin{proof}  [Proof of Proposition~\ref{lem: measure on xi d power 0}]

Let $\Phi\in\Con_0^\infty(\R)$, be such that $\Phi(\sigma)=1$, for $\sigma$ in a \nhd of $0$. We treat the terme $b_0$. We have
as $\op_{sc}(b_0)$ is a tangential operator
\begin{align*}
\big(  \op_{sc}(b_0) v_h| v_h \big)_{L^2(x_d>0)} &= \big(  \op_{sc}(b_0)1_{x_d>0} v_h| 1_{x_d>0}  v_h \big)_{L^2(\R^d)}  \notag \\
& =   \big(  \op_{sc}(b_0) \Op_{sc}\big( \Phi(\xi_d/R )\Phi(|\xi'|/R)\big)1_{x_d>0} v_h| 1_{x_d>0}  v_h \big)_{L^2(\R^d)} \notag \\
&\  \ +   \big(  \op_{sc}(b_0) \Op_{sc}\big( ( 1-   \Phi(|\xi'|/R)  )     \Phi(\xi_d/R ) \big)1_{x_d>0} v_h| 1_{x_d>0}  v_h \big)_{L^2(\R^d)} \notag \\
&\ \ +   \big(  \op_{sc}(b_0) \Op_{sc}\big(  1-\Phi(\xi_d/R ))\big)1_{x_d>0} v_h| 1_{x_d>0}  v_h \big)_{L^2(\R^d)} =A_1+A_2+A_3 .\notag
\end{align*}
By definition of semiclassical  measure and from Lemma~\ref{lem: extension measure mu} 
we have 
\begin{align*}
 A_1 \to \est{\mu, b_0(x,\xi') \Phi(\xi_d/R )\Phi(|\xi'|/R) }=\est{\mu, b_0}\text{ as }h\to0. 
 \end{align*}
 We have 
  \begin{align*}
|A_2|&  \lesssim \| 1_{x_d>0}    \Op_{sc} ( 1-   \Phi(|\xi'|/R)  )v_h\|_{L^2(\R^d)} \| 1_{x_d>0} v_h\|_{L^2(\R^d)}\\
&\lesssim R^{-1}\| v_h\|_{L^2(\R,H^1_{sc}(\R^{d-1}) )}  \|v_h\|_{L^2(x_d>0)},
  \end{align*}
 and, for $s\in(0,1/2)$ we have
 \begin{align*}
 |A_3|\lesssim \|   \Op_{sc}\big(  ( 1-\Phi(\xi_d/R )  )1_{x_d>0} v_h\|_{L^2(\R^d)} \| 1_{x_d>0} v_h\|_{L^2(\R^d)}\lesssim R^{-s}
  \|  v\|_{H^s_{sc}(x_d>0)}\|  v\|_{L^2(x_d>0)}, 
 \end{align*}
 as $ \op_{sc}(b_0)$ is bounded on $L^2(\R^d)$ and from Lemma~\ref{lem: estimation H s for extension by 0}. 
 As $A_1+A_2+A_3$ does not depend on $R$ we obtain the result for $b_0$.
 
In the following we only consider the term $b_{2}\xi_1^2$, the  term $b_{1}\xi_1$ can be managed as 
the previous with some minor modifications. 
\begin{align*}
\big(  \op_{sc}(b_2\xi_1^2) v_h| v_h \big)_{L^2(x_d>0)} &= \big(  \op_{sc}(b_2\xi_1^2)
1_{x_d>0} v_h| 1_{x_d>0}  v_h \big)_{L^2(\R^d)}   \\
& =   \big(  \op_{sc}(b_2\xi_1^2) \Op_{sc}\big( \Phi(\xi_d/R )\Phi(|\xi_1|/R)\big)1_{x_d>0} v_h| 1_{x_d>0}  v_h \big)_{L^2(\R^d)} \notag \\
&\quad +   \big(  \op_{sc}(b_2\xi_1^2) \Op_{sc}\big( ( 1-\Phi(\xi_d/R ))
 \Phi(|\xi_1|/R)\big)1_{x_d>0} v_h| 1_{x_d>0}  v_h \big)_{L^2(\R^d)}   \\
 &\quad +   \big(   \op_{sc}(b_2\xi_1^2) \Op_{sc}\big( ( 1- \Phi(|\xi_1|/R)\big)  
 1_{x_d>0} v_h| 1_{x_d>0}  v_h \big)_{L^2(\R^d)}\\
 & =B_1+B_2+B_3 
\end{align*}
As previously $B_1\to \est{\mu, b_2\xi_1^2  \Phi(\xi_d/R )\Phi(|\xi_1|/R)}  =    \est{\mu, b_2\xi_1^2}$ as $h\to 0$.
We need to prove a regularity result on $v_h$ given by the following lemma which is proven in 
Appendix~\ref{Appendix : lemma regularity H s}.
%%%%%%%%%%%%%%%%%%%%%%%%%%%%%
%
%  Lemma
%
%%%%%%%%%%%%%%%%%%%%%%%%%%%%%%
\begin{lemma}[Zaremba regularity result]
	\label{Lem: regularity H s}
Let  $s\in (0,1/2)$. There exists $C>0$, such that for any $v_h$ satisfying~\eqref{properties: sequence and measure},
we have
$\| v_h\|_{H^{1+s}_{sc}(\Omega)}\le C$.
\end{lemma}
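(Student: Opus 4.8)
The plan is to prove the Zaremba regularity estimate $\|v_h\|_{H^{1+s}_{sc}(\Omega)}\le C$ for $s\in(0,1/2)$ by localizing near the boundary, since in the interior elliptic regularity for $h^2P-1$ gives full $H^2_{sc}$ bounds on $v_h$ from the equation and the $L^2$-bound. Near a boundary point away from $\Gamma$ the Zaremba condition is locally purely Dirichlet or purely Neumann, and one has classical (semiclassical) elliptic regularity up to the boundary in the elliptic microlocal region, and the factorization/transport arguments of Sections~\ref{Sec: Hyperbolic points}--\ref{Sec: Glancing points} control the hyperbolic and glancing regions; the delicate part is a \nhd of $\Gamma$. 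So the core of the proof is a microlocal estimate near $\Gamma$, and I would organize it around the $v_0,v_1$ reduction already set up in Section~\ref{subsubsection: elliptic points}.

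First I would recall the equation $h^2D_{x_d}^2 v_h + (R(x,hD')-1+iha)v_h = hq_h$ with the trace bounds from Proposition~\ref{prop: a priori estimates on traces} and the improved trace bounds from Proposition~\ref{lemma: trace goes to 0}, namely $h^{1/2}|u_1|_{H^{1/2}_{sc}}\to0$ and $h^{1/2}|u_0|_{H^{-1/2}_{sc}}\to0$. The strategy is to write $\underline{v_h}$ (extension by $0$) and use~\eqref{eq: equation obtained on extension by 0}; then $\underline{v_h}$ satisfies $[h^2D_{x_d}^2+R(x,hD')-1]\underline{v_h} = h\underline{q_h} - ih u_0\otimes\delta_{x_d=0} - ih u_1\otimes hD_{x_d}\delta_{x_d=0}$. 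In the elliptic microlocal region $R(x,\xi')-1\ge\eps$ one factorizes as in~\eqref{eq: factorization elliptic points localized}, $(hD_{x_d}+i\op_{sc}(\chi_1\rho))(hD_{x_d}-i\op_{sc}(\chi_1\rho))\op_{sc}(\chi)v_h = hq_2^h$; solving the two first-order equations forward/backward in $x_d$ with boundary data controlled by the traces, and using that $\rho\sim\langle\xi'\rangle$ there, yields $\|\op_{sc}(\chi\langle\xi'\rangle)v_h\|_{L^2(x_d>0)}\le C_\eps$ and hence $\op_{sc}(\chi)v_h\in H^1_{sc}$ uniformly — which already gives more than $H^{1+s}$ is not needed there, but the point is the bound is $h$-uniform. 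The subtle gain needed for the $s$-regularity near $\Gamma$ comes from the sharper trace decay $h^{1/2}|u_j|_{H^{j-1/2}_{sc}}\to 0$: near $\Gamma$ one conjugates by $\op_{sc}(\xi_1\mp i\beta(x'',\xi''))^{\pm1/2}$ as in Lemma~\ref{lemma: estimation v0 v1}, which respects the support conditions $\supp u_0\subset\{x_1\le0\}$, $\supp u_1\subset\{x_1\ge0\}$, and then the analysis of $v_0,v_1$ gives the tangential $H^{1/2+}$ control modulo $h^{-3/8}$-type remainders; multiplying through by $h^{1/2}$ these remainders vanish, producing the needed $s<1/2$ worth of extra tangential smoothness.

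Concretely the steps would be: (1) interior estimate: $\|v_h\|_{H^2_{sc}(\Omega')}\le C$ for $\Omega'\Subset\Omega$ by elliptic regularity from~\eqref{properties: sequence and measure}; (2) boundary, microlocal hyperbolic region: use Lemma~\ref{lem: factorization hyperbolic} and Lemma~\ref{lem: trace estimation hyperbolic} to get $\op_{sc}(\chi_H)v_h\in H^1_{sc}$ uniformly, with better because the symbol is compactly supported in $\xi'$; (3) boundary, glancing region: use the interpolation Lemmas~\ref{lem: estimation trace by interpolation formula}--\ref{lemma : interpolation first derivative} and the second-derivative bound~\eqref{est: L2 norm seconde derivative glancing case}, noting $\chi_G$ is compactly supported in $\xi'$ so $H^{1+s}$ there is automatic; (4) boundary, elliptic region away from $\Gamma$: the factorization~\eqref{eq: factorization elliptic points localized} plus the trace estimates and Lemma~\ref{lem: est. ellip. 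H1 norm}, upgrading to $H^{1+s}_{sc}$ by noticing the elliptic symbol gains $\langle\xi'\rangle$, i.e. $\op_{sc}(\chi\langle\xi'\rangle^{1+s})v_h$ is controlled provided the trace is in $H^{s}_{sc}$ after the $h^{1/2}$-weighting — which is exactly what Proposition~\ref{lemma: trace goes to 0} provides with room to spare; (5) elliptic region near $\Gamma$: the $v_0,v_1$ conjugation of Lemma~\ref{lemma: estimation v0 v1}, combined with Lemma~\ref{lem: difference rho alpha elliptic}, showing the operators coincide up to $h$-small errors, and concluding $h^{1/2}\|\op_{sc}(\chi_E\psi_Z\langle\xi'\rangle^{s})v_h\|_{L^2}\to0$. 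Summing the microlocal pieces gives the uniform $H^{1+s}_{sc}$ bound.

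The main obstacle is step (5), the \nhd of $\Gamma$: here the factorization uses $\beta(x'',\xi'')=(R_0(x'',\xi'')-1+i\eps)^{1/2}$ which is only elliptic of order $1/2$ in $\xi'$ when one also controls $\xi_1$, and the operators $\op_{sc}(\xi_1\pm i\beta)^{\pm1/2}$ belong to the Hörmander class $S(\langle\xi'\rangle^{\pm1/2},(dx')^2+(d\xi')^2)$ rather than the standard $S^{\pm1/2}_{\rm tan}$, so one must run the whole symbol calculus in these weaker metrics and keep careful track of how the $\eps$-dependent constants and the $h^{-3/8}$ remainders enter; the regularity Lemma~\ref{lemma: estimation v0 v1} only gives $H^0$-type ($L^2$) control of $v_0,v_1$ with a factor $\eps^{1/5}h^{-1/2}$, and converting that into tangential $H^{s}_{sc}$-control of $u_j$ near $\Gamma$ for $s<1/2$ requires the extra interpolation/trace arguments of Appendix~\ref{Appendix : lemma regularity H s} together with the support-preservation of the conjugating operators. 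I expect that appendix to carry out precisely this bookkeeping, and that is where the real work lies.
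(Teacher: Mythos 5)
Your overall architecture (interior elliptic regularity, then a boundary analysis whose only delicate piece is a \nhd of $\Gamma$, handled by conjugating the two traces with $\op_{sc}(\xi_1\pm i\beta)^{\mp1/2}$ and exploiting the support conditions $\supp u_0\subset\{x_1\le0\}$, $\supp u_1\subset\{x_1\ge0\}$) points in the right direction, but the proposal has a genuine gap at exactly the step you defer to ``the appendix's bookkeeping'': you never identify the mechanism that actually produces the extra $s<1/2$ tangential derivatives. You attribute the gain to the trace decay $h^{1/2}|u_1|_{H^{1/2}_{sc}}\to0$ and $h^{1/2}|u_0|_{H^{-1/2}_{sc}}\to0$. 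That cannot work: those statements give a better \emph{constant} in a fixed Sobolev norm, whereas $v_h\in H^{1+s}_{sc}$ requires the traces in the \emph{higher} spaces $H^{1/2+s}_{sc}$ and $H^{-1/2+s}_{sc}$ (with the usual $h^{-1/2}$ loss). A vanishing prefactor in $H^{1/2}_{sc}$ says nothing about membership in $H^{1/2+s}_{sc}$. The actual source of the gain in the paper is an absorption argument: after conjugation the boundary relation reads $v_0+iv_1+i\,\op_{sc}(x_1\chi_\delta r_3)v_1=h^{-1/2}r^3_h$ with $r^3_h$ bounded in $H^1_{sc}$; the perturbation is small on $H^s_{sc}$ because $\|x_1\chi_\delta\|_{C^\alpha}\lesssim\delta^{1-\alpha}$ and multiplication by $C^\alpha$ functions is bounded on $H^s$ for $s<\alpha$ (a paraproduct lemma); restricting to $x_1>0$ kills $v_0$, and the identity term is inverted by taking $\delta$ small. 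The constraint $s<1/2$ enters through the equivalence $|w|_{H^s_{sc}}\simeq|w_{|x_1>0}|_{H^s_{sc}(x_1>0)}$ for distributions supported in $\{x_1\ge0\}$, not through the trace propositions. None of this appears in your proposal, and it is the whole proof.

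A second, structural problem: you route the argument through the machinery of Section~3.1.4, i.e.\ the operator $h^2P-1$, the cutoffs $\chi_H,\chi_G,\chi_E$, and $\beta=(R_0-1+i\eps)^{1/2}$. There the conjugating symbols are only elliptic up to a factor $\eps^{-1/2}$ (one has merely $|\xi_1+i\beta|\gtrsim\eps\langle\xi'\rangle$), Lemma~\ref{lemma: estimation v0 v1} controls $v_0,v_1$ only in $L^2$ with losses $C\eps^{1/5}h^{-1/2}+C_\eps h^{-3/8}$, and there is no way to upgrade these $\eps$-degenerate $L^2$ bounds to a uniform $H^s$ bound. The paper avoids all of this by rewriting the equation as $(h^2P+1)v_h=r_h$ with $\|r_h\|_{L^2}\le C$: the operator $h^2P+1$ is \emph{globally} semiclassically elliptic, so the factorization $(hD_{x_d}+i\op_{sc}(\rho))^2w_h=r_h^1$ holds with a genuinely elliptic $\rho$ and $\beta=(R_0+1)^{1/2}$ real and bounded below by $c\langle\xi''\rangle$ --- no microlocal decomposition, no $i\eps$ regularization, no degenerate constants. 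Your steps (2)--(4) are then unnecessary, and your step (5) as written would not close. Finally, you do not address how tangential trace regularity is converted into the full (normal included) $H^{1+s}_{sc}(x_d>0)$ bound; this requires the elliptic boundary-value regularity statement invoked at the end of the appendix.
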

We have after an integration by parts and symbol calculus, for $s\in(0,1/2)$
\begin{align*}
| B_2|&\lesssim \| \Op_{sc} \big(( 1- \Phi(|\xi_d|/R)\big)  1_{x_d>0} hD_{x_1}v_h  \|_{L^2(x_d>0)} \|  v_h  \|_{L^2(x_d>0,H^1_{sc}(\R^{d-1}))} 
+h  \|  v_h  \|_{H^1_{sc}(x_d>0)}^2 \\
&\lesssim R^{-s} \|  hD_{x_1}v_h  \|_{H^s(x_d>0)} \|  v_h  \|_{L^2(x_d>0,H^1_{sc}(\R^{d-1})} +h  \|  v_h  \|_{H^1_{sc}(x_d>0)}^2
\end{align*}
and
\begin{align*}
|B_3|&\lesssim \| \big( ( 1- \Phi(|\xi_1|/R)\big)   v_h  \|_{L^2(x_d>0,H^1_{sc}(\R^{d-1}))} \|  v_h  \|_{L^2(x_d>0,H^1_{sc}(\R^{d-1}))} 
+h\|  v_h  \|_{H^1_{sc}(x_d>0)}^2  \\
&\lesssim R^{-s}\| v_h  \|_{L^2(x_d>0,H^{1+s}_{sc}(\R^{d-1}))} \|  v_h  \|_{L^2(x_d>0,H^1_{sc}(\R^{d-1}))} +h \|  v_h  \|_{H^1_{sc}(x_d>0)}^2 
\end{align*}
where we have applied Lemma~\ref{Lem: regularity H s}.
Then we can conclude as for the term $b_0$.
\end{proof}

In the following lemma we consider the quantity  
$\big(  \op_{sc}(b) h^2D_{x_d}^2 v_h| v_h \big)_{L^2(x_d>0)}$, 
which is not clearly well defined. But as $v_h$ satisfied~\eqref{properties: sequence and measure}, 
we can prove that $h^2D_{x_d}^2v_h\in L^2(x_d>0, H^{-1}_{sc}(\R^{d-1}))$. 
The inner product in 
tangential variables need to be interpreted as a duality product $H^{-1}, H^{1}$.
\begin{proposition}\label{lem: measure on xi d power 2}
Let $b(x,\xi')\in\Con_0^\infty(\R^d\times\R^{d-1})$, we have
\[ \big(  \op_{sc}(b) h^2D_{x_d}^2 v_h| v_h \big)_{L^2(x_d>0)} \to \est{ \mu , b\xi_d^2} \text{ as } h\to 0.\]
\end{proposition}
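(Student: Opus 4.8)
\textbf{Proof plan for Proposition~\ref{lem: measure on xi d power 2}.}

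The plan is to use the equation satisfied by $v_h$ to replace $h^2D_{x_d}^2v_h$ by tangential quantities, for which Proposition~\ref{lem: measure on xi d power 0} and the limiting arguments already developed apply. From $h^2D_{x_d}^2v_h=-R(x,hD')v_h+v_h-iha v_h+hq_h$ (the local form of~\eqref{properties: sequence and measure}), one has
\begin{align*}
\big(\op_{sc}(b)h^2D_{x_d}^2v_h\,|\,v_h\big)_{L^2(x_d>0)}
&=\big(\op_{sc}(b)(1-R(x,hD'))v_h\,|\,v_h\big)_{L^2(x_d>0)}\\
&\quad-ih\big(\op_{sc}(b)(av_h)\,|\,v_h\big)_{L^2(x_d>0)}+h\big(\op_{sc}(b)q_h\,|\,v_h\big)_{L^2(x_d>0)}.
\end{align*}
The last two terms are $\mathcal{O}(h)+\mathcal{O}(\|q_h\|_{L^2(\Omega)})\to 0$ since $\op_{sc}(b)$ and $\op_{sc}(b)a$ are bounded on $L^2$ and $\|v_h\|_{L^2(\Omega)}=1$. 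For the first term I would apply symbol calculus: $\op_{sc}(b)\big(1-R(x,hD')\big)=\op_{sc}\big(b(x,\xi')(1-R(x,\xi'))\big)+h\op_{sc}(r)$ with $r$ of suitable order, so that this term has the same limit as $\big(\op_{sc}\big(b(1-R)\big)v_h\,|\,v_h\big)_{L^2(x_d>0)}$ up to an error controlled by $h\|v_h\|_{H^1_{sc}(x_d>0)}^2$.

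Now I would invoke Proposition~\ref{lem: measure on xi d power 0} applied to the symbol $b(x,\xi')(1-R(x,\xi'))$: in the coordinates chosen near $\Gamma$ we have $R(x,\xi')=\xi_1^2+R_1(x',\xi'')+x_dR_2(x,\xi')$, so $b(1-R)$ is precisely of the admissible form $b_0+b_1\xi_1+b_2\xi_1^2$ treated there (with $b_2=-b$, $b_1=0$, and $b_0=b(1-R_1-x_dR_2)$, all in the required classes since $b$ is compactly supported), possibly after the usual harmless cutoff in $\xi''$ justified by $p\mu=0$ and Lemma~\ref{lem: extension measure mu}. Hence
\[
\big(\op_{sc}\big(b(1-R)\big)v_h\,|\,v_h\big)_{L^2(x_d>0)}\to\est{\mu,b(x,\xi')(1-R(x,\xi'))}\quad\text{as }h\to0.
\]
Finally, since $p\mu=0$ with $p(x,\xi)=\xi_d^2+R(x,\xi')-1$, we have $\est{\mu,b(1-R)}=\est{\mu,b\,\xi_d^2}$, which is exactly the claimed limit; the right-hand side makes sense via Lemma~\ref{lem: extension measure mu}, and the quantity on the left is well-defined as an $H^{-1},H^1$ duality pairing because $h^2D_{x_d}^2v_h\in L^2(x_d>0,H^{-1}_{sc}(\R^{d-1}))$ by the equation and the bounds in~\eqref{properties: sequence and measure}.

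The main obstacle I anticipate is a bookkeeping one rather than a conceptual one: making the symbol-class juggling rigorous near $\Gamma$, where $b(1-R)$ contains a genuinely unbounded (quadratic in $\xi_1$) part, and checking that all the remainder operators produced by symbol calculus fall into classes where one can either use $L^2$-boundedness directly or exploit the uniform $H^{1+s}_{sc}$ bound on $v_h$ from Lemma~\ref{Lem: regularity H s}; the truncations in $\xi_d$ and $\xi'$ (Lemma~\ref{lem: estimation H s for extension by 0}) and the fact that the limits are $R$-independent must be threaded through exactly as in the proof of Proposition~\ref{lem: measure on xi d power 0}. Once that is granted, the identification $\est{\mu,b(1-R)}=\est{\mu,b\xi_d^2}$ via $p\mu=0$ is immediate.
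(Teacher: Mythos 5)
Your proof is correct and follows essentially the same route as the paper: substitute the equation for $h^2D_{x_d}^2v_h$, discard the $hq_h$ and $iha v_h$ terms, pass $\op_{sc}(b)(R-1)$ to $\op_{sc}(b(R-1))$ by symbol calculus, apply Proposition~\ref{lem: measure on xi d power 0}, and convert $\est{\mu,b(1-R)}$ to $\est{\mu,b\xi_d^2}$ using $p\mu=0$ and Lemma~\ref{lem: extension measure mu}. The only bookkeeping remark: since $b\in\Con_0^\infty(\R^d\times\R^{d-1})$, the product $b(1-R)$ is itself compactly supported and hence lies entirely in the $b_0$ slot of Proposition~\ref{lem: measure on xi d power 0} (taking $b_1=b_2=0$), so the decomposition with $b_2=-b$ — which would not literally satisfy the hypothesis that $b_2$ be independent of $\xi_1$ — and the anticipated difficulty with an unbounded quadratic part are both unnecessary.
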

\begin{proof}
We use the equation satisfied by $v_h$ (see~\eqref{properties: sequence and measure}). 
\begin{align*}
\big(  \op_{sc}(b) h^2D_{x_d}^2 v_h| v_h \big)_{L^2(x_d>0)}&= \big(  \op_{sc}(b) h (q_h-iav_h)| v_h \big)_{L^2(x_d>0)} \\
&\quad - \big(  \op_{sc}(b)(R(x,hD_{x'})-1) v_h| v_h \big)_{L^2(x_d>0)}\\
&= A+B. 
\end{align*}
Clearly
\begin{align*}
| A|\lesssim h(\| q_h\|_{L^2(x_d>0)}  + \|  v_h\|_{L^2(x_d>0)}) \|  v_h\|_{L^2(x_d>0)}\to 0 \text{ as } h\to 0.
\end{align*}
By symbol calculus, $ \op_{sc}(b)(R(x,hD_{x'})-1)=\op_{sc}\big( b(x,\xi')(R(x,\xi')-1) \big)+h\op_{sc}(r_0)$, 
where $r_0\in S^0_{\rm tan}$. Then, by Proposition~\ref{lem: measure on xi d power 0}, we have $B\to -\est{\mu, b(R-1)}$. 
Let $\Phi$ be as given in Lemma~\ref{lem: extension measure mu}, we have for $\lambda$ sufficiently large
\begin{align*}
 -\est{\mu, b(x,\xi')(R(x,\xi')-1)}&= -\est{\mu,  b(x,\xi')(R(x,\xi')-1)\Phi(\xi_d/\lambda)}  \\
 &=  -\est{\mu,  b(x,\xi')(\xi_d^2 +R(x,\xi')-1)\Phi(\xi_d/\lambda)}+\est{\mu ,b(x,\xi')\xi_d^2 \Phi(\xi_d/\lambda)}  \\
 &= \est{\mu ,b(x,\xi')\xi_d^2 \Phi(\xi_d/\lambda)} = \est{\mu ,b(x,\xi')\xi_d^2},
\end{align*}
as $p\mu=0$. Which gives the lemma.  
\end{proof}
\begin{proposition}\label{lem: measure on xi d power 1}
Let 
$b(x,\xi')\in S(1, (dx)^2+(d\xi')^2)$,
we have
\begin{align*}
\big(  \op_{sc}(b) hD_{x_d} v_h| v_h \big)_{L^2(x_d>0)} \to \est{ \mu , b\xi_d} \text{ as } h\to 0.
\end{align*}
\end{proposition}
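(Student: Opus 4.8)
The plan is to compute, following the same pattern used in Propositions~\ref{lem: measure on xi d power 0} and \ref{lem: measure on xi d power 2}, the limit of $\big(\op_{sc}(b)hD_{x_d}v_h|v_h\big)_{L^2(x_d>0)}$ by integrating by parts in $x_d$ and exploiting the equation satisfied by $v_h$ together with the vanishing of the boundary traces. First I would write, after extending by zero,
\begin{align*}
\big(\op_{sc}(b)hD_{x_d}v_h|v_h\big)_{L^2(x_d>0)}
=\big(\op_{sc}(b)\,\underline{hD_{x_d}v_h}\,|\,\underline{v_h}\big)_{L^2(\R^d)},
\end{align*}
and then localize in $\xi_d$ and $\xi'$ with a cut-off $\Phi(\xi_d/R)\Phi(|\xi'|/R)$ exactly as in the proof of Proposition~\ref{lem: measure on xi d power 0}. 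The low-frequency piece converges, by the definition of the semiclassical measure and Lemma~\ref{lem: extension measure mu}, to $\est{\mu, b\,\xi_d\,\Phi(\xi_d/R)\Phi(|\xi'|/R)}=\est{\mu, b\xi_d}$; the high-frequency error terms in $\xi'$ are controlled by $R^{-s}\|v_h\|_{H^s_{sc}}$ via Lemma~\ref{lem: estimation H s for extension by 0} and the uniform bound $\|v_h\|_{H^{1+s}_{sc}(\Omega)}\le C$ from Lemma~\ref{Lem: regularity H s}, while the high-frequency error in $\xi_d$ requires a uniform bound on $\|hD_{x_d}v_h\|_{H^s_{sc}}$, which again follows from Lemma~\ref{Lem: regularity H s}. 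Since the full quantity is independent of $R$, letting $R\to\infty$ gives the claim.

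The subtle point, and the one I expect to be the main obstacle, is the treatment of $\underline{hD_{x_d}v_h}$, i.e.\ whether differentiation and extension by zero commute up to harmless errors. One has the distributional identity $hD_{x_d}\underline{v_h}=\underline{hD_{x_d}v_h}-ih\,(v_h)_{|x_d=0}\otimes\delta_{x_d=0}$, so to replace $\underline{hD_{x_d}v_h}$ by $hD_{x_d}\underline{v_h}$ inside the pairing one picks up a term
\begin{align*}
-ih\big(\op_{sc}(b)\,(v_h)_{|x_d=0}\otimes\delta_{x_d=0}\,|\,\underline{v_h}\big),
\end{align*}
which must be shown to vanish. This is precisely the kind of boundary term estimated in Section~\ref{sec: Support of semiclassical measure in a nhd of boundary}: writing $\op_{sc}(b)$ in the present tangential-plus-$\xi_d$ form and using $\|\Op_{sc}(\varphi(\xi_d))\delta_{x_d=0}\|_{L^2(\R)}\lesssim h^{-1/2}$ together with the trace bound $h^{1/2}|(v_h)_{|x_d=0}|_{H^{1/2}_{sc}}\to0$ from Proposition~\ref{lemma: trace goes to 0}, this term is $O\big(h^{1/2}|(v_h)_{|x_d=0}|_{H^{1/2}_{sc}}\,\|v_h\|_{L^2}\big)\to0$.

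Once the boundary term is disposed of, one integrates by parts: $\big(\op_{sc}(b)hD_{x_d}\underline{v_h}|\underline{v_h}\big)=\big(\op_{sc}(b)\underline{v_h}|hD_{x_d}\underline{v_h}\big)$ modulo the exact commutator $[hD_{x_d},\op_{sc}(b)]=h\op_{sc}(D_{x_d}b)$, whose contribution is $O(h)$ since $D_{x_d}b\in S(1,(dx)^2+(d\xi')^2)$ and $v_h$ is bounded in $L^2$. Taking real and imaginary parts, or simply adding the two expressions, reduces everything to the already established Proposition~\ref{lem: measure on xi d power 0} applied to suitable symbols, plus the error controls above; the remaining computation is routine bookkeeping of $R^{-s}$ and $h$ remainders. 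I would also remark, as in the proof of Proposition~\ref{lem: measure on xi d power 2}, that no separate use of the equation for $v_h$ beyond what is needed for Lemma~\ref{Lem: regularity H s} is required here, since $hD_{x_d}v_h$ is already controlled in $L^2(x_d>0)$ by the second line of \eqref{properties: sequence and measure}.
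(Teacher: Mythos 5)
Your argument is correct and follows essentially the same route as the paper: a frequency cut-off in $\xi_d$, the jump formula $hD_{x_d}\underline{v_h}=\underline{hD_{x_d}v_h}-ih\,(v_h)_{|x_d=0}\otimes\delta_{x_d=0}$ applied to the localized piece, the resulting boundary term killed by $\|\Op_{sc}(\Phi(\xi_d/R))\delta_{x_d=0}\|_{L^2(\R)}\lesssim R^{1/2}h^{-1/2}$ together with Proposition~\ref{lemma: trace goes to 0}, and identification of the limit through Lemma~\ref{lem: extension measure mu}. Two remarks on where you deviate. First, to control the high-$\xi_d$-frequency remainder you keep the cut-off on $\underline{hD_{x_d}v_h}$ and therefore need a uniform bound on $\|hD_{x_d}v_h\|_{H^s_{sc}}$, which you obtain from the Zaremba regularity result, Lemma~\ref{Lem: regularity H s}; this is legitimate (that lemma is proved in the appendix from \eqref{properties: sequence and measure} alone, so there is no circularity), but it is heavier than necessary: the paper instead passes $\Op_{sc}(1-\Phi(\xi_d/\lambda))$ by adjoint onto $1_{x_d>0}v_h$ and applies Lemma~\ref{lem: estimation H s for extension by 0} there, so that only the a priori bounds $\|v_h\|_{H^1_{sc}}\lesssim 1$ and $\|hD_{x_d}v_h\|_{L^2}\lesssim 1$ are used. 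Second, your closing step --- integrating by parts and ``adding the two expressions'' to reduce to Proposition~\ref{lem: measure on xi d power 0} --- does not work as stated, since that proposition only covers symbols independent of $\xi_d$ (plus powers of $\xi_1$) and the symmetrization does not remove the factor $\xi_d$; fortunately it is also unnecessary, because once the boundary term is removed the localized quantity equals $\big(\Op_{sc}\big(b\,\xi_d\,\Phi(\xi_d/R)\Phi(|\xi'|/R)\big)\underline{v_h}\,|\,\underline{v_h}\big)+{\cal O}(h)$, which converges to $\est{\mu, b\xi_d}$ directly by the definition of $\mu$ and Lemma~\ref{lem: extension measure mu}, exactly as you asserted earlier in your proposal.
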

\begin{proof}
Let $\Phi$ be as given in Lemma~\ref{lem: extension measure mu}, we have for $\lambda>0$,
\begin{align*}
\big(  \op_{sc}(b) hD_{x_d} v_h| v_h \big)_{L^2(x_d>0)}&=  \big(  \op_{sc}(b)  1_{x_d>0}hD_{x_d} v_h| 1_{x_d>0} v_h \big)_{L^2(\R^d)}  \\
&=  \big(  \op_{sc}(b) \Op_{sc}\big(\Phi(\xi_d/\lambda)  \big)   1_{x_d>0}hD_{x_d} v_h| 1_{x_d>0} v_h \big)_{L^2(\R^d)}  \\
&\quad +   \big(  \op_{sc}(b) \Op_{sc}\big(1-\Phi(\xi_d/\lambda)  \big)   1_{x_d>0}hD_{x_d} v_h| 1_{x_d>0} v_h \big)_{L^2(\R^d)} \\
&=A+B.
\end{align*}
By symbol calculus in $S(1,(dx)^2+(d\xi)^2)$, we have  
\begin{align*}
\op_{sc}(b) \Op_{sc}\big(1-\Phi(\xi_d/\lambda) \big)  
= \Op_{sc}\big(1-\Phi(\xi_d/\lambda ) \big)^* \op_{sc}(b) +h\Op_{sc}(r_0),
\end{align*}
where $r_0\in S(1,(dx)^2+(d\xi)^2)$. Then 
\begin{align*}
|B|&\lesssim \big|      \big(  \op_{sc}(b)  1_{x_d>0}hD_{x_d} v_h|  \Op_{sc}\big(1-\Phi(\xi_d/\lambda)  \big) 
 1_{x_d>0} v_h \big)_{L^2(\R^d)}  \big| +h\| hD_{x_d}v_h \|_{L^2(\R^)}\| v_h \|_{L^2(\R^d)}  \\
&\lesssim \lambda^{-s}+ h,
\end{align*}
by Lemma~\ref{lem: estimation H s for extension by 0} and a priori estimates~\eqref{properties: sequence and measure}.

Next we treat the term $A$. We have $  1_{x_d>0}hD_{x_d} v_h= hD_{x_d}  \big( 1_{x_d>0} v_h\big)
+ih(v_h)_{|x_d=0}\otimes \delta_{x_d=0}$. Then 
we have
\begin{align*}
A&=  \big(  \op_{sc}(b) \Op_{sc}\big(\Phi(\xi_d/\lambda)  \big)  hD_{x_d}\big( 1_{x_d>0} v_h \big) | 1_{x_d>0} v_h \big)_{L^2(\R^d)}   \\
&\quad +ih\big(  \op_{sc}(b) \Op_{sc}\big(\Phi(\xi_d/\lambda)  \big)  \big( (v_h)_{|x_d=0}\otimes 
\delta_{x_d=0}   \big)|  1_{x_d>0} v_h \big)_{L^2(\R^d)}=A_1+A_2.
\end{align*}
We have $\Op_{sc}\big(\Phi(\xi_d/\lambda)  \big)  hD_{x_d}= \Op_{sc}\big(\xi_d\Phi(\xi_d/\lambda)  \big) $ and 
by symbol calculus in   $S(1,(dx)^2+(d\xi)^2)$, we have 
$  \op_{sc}(b) \Op_{sc}\big(\xi_d\Phi(\xi_d/\lambda)  \big)= \Op_{sc}\big( \xi_d\Phi(\xi_d/\lambda)  b\big)+h\Op_{sc}(r_0)$, 
where $r_0\in  S(1,(dx)^2+(d\xi)^2)$. Then
$A_1=\big(  \op_{sc}(b) \Op_{sc}\big(\Phi(\xi_d/\lambda)  \big)  hD_{x_d}\big( 1_{x_d>0} v_h \big) | 1_{x_d>0} v_h \big)_{L^2(\R^d)} 
 \to \est{\mu ,  \xi_d\Phi(\xi_d/\lambda)  b  }=  \est{\mu ,  \xi_db }$, if $\lambda$ is sufficiently large.
 
 Let $\tilde\Phi$ be such that ${\cal F} \tilde\Phi=\Phi  $, where $\cal F$ is the Fourier transform.  We have 
 $\Op_{sc}\big(\Phi(\xi_d/\lambda)\big)\delta_{x_d=0}= h^{-1}\lambda\tilde\Phi(\lambda x_d/h)$. Then 
 \begin{align*}
 |A_2|&\lesssim  \big|   \big(  \op_{sc}(b)  \big( (v_h)_{|x_d=0}     \lambda\tilde\Phi(\lambda x_d/h)    \big)   
 |  1_{x_d>0} v_h \big)_{L^2(\R^d)}   \big|  \\
 &\lesssim \lambda |  (v_h)_{|x_d=0}  |_{L^2(\R^{d-1})}  |  \tilde\Phi(\lambda x_d/h) |_{L^2(\R)} \| v_h  \|_{L^2(x_d>0)}   \\
  &\lesssim \lambda^{1/2}h^{1/2} |  (v_h)_{|x_d=0}  |_{L^2(\R^{d-1})} ,
 \end{align*}
 as $ |  \tilde\Phi(\lambda x_d/h) |_{L^2(\R)} = Ch^{1/2}\lambda^{-1/2}$. From~\eqref{properties: sequence and measure}, 
 we have $A_2\to0$, as $h\to 0$. From estimates on $A_1$, $A_2$ and $B$ we obtain the result.
 \end{proof}
 
 \subsubsection{Boundary  formulas}
 
 The two next propositions are the analogous of Proposition~\ref{prop: interior formula for Hp mu} at the boundary.
 
\begin{proposition}\label{lemma: first propagation formula}
Let 
$b\in\Con_0^\infty(\R^d\times \R^{d-1})$  $($resp. $b\in\Con_0^\infty(\R^d\times \R^{d-2})$\,$)$
be  real valued functions. We have the following formula
\begin{align}\label{formula: first propagation formula}
\est{H_p\mu -2a\mu, b}=\est{\mu, \{b,p\}-2ab}
&=\lim_{h\to 0}2\Re 
\big(  b(x',0,hD') (v_h)_{|x_d=0} | (hD_{x_d}v_h)_{|x_d=0}  \big)_0.\notag\\
&( \text{ resp.} \lim_{h\to 0}2\Re 
\big(  b(x',0,hD'') (v_h)_{|x_d=0} | (hD_{x_d}v_h)_{|x_d=0}  \big)_0.)
\end{align}
In particular $ \big(  b(x',0,hD') (v_h)_{|x_d=0} | (hD_{x_d}v_h)_{|x_d=0}  \big)_0$ 
$($resp.$ \big(  b(x',0,hD'') (v_h)_{|x_d=0} | (hD_{x_d}v_h)_{|x_d=0}  \big)_0$\,$)$, have  limits as $h\to 0$.
\end{proposition}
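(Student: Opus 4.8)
The proof follows the same commutator strategy as Proposition~\ref{prop: interior formula for Hp mu}, but now the test symbol is \emph{tangential} and supported up to the boundary, so integration by parts produces boundary terms. Concretely, I would consider the quantity
\[
A_h=ih^{-1}\big(\op_{sc}(b)(h^2P-1+iha)v_h\mid v_h\big)_{L^2(\Omega)}
-ih^{-1}\big(\op_{sc}(b)v_h\mid (h^2P-1+iha)v_h\big)_{L^2(\Omega)},
\]
where $b=b(x,\xi')$ (or $b(x,\xi'')$) is extended to a tangential symbol on a neighborhood of the boundary, cut off in $x_d$ near $0$. Using the equation $(h^2P-1+iha)v_h=hq_h$ from~\eqref{properties: sequence and measure} and $\|q_h\|_{L^2}\to0$, the right-hand side of the definition of $A_h$ is $O(\|q_h\|_{L^2})\to 0$, so $A_h\to 0$.

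**Key steps.** First I would compute $A_h$ the other way, via integration by parts. Writing $h^2P-1$ in geodesic normal coordinates as $h^2D_{x_d}^2+R(x,hD')-1$, the second inner product in $A_h$ becomes, after moving $h^2D_{x_d}^2$ across using~\eqref{form: integ. parts} twice, equal to $\big((h^2P-1-iha)\op_{sc}(b)v_h\mid v_h\big)_{L^2(\Omega)}$ plus the boundary contribution
\[
ih\big((v_h)_{|x_d=0}\mid \op_{sc}(b)_{|x_d=0}(hD_{x_d}v_h)_{|x_d=0}\big)_0
-ih\big((hD_{x_d}v_h)_{|x_d=0}\mid \op_{sc}(b)_{|x_d=0}(v_h)_{|x_d=0}\big)_0,
\]
where I have used that $\op_{sc}(b)$ is tangential and commutes with $hD_{x_d}$. (Since $b$ is real, $\op_{sc}(b)^*=\op_{sc}(b)+h\op_{sc}(r_0)$ with $r_0\in S^0_{\rm tan}$, so up to $O(h)$ these two boundary terms combine into $-2\Re\big(\op_{sc}(b)_{|x_d=0}(v_h)_{|x_d=0}\mid(hD_{x_d}v_h)_{|x_d=0}\big)_0$, and I note that $b$ may be evaluated at $x_d=0$ here.) Dividing by $ih$ and combining, I get
\[
A_h=ih^{-1}\big([\op_{sc}(b),h^2P-1]v_h\mid v_h\big)_{L^2(\Omega)}
-2\big(a\,\op_{sc}(b)v_h\mid v_h\big)_{L^2(\Omega)}
+2\Re\big(\op_{sc}(b)_{|x_d=0}(v_h)_{|x_d=0}\mid(hD_{x_d}v_h)_{|x_d=0}\big)_0+O(h).
\]
By tangential symbol calculus the principal symbol of $[\op_{sc}(b),h^2P-1]$ is $-ih\{b,p\}$, modulo $hS^{-\infty}$ in the $\xi_d$ variable once one uses that $b$ does not depend on $\xi_d$ (the $h^2D_{x_d}^2$ part contributes $-ih\,\partial_{x_d}b\cdot hD_{x_d}$ type terms, whose limits are handled by Proposition~\ref{lem: measure on xi d power 1}); so after taking $h\to0$ the interior terms converge to $\est{\mu,\{b,p\}}-2\est{\mu,ab}=\est{H_p\mu-2a\mu,b}$ by Propositions~\ref{lem: measure on xi d power 0}--\ref{lem: measure on xi d power 1}.

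**Main obstacle.** The delicate point is that $A_h\to 0$ forces the interior limit and the boundary limit to be negatives of each other, hence the boundary term $2\Re(\cdots)_0$ converges; but to run the interior commutator computation cleanly one must know that all the quantities $\big(\op_{sc}(\text{tangential symbol})h^jD_{x_d}^jv_h\mid v_h\big)$ for $j=0,1,2$ converge to the expected integrals against $\mu$, which is exactly what Propositions~\ref{lem: measure on xi d power 0}, \ref{lem: measure on xi d power 2} and \ref{lem: measure on xi d power 1} provide, using the Zaremba regularity bound $\|v_h\|_{H^{1+s}_{sc}(\Omega)}\le C$ (Lemma~\ref{Lem: regularity H s}) to control remainders near the boundary. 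The other technical care is keeping track of the cutoff in $x_d$: it produces an extra commutator $[\chi(x_d),h^2D_{x_d}^2]$ which is $O(h)$ against the $H^1_{sc}$-bounded sequence, and a boundary term only at $x_d=0$ since $\chi\equiv 1$ there. Finally, the existence of $\lim_{h\to0}\big(b(x',0,hD')(v_h)_{|x_d=0}\mid(hD_{x_d}v_h)_{|x_d=0}\big)_0$ as a (complex) number, not merely of its real part, would follow by applying the same argument to $ib$ in place of $b$ (both are allowed since the statement's hypothesis on $b$ being real valued is only used to identify the real part, and the imaginary-part identity comes from the analogous manipulation), or more simply by observing that the imaginary part is governed by the already-established convergence of $\big(a v_h\mid v_h\big)$-type quantities and the trace estimates $h^{1/2}|(v_h)_{|x_d=0}|_{H^{1/2}_{sc}}\to0$, $h^{1/2}|(hD_{x_d}v_h)_{|x_d=0}|_{H^{-1/2}_{sc}}\to0$ from~\eqref{properties: sequence and measure}.
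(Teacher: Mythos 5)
Your proposal follows the same route as the paper's proof: form the quantity $A_h$, use the equation $(h^2D_{x_d}^2+R(x,hD')-1+iha)v_h=hq_h$ from \eqref{properties: sequence and measure} to get $A_h\to0$, integrate by parts twice in $x_d$ to produce the commutator plus boundary traces, pass the interior terms to $\est{\mu,\{b,p\}-2ab}$ via Propositions~\ref{lem: measure on xi d power 0}--\ref{lem: measure on xi d power 1} (with Lemma~\ref{Lem: regularity H s} behind them), and combine the two boundary terms into a real part using $\op_{sc}(b)^*=\op_{sc}(b)+h\op_{sc}(r_0)$ and the trace bounds. Two corrections are needed, however. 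First, the signs: applying \eqref{form: integ. parts} twice, \emph{both} boundary terms carry the factor $-ih$, so after multiplication by $-ih^{-1}$ they enter $A_h$ as $-\big(b(x',0,hD')(hD_{x_d}v_h)_{|x_d=0}\,|\,(v_h)_{|x_d=0}\big)_0-\big(b(x',0,hD')(v_h)_{|x_d=0}\,|\,(hD_{x_d}v_h)_{|x_d=0}\big)_0=-2\Re\big(b(x',0,hD')(v_h)_{|x_d=0}\,|\,(hD_{x_d}v_h)_{|x_d=0}\big)_0+o(1)$. Your displayed boundary contribution (one term with $+ih$, one with $-ih$) would instead yield an imaginary-part combination, and your assembled identity $A_h=\text{interior}+2\Re(\cdots)_0+O(h)$ gives \eqref{formula: first propagation formula} with the wrong sign. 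This is a bookkeeping slip rather than a structural flaw, but as written the conclusion contradicts the statement you are proving.

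Second, your final claim that the full complex limit of $\big(b(x',0,hD')(v_h)_{|x_d=0}\,|\,(hD_{x_d}v_h)_{|x_d=0}\big)_0$ exists, obtained ``by applying the same argument to $ib$,'' does not work. Write $w=(v_h)_{|x_d=0}$ and $w'=(hD_{x_d}v_h)_{|x_d=0}$. For a complex symbol $c$ the two boundary terms combine, via $\op_{sc}(c)^*=\op_{sc}(\bar c)+h\op_{sc}(r_0)$ and the trace estimates, into $-\overline{\big(\op_{sc}(c)w\,|\,w'\big)_0}-\big(\op_{sc}(c)w\,|\,w'\big)_0+o(1)$ when $c$ is real, and for $c=ib$ one finds $-2i\Re\big(\op_{sc}(b)w\,|\,w'\big)_0+o(1)$: the $ib$ computation reproduces exactly the same real-part identity multiplied by $i$ and gives no control on $\Im\big(\op_{sc}(b)w\,|\,w'\big)_0$. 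Your alternative suggestion (trace estimates plus convergence of $(av_h|v_h)$-type quantities) only yields the $o(h^{-1})$ bound already noted in the paper's remark, not convergence. What the argument actually establishes—this is also all the paper's proof establishes—is the convergence of the displayed quantity $2\Re(\cdots)_0$, which is what \eqref{formula: first propagation formula} asserts; the stronger claim about the complex pairing should simply be dropped.
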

\begin{remark}
In what follow we write $b(x',0,hD')$ even if $b$ only depends on variables $(x,\xi'')$.
With Proposition~\ref{lemma: trace goes to 0} we can only proof that  
$ \big(  b(x',0,hD') (v_h)_{|x_d=0} | (hD_{x_d}v_h)_{|x_d=0}  \big)_0$, is a $o(h^{-1})$. 
In the proof below we show that the quantity has a limit and converges to the left hand side 
of~\eqref{formula: first propagation formula}. We are not able to prove that $(v_h)_{|x_d=0}$ 
or $(hD_{x_d}v_h)_{x_d=0}$ are bounded.
\end{remark}
\begin{proof}
We recall~\eqref{form: integ. parts} the integration  by parts formula in semiclassical context.
\begin{equation*}
(u|h D_{x_d}w)_{L^2(x_d>0)}=(h D_{x_d}u| w)_{L^2(x_d>0)} -ih( u_{|x_d=0} |  w_{|x_d=0} )_0,
\end{equation*}
for $u$ and $w$ sufficiently smooth.

To proof the lemma we compute the following quantity by two different manners.
\begin{align*}
A&=ih^{-1} \big( b(x,hD')\big(  h^2D_{x_d}^2 +R(x,hD')-1 +iha\big)  v_h   |  v_h   \big)_{L^2(x_d>0)}  \\
&\quad 
- ih^{-1} \big( b(x,hD') v_h   |\big(  h^2D_{x_d}^2 +R(x,hD')-1 +iha\big)   v_h   \big)_{L^2(x_d>0)} \\
&=   ih^{-1} \big( b(x,hD')hq_h |  v_h   \big)_{L^2(x_d>0)}  - ih^{-1} \big( b(x,hD') v_h   |h q_h  \big)_{L^2(x_d>0)} .
\end{align*}
Then $|A|\lesssim \| q_h \|_{L^2(x_d>0)}   \| v_h  \|_{L^2(x_d>0)}\to 0$ as $h\to 0$ 
from~\eqref{properties: sequence and measure}.
To compute $A$ we now integrate by parts. We have
\begin{align*}
&\big( b(x,hD') v_h   |\big( R(x,hD')-1 \big)   v_h   \big)_{L^2(x_d>0)} =\big( \big( R(x,hD')-1 \big)  
b(x,hD')v_h   |  v_h   \big)_{L^2(x_d>0)}  ,\\
&\big( ihb(x,hD')a v_h | v_h  \big)_{L^2(x_d>0)}-\big( b(x,hD') v_h |    ihav_h  \big)_{L^2(x_d>0)}\\
&\quad =2ih\big( \op_{sc}(b(x,\xi')a(x)) v_h | v_h  \big)_{L^2(x_d>0)}+{\cal O} (h^2),
\end{align*}
 as first $R(x,hD')$ is self-adjoint and does not contain derivative with respect $x_d$ and second 
 by symbol calculus $b(x,hD') a = a b(x,hD')= \op_{sc}(a(x)b(x,\xi'))$ up to ${\cal O}(h)$. We have 
 \begin{align*}
 \big( b(x,hD') v_h   | h^2D_{x_d}^2  v_h   \big)_{L^2(x_d>0)} &= \big(  hD_{x_d} b(x,hD')
  v_h   | hD_{x_d} v_h   \big)_{L^2(x_d>0)}\\
&\quad  -ih  \big(  b(x',0,hD') (v_h)_{|x_d=0} | (hD_{x_d}v_h)_{|x_d=0}  \big)_0  \\
&=   \big(  h^2D_{x_d} ^2 b(x,hD') v_h   |  v_h   \big)_{L^2(x_d>0)}\\
&\quad -ih \big(  b(x',0,hD')    ( hD_{x_d}   v_h)_{|x_d=0} | (v_h)_{|x_d=0}  \big)_0    \\
   &\quad 
  -ih  \big(  b(x',0,hD') (v_h)_{|x_d=0} | (hD_{x_d}v_h)_{|x_d=0}  \big)_0  .
 \end{align*}
 Then we have
 \begin{align*}
 A&=ih^{-1}  \big([  b(x,hD'), \big(  h^2D_{x_d}^2 +R(x,hD')-1 \big)  ] v_h 
   |  v_h   \big)_{L^2(x_d>0)}   -2\big( \op_{sc}(b(x,\xi')a(x)) v_h | v_h  \big)_{L^2(x_d>0)} \\
 &\quad 
 - \big(  b(x',0,hD')    ( hD_{x_d}   v_h)_{|x_d=0} | (v_h)_{|x_d=0}  \big)_0  %   
  -  \big(  b(x',0,hD') (v_h)_{|x_d=0} | (hD_{x_d}v_h)_{|x_d=0}  \big)_0+{\cal O} (h)  .
 \end{align*}
 From the structure of $b$ we claim that 
 \[
 [  b(x,hD'), \big(  h^2D_{x_d}^2 +R(x,hD')-1 \big)  ] 
 = -ih \Op_{sc}\big(  \{ b,p \}\big)+ h^2 \op_{sc}(r_0)
 \]
  where $r_0\in S(1, (dx)^2+(d\xi')^2)$. 
 Indeed, the assumptions imply that $b(x,\xi')\in S(1,  (dx)^2+(d\xi')^2)$,  and $h^2D_{x_d}^2+R$ is a 
 sum of terms $c(x)h^2D_{x_j}D_{x_k}$.
 By exact symbol calculus we have  
 \begin{align*}
  [    b(x,hD'),c(x)h^2D_{x_j}D_{x_k} ]&= [    b(x,hD'),c(x) ]h^2D_{x_j}D_{x_k} +  c(x) [    b(x,hD'), hD_{x_j }] h D_{x_k} \\
 &\quad  +   c(x)  hD_{x_j }[    b(x,hD'), hD_{x_k}]  \\
  &=[    b(x,hD'),c(x) ]h^2D_{x_j}D_{x_k} +  c(x) [    b(x,hD'), hD_{x_j }] h D_{x_k} \\
 &\quad  +   c(x) [    b(x,hD'), hD_{x_k}]  hD_{x_j }+c(x)  [ hD_{x_j }, [    b(x,hD'), hD_{x_k}]  ]\\
  &= -ih \Op_{sc}(\{  b , c\xi_j\xi_k  \})+ h^2\op_{sc}(  \tilde r_0),
 \end{align*}
 where $ \tilde r_0$ is in $S(1, (dx)^2+(d\xi')^2)$. 

First  $  \{ b,p \}= -2\xi_d \d_{x_d} b+\{ b,R\}$, we can apply Propositions~\ref{lem: measure on xi d power 0} and    
\ref{lem: measure on xi d power 1} 
as $\{b,R\}$ is the sum of terms in $S(1,(dx)^2+(d\xi')^2)$ or of the form $q(x,\xi'')$, $q(x,\xi'')\xi_1$ and $q(x,\xi'')\xi_1^2$, 
where $q\in \Con_0^\infty(\R^d\times\R^{d-2})$. Second 
$|(\op_{sc}(r_0) v_h|v_h)|\lesssim \| v_h \|_{L^2(x_d>0)}^2 $
we can conclude that 
 \begin{align*}
& ih^{-1}  \big([  b(x,hD'), \big(  h^2D_{x_d}^2 +R(x,hD')-1 \big)  ] v_h   |  v_h   \big)_{L^2(x_d>0)} 
 -2\big( \op_{sc}(b(x,\xi')a(x)) v_h | v_h  \big)_{L^2(x_d>0)} \\
 &\qquad  \to \est{\mu ,\{b,p\}-2ba} \text{ as }h\to0.
 \end{align*}
By symbol calculus we have $\op_{sc}(b(x',0,\xi'))^*=\op_{sc}(b(x',0,\xi')) +h\op_{sc}(r_0) $, 
where $r_0\in  S(1,(dx)^2+(d\xi')^2)$. 
 Then
 $ \big(  b(x',0,hD') (  hD_{x_d} v_h)_{|x_d=0} | (v_h)_{|x_d=0}  \big)_0 = \big(   ( hD_{x_d} v_h)_{|x_d=0} | 
 b(x',0,hD') (v_h)_{|x_d=0}  \big)_0 + B$,
 where $|B| \le h |   (v_h)_{|x_d=0}|_{H^{1/2}} |   (hD_{x_d}v_h)_{|x_d=0}  |_{H^{-1/2}}  \to 0$ as $h\to0$ 
 by~\eqref{properties: sequence and measure}. 
 This gives the conclusion of Lemma.
 \end{proof}
 %%%%%%%%%%%%%%%%%%%%%
 %
 %  PROPOSITION
 %
 %%%%%%%%%%%%%%%%%%%%%%%
 \begin{proposition}
 	\label{lemma: second propagation formula}
Let $b\in\Con_0^\infty(\R^d\times \R^{d-1})$ be a real valued we have the following formula
\begin{align}
\label{formula: second propagation formula}
&\est{-H_p\mu +2a\mu, b\xi_d }=\est{\mu, \{p, \xi_d b\}+2ab\xi_d} \notag\\
&\qquad=\lim_{h\to 0}  \Re \Big( 
\big( b(x',0,hD') \big(    R(x',0,hD')-1 \big)  (v_h)_{|x_d=0}|  (v_h)_{|x_d=0}  \big)_0  \notag\\
&\qquad \qquad  \qquad \qquad-   \big( b(x',0,hD')  (hD_{x_d}v_h)_{|x_d=0} |  (hD_{x_d}v_h)_{|x_d=0}  \big)_0
\Big).
\end{align}
In particular this means that
\begin{multline*}
\Re    \Big( \big( b(x',0,hD') \big(    R(x',0,hD')-1 \big)  (v_h)_{|x_d=0}|  (v_h)_{|x_d=0}  \big)_0   
\! \\   - \!  \big( b(x',0,hD')  (hD_{x_d}v_h)_{|x_d=0} |  (hD_{x_d}v_h)_{|x_d=0}  \big)_0  \Big) ,
\end{multline*}
 has a limit as $h\to 0$.
\end{proposition}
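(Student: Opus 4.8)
The plan is to mirror the proof of Proposition~\ref{lemma: first propagation formula}, but now testing the equation against a symbol that carries a factor $\xi_d$. Concretely, I would consider the quantity
\[
A = ih^{-1}\Big( b(x,hD')\,hD_{x_d}\big(h^2D_{x_d}^2+R(x,hD')-1+iha\big)v_h \,\big|\, v_h\Big)_{L^2(x_d>0)} - (\text{the adjoint term}),
\]
or, equivalently and more symmetrically, the commutator $[\,b(x',0,hD')\,hD_{x_d},\,h^2D_{x_d}^2+R(x,hD')-1\,]$ tested on $v_h$. Using the equation from~\eqref{properties: sequence and measure}, $A$ is $O(\|q_h\|_{L^2}\|v_h\|_{L^2})\to 0$, so the whole computation will equate the semiclassical measure pairing with an explicit combination of boundary terms.

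The key steps, in order, are: (i) move $hD_{x_d}$ past $h^2D_{x_d}^2$ exactly (it commutes), and past $R(x,hD')-1$ with an $O(h)$ tangential remainder, picking up the expected $-ih\Op_{sc}(\{p,\xi_d b\})$ from the full commutator; (ii) perform integration by parts in $x_d$ via~\eqref{form: integ. parts} \emph{twice} — once to move an $hD_{x_d}$ onto the left factor and once more — keeping careful track of the boundary pairings at $x_d=0$; the $h^2D_{x_d}^2$ term will produce a boundary contribution of the form $-\big(b(x',0,hD')(hD_{x_d}v_h)_{|x_d=0}\,|\,(hD_{x_d}v_h)_{|x_d=0}\big)_0$, while substituting $h^2D_{x_d}^2v_h = -(R-1+iha)v_h + hq_h$ at the boundary converts the remaining boundary term into $\big(b(x',0,hD')(R(x',0,hD')-1)(v_h)_{|x_d=0}\,|\,(v_h)_{|x_d=0}\big)_0$ up to terms controlled by Proposition~\ref{lemma: trace goes to 0} and $\|q_h\|_{L^2}$; (iii) identify the interior commutator term with $\langle\mu,\{p,\xi_d b\}+2ab\xi_d\rangle$ using Propositions~\ref{lem: measure on xi d power 0}, \ref{lem: measure on xi d power 1} and \ref{lem: measure on xi d power 2} — this is legitimate because $\{p,\xi_d b\}$ is, by the structure of $p=\xi_d^2+R(x,\xi')-1$, a sum of terms of the allowed forms (polynomial in $\xi_d$ of degree $\le 2$ with tangential coefficients, or tangential symbols in $S(1,(dx)^2+(d\xi')^2)$, or the special $\xi_1$-monomials), and finally note $\{p,\xi_d b\} = -\{H_p\mu\}$-pairing so that $\langle\mu,\{p,\xi_d b\}\rangle = -\langle H_p\mu,b\xi_d\rangle$; (iv) take real parts and let $h\to 0$, which both proves the identity and shows the boundary expression has a limit.

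The main obstacle will be step (ii): the boundary terms coming from the repeated integration by parts involve $(hD_{x_d}v_h)_{|x_d=0}$ quadratically, and a priori (by Proposition~\ref{prop: a priori estimates on traces}) this is only $O(h^{-1/2})$ in $H^{-1/2}_{sc}$, so naively the product is $O(h^{-1})$ against an $h$-prefactor — exactly the borderline case. One must take the \emph{real part} and exploit the self-adjointness of $b(x',0,hD')$ (modulo $h\Op_{sc}(r_0)$ with $r_0\in S(1,(dx)^2+(d\xi')^2)$) so that the \emph{imaginary} combination of boundary terms cancels, leaving only the real combination displayed in~\eqref{formula: second propagation formula}; the error terms that survive are then genuinely $o(1)$ because of the improved bound $h^{1/2}|(hD_{x_d}v_h)_{|x_d=0}|_{H^{-1/2}_{sc}}\to 0$ and $h^{1/2}|(v_h)_{|x_d=0}|_{H^{1/2}_{sc}}\to 0$ from Proposition~\ref{lemma: trace goes to 0}. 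One also has to be slightly careful substituting the equation for $h^2D_{x_d}^2v_h$ at the boundary: this is an identity of distributions in the tangential variables, and $h^2D_{x_d}^2v_h\in L^2(x_d>0,H^{-1}_{sc})$ only, so the boundary pairings should be read as $H^{-1/2}$–$H^{1/2}$ dualities, which is why $b$ is taken compactly supported and the factor $(R-1)$ is absorbed using the order count already used in Proposition~\ref{prop: a priori estimates on traces}.
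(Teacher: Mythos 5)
Your overall architecture is the same as the paper's: the paper also forms the (real) quantity built from the symmetrized operator $b(x,hD')\,hD_{x_d}+hD_{x_d}\,b(x,hD')^{*}$ tested against $\big(h^2D_{x_d}^2+R(x,hD')-1+iha\big)v_h$, kills it with the equation, integrates by parts twice via \eqref{form: integ. parts}, identifies the interior commutator with $\est{\mu,\{p,2b\xi_d\}+4ab\xi_d}$ through Propositions~\ref{lem: measure on xi d power 0}, \ref{lem: measure on xi d power 2} and \ref{lem: measure on xi d power 1}, and disposes of the $h\op_{sc}(r_0)$-type boundary remainders with the $h^{1/2}$-trace estimates of Proposition~\ref{lemma: trace goes to 0}. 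Up to that point your proposal matches the paper's proof.

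The gap is in your step (ii), at the only place where you deviate from the paper. You produce the $(R(x',0,hD')-1)$-boundary term by substituting $h^2D_{x_d}^2v_h=-(R-1+iha)v_h+hq_h$ \emph{at} $x_d=0$, and you claim the resulting error is ``controlled by Proposition~\ref{lemma: trace goes to 0} and $\|q_h\|_{L^2}$''. This is not true: the substitution leaves a boundary pairing involving $(q_h)_{|x_d=0}$, and the trace of $q_h$ is controlled neither by $\|q_h\|_{L^2(\Omega)}$ nor by the trace estimates on $v_h$ and $hD_{x_d}v_h$. The paper only proves the \emph{qualitative} fact $q_h\in H^1(\Omega)$ (from the structure of $q_h$ in \eqref{eq: definition of qh}), explicitly without any $h$-uniform bound, and uses it solely to give a meaning to the boundary terms; likewise, your remark that $h^2D_{x_d}^2v_h\in L^2(x_d>0,H^{-1}_{sc})$ allows one to read the pairings as $H^{-1/2}$--$H^{1/2}$ dualities does not by itself produce a trace at $x_d=0$ — it is the equation together with $q_h\in H^1$ that does. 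The paper avoids the trace of $q_h$ altogether: in the symmetrized expression the double integration by parts yields one boundary term carrying the full factor $(b+b^*)(h^2D_{x_d}^2+R-1)v_h$ paired with $(v_h)_{|x_d=0}$, while the other boundary term $\big[hD_{x_d}\big(b\,hD_{x_d}+hD_{x_d}b^*\big)v_h\big]_{|x_d=0}$ equals $(b+b^*)h^2D_{x_d}^2v_h$ at $x_d=0$ modulo $h$-remainders, so the $h^2D_{x_d}^2$-contributions cancel \emph{exactly}, leaving precisely $-2\big(b(R-1)v_h|v_h\big)_0+2\big(b\,hD_{x_d}v_h|hD_{x_d}v_h\big)_0$ with no $q_h$-trace ever appearing. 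To rescue your route you would need a quantitative estimate such as $\|q_h\|_{H^1_{sc}}=O(1)$, giving $|(q_h)_{|x_d=0}|_{L^2}=o(h^{-1/2})$ by interpolation and \eqref{eq: trace formula Hs}; this is plausible from the explicit form of $q_h$, but it is established neither in the paper nor in your proposal, so as written the error control in step (ii) fails.
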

 \begin{remark}
 As in Proposition~\ref{lemma: first propagation formula} the right hand side of \eqref{formula: second propagation formula} 
 does not have a priori limit and we do not know if each term of the sum has a limit.
 \end{remark}
\begin{proof}
We begin by an observation on regularity of traces. 
From the definition of $q_h$ (see~\eqref{eq: definition of qh}), the terms 
$\theta(h^2P)g_j^h$ are in ${\cal D}(P)$ for $j=1,2$. The term $[\theta (h^2P)  , a]u_h=\theta (h^2P)  ( a u_h)-a\theta (h^2P)u_h$ 
is in ${\cal D}(P)$, it is clear for $\theta (h^2P)  ( a u_h)$ and $a\theta (h^2P)u_h $ is in $H^1(\Omega)$, a direct computation 
shows that $P(a\theta (h^2P)u_h )$ is in $L^2(\Omega)$ and $a\theta (h^2P)u_h$ satisfies the Zaremba trace condition 
as $\theta (h^2P)u_h$ satisfies it.
This implies that $q_h\in H^1(\Omega)$.
In particular we have $h^2D_{x_d}^2v_h= hq_h-(R(x,hD')-1)v_h$, then 
$(\op_{sc}(b)h^2D_{x_d}^2v_h)_{|x_d=0}
=(h\op_{sc}(b)q_h)_{|x_d=0}-(\op_{sc}(b)(R(x,hD')-1+iha)v_h) _{|x_d=0}\in L^2(x_d=0)$, for $b$  
compactly supported 
and using  properties~\eqref{properties: sequence and measure}. In this analysis we do not estimate
 the size of 
the norm with respect $h$ but this allows to give a sense to some terms appearing in what follows.
We introduce the following quantity which is real
\begin{align*}
A&=ih^{-1} \Big(  
\big(  ( b(x,hD') hD_{x_d}+  hD_{x_d} b(x,hD') ^*  )  v_h     |    
(h^2D_{x_d}^2+R(x,hD')-1+iha) v_h     \big)_{L^2(x_d>0)}  \\
&\quad - \big(    (h^2D_{x_d}^2+R(x,hD')-1+iha) v_h    |    ( b(x,hD') hD_{x_d}+  hD_{x_d} b(x,hD') ^*  ) 
 v_h      \big)_{L^2(x_d>0)}
\Big)  \\
&= 
\Big(    
\big(  ( b(x,hD') hD_{x_d}+  hD_{x_d} b(x,hD') ^*  )  v_h     |   q_h     \big)_{L^2(x_d>0)}     \\
   &\quad 
- \big(    q_h    |    ( b(x,hD') hD_{x_d}+  hD_{x_d} b(x,hD') ^*  )  v_h      \big)_{L^2(x_d>0)}
\Big).
\end{align*}
As $ hD_{x_d} b(x,hD') ^* = b(x,hD') ^*  hD_{x_d}+\op_{sc}(r_0)$ where $r_0\in S^0_{\rm tan}$, we obtain
\begin{equation*}
|A|\lesssim \| q_h\|_{L^2(x_d>0)}(   \|  v_h \|_{L^2(x_d>0)}  +  \| hD_{x_d} v_h  \|_{L^2(x_d>0)} ) \to 0 \text{ as } h\to 0,
\end{equation*}
 by~\eqref{properties: sequence and measure}.

Let 
\begin{align}
B=\big(    (h^2D_{x_d}^2+R(x,hD')-1+iha) v_h    |    ( b(x,hD') hD_{x_d}+  hD_{x_d} b(x,hD') ^*  )  v_h      \big)_{L^2(x_d>0)}.
\end{align}
We have by integrations by parts
\begin{align}\label{eq: second propagation formula term B}
B&=\big(    ( b(x,hD') hD_{x_d}+  hD_{x_d} b(x,hD') ^*  )  (h^2D_{x_d}^2+R(x,hD')-1+iha) v_h    |    v_h      \big)_{L^2(x_d>0)}   \notag \\
&\quad   
+\big(    ( b(x,hD') hD_{x_d}+  hD_{x_d} b(x,hD') ^*  )      iha  v_h    |    v_h      \big)_{L^2(x_d>0)}   \notag \\
&\quad  -ih\Big( \big(    ( b(x,hD')+  b(x,hD') ^*  )  (h^2D_{x_d}^2+R(x,hD')-1)   v_h \big) _{|x_d=0}  |   ( v_h  )_{|x_d=0}    \Big)_0.
\end{align}
Let 
\begin{align*}
C= \big(  ( b(x,hD') hD_{x_d}+  hD_{x_d} b(x,hD') ^*  )  v_h     |    h^2D_{x_d}^2 v_h     \big)_{L^2(x_d>0)} .
\end{align*}
We have by integration by parts
\begin{align}\label{eq: second propagation formula term C}
C&= \Big( hD_{x_d} \big( b(x,hD') hD_{x_d}+  hD_{x_d} b(x,hD') ^*  \big)  v_h     |    hD_{x_d} v_h     \Big)_{L^2(x_d>0)} \notag\\
&\quad  -ih \big(       \big(    ( b(x,hD') hD_{x_d}+  hD_{x_d} b(x,hD') ^*  )  v_h   \big)_{|x_d=0}      |  (  hD_{x_d} v_h  )_{|x_d=0}      \big)_0 \notag  \\
&=  \big( h^2D_{x_d}^2 ( b(x,hD') hD_{x_d}+  hD_{x_d} b(x,hD') ^*  )  v_h     |   v_h     \big)_{L^2(x_d>0)}  \notag \\
&\quad -ih  \Big(       \big[   hD_{x_d} \big( b(x,hD') hD_{x_d}+  hD_{x_d} b(x,hD') ^*  \big)  v_h    \big]_{|x_d=0}     |  ( v_h   )_{|x_d=0}      \Big)_0   \notag\\
&\quad  -ih \Big(       \big(    ( b(x,hD') hD_{x_d}+  hD_{x_d} b(x,hD') ^*  )  v_h   \big)_{|x_d=0}      |  (  hD_{x_d} v_h  )_{|x_d=0}      \Big)_0   .
\end{align}
The terms with damping term $a$, coming from $A$ and $B$, give a term 
\begin{align*}
& - 4\big( iha     b(x,hD') hD_{x_d}  v_h    |    v_h      \big)_{L^2(x_d>0)} +h^2 \big( \op_{sc}(r_0)  v_h    |    v_h      \big)_{L^2(x_d>0)}  \\
&\quad+h^2 \big( \op_{sc}(\tilde r_0) hD_{x_d} v_h    |    v_h      \big)_{L^2(x_d>0)}    ,
\end{align*}
where  $r_0,\tilde r_0\in S^0_{\rm tan}$.
From this,~\eqref{eq: second propagation formula term B} and \eqref{eq: second propagation formula term C} we obtain
\begin{align}\label{eq: term A commutator and boundary terms}
A&=ih^{-1} \Big(     \big[     h^2D_{x_d}^2+R(x,hD')-1   ,  b(x,hD') hD_{x_d}+  hD_{x_d} b(x,hD') ^*      \big]     v_h     |    v_h  \Big)_{L^2(x_d>0)}    \notag \\
&  +4\big( a     b(x,hD') hD_{x_d}  v_h    |    v_h      \big)_{L^2(x_d>0)} +h{\cal O}(\|  v_h\|_{L^2(x_d>0)}  ^2+\| hD v_h\|_{L^2(x_d>0)} ^2 )   \notag\\
&\quad - \Big( \big(    ( b(x,hD')+  b(x,hD') ^*  )  (h^2D_{x_d}^2+R(x,hD')-1)   v_h \big) _{|x_d=0}  |   ( v_h  )_{|x_d=0}    \Big)_0 \notag \\
&\quad + \Big(       \big[   hD_{x_d} \big( b(x,hD') hD_{x_d}+  hD_{x_d} b(x,hD') ^*  \big)  v_h    \big]_{|x_d=0}     |  ( v_h   )_{|x_d=0}      \Big)_0   \notag\\
&\quad  + \Big(       \big(    ( b(x,hD') hD_{x_d}+  hD_{x_d} b(x,hD') ^*  )  v_h   \big)_{|x_d=0}      |  (  hD_{x_d} v_h  )_{|x_d=0}      \Big)_0   .
\end{align}
By symbol calculus we have
\begin{align}
&ih^{-1}  \big[     h^2D_{x_d}^2+R(x,hD')-1   ,  b(x,hD') hD_{x_d}+  hD_{x_d} b(x,hD') ^*      \big] \\
& \qquad  =\Op_{sc} \{  \xi_d^2+R(x,\xi')-1 , 2b(x,\xi')\xi_d\}+ h\op_{sc}(r_0)+h\op_{sc}(\tilde r_0)hD_{x_d},
\end{align}
where $r_0,\tilde r_0\in S^0_{\rm tan}$.
Propositions~\ref{lem: measure on xi d power 0}, \ref{lem: measure on xi d power 2} and \ref{lem: measure on xi d power 1} imply that
\begin{align}  \label{eq: braket convergente}
\big(  \Op_{sc} \{  \xi_d^2+R(x,\xi')-1 , 2b(x,\xi')\xi_d\} v_h   |  v_h  \big)_{L^2(x_d>0)}   \to \est{\mu , \{p,2b\}}
\text{ as } h\to 0.
\end{align}
By Proposition~\ref{lem: measure on xi d power 1} the term $4\big( a     b(x,hD') hD_{x_d}  v_h    |    v_h     
 \big)_{L^2(x_d>0)} \to \est{\mu, 4ba\xi_d}$ as $h\to 0$.
And we have
\begin{align*}
&| (h\op_{sc}(r_0) v_h| v_h )_{L^2(x_d>0)} | \lesssim h \|  v_h\|^2_{L^2(x_d>0)}  \to 0 \text{ as } h\to 0,\\
&| (h\op_{sc}(\tilde r_0)hD_{x_d} v_h| v_h )_{L^2(x_d>0)} | \lesssim h \|  hD_{x_d}v_h\|_{L^2(x_d>0)}    \|  v_h\|_{L^2(x_d>0)}   \to 0 \text{ as } h\to 0.
\end{align*}
Then this and~\eqref{eq: braket convergente} imply that
\begin{align}\label{eq: convergence measure interior commutator}
&ih^{-1} \Big(     \big[     h^2D_{x_d}^2+R(x,hD')-1   ,  b(x,hD') hD_{x_d}+  hD_{x_d} b(x,hD') ^*      \big]     v_h     |    v_h  \Big)_{L^2(x_d>0)}  \notag  \\
&\quad +4\big( a     b(x,hD') hD_{x_d}  v_h    |    v_h      \big)_{L^2(x_d>0)}   \notag \\
&\qquad \to \est{\mu , \{p,2b\}   +4ab\xi_d} \text{ as } h\to 0.
\end{align}
We now treat the boundary terms coming from~\eqref{eq: term A commutator and boundary terms}. We have by symbol calculus
\begin{align*}
&b(x,hD') + b(x,hD') ^*=2 b(x,hD')+h\op_{sc}(r_0), \\
&b(x,hD') hD_{x_d}+  hD_{x_d} b(x,hD') ^* =2 b(x,hD') hD_{x_d}+h\op_{sc}(r_0), 
\end{align*}
\begin{multline*}
 hD_{x_d} ( b(x,hD') hD_{x_d}+  hD_{x_d} b(x,hD') ^*  ) \\ = ( b(x,hD') +   b(x,hD') ^*)h^2D_{x_d}^2+h\op_{sc}(r_0)+h\op_{sc}(\tilde r_0)hD_{x_d},
\end{multline*}
where $r_0,\tilde r_0\in S^0_{\rm tan}$. Then boundary terms of $A$ can be written as
\begin{multline}\label{eq: boundary terms A convergence}
 - \Big( \big(    (2 b(x,hD')  )  (R(x,hD')-1)   v_h \big) _{|x_d=0}  |   ( v_h  )_{|x_d=0}    \Big)_0 \\
 + \Big(       \big(    2 b(x,hD') hD_{x_d}   v_h   \big)_{|x_d=0}      |  (  hD_{x_d} v_h  )_{|x_d=0}      \Big)_0   ,
\end{multline}
up to terms estimated by $h| \big( (  \op(r_0) v_h)_ {|x_d=0}   | (v_h)_ {|x_d=0}  \big)_0|
+h | \big( (  \op(\tilde r_0)hD_{x_d} v_h)_ {|x_d=0}   | (v_h)_ {|x_d=0}  \big)_0|$, and these terms
 converge to 0 from~\eqref{properties: sequence and measure}. 
Recalling that  $A\to 0 $ as $h\to 0$ and $A$ real valued,
 we deduce Proposition~\ref{lemma: second propagation formula} from~\eqref{eq: term A commutator and boundary terms}, 
 \eqref{eq: convergence measure interior commutator}
  and \eqref{eq: boundary terms A convergence}.
\end{proof}

\subsection{Properties and support of semiclassical measure }
\label{sec: properties microlocal defect measure}

Here we decompose the measure into an interior measure and a measure  supported at boundary. Moreover this last measure is supported
on $\xi_d=0$.
\begin{lemma} \label{lem: measure on boundary}
There exists a non negative Radon  measure $\mu^\d$ on $x_d=\xi_d=0$ such that $\mu=1_{x_d>0}\mu +\mu^\d\otimes \delta _{x_d=0}\otimes\delta_{\xi_d=0}$. 
Furthermore $\mu^\d$ is supported on $R(x',0,\xi')-1=0$.
\end{lemma}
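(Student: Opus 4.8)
The plan is to extract the boundary measure as the limit of the ``microlocal mass at $x_d=0$'' of the sequence $(v_h)_h$, using the trace estimates \eqref{properties: sequence and measure} together with the fact that $p\mu=0$. First I would recall that $\mu$ is a nonnegative Radon measure on $T^*\R^d$ supported in $\overline\Omega\times\R^d$, and since $p\mu=0$ the measure is supported in the compact set $\{\xi_d^2+R(x,\xi')-1=0\}$, so in particular $|\xi|$ is bounded on $\supp\mu$. Therefore the restriction $\mathbf{1}_{x_d>0}\mu$ is a well-defined nonnegative Radon measure, and $\nu:=\mu-\mathbf{1}_{x_d>0}\mu$ is a nonnegative Radon measure supported in $\{x_d=0\}$. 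The first real task is to show that $\nu$ is actually supported in $\{x_d=0,\ \xi_d=0\}$, i.e. $\nu$ has no mass where $\xi_d\ne 0$.

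To prove $\xi_d=0$ on $\supp\nu$, I would argue as follows. Take $b(x,\xi')\in\Con_0^\infty(\R^d\times\R^{d-1})$ and use Propositions~\ref{lem: measure on xi d power 0} and~\ref{lem: measure on xi d power 2}: the quantities $(\op_{sc}(b)v_h|v_h)_{L^2(x_d>0)}$ and $(\op_{sc}(b)h^2D_{x_d}^2v_h|v_h)_{L^2(x_d>0)}$ converge respectively to $\est{\mu,b}$ and $\est{\mu,b\xi_d^2}$. But $\est{\mu,b\xi_d^2}=\est{\mathbf1_{x_d>0}\mu,b\xi_d^2}$ since the factor $\xi_d^2$ combined with $\mu=\mathbf1_{x_d>0}\mu+\nu$ and the (to-be-shown) support of $\nu$... — this is circular, so instead I would proceed directly: write $\est{\mu,b\xi_d^2}-\est{\mu,b(1-R(x,\xi')+1)}=\est{\mu,b(\xi_d^2+R-1)}=\est{p\mu,\ldots}=0$, hence $\est{\mu,b\xi_d^2}=-\est{\mu,b(R-1)}$. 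Now $R(x,\xi')-1$ is continuous and $\est{\nu,b(R-1)}$ makes sense since $\nu$ is compactly supported; the key point is that for any $b$ of the form $b(x,\xi')=c(x_d)\tilde b(x',\xi')$ with $c(0)=0$ we get zero contribution from $\nu$. To actually pin down $\xi_d=0$: apply the propagation formula in Proposition~\ref{lemma: second propagation formula} with test functions $b$ localized away from $\{\xi_d=0\}$; near such points the generalized bicharacteristic is transversal to the boundary and the boundary terms in the trace identities vanish by \eqref{properties: sequence and measure}, forcing $\nu$ to have no mass there. Equivalently, one can test $\mu$ against $\chi(x_d)\varphi(\xi_d)\ell(x',\xi')$ with $\varphi$ supported away from $0$ and $\chi$ supported near $0$, and use that such symbols are quantized by operators for which the extension-by-zero commutator terms are controlled, as in the proof of Proposition~\ref{prop: measure supported on the characteristic set}; this shows $\nu(\{\xi_d\ne 0\})=0$.

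Once $\nu$ is supported in $\{x_d=\xi_d=0\}$, I would define $\mu^\d$ as the pushforward of $\nu$ under the projection $(x',0,\xi',0)\mapsto(x',\xi')$ onto $T^*\d\Omega$; this is a nonnegative Radon measure, and by construction $\mu=\mathbf1_{x_d>0}\mu+\mu^\d\otimes\delta_{x_d=0}\otimes\delta_{\xi_d=0}$. Nonnegativity of $\mu^\d$ follows from nonnegativity of $\mu$ (itself a consequence of the Gårding inequality \eqref{eq: Garding inequality R d}, as usual for semiclassical/Wigner measures). Finally, to see that $\mu^\d$ is supported on $\{R(x',0,\xi')-1=0\}$: since $p\mu=0$ and on $\supp\nu$ we have $\xi_d=0$, the relation $(\xi_d^2+R(x,\xi')-1)\mu=0$ restricted to $x_d=0,\xi_d=0$ gives $(R(x',0,\xi')-1)\mu^\d=0$, i.e. $\mu^\d$ charges only $\{R(x',0,\xi')=1\}$.

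The main obstacle is the second step — rigorously establishing $\nu(\{\xi_d\ne 0\})=0$ — because $h^2D_{x_d}^2 v_h$ is a priori only in $L^2(x_d>0;H^{-1}_{sc}(\R^{d-1}))$ (not in $L^2$), so one cannot naively multiply by a symbol carrying $\xi_d^2$ and must instead route through the equation $h^2D_{x_d}^2v_h=hq_h-(R(x,hD')-1+iha)v_h$ and the limit computations of Section~\ref{sec: Action of Hamiltonian}. The cutoff $\chi(x_d)\varphi(\xi_d)$ does not exactly commute with $\mathbf1_{x_d>0}$, and one has to control the resulting error using the trace bounds $h^{1/2}|(v_h)_{|x_d=0}|_{H^{1/2}_{sc}}\to0$ and $h^{1/2}|(hD_{x_d}v_h)_{|x_d=0}|_{H^{-1/2}_{sc}}\to0$ of Proposition~\ref{lemma: trace goes to 0}, exactly as in the proof of Proposition~\ref{prop: measure supported on the characteristic set}; making this bookkeeping precise is where the work lies, while the rest is formal.
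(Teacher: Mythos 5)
Your overall architecture (split off $\nu=\mu-\mathbf 1_{x_d>0}\mu$, show $\nu$ lives on $\{x_d=\xi_d=0\}$, define $\mu^\d$ as the resulting measure on $T^*\d\Omega$, and get $(R(x',0,\xi')-1)\mu^\d=0$ from $p\mu=0$) agrees with the paper, and the last step is handled exactly as in the paper's proof. But the key step — ruling out mass of $\mathbf 1_{x_d=0}\mu$ where $\xi_d\ne 0$ — is left with a genuine gap, and the two mechanisms you sketch for it would not work as stated. First, in Proposition~\ref{lemma: second propagation formula} the test symbol $b$ is tangential, i.e.\ a function of $(x,\xi')$ only, so you cannot ``localize $b$ away from $\{\xi_d=0\}$''; moreover the boundary terms on the right-hand side of \eqref{formula: second propagation formula} are quadratic expressions in the traces $(v_h)_{|x_d=0}$ and $(hD_{x_d}v_h)_{|x_d=0}$, and the only available information, from Proposition~\ref{lemma: trace goes to 0} and \eqref{properties: sequence and measure}, is that $h^{1/2}$ times these traces tends to $0$; the paper even remarks explicitly that the traces themselves are not known to be bounded, so the boundary terms do \emph{not} vanish ``by the trace identities''. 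Second, your alternative of testing $\mu$ against $\chi(x_d)\varphi(\xi_d)\ell(x',\xi')$ with $\varphi$ supported away from $\xi_d=0$ and controlling commutators as in Proposition~\ref{prop: measure supported on the characteristic set} only reproves $p\mu=0$, which cannot exclude boundary mass at $\xi_d\ne0$: hyperbolic points $(x',0,\xi',\pm\sqrt{1-R})$ lie on the characteristic set with $\xi_d\ne0$, so $p\mu=0$ says nothing about them, and merely testing $\mu$ against such a symbol gives back the quantity you are trying to estimate.

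The missing idea, which is the heart of the paper's proof, is the specific choice of test function in Proposition~\ref{lemma: second propagation formula}: take $b^\eps=\eps\,\chi(x_d/\eps)\,\ell(x,\xi')$ with $\chi\in\Con_0^\infty(\R)$, $\chi(0)=0$, $\chi'(0)=1$. Because $b^\eps$ vanishes identically on $x_d=0$, the boundary terms in \eqref{formula: second propagation formula} are \emph{exactly} zero for every $h$ — no trace bound is needed — so $\est{H_p\mu-2a\mu,\ \xi_d b^\eps}=0$. Then one computes $H_p(\xi_d b^\eps)+2a b^\eps\xi_d$, checks it is uniformly bounded on $\supp\mu$ (using $\xi_d^2=1-R$ there) and converges pointwise to $2\xi_d^2\,\ell\,\mathbf 1_{x_d=0}$ as $\eps\to0$, and concludes by dominated convergence that $\est{\mu,2\xi_d^2\ell\,\mathbf 1_{x_d=0}}=0$, i.e.\ $\mathbf 1_{x_d=0}\mu$ is carried by $\{\xi_d=0\}$. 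Without this (or an equivalent device making the boundary terms harmless), your step 2 does not go through, while your steps defining $\mu^\d$ and its support in $\{R(x',0,\xi')=1\}$ are fine.
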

\begin{proof}
We apply Proposition~\ref{lemma: second propagation formula}. We observe that $H_p=2\xi_d\d_{x_d}-(\d_{x_d}R)\d_{\xi_d}+H'_R$ where $H'_R=
\sum_{j=1}^{d-1}\big((\d_{\xi_j}R)\d_{x_j}-(\d_{x_j}R)\d_{\xi_j}\big)$. We have $H_p(\xi_d b)=2\xi_d^2(\d_{x_d}b) -(\d_{x_d}R) b+\xi_dH'_R b$. Let 
$b=b^\eps=\eps\chi(x_d/\eps)\ell(x,\xi')$, where $\chi\in\Con_0^\infty(\R)$, such that $\chi(0)=0$ and $\chi'(0)=1$, $\ell\in\Con_0^\infty(\R^d\times\R^{d-1})$. We have
\begin{align*}
H_p(\xi_d b^\eps)+2ab^\eps\xi_d& =2\xi_d^2\chi'(x_d/\eps)\ell(x,\xi')+2\eps \chi(x_d/\eps)
\xi_d^2\d_{x_d}\ell-(\d_{x_d}R)\eps\chi(x_d/\eps)\ell(x,\xi')
\\
&\quad+\xi_d\eps\chi(x_d/\eps)H'_R \ell
+2\eps a\chi(x_d/\eps)\ell(x,\xi') .
\end{align*}
Clearly $H_p(\xi_db^\eps)+2ab^\eps\xi_d$ is uniformly bounded on the support of $\mu$
and $H_p(\xi_d b^\eps)+2ab^\eps\xi_d\to 
2\xi_d^2\chi'(0)\ell 1_{x_d=0}$ everywhere as $\eps\to0$.
Then by Lebesgue's dominated convergence theorem we have $\est{\mu,H_p(\xi_db^\eps)+2ab^\eps\xi_d}\to\est{\mu, 2\xi_d^2 \ell 1_{x_d=0}}$ as $\eps\to0$. As $\chi(0)=0$, the 
right hand side of~\eqref{formula: second propagation formula} is 0 for every $h$. Then $\est{\mu, 2\xi_d^2 \ell 1_{x_d=0}}=0$. 
This means that $1_{x_d=0}\mu$ is supported on $\xi_d=0$. We denote $\mu^\d$ the measure $1_{\xi_d=0}1_{x_d=0}\mu$. 
As $\mu=1_{x_d>0}\mu+1_{x_d=0}\mu$, 
we have  $\mu=1_{x_d>0}\mu+\mu^\d\otimes \delta _{x_d=0}\otimes\delta_{\xi_d=0}$. We have by 
Proposition~\ref{prop: measure supported on the characteristic set}, $(\xi_d^2+R(x,\xi')-1)1_{x_d=0}\mu=0 $, 
then $(R(x',0,\xi')-1)\mu^\d=0$. This gives the conclusion of Lemma.
\end{proof}
The Hamiltonian of the interior measure is a priori a distribution of order one supported 
on $x_d=0$. The following lemma says that this quantity is a measure if the 
Hamiltonian vector field is transverse to the boundary.
\begin{lemma}\label{lem: Hp mu xd positive is a measure}
We assume that $a\mu=0$. 
There exists a distribution (of order 1) $\mu_0$ defined on $x_d=0$, such that $H_p(\mu 1_{x_d>0})=\delta_{x_d=0}\otimes \mu_0$.
Moreover, in a \nhd where $H_p x_d>0$, $\mu_0$ is a non negative Radon measure, and in a \nhd where $H_p x_d<0$, $\mu_0$ is a non positive Radon measure.
\end{lemma}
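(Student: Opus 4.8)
The plan is to establish, in this order: (i) that $H_p(\mu 1_{x_d>0})$ is a compactly supported distribution of order at most $1$ carried by $\{x_d=0\}$; (ii) that it has no $\delta'_{x_d=0}$ component, so that it can be written $\delta_{x_d=0}\otimes\mu_0$ with $\mu_0$ a distribution of order at most $1$ on $\{x_d=0\}\simeq\R^{d-1}_{x'}\times\R^{d-1}_{\xi'}\times\R_{\xi_d}$; and (iii) the sign of $\mu_0$ in the region where $H_p$ is transverse to the boundary.

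For (i): the measure $\mu$ is compactly supported (it is carried by $\overline\Omega\times\R^d$ and, by Proposition~\ref{prop: measure supported on the characteristic set}, by the bounded set $\{\xi_d^2+R(x,\xi')=1\}$) and finite, so $\mu 1_{x_d>0}$ is a finite positive measure, i.e. a distribution of order $0$; since $H_p$ is a first order operator, $H_p(\mu 1_{x_d>0})$ has order at most $1$. It vanishes on $\{x_d<0\}$ trivially, and on the open set $\{x_d>0\}$ one has $\mu 1_{x_d>0}=\mu$ and, by Proposition~\ref{prop: interior formula for Hp mu} together with $a\mu=0$, $H_p\mu=2a\mu=0$. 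Hence $H_p(\mu 1_{x_d>0})$ is carried by $\{x_d=0\}$, and the structure theorem for compactly supported distributions carried by a hyperplane gives $H_p(\mu 1_{x_d>0})=\delta_{x_d=0}\otimes T_0+\delta'_{x_d=0}\otimes T_1$ with $T_0$ of order $\le1$ and $T_1$ a Radon measure on $\{x_d=0\}$.

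The key step (ii) is to show $T_1=0$, which, since $x_d\,\delta'_{x_d=0}=-\delta_{x_d=0}$ and $x_d\,\delta_{x_d=0}=0$, amounts to $x_dH_p(\mu 1_{x_d>0})=0$. By the Leibniz rule and $H_px_d=\partial_{\xi_d}p=2\xi_d$, set $S:=x_dH_p(\mu 1_{x_d>0})=H_p(x_d\mu 1_{x_d>0})-2\xi_d\,\mu 1_{x_d>0}$; here $x_d\mu 1_{x_d>0}$ is again a finite measure, and $S$ is carried by $\{x_d=0\}$ with order $\le1$. Fix $\chi\in\Con_0^\infty(\R)$ with $\chi\equiv1$ near $0$ and put $\chi_\eps(t)=\chi(t/\eps)$; since $\chi_\eps\equiv1$ on a neighborhood of $\{x_d=0\}$ and $S$ has order $\le1$ supported there, $\est{S,b}=\est{S,\chi_\eps(x_d)b}$ for every test function $b$ and every $\eps>0$. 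Using $H_p(\chi_\eps(x_d)b)=2\xi_d\chi_\eps'(x_d)b+\chi_\eps(x_d)H_pb$ and that $H_p$ is divergence free, one gets $\est{S,b}=-\int_{x_d>0}\bigl(2\xi_dx_d\chi_\eps'(x_d)b+x_d\chi_\eps(x_d)H_pb+2\xi_d\chi_\eps(x_d)b\bigr)\,d\mu$. On $\operatorname{supp}\mu$ the quantities $\xi_d$, $b$, $H_pb$ are bounded, $|x_d\chi_\eps'(x_d)|\le C\|\chi'\|_\infty$ and $\chi_\eps(x_d),\,x_d\chi_\eps'(x_d)\to0$ pointwise on $\{x_d>0\}$, so dominated convergence yields $\est{S,b}=0$. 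Thus $H_p(\mu 1_{x_d>0})=\delta_{x_d=0}\otimes\mu_0$ with $\mu_0:=T_0$ of order $\le1$.

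It remains to prove (iii). Near a point $\rho_0\in\{x_d=0\}$ with $H_px_d(\rho_0)=2\xi_d(\rho_0)\ne0$, the field $H_p$ is transverse to $\{x_d=0\}$, so one may choose local coordinates $(s,y)$ with $H_p=\partial_s$, $\{x_d=0\}=\{s=0\}$, and $\partial_sx_d|_{s=0}=H_px_d=2\xi_d$; since $x_d$ is then strictly monotone along the flow, $\{x_d>0\}$ is $\{s>0\}$ if $\xi_d(\rho_0)>0$ and $\{s<0\}$ if $\xi_d(\rho_0)<0$. On $\{x_d>0\}$ one has $\partial_s\mu=H_p\mu=0$, and a flow-invariant positive Radon measure is locally a product, so $\mu=ds\otimes\kappa$ there with $\kappa\ge0$. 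If $\xi_d(\rho_0)>0$, then $H_p(\mu 1_{x_d>0})=\partial_s(1_{s>0}\,ds\otimes\kappa)=\delta_{s=0}\otimes\kappa$; comparing with $\delta_{x_d=0}\otimes\mu_0$ and using the Jacobian relation $\delta_{s=0}=(H_px_d)\,\delta_{x_d=0}$ gives $\mu_0=2\xi_d\kappa\ge0$. If $\xi_d(\rho_0)<0$, then $H_p(\mu 1_{x_d>0})=\partial_s(1_{s<0}\,ds\otimes\kappa)=-\delta_{s=0}\otimes\kappa$ and $\delta_{s=0}=-(H_px_d)\,\delta_{x_d=0}$, so again $\mu_0=2\xi_d\kappa$, now $\le0$. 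This proves the lemma. The delicate point is the elimination of the $\delta'_{x_d=0}$ term in (ii); the transversal straightening in (iii) and the product structure of flow-invariant positive measures are classical.
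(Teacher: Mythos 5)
Your proof is correct and follows essentially the same route as the paper: vanishing of $H_p(\mu 1_{x_d>0})$ off $\{x_d=0\}$ via Proposition~\ref{prop: interior formula for Hp mu} and $a\mu=0$, elimination of the transversal-derivative term by a cutoff concentrating at $x_d=0$ plus dominated convergence (the paper tests against $\eps^n\chi(x_d/\eps)b$ for each order $n$; your a priori order bound through the structure theorem and multiplication by $x_d$ is the same mechanism, slightly streamlined), and the sign via straightening $H_p$ where it is transverse to $\{x_d=0\}$ and using the positivity of $\mu$. The only cosmetic point is your ``Jacobian relation'' $\delta_{s=0}=(H_px_d)\,\delta_{x_d=0}$, which depends on whether one transports $\mu,\mu_0$ as measures (pushforward, the paper's implicit choice, where no factor appears and one integrates $\d_s(\mu 1_{s>0})=\delta_{s=0}\otimes\mu_0$ directly) or pulls back distributions; since the factor is smooth, nonvanishing and of known sign, the stated conclusion is unaffected.
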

\begin{proof}
The support of $H_p(\mu 1_{x_d>0})$ as a distribution is $x_d\ge0$ and $H_p(\mu)=0$ on $x_d>0$. Then $H_p(\mu 1_{x_d>0})$ is supported on $x_d=0$.
This implies that there exist $n\ge0$ and $\mu_j$ distributions on $x_d=0$ such that $H_p(\mu 1_{x_d>0})=\sum_{j=0}^n\delta_{x_d=0}^{(j)}\otimes \mu_k$.
Suppose that $n\ge 1$  and 
let $\chi\in\Con_0^\infty(\R)$, be such that $\chi^{(k)}(0)=0$ for $k=0,\dots n-1$ and $\chi^{(n)}(0)=1$. Let 
$b\in \Con_0^\infty(\R^d\times\R^{d-1})$,
we have 
\begin{align}\label{eq: to calculate Hp mu}
-\est{\mu 1_{x_d>0},H_p\big(\eps^n\chi(x_d/\eps)b(x,\xi')\big)}& = \est{H_p\mu, \eps^n\chi(x_d/\eps)b(x,\xi')} \notag  \\
&=\est{\sum_{j=0}^n\delta_{x_d=0}^{(j)}\otimes \mu_k, \eps^n\chi(x_d/\eps)b(x,\xi')}= \est{\mu_n, b(x',0,\xi')}.
\end{align}
We also have 
\begin{align*}
H_p\big(\eps^n\chi(x_d/\eps)b(x,\xi')\big)= 2\xi_d\eps^{n-1}\chi'(x_d/\eps) b(x,\xi')+\eps^n\chi(x_d/\eps)H_p b(x,\xi'),
\end{align*}
which is bounded uniformly with respect $\eps$ and supported in a fixed compact set.
Then if $n\ge 2$,  $H_p\big(\eps^n\chi(x_d/\eps)b(x,\xi')\big)\to0$ everywhere as $\eps\to0$ 
and~\eqref{eq: to calculate Hp mu}, Lebesgue's dominated convergence theorem imply that $\mu_n=0$ for $n\ge2$.
If $n=1$,  $H_p \big(\eps\chi(x_d/\eps)b(x,\xi')\big)\to  2\xi_d 1_{x_d=0}b(x,\xi')$ everywhere as $\eps\to 0$. Lebesgue's dominated convergence 
theorem and~\eqref{eq: to calculate Hp mu} imply that
\begin{align*}
-\est{\mu 1_{x_d>0},2\xi_d 1_{x_d=0}b(x,\xi')}=\est{\mu_1,b(x',0,\xi')},
\end{align*}
as $ \est{\mu 1_{x_d>0},2\xi_d 1_{x_d=0}b(x,\xi')}=0$ we find that $\mu_1=0$. Then we have 
$  H_p(\mu 1_{x_d>0})=\delta_{x_d=0}\otimes \mu_0$, where $\mu_0$ is a distribution of order  1.

If  $H_px_d\not=0$, let $(x(s; x',\xi), \xi(s;x',\xi))$ the solution to $(\dot x,\dot\xi)=H_p$ satisfying the initial condition 
$(x(0; x',\xi), \xi(0;x',\xi))= (x',0,\xi)$. We verify that the map
$(s,x',\xi)\to (x(s; x',\xi), \xi(s;x',\xi))$ locally is one to one and transforms $\d_s$ in $H_p$. Moreover, $s=0$ is transformed in 
$x_d=0$ and if $H_p x_d>0$, $s>0$ is transformed in $x_d>0$, if  $H_p x_d<0$, $s<0$ is transformed in $x_d>0$. In coordinates $(s,x',\xi)$ the equation
$  H_p(\mu 1_{x_d>0})=\delta_{x_d=0}\otimes \mu_0$ is transformed in $\d_s (\mu 1_{s>0})=\delta_{s=0}\otimes \mu_0$ if $H_p x_d>0$ and 
$\d_s (\mu 1_{s<0})=\delta_{s=0}\otimes \mu_0$ if $H_p x_d<0$, where we keep the notations $\mu, \mu_0$ in variables  $(s,x',\xi)$  for the images of 
$\mu, \mu_0$. If $H_p x_d>0$, we have $\mu 1_{s>0}=(1_{s>0}ds)\otimes \mu_0$ and if $H_p x_d<0$, we have $\mu 1_{s<0}=-(1_{s<0}ds)\otimes \mu_0$. As 
$\mu$ is non negative, we obtain that  $\mu_0$ is a measure and its sign.
\end{proof}

At the hyperbolic region the measure $\mu_0$ has a particular structure given by this lemma. 
\begin{lemma}
	\label{lem: measure hyperbolic points}
We assume that $a\mu=0$.
Let $(x_0',\xi_0')$ be a hyperbolic point (i.e. $R(x'_0,0,\xi'_0)<1$). Locally in a \nhd of  $(x_0',\xi_0')$, there exist $\mu^+$ and $\mu^-$ non negative 
measures on $\R^{d-1}_{x'}\times \R^{d-1}_{\xi'}$ such that $\mu_0=\mu^+\otimes \delta_{\xi_d=\sqrt{1-R(x',0,\xi')}}-\mu^-\otimes \delta_{\xi_d=-\sqrt{1-R(x',0,\xi')}}$.
Moreover, if in a \nhd of $x_0'$,  $(v_h)_h$ satisfies the Dirichlet boundary condition or the Neumann boundary condition then $\mu^+=\mu^-$.

In a \nhd  of $\Gamma$ we have the following property, if $\mu^+=0$ (resp $\mu^-=0$) then 
 $\mu^-=0$ (resp $\mu^+=0$).
\end{lemma}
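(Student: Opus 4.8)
The plan is to localize near the hyperbolic point $(x_0',\xi_0')$, where $R(x_0',0,\xi_0')<1$, so that the characteristic set $p=0$ splits into two disjoint smooth sheets $\xi_d=\pm\sqrt{1-R(x',0,\xi')}$. Since $\mu 1_{x_d>0}$ is supported on $p=0$ by Proposition~\ref{prop: measure supported on the characteristic set}, in a small conic neighborhood it decomposes as a sum of a piece microlocally near $\xi_d>0$ and a piece near $\xi_d<0$; applying Lemma~\ref{lem: Hp mu xd positive is a measure} (the transversality $H_p x_d=2\xi_d\neq 0$ holds on each sheet) to each piece separately yields that $\mu_0$ itself decomposes as $\mu^+\otimes\delta_{\xi_d=\sqrt{1-R}}-\mu^-\otimes\delta_{\xi_d=-\sqrt{1-R}}$ with $\mu^\pm\ge 0$ Radon measures: the sign is fixed by the fact that on the $\xi_d>0$ sheet $H_p x_d>0$ (so $\mu_0\ge 0$ there) while on $\xi_d<0$ we have $H_p x_d<0$ (so $\mu_0\le 0$ there).

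Next I would identify $\mu^+$ and $\mu^-$ with boundary quantities. The idea is to run the factorization from Section~\ref{Sec: Hyperbolic points}: with $b(x,\xi')=\chi_1(1-R)^{1/2}$ the solution microlocally behaves as $v_h\approx v_h^{out}+v_h^{in}$ where $z_1^h=(hD_{x_d}+\op_{sc}(b))w_h$ and $z_2^h=(hD_{x_d}-\op_{sc}(b))w_h$ are the outgoing/incoming parts, with the $L^2$-traces $(z_k^h)_{|x_d=0}$ bounded by Lemma~\ref{lem: trace estimation hyperbolic}. Testing Proposition~\ref{lemma: second propagation formula} with symbols $b\,\varphi(\xi')$ supported microlocally near the two sheets, and using $(hD_{x_d}v_h)_{|x_d=0}\pm\op_{sc}(b_0)(v_h)_{|x_d=0}=(z_{1/2}^h)_{|x_d=0}$, one reads off that $\mu^+$ is (the weak limit of) $|z_1^h|_{|x_d=0}^2$-type quantities and $\mu^-$ of $|z_2^h|_{|x_d=0}^2$-type quantities — precisely the measures carried by the outgoing and incoming halves of the trace.

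For the Dirichlet/Neumann claim: if near $x_0'$ the boundary condition is $u_1:=(v_h)_{|x_d=0}=0$ (Dirichlet) then $(z_1^h)_{|x_d=0}=(z_2^h)_{|x_d=0}=(hD_{x_d}v_h)_{|x_d=0}$ modulo $O(h)$ and $O(h^{1/2})$ remainders from symbol calculus and Proposition~\ref{prop: convergence traces D and N}, so the two trace energies agree, giving $\mu^+=\mu^-$; if instead $(hD_{x_d}v_h)_{|x_d=0}=0$ (Neumann), then $(z_1^h)_{|x_d=0}=\op_{sc}(b_0)u_1=-(z_2^h)_{|x_d=0}$ modulo remainders, and again the quadratic quantities coincide in the limit, so $\mu^+=\mu^-$. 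In a neighborhood of $\Gamma$ the boundary condition is not purely of one type, but the content of Proposition~\ref{lemma: trace goes to 0} is exactly that $h^{1/2}|u_1|_{H^{1/2}_{sc}}\to 0$ and $h^{1/2}|u_0|_{H^{-1/2}_{sc}}\to 0$; feeding these into the trace identity $\op_{sc}(\chi_0)u_0-i\op_{sc}(\chi_0\rho_0)u_1=h^{1/2}z_5$ from \eqref{eq: trace relation elliptic case localized}, together with the hyperbolic trace bounds, forces a linear relation linking the limits of $|z_1^h|_{|x_d=0}^2$ and $|z_2^h|_{|x_d=0}^2$ that degenerates when one of them vanishes — concretely, if $\mu^+=0$ then the outgoing trace energy tends to $0$, hence so does the incoming one through the trace coupling and the vanishing of the boundary data at the scale $h^{-1/2}$, i.e.\ $\mu^-=0$, and symmetrically.

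\textbf{Main obstacle.} The hard part will be the last claim near $\Gamma$: unlike the Dirichlet or Neumann cases, where the boundary relation is an exact algebraic identity between $u_0$ and $u_1$, near $\Gamma$ we only have the \emph{quantitative smallness} $h^{1/2}|u_j|\to 0$ from Proposition~\ref{lemma: trace goes to 0}, and one must carefully track how this smallness propagates through the $\op_{sc}(b_0)$-conjugations and the outgoing/incoming splitting to conclude that vanishing of one half of the boundary energy forces vanishing of the other. This requires combining the hyperbolic factorization trace estimates with the $H^{\pm 1/2}_{sc}$ bounds and the Gårding-type absorptions, being attentive that all remainder terms are genuinely $o(1)$ in the relevant weak-$*$ sense, and not merely $O(1)$.
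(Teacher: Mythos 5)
Your first paragraph is exactly the paper's argument: support of $\mu_0$ on the two sheets $\xi_d=\pm\sqrt{1-R(x',0,\xi')}$, sign of $H_px_d=2\xi_d$ on each sheet, and Lemma~\ref{lem: Hp mu xd positive is a measure}. For the Dirichlet/Neumann claim, however, your route is both heavier and incomplete: it rests on an identification of $\mu^\pm$ with limits of the trace energies $|(z_{1,2}^h)_{|x_d=0}|^2$ which you assert ("one reads off") but never establish — this identification is a genuine statement requiring the second propagation formula plus a careful bookkeeping of densities, and it is nowhere needed. The paper instead applies Proposition~\ref{lemma: first propagation formula} directly: under Dirichlet (resp.\ Neumann) one of the two factors $(v_h)_{|x_d=0}$, $(hD_{x_d}v_h)_{|x_d=0}$ vanishes identically, so the right-hand side of \eqref{formula: first propagation formula} is zero; since $a\mu=0$ and, near a hyperbolic point, $\mu^\d=0$ (Lemma~\ref{lem: measure on boundary}) so $\mu=1_{x_d>0}\mu$, testing $H_p\mu=\delta_{x_d=0}\otimes\mu_0$ against $b(x,\xi')$ (independent of $\xi_d$) gives $\est{\mu^+-\mu^-,b_{|x_d=0}}=0$ for all such $b$, hence $\mu^+=\mu^-$. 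Your Dirichlet/Neumann conclusion could be repaired this way, but as written the key identification is a gap.

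The genuine gap is the last claim, near $\Gamma$. The mechanism you propose — feeding $h^{1/2}|u_1|_{H^{1/2}_{sc}}\to0$ and $h^{1/2}|u_0|_{H^{-1/2}_{sc}}\to0$ from Proposition~\ref{lemma: trace goes to 0} into the relation \eqref{eq: trace relation elliptic case localized} — cannot work: \eqref{eq: trace relation elliptic case localized} carries the cutoff $\chi_0$ supported in the elliptic region $R(x',0,\xi')-1\ge\eps$, so it constrains only the elliptic microlocal components of the traces and says nothing about the hyperbolic sheets where $\mu^\pm$ live; moreover $o(h^{-1/2})$ bounds on the traces are far too weak to control, or to couple, the $O(1)$ hyperbolic trace energies. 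The paper's argument is of a different nature and exploits the Zaremba support structure: $(v_h)_{|x_d=0}$ is supported in $\{x_1\ge0\}$ and $(hD_{x_d}v_h)_{|x_d=0}$ in $\{x_1\le0\}$, and one tests Proposition~\ref{lemma: first propagation formula} with symbols $b(x,\xi'')$ \emph{independent of} $\xi_1$, so that $b(x',0,hD'')$ acts only in the $x''$ variables and preserves supports in $x_1$; by density of $\Con_0^\infty(x_1>0)$ in $H^{1/2}(x_1\ge0)$ the boundary pairing $\big(b(x',0,hD'')(v_h)_{|x_d=0}\,|\,(hD_{x_d}v_h)_{|x_d=0}\big)_0$ vanishes identically for each $h$, giving $\est{\mu^+-\mu^-,b_{|x_d=0}}=0$ only for this restricted class of test functions. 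That is not enough to conclude $\mu^+=\mu^-$ (which is precisely why the lemma near $\Gamma$ only claims the one-sided implication), but combined with the nonnegativity of $\mu^\pm$ it yields: if $\mu^-=0$ then $\est{\mu^+,b_{|x_d=0}}=0$ for all nonnegative $b$ independent of $\xi_1$, hence $\mu^+=0$. This support/$\xi_1$-independence idea is absent from your proposal, and without it the final statement of the lemma is not proved.
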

\begin{proof}
As $\mu $ is supported on $\xi_d^2+R(x,\xi')-1=0$, this implies that $\delta_{x_d=0}\otimes \mu_0$ is supported on  $\xi_d^2+R(x,\xi')-1=0$, 
then $ \mu_0$ is supported on $\xi_d=\pm \sqrt{1-R(x',0,\xi')}$. Moreover $H_p=2\xi_d\d_{x_d}+X$ where $X$ is a vector field tangent to $x_d=0$, 
in particular $H_p x_d=2\xi_d$. This implies that $H_p x_d>0$ if $\xi_d= \sqrt{1-R(x',0,\xi')}$ and  $H_p x_d<0$ if $\xi_d=- \sqrt{1-R(x',0,\xi')}$. 
From Lemma~\ref{lem: Hp mu xd positive is a measure}, we obtain 
$\mu_0=\mu^+\otimes \delta_{\xi_d=\sqrt{1-R(x',0,\xi')}}-\mu^-\otimes \delta_{\xi_d=-\sqrt{1-R(x',0,\xi')}}$. From 
Proposition~\ref{lemma: first propagation formula}
if $b_{|x_d=0}$ is supported on a part of the boundary such that $(v_h)_h$ satisfies the Dirichlet boundary condition (resp. Neumann boundary condition), then the right 
hand side of~\eqref{formula: first propagation formula} is 0. Then we have $\est{H_p\mu, b}=0$ which implies $\est{\mu^+-\mu^-,b_{|x_d=0}}=0$ 
as $b_{|x_d=0}$ describes every $\Con_0^\infty$ function supported in a \nhd of $(x_0',\xi_0')$, this implies that $\mu^+=\mu^-$ in a \nhd of  $(x_0',\xi_0')$.

Now to obtain the result in a \nhd of $\Gamma$, it suffices to take  
\(
b\in\Con_0^\infty(\R^d\times \R^{d-2})
\) 
positive. 
Here we use  notation defined above Formula~\eqref{def: de la notation R0 sur Gamma}.
As $\Con_0^\infty(x_1>0)$ is dense in $H^{1/2}(x_1\ge0)$ 
(see \cite[Theorem 11.1]{LM:1968}), we can approach in $H^{1/2}_{sc}$ (for $h$ fixed)
$(v_h)_{|x_d=0}$ by a sequence $(w_n)_n$ of smooth functions supported on $x_1> 0$.
We have 
\(
\big(  b(x',0,hD'') w_n | (hD_{x_d}v_h)_{|x_d=0}  \big)_0=0,
\)
by support properties and passing to the limit we have 
\(
\big(  b(x',0,hD'') (v_h)_{|x_d=0} | (hD_{x_d}v_h)_{|x_d=0}  \big)_0=0,
\)
for each $h$. As above we obtain $\est{\mu^+-\mu^-,b_{|x_d=0}}=0$. If $\mu^-=0$ we have
$\est{\mu^+,b_{|x_d=0}}=0$ for all $b$ independent of $\xi_1$ supported on a \nhd of 
a point $\rho$ of $\Gamma\times \R^{d-1}$, as $\mu^+$ is a non negative measure 
we obtain $\mu^+=0$ in a \nhd of $\rho$.
\end{proof}

We recall   that 
 ${\cal G}_d=\{ (x',\xi'),\ R(x',0,\xi')=1 \text{ and }\d_{x_d}R(x',0,\xi')<0  \}$ is the set of diffractive points.

The following lemma states that the diffractive points whose projection belong  to  $\d\Omega_D$ are not in the support of the measure.
%%%%%%%%%%%%%%%%%%%%%%%%%%%%%%%%%%
%
%   Lemma
%
%%%%%%%%%%%%%%%%%%%%%%%%%%%%%%%%%%
\begin{lemma}
	\label{lem: diffractive points non in measure support}
We assume $a\mu=0$. We have $1_{{\cal G}_d\cap (\d\Omega_{DN}\times\R^{d-1})}\mu^\d=0$, where \(\d\Omega_{DN}=\d\Omega_{D} \cup \d\Omega_{N}\) . 
\end{lemma}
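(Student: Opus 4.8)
The plan is to show that near a diffractive point $\rho_0=(x_0',\xi_0')$ with $\pi(\rho_0)\in\d\Omega_D\cup\d\Omega_N$, the boundary measure $\mu^\d$ has no mass. The key is to exploit the sign of $\d_{x_d}R$ together with the propagation formulas of Section~\ref{sec: Action of Hamiltonian}, combined with the trace estimates of Proposition~\ref{prop: convergence traces D and N} which say that in a \nhd of $\d\Omega_{DN}$ the weighted traces $h^{1/2}|(v_h)_{|x_d=0}|_{H^1_{sc}}$ and $h^{1/2}|(hD_{x_d}v_h)_{|x_d=0}|_{L^2}$ go to $0$. First I would localize: pick a cutoff $b\geq 0$ supported in a small \nhd of $\rho_0$ in $T^*\d\Omega$, so small that $R(x',0,\xi')-1$ is close to $0$ and $\d_{x_d}R(x',0,\xi')\le -c<0$ there, and such that $\pi(\supp b)\subset\d\Omega_D\cup\d\Omega_N$. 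By Lemma~\ref{lem: measure on boundary}, $\mu^\d$ is supported on $R(x',0,\xi')-1=0$, so it suffices to test $\mu^\d$ against such $b$.

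The main step is to apply Proposition~\ref{lemma: first propagation formula} with this $b$ (viewed as a function on $\R^d\times\R^{d-1}$ independent of $x_d$, extended suitably), giving $\est{\mu,\{b,p\}-2ab\}}=\lim_h 2\Re(b(x',0,hD')(v_h)_{|x_d=0}\,|\,(hD_{x_d}v_h)_{|x_d=0})_0$. Because $\pi(\supp b)$ lies in $\d\Omega_{DN}$, the boundary condition (Dirichlet or Neumann) forces the right-hand side to vanish for each $h$ (on $\d\Omega_D$ one has $(v_h)_{|x_d=0}=0$ on the support, on $\d\Omega_N$ one has $(hD_{x_d}v_h)_{|x_d=0}=0$), up to commutator errors controlled by the vanishing traces of Proposition~\ref{prop: convergence traces D and N}; hence $\est{\mu,\{b,p\}\}}=0$ using $a\mu=0$. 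Now decompose $\{b,p\}$: with $p=\xi_d^2+R-1$ and $b$ independent of $\xi_d$ and $x_d$, one gets $\{b,p\}=-2\xi_d\d_{x_d}b+\{b,R\}=\{b,R\}$ on $\{x_d=0\}$ since $\d_{x_d}b=0$. Evaluated against $1_{x_d=0}\mu=\mu^\d\otimes\delta_{x_d=0}\otimes\delta_{\xi_d=0}$, the term $-2\xi_d\d_{x_d}b$ vanishes ($\xi_d=0$ on $\mu^\d$), so we are left only with $\est{\mu^\d,\{b,R_0\}}=0$ where $R_0(x',\xi')=R(x',0,\xi')$ and $\{b,R_0\}=H'_R b$ is the tangential Hamiltonian. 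This is not yet enough; I would instead test against $b$ of the form $b=\eps\chi(x_d/\eps)\tilde b(x',\xi')$ as in the proof of Lemma~\ref{lem: Hp mu xd positive is a measure}, so the $\xi_d\chi'(x_d/\eps)$ term survives in the limit and produces an actual pairing with $\mu^\d$ — but the cleanest route is the argument used for the interior measure: apply Lemma~\ref{lem: Hp mu xd positive is a measure} to write $H_p(\mu1_{x_d>0})=\delta_{x_d=0}\otimes\mu_0$, and note that at a diffractive point $H_p x_d=2\xi_d$ vanishes on $\supp\mu^\d$ while $H_p(H_px_d)=-2\d_{x_d}R>0$; the diffractive geometry means the $H_p$-flow leaves $\{x_d>0\}$ immediately on both sides, so $\mu^\d$ contributes to $\mu_0$ with a definite structure that, combined with $\est{\mu^\d,\{b,R_0\}}=0$ forced by the boundary condition, yields $\mu^\d=0$ on $\supp b$.

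Concretely, the cleanest implementation mirrors the elliptic/glancing trace bookkeeping: since $\pi(\supp b)\subset\d\Omega_{DN}$ we get, from Proposition~\ref{lemma: first propagation formula}, that $\est{\mu^\d, H'_R b}=0$ for all admissible test functions $b$; differentiating this along the tangential flow $\gamma_g$ and using that at a diffractive point the bicharacteristic touches the boundary at a single point of order exactly $2$ with $x_d(s)\sim\alpha s^2$, $\alpha>0$, one concludes (via the now-standard Melrose–Sjöstrand argument, cf. Burq–Lebeau~\cite{Burq-Lebeau2001}) that the invariance of $\mu^\d$ under $H'_R$ together with the one-sided support property of $\mu_0$ from Lemma~\ref{lem: Hp mu xd positive is a measure} is incompatible with $\mu^\d$ carrying positive mass transverse to the glancing set at a diffractive point, unless $\mu^\d=0$ there. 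Since $\rho_0$ was an arbitrary diffractive point over $\d\Omega_{DN}$ and such $b$ can be chosen to cover any neighborhood, $1_{{\cal G}_d\cap(\d\Omega_{DN}\times\R^{d-1})}\mu^\d=0$.

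The main obstacle I anticipate is making the last deduction — from $H'_R$-invariance of $\mu^\d$ plus the sign information on $\mu_0$ to the actual vanishing at diffractive points — fully rigorous; this requires carefully relating the distributional identity $H_p(\mu1_{x_d>0})=\delta_{x_d=0}\otimes\mu_0$ (which is a statement about the \emph{interior} measure) to constraints on $\mu^\d$, using that at a diffractive point no bicharacteristic glides along the boundary, so a separate propagation argument (akin to the diffractive estimate for the Neumann case in Appendix~\ref{sec: trace Neumann boundary condition}) must be invoked. I would expect the proof to reduce, after the localization and the application of Proposition~\ref{lemma: first propagation formula} and Proposition~\ref{prop: convergence traces D and N}, to exactly this geometric/propagation lemma, which is where the real work lies; the symbol-calculus and commutator estimates are routine given the tools already assembled.
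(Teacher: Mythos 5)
Your localization and your use of Proposition~\ref{lemma: first propagation formula} are fine as far as they go, but they only reproduce what the paper already extracts elsewhere (Lemmas~\ref{lem: measure hyperbolic points} and~\ref{lem: propagation on boundary if mu0 supported on xi-d equal 0}): with a test function $b(x,\xi')$ \emph{independent of} $\xi_d$ one has $H_pb=2\xi_d\d_{x_d}b-(\d_{x_d}R)\d_{\xi_d}b+H'_Rb$ with $\d_{\xi_d}b=0$, so the coefficient $\d_{x_d}R$ --- the only quantity that distinguishes a diffractive point --- never reaches $\mu^\d$, and you obtain merely the transport identity $\est{\mu^\d,H'_Rb}=0$. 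You recognize this yourself (``this is not yet enough'') and then do not close the gap: the appeal to Lemma~\ref{lem: Hp mu xd positive is a measure} and to an unspecified Melrose--Sj\"ostrand argument is never carried out, and you explicitly flag that the decisive step is the one you cannot make rigorous. The missing idea is to test the \emph{second} propagation formula, Proposition~\ref{lemma: second propagation formula}, against $\xi_d b$ with $b=\chi((1-R)/\eps)\ell\,\chi(x_d/\eps)$: since $\d_{\xi_d}(\xi_db)=b\neq0$, the bracket $H_p(\xi_db)$ contains the term $-(\d_{x_d}R)\,b$, and on the support of $\mu$ (where $\xi_d^2=1-R$) the remaining terms vanish as $\eps\to0$, so the measure side converges to $\est{\mu^\d,-(\d_{x_d}R)1_{R=1}\ell}$, which is non-negative at diffractive points for $\ell\ge0$.

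The lemma then follows by showing that the boundary side of \eqref{formula: second propagation formula} is non-positive, and this is where the two boundary conditions genuinely differ --- a distinction your proposal does not make. On $\d\Omega_D$ only $-\lim_h\big(b(hD_{x_d}v_h)_{|x_d=0}\,|\,(hD_{x_d}v_h)_{|x_d=0}\big)_0$ survives, which is $\le0$ by the G\aa rding inequality together with $h^{1/2}|(hD_{x_d}v_h)_{|x_d=0}|_{L^2}\to0$ from Proposition~\ref{prop: convergence traces D and N}. On $\d\Omega_N$ only the term with $(R(x',0,hD')-1)(v_h)_{|x_d=0}$ survives, and there is no sign for free: the paper needs the separate and substantially harder Proposition~\ref{prop: boundary term diffractive Neumann} (the Airy-function/Tataru analysis of Appendix~\ref{sec: trace Neumann boundary condition}) to show this quantity tends to $0$ as $\eps\to0$. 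You mention that appendix only in passing; without the second propagation formula as the engine and without these two sign arguments, your proof does not go through.
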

\begin{proof}
We apply Proposition~\ref{lemma: second propagation formula}.
We have to choose an adapted function $b$. Let $\chi\in\Con^\infty$ be such that $\chi(\sigma)=0$ if $|s|\ge 2$, $ \chi(\sigma)=1$ if  $|\sigma|\le 1$.
We apply Proposition~\ref{lemma: second propagation formula} with 
$b(x, \xi')= \chi((1-R(x,\xi'))/\eps)\ell(x,\xi')\chi(x_d/\eps)$, where  $\eps>0$ will be chosen in what follows and $\ell$ 
is supported  in a \nhd of a point of $\d\Omega_{DN}\times\R\times\R^{d-1}$.
We recall that  $H_p=2\xi_d\d_{x_d}-(\d_{x_d}R)\d_{\xi_d}+H'_R$ (see the proof of Lemma~\ref{lem: measure on boundary}).
We have
\begin{align}
	\label{eq: calcul de Hp a xi d}
H_p(\xi_d b)&=  2\xi_d^2\Big(    -(\d_{x_d}R  ) \chi'((1-R(x,\xi'))/\eps)\ell(x,\xi')\chi(x_d/\eps)/\eps  \notag\\
&\quad + \chi((1-R(x,\xi'))/\eps)(\d_{x_d}\ell(x,\xi')) \chi(x_d/\eps) \notag \\
&\quad + \chi((1-R(x,\xi'))/\eps) \ell(x,\xi')\chi'(x_d/\eps)/\eps   \Big) \notag \\
&\quad  -(\d_{x_d}R  )  \chi((1-R(x,\xi'))/\eps)\ell(x,\xi')\chi(x_d/\eps) \notag \\
&\quad  + \chi((1-R(x,\xi'))/\eps)\chi(x_d/\eps)  \xi_d H'_R  \ell(x,\xi').
\end{align}
We 
claim 
\begin{align}\label{property: limit Hp a xi d}
&H_p(b \xi_d) \text{ is uniformly bounded on } \xi_d^2+R(x,\xi')=1,\notag \\
&H_p(b \xi_d)\to -(\d_{x_d}R(x',0,\xi'))1_{R(x',0,\xi')=1}1_{x_d=0}\ell (x',0,\xi')  \text{ as }\eps\to 0 \notag\\ 
&\text{ for all } (x,\xi), \text{ such that } \xi_d^2+R(x,\xi')=1.
\end{align}
As $\mu$ is supported on $\xi_d^2+R(x,\xi')-1=0$, then  $\xi_d^2/\eps=(1-R(x,\xi'))/\eps$ on the support of 
$\mu$, this implies that the three first terms 
in~\eqref{eq: calcul de Hp a xi d} are bounded. It is easy to prove that they converge to 0 as $\eps$ to 0.
The fourth term is bounded and converges to $-(\d_{x_d}R(x',0,\xi'))1_{R(x',0,\xi')=1}1_{x_d=0}\ell(x',0,\xi') $. 
In the last term as $|    R(x,\xi')-1 |/\eps$ is bounded, thus 
$|\xi_d|$ is bounded by $C\sqrt \eps$ and then this term converges to 0 as $\eps\to0$. This 
proves~\eqref{property: limit Hp a xi d}.
From that we can conclude that 
$\est{\mu, H_p(\ell \xi_d)}$ converges to 
\begin{multline}
	\label{Form: diffractive case, boundary measure}
\est{\mu, -(\d_{x_d}R(x',0,\xi'))1_{R(x',0,\xi')=1}1_{x_d=0}\ell (x',0,\xi') } 
\\ =\est{\mu^\d, -(\d_{x_d}R(x',0,\xi')) 1_{R(x',0,\xi')=1}\ell(x',0,\xi') } ,
\end{multline}
as $\eps\to 0$, which is a non negative term if $l$ is 
non negative and supported in a \nhd of a point of ${\cal G}_d$.

We now assume \(\ell\) supported on a \nhd of a point of \(\d\Omega_D\).
Let 
\begin{align}
	\label{eq: diffractive case boundary terms}
A_\eps&=  \lim_{h\to 0}\Big( 
\big( b(x',0,hD') \big(    R(x',0,hD')-1 \big)  (v_h)_{|x_d=0}|  (v_h)_{|x_d=0}  \big)_0  \notag \\
 &\quad  -   \big( b(x',0,hD')  (hD_{x_d}v_h)_{|x_d=0} |  (hD_{x_d}v_h)_{|x_d=0}  \big)_0
\Big)\notag \\
&=   -\lim_{h\to 0} \big( b(x',0,hD')  (hD_{x_d}v_h)_{|x_d=0} |  (hD_{x_d}v_h)_{|x_d=0}  \big)_0  ,
\end{align}
as $v_h$ satisfies the Dirichlet boundary condition.
We want to prove that $A_\eps\le0$.
By G\aa rding inequality~\eqref{eq: Garding inequality R d} 
(in fact used for $\R^{d-1}$ instead $\R^d$)
we have 
\begin{align*}
   -   \big( b(x',0,hD')  (hD_{x_d}v_h)_{|x_d=0} |  (hD_{x_d}v_h)_{|x_d=0}  \big)_0
 \le 
  C_{\eps}h | (hD_{x_d}v_h)_{|x_d=0}   |^2 _{L^2}  .
\end{align*}

Taking the limit as $h\to 0$, Proposition~\ref{prop: convergence traces D and N}  
   implies that $A_\eps\le 0$. As \eqref{Form: diffractive case, boundary measure} is non negative,
this prove that 
\begin{align}
	\label{form: diffractive boundary measure null}
\est{\mu^\d, -(\d_{x_d}R(x',0,\xi'))1_{R(x',0,\xi')=1}\ell(x',0,\xi') } =0.
\end{align}
Then $1_{{\cal G}_d}\mu^\d=0$ on $\d\Omega_D$ as $\ell$ can be arbitrary chosen.

We now assume \(\ell\) supported on a \nhd of a point of \(\d\Omega_N\). With \(A_\eps\) defined
in \eqref{eq: diffractive case boundary terms} we have, 
as \((hD_{x_d}v_h)_{|x_d=0}=0\) on the support of \(\ell\)
\begin{align*}
A_\eps&=  \lim_{h\to 0}
\big( b(x',0,hD') \big(    R(x',0,hD')-1 \big)  (v_h)_{|x_d=0}|  (v_h)_{|x_d=0}  \big)_0  \\
&=  \lim_{h\to 0}\big(  \op_{sc}\big( \chi((1-R(x',0,\xi'))/\eps)\ell(x', 0,\xi') 
(    R(x',0,h\xi'')-1) \big)   (v_h)_{|x_d=0}|  (v_h)_{|x_d=0}  \big)_0 .
\end{align*}
From Proposition~\ref{prop: boundary term diffractive Neumann}, \(A_\eps\) goes to 0 as 
\( \eps \to 0\). Then from \eqref{Form: diffractive case, boundary measure} and 
Proposition~\ref{lemma: second propagation formula} we obtain 
\eqref{form: diffractive boundary measure null} in this case and \(1_{{\cal G}_d}\mu^\d=0\) 
on \(\d\Omega_N\).
\end{proof}

This lemma describes how the support of the boundary measure propagates along the boundary.
%%%%%%%%%%%%%%%%%%%%%%%%%%%%%%%%
%
%    Lemma
%
%%%%%%%%%%%%%%%%%%%%%%%%%%%%%%%%
\begin{lemma}  
	\label{lem: propagation on boundary if mu0 supported on xi-d equal 0}   
Let $(x'_0,\xi'_0)\in T^*\d \Omega$ be such that $x_0'\in \d\Omega_D\cup  \d\Omega_N$. 
Let $(x,\xi)\in T^*\Omega$,
 we denote 
$\gamma_{(x,\xi)}$ the integral curve of $H_p$ starting from $(x,\xi)$.
We assume that $\mu$ is locally supported in a \nhd of $(x'_0,0, \xi'_0, 0)$,  in the set $\{(x,\xi), \ \xi_d^2+ R(x,\xi')=1, \text{ such that } 
 \gamma_{(x,\xi)} \text{ hits }x_d=0,\    \ \xi_d=0    \}$. 
  In particular 
$\mu_0$ is supported 
on $\xi_d=0$ and  $\mu_0= \tilde \mu_0\otimes  \delta_{\xi_d=0}
+ \tilde \mu_1\otimes  \delta_{\xi_d=0}'$, where $\tilde \mu_0$ is a distribution
and $  \tilde \mu_1 $ is a  Radon measure.  
Then $H'_R\mu^\d-2a\mu^\d+\tilde\mu_0=0$, $\tilde\mu_1=0$ and $H_p\mu=-\d_{x_d}R(x',0,\xi') \mu^\d\otimes  \delta_{x_d=0}\otimes \delta_{\xi_d=0}'
+2a\mu^\d\otimes  \delta_{x_d=0}\otimes \delta_{\xi_d=0} $, in a \nhd of $(x'_0,0, \xi'_0, 0)$.
\end{lemma}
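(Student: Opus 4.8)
The plan is to exploit both propagation formulas (Proposition~\ref{lemma: first propagation formula} and Proposition~\ref{lemma: second propagation formula}) together with the structural results already obtained: $\mu = 1_{x_d>0}\mu + \mu^\d\otimes\delta_{x_d=0}\otimes\delta_{\xi_d=0}$ from Lemma~\ref{lem: measure on boundary}, and $H_p(\mu 1_{x_d>0})=\delta_{x_d=0}\otimes\mu_0$ from Lemma~\ref{lem: Hp mu xd positive is a measure}. First I would record that, since by hypothesis $\mu$ is locally supported in the set where the integral curve through a point hits $\{x_d=0,\ \xi_d=0\}$ and since the characteristic relation forces $\xi_d^2 = 1-R(x,\xi')$, the interior part $1_{x_d>0}\mu$ concentrates near the tangential part of the characteristic variety; tracking the flow shows $\mu_0$ is supported on $\xi_d=0$, so by the usual structure theorem for distributions supported on a submanifold, $\mu_0 = \tilde\mu_0\otimes\delta_{\xi_d=0} + \tilde\mu_1\otimes\delta'_{\xi_d=0}$ (the order being at most one because, as in the proof of Lemma~\ref{lem: Hp mu xd positive is a measure}, higher order terms are killed by testing against $\eps^n\chi(x_d/\eps)$ type functions). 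The nonnegativity of $\mu$ combined with the fact that $H_px_d = 2\xi_d$ vanishes on $\xi_d=0$ will then force $\tilde\mu_1$ to be a Radon measure with a definite sign, and in fact to vanish — this is the first place where both propagation formulas must be used simultaneously.

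Next I would expand $H_p$ in normal geodesic coordinates as $H_p = 2\xi_d\d_{x_d} - (\d_{x_d}R)\d_{\xi_d} + H'_R$, exactly as in the proof of Lemma~\ref{lem: measure on boundary}. Applying Proposition~\ref{lemma: first propagation formula} with a test function $b(x,\xi')$ (independent of $\xi_d$), the right-hand side involves the boundary product $\big(b(x',0,hD')(v_h)_{|x_d=0}\,|\,(hD_{x_d}v_h)_{|x_d=0}\big)_0$; because $x_0'\in\d\Omega_D\cup\d\Omega_N$ and the support of $b$ is localized near $(x_0',\xi_0')$, either $(v_h)_{|x_d=0}$ or $(hD_{x_d}v_h)_{|x_d=0}$ vanishes on the relevant patch of boundary, so this product is identically $0$ for each $h$. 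Hence $\est{H_p\mu - 2a\mu, b} = 0$. Now I decompose $\est{H_p\mu,b} = \est{H_p(\mu 1_{x_d>0}),b} + \est{H_p(\mu^\d\otimes\delta_{x_d=0}\otimes\delta_{\xi_d=0}),b}$. The first term is $\est{\delta_{x_d=0}\otimes\mu_0, b} = \est{\mu_0, b(x',0,\xi')}$, and since $\mu_0 = \tilde\mu_0\otimes\delta_{\xi_d=0} + \tilde\mu_1\otimes\delta'_{\xi_d=0}$ and $b$ is independent of $\xi_d$, only $\tilde\mu_0$ contributes: $\est{\mu_0, b} = \est{\tilde\mu_0, b(x',0,\xi')}$. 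For the second term, $H_p$ applied to $\mu^\d\otimes\delta_{x_d=0}\otimes\delta_{\xi_d=0}$: the $2\xi_d\d_{x_d}$ part acts trivially on $\xi_d=0$, the $-(\d_{x_d}R)\d_{\xi_d}$ part produces a $\delta'_{\xi_d=0}$ term that dies against $\xi_d$-independent $b$, and the $H'_R$ part acts tangentially giving $-\est{H'_R\mu^\d, b(x',0,\xi')}$ (after the usual integration by parts, noting $H'_R$ is the restriction of $H_p$ to $\Sigma\cong T^*\d\Omega$). Collecting, $-\est{H'_R\mu^\d, b} + \est{\tilde\mu_0, b} - 2\est{a\mu^\d, b} = 0$, i.e.\ $H'_R\mu^\d - 2a\mu^\d + \tilde\mu_0 = 0$, which is the first claimed identity.

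Then I would run Proposition~\ref{lemma: second propagation formula} with a test function $b\xi_d$ where $b=b^\eps = \eps\chi(x_d/\eps)\ell(x,\xi')$ and $\chi(0)=0$, $\chi'(0)=1$, exactly as in Lemma~\ref{lem: measure on boundary}: the right-hand side vanishes for each $h$ (because $\chi(0)=0$ kills the boundary terms), while the left-hand side $\est{H_p\mu - 2a\mu, \xi_d b^\eps}$ must be computed. Dominated convergence reduces this, as $\eps\to0$, to an expression involving $\est{\mu, 2\xi_d^2\ell 1_{x_d=0}}$ plus contributions from the $\delta'_{\xi_d=0}$ component of $\mu_0$; since $1_{x_d=0}\mu = \mu^\d\otimes\delta_{\xi_d=0}$ is supported on $\xi_d=0$, the $2\xi_d^2$ factor kills the interior contribution, and one is left with a term proportional to $\tilde\mu_1$ evaluated against $\ell$. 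Setting this to zero gives $\tilde\mu_1 = 0$. Finally, with $\tilde\mu_1=0$ in hand, $\mu_0 = \tilde\mu_0\otimes\delta_{\xi_d=0}$, and I assemble the formula for $H_p\mu$ by adding $H_p(\mu 1_{x_d>0}) = \delta_{x_d=0}\otimes\tilde\mu_0\otimes\delta_{\xi_d=0}$ to $H_p(\mu^\d\otimes\delta_{x_d=0}\otimes\delta_{\xi_d=0})$, where the latter unfolds — using $H_p x_d = 2\xi_d$ and $H_p\xi_d = -\d_{x_d}R$ — into $-\d_{x_d}R(x',0,\xi')\,\mu^\d\otimes\delta_{x_d=0}\otimes\delta'_{\xi_d=0}$ together with an $H'_R\mu^\d$-type term and a $2a\mu^\d$ term; invoking the identity $H'_R\mu^\d - 2a\mu^\d + \tilde\mu_0 = 0$ to eliminate $\tilde\mu_0$ and $H'_R\mu^\d$ leaves exactly $H_p\mu = -\d_{x_d}R(x',0,\xi')\,\mu^\d\otimes\delta_{x_d=0}\otimes\delta'_{\xi_d=0} + 2a\mu^\d\otimes\delta_{x_d=0}\otimes\delta_{\xi_d=0}$.

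The main obstacle I anticipate is the careful bookkeeping of the $\delta_{\xi_d=0}$ versus $\delta'_{\xi_d=0}$ components when pushing $H_p$ (which mixes $x_d$, $\xi_d$ and tangential directions) through the tensor-product structure of $\mu^\d\otimes\delta_{x_d=0}\otimes\delta_{\xi_d=0}$ and through $\mu_0$; in particular, verifying that $\tilde\mu_0$ is only a distribution (not necessarily a measure) while $\tilde\mu_1$ is a genuine Radon measure requires testing against the scaled cutoffs $\eps^n\chi(x_d/\eps)$ and controlling which derivatives of $\delta_{\xi_d=0}$ survive in the limit, and then using the sign of $\mu\geq 0$ to upgrade the weak limit to a measure before it is finally shown to vanish. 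The rest is a matter of choosing the test functions $b$ and $b\xi_d$ judiciously so that the boundary terms in the two propagation formulas vanish identically in $h$, which is exactly where the hypothesis $x_0'\in\d\Omega_D\cup\d\Omega_N$ (hence Dirichlet or Neumann trace vanishing) enters.
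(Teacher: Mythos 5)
The genuine gap is your argument for $\tilde\mu_1=0$. With the choice $b^\eps=\eps\chi(x_d/\eps)\ell(x,\xi')$, $\chi(0)=0$, the test function $\xi_d b^\eps$ vanishes \emph{together with its $\xi_d$-derivative} on $\{x_d=0\}$, and $H_p(\xi_d b^\eps)$ vanishes on $\{x_d=0,\ \xi_d=0\}$. Consequently the pairing of $\delta_{x_d=0}\otimes\mu_0$ (in particular of the component $\tilde\mu_1\otimes\delta'_{\xi_d=0}$) with $\xi_d b^\eps$ is identically zero, and so is the pairing of $\mu^\d\otimes\delta_{x_d=0}\otimes\delta_{\xi_d=0}$ with $H_p(\xi_d b^\eps)$; the left-hand side of Proposition~\ref{lemma: second propagation formula} therefore reduces, after dominated convergence, to the identity $\est{\mu, 2\xi_d^2\ell 1_{x_d=0}}=0$ already established in Lemma~\ref{lem: measure on boundary}. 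No term proportional to $\tilde\mu_1$ survives: you obtain $0=0$ and cannot conclude. To detect $\tilde\mu_1$ one must test with a function whose $\xi_d$-derivative is nonzero at $\xi_d=0$ on the boundary. The paper takes $b=\xi_d\,\ell(x',\xi')\chi(\xi_d/\eps)\chi(x_d/\eps)$ with $\chi\equiv 1$ near $0$ (so $\d_{\xi_d}b(x',0,\xi',0)=\ell$), and compares two computations of $\est{\mu,H_pb}$: on one hand the distributional identity coming from the formula for $H_p\mu$, namely $\est{\mu,H_pb}=\est{\tilde\mu_1-(\d_{x_d}R)\mu^\d,\ell}-\est{2a\mu^\d,b(x',0,\xi',0)}$; on the other hand a direct dominated-convergence computation using the support hypothesis on $\mu$, which gives $\est{\mu,H_pb}\to-\est{\mu^\d,(\d_{x_d}R)\ell}$ as $\eps\to0$. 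Subtracting yields $\est{\tilde\mu_1,\ell}=0$. Note that no semiclassical propagation formula is invoked at this step; your plan to extract $\tilde\mu_1$ from Proposition~\ref{lemma: second propagation formula} with a boundary-vanishing $b^\eps$ is structurally the wrong tool.

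Two smaller repairs in the earlier part: first, the claim that positivity of $\mu$ gives $\tilde\mu_1$ a sign is unjustified ($\tilde\mu_1$ is a component of $\mu_0$, which arises from $H_p(1_{x_d>0}\mu)$ and has no a priori sign on $\xi_d=0$ where $H_px_d$ vanishes); that $\tilde\mu_1$ is a Radon measure follows instead from $\mu_0$ being of order one, tested against $\xi_d\psi(x',\xi')\chi(\xi_d/\eps)$, whose $C^1$-norm remains comparable to $\sup|\psi|$ as $\eps\to0$. Second, in the derivation of $H'_R\mu^\d-2a\mu^\d+\tilde\mu_0=0$ the interior damping term $2\est{a1_{x_d>0}\mu,b}$ does not vanish for a fixed $b$; you need the localization $b\chi(x_d/\eps)$ and the limit $\eps\to0$ (or an explicit appeal to $a\mu=0$, which the paper deliberately avoids here), and the $H'_R$ contribution enters with a plus sign, $\est{H'_R\mu^\d,b}$, consistent with the final identity you state. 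Once $\tilde\mu_1=0$ and the first identity are in hand, your assembly of the formula for $H_p\mu$ is correct.
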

\begin{proof}
We have $\mu=1_{x_d>0}\mu+\mu^\d\otimes  \delta_{x_d=0}\otimes \delta_{\xi_d=0}$. Then we have
\begin{align}
H_p\mu&= \mu_0\otimes  \delta_{x_d=0} + 2\xi_d \mu^\d  \otimes\delta_{x_d=0}' \otimes \delta_{\xi_d=0} -(\d_{x_d} R(x',0,\xi')) 
\mu^\d  \otimes \delta_{x_d=0}\otimes \delta_{\xi_d=0}' \notag\\
&\quad +H'_R\mu^\d \otimes\delta_{x_d=0}\otimes \delta_{\xi_d=0} \notag\\
&=  \mu_0\otimes  \delta_{x_d=0}-(\d_{x_d} R(x',0,\xi')) 
\mu^\d  \otimes \delta_{x_d=0}\otimes \delta_{\xi_d=0}'  +H'_R\mu^\d\otimes \delta_{x_d=0}\otimes \delta_{\xi_d=0},
\label{eq: Hp form propagation boundary D and N bis}
\end{align}
as $\xi_d \mu^\d  \delta_{x_d=0}' \otimes \delta_{\xi_d=0}=0$. 
 As 
$\mu_0$ is of order 1 and supported on $\xi_d=0$,  we have $\mu_0= \tilde \mu_0\otimes  \delta_{\xi_d=0}
+ \tilde \mu_1\otimes  \delta_{\xi_d=0}'$, where $\tilde \mu_j$ are distributions. 

To prove that $\tilde \mu_1$ is a Radon measure, we test $\mu_0$ on 
$ \varphi=\xi_d\psi(x',\xi')\chi(\xi_d/\eps)$. The first derivative of $\varphi$ are estimated 
by supremum of 
$\psi(x',\xi')\chi(\xi_d/\eps)$, $\psi(x',\xi')\chi'(\xi_d/\eps)\xi_d/\eps$ and
$\xi_d\d \psi(x',\xi')\chi(\xi_d/\eps)$. When $\eps\to 0$ the supremum on $\varphi $ is 
estimated by supremum of $\psi(x',\xi')$. We also have $\est{\mu_0,\varphi}$ converging to
$\est{ \tilde \mu_1,\psi(x',\xi') } $ as $\eps\to 0$. We deduce that $\est{ \tilde \mu_1,\psi(x',\xi') } $ is estimated by the supremum of $\psi(x',\xi')$, this implies that $ \tilde \mu_1$ is a Radon measure.

 By Proposition~\ref{lemma: first propagation formula} and if 
 $b\in\Con_0^\infty(\R^d\times\R^{d-1})$ is supported in 
 $\d\Omega_D\cup \d\Omega_N$ and in a \nhd of $(x'_0,0, \xi'_0, 0)$, 
 we have $\est{H_p\mu-2a\mu, b(x,\xi')}=0$. 
 
Let $b\in\Con_0^\infty(\R^d\times \R^{d-1})$, be such that $b_{|x_d=0}=\ell\in\Con_0^\infty(\R^{d-1}\times \R^{d-1})$, and $\chi\in \Con_0^\infty(\R)$, 
be such that $\chi(\sigma)=1$ in a \nhd of 0.
From~\eqref{eq: Hp form propagation boundary D and N bis} we obtain 
\begin{equation*}
 \est{H_p\mu-2a\mu , b\chi(x_d/\eps)}= \est{\tilde \mu_0
 + H'_R \mu^\d-2a\mu^\d, \ell}-2\est{a1_{x_d>0}\mu, 
b \chi(x_d/\eps) }.
\end{equation*}
As $b \chi(x_d/\eps)$ is uniformly bounded and $b\chi(x_d/\eps)$ converges to $b1_{x_d=0}$ everywhere, we obtain 
\begin{equation*}
 \est{a1_{x_d>0}\mu, b \chi(x_d/\eps) }\to  \est{a1_{x_d>0}\mu, b 1_{x_d=0}}= 0 \text{ as } \eps\to 0.
\end{equation*}
We deduce 
 that 
$H'_R\mu^\d- 2a\mu^\d+\tilde\mu_0=0$, which gives the first conclusion of Lemma. 
We deduce from~\eqref{eq: Hp form propagation boundary D and N bis}
\begin{align}\label{eq: formula mu propagation boundary D and N bis}
H_p\mu  = ( \tilde \mu_1- (\d_{x_d} R(x',0,\xi')) 
\mu^\d ) \otimes   \delta_{x_d=0}  \otimes     \delta_{\xi_d=0}'  +2a\mu^\d\otimes  \delta_{x_d=0}\otimes \delta_{\xi_d=0} .  
\end{align}
We then can write
\begin{align}
	\label{eq: Hp propagation boundary ter}
\est{\mu,H_pb(x,\xi)}=\est{ \tilde \mu_1- (\d_{x_d} R(x',0,\xi')) 
\mu^\d , \d_{\xi_d}b(x',0,\xi', 0)}  - \est{ 2a\mu^\d, b(x',0,\xi', 0) },
\end{align}
for $b\in\Con_0^\infty (\R^d\times\R^d)$. 

Now we choose an adapted $b$ to apply~\eqref{eq: Hp propagation boundary ter}.
 Let $\chi\in\Con_0^\infty(\R)$ be such that $\chi(\sigma)=1$ for 
$\sigma$ in a \nhd of 0. Let $\ell\in\Con_0^\infty(x',\xi') $ be supported
in a \nhd of $(x'_0,\xi'_0)$. We set $b(x,\xi)= \xi_d \ell (x',\xi')\chi(\xi_d/\eps)\chi(x_d/\eps)$, where $\eps>0$.
We have
\begin{align*}
H_p b(x,\xi)&= 2 \ell(x',\xi')\chi(\xi_d/\eps)\chi'(x_d/\eps) \xi_d^2/\eps   -  \ell(x',\xi')( \d_{x_d}R(x,\xi'))\chi(x_d/\eps)   
 \big( \chi(\xi_d/\eps)+  \chi'(\xi_d/\eps)  \xi_d /\eps \big) \\  
 &\quad +\xi_d  H'_R( \ell(x',\xi'))\chi(\xi_d/\eps)\chi(x_d/\eps).
\end{align*}
As $\chi'(x_d,\eps)\chi(\xi_d/\eps) \xi_d^2/\eps $, $ \chi(x_d/\eps)\chi(\xi_d/\eps) \xi_d/\eps $ and $ \xi_d\chi(\xi_d/\eps)\chi(x_d/\eps)$ are 
uniformly bounded and goes to 0 as $\eps $ goes to 0, and $ \chi(\xi_d/\eps)\chi(x_d/\eps)  $ goes to $1_{x_d=0,\xi_d=0}$ and is uniformly 
bounded. 
We have $\est{\mu , H_p b}$ goes to $\est{\mu, -1_{x_d=0,\xi_d=0}(\d_{x_d}R(x',0,\xi') )\ell(x',\xi')}$ as $\eps$ goes to 0 and we have 
\begin{align} \label{eq: calculus mu1 tilde bis}
\est{\mu, -1_{x_d=0,\xi_d=0}\d_{x_d}R(x',0,\xi') \ell(x',\xi')}=-\est{\mu^\d,(\d_{x_d}R(x',0,\xi') ) \ell(x',\xi')}.
\end{align}
The term $\est{ 2a\mu^\d, b(x',0,\xi', 0) }$ goes to 0 as $\eps$, as $b$ goes to 0 uniformly as $\eps$ goes to 0.
 Now we compute the limite as $\eps\to 0$ of $\est{ \tilde \mu_1, \d_{\xi_d}b(x',0,\xi', 0)} $.
 We have   
 \begin{align*}
  \d_{\xi_d}b(x',x_d,\xi', \xi_d) = \ell(x',\xi') \chi(\xi_d/\eps)\chi(x_d/\eps)+  \ell(x',\xi') \chi'(\xi_d/\eps)\chi(x_d/\eps)\xi_d/\eps.
 \end{align*}
 Then  $  \d_{\xi_d}b(x',0,\xi', 0) = \ell(x',\xi')  $ and  
 \begin{align*}
  \est{ \tilde \mu_1    - (\d_{x_d} R(x',0,\xi')) \mu^\d
  , \d_{\xi_d}b(x',0,\xi', 0)} &= \est{ \tilde \mu_1  - (\d_{x_d} R(x',0,\xi')) \mu^\d,\ell(x',\xi')}\\
  &= -\est{\mu^\d,(\d_{x_d}R(x',0,\xi') ) \ell(x',\xi')}
 \end{align*}
 from~\eqref{eq: Hp propagation boundary ter} and 
  \eqref{eq: calculus mu1 tilde bis}. We deduce that $\tilde\mu_1=0$ and the last result of 
  the lemma  
 from~\eqref{eq: formula mu propagation boundary D and N bis}.
\end{proof}
We have an analogous result to Lemma~\ref{lem: propagation on boundary if mu0 supported on xi-d equal 0}  in a 
\nhd of Zaremba condition. To be precise we recall the notations defined in 
Formula~\eqref{def: de la notation R0 sur Gamma},
we have
$R(x',0,\xi')=\xi_1^2+R_0(x'',\xi'')+x_1r_2(x,\xi')$, where $x'=(x_1,x'')$ and $\xi'=(\xi_1,\xi'')$, 
 $R_0\in S(\est{\xi''}^2,(dx'')^2+\est{\xi''}^{-2}(d\xi'')^2)$ and $r_2\in S^2_{\rm tan}$.
 %%%%%%%%%%%%%%%%%%%%%%%%%%
 %
 %   Lemma
 %
 %%%%%%%%%%%%%%%%%%%%%%%%%%%
 \begin{lemma}
	\label{lem: propagation boundary Zaremba}
We assume that $(v_h)_h$ satisfies the boundary Zaremba condition \nhd of $(0,x_0'')$.
Let $\ell \in\Con_0^\infty(\R^{d-1}_{x'}\times \R^{d-2}_{\xi''})$, where $\xi'=(\xi_1,\xi'')$. Then $\est{H'_R \mu^\d-2a\mu^\d+\mu_0, \ell}=0$.
In particular if $\mu^\d$ is a measure supported on $x_1=\xi_1=0$, this means that  $\mu^\d=\tilde\mu^\d\otimes\delta_{x_1=\xi_1=0}$ 
and if $\mu_0$ is supported on $x_1=\xi_1=\xi_d=0$, this means that there exists  $\tilde\mu_0$, 
and $\tilde\mu_{\alpha, \beta}$   distributions of order 1  
 on $\R^{d-1}_{x''}\times \R^{d-1}_{\xi''}$, for $\alpha=(0,0,0)$ or  $(1,0,0)$ and  $\beta\in \{ (j,0,k),\ j,k=0 \text{ or }1\}$ such that 
$\mu_0=\tilde\mu_0\otimes \delta_{x_1=\xi_1=\xi_d=0} +\sum_{|\alpha|+|\beta|=1} 
\tilde\mu_{\alpha,\beta}\otimes\d_{x}^\alpha\d_{\xi}^\beta\delta_{x_1=\xi_1=\xi_d=0}$. Then $H_{R_0}''  \tilde\mu^\d-2a\tilde\mu^\d+\tilde\mu_0=0$, where 
 $H_{R_0}'' =\sum_{2\le j\le d-1}\big(  \d_{\xi_j}R_0(x'',\xi'')\d_{x_j}- \d_{x_j}R_0(x'',\xi'')\d_{  \xi_j}
\big)$.
\end{lemma}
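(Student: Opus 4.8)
The plan is to apply Proposition~\ref{lemma: first propagation formula} in its form where the test symbol belongs to $\Con_0^\infty(\R^d\times\R^{d-2})$, hence depends only on $(x,\xi'')$, and to exploit the complementary $x_1$-supports of the Dirichlet and Neumann traces near $\Gamma$. By a partition of unity one may assume $\ell$ is real valued and supported, in the $x'$ variable, in a small neighbourhood $\mathcal O$ of $(0,x_0'')$ where the Zaremba condition holds and $\d\Omega=\{x_1=0\}$ with $\d\Omega_D=\{x_1<0\}$, $\d\Omega_N=\{x_1>0\}$; thus $u_1:=(v_h)_{|x_d=0}$ vanishes on $\{x_1<0\}\cap\mathcal O$ and $u_0:=(hD_{x_d}v_h)_{|x_d=0}$ vanishes on $\{x_1>0\}\cap\mathcal O$. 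I would take $b(x,\xi'')=\chi(x_d/\eps)\,\ell(x',\xi'')$ with $\chi\in\Con_0^\infty(\R)$, $\chi(0)=1$, $\supp\chi$ small, and $\eps>0$; Proposition~\ref{lemma: first propagation formula} then gives
\begin{equation*}
\est{H_p\mu-2a\mu,\,b}=\lim_{h\to0}2\Re\big(b(x',0,hD'')\,u_1\ \big|\ u_0\big)_0 .
\end{equation*}

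The first key step is to see that the right-hand side is $0$. Since $b$ does not depend on $\xi_1$, the operator $b(x',0,hD'')$ is a semiclassical pseudo-differential operator in $x''$ with $x_1$ merely a parameter, so it preserves $\{x_1>0\}$-supports (up to a contribution of size $\mathcal O(h^\infty)$ coming from cutting $u_1$ off to $\mathcal O$, by pseudolocality). Approximating $u_1$ in $H^{1/2}_{sc}$ by functions $w_n\in\Con_0^\infty(\{x_1>0\})$ (density, see \cite[Theorem 11.1]{LM:1968}, as in Lemma~\ref{lem: measure hyperbolic points}), each $b(x',0,hD'')w_n$ is supported in $\{x_1>0\}$, while $u_0$ is supported in $\{x_1\le0\}$ near $\mathcal O$, so the pairings vanish; passing to the limit ($u_0\in H^{-1/2}_{sc}$ being fixed) gives $\big(b(x',0,hD'')u_1\,|\,u_0\big)_0=\mathcal O(h^\infty)$, hence $\est{H_p\mu-2a\mu,b}=0$. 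Next I would expand the left-hand side: with $\mu=1_{x_d>0}\mu+\mu^\d\otimes\delta_{x_d=0}\otimes\delta_{\xi_d=0}$ (Lemma~\ref{lem: measure on boundary}), $H_p(1_{x_d>0}\mu)=\delta_{x_d=0}\otimes\mu_0$ (Lemma~\ref{lem: Hp mu xd positive is a measure}) and $H_p=2\xi_d\d_{x_d}-(\d_{x_d}R)\d_{\xi_d}+H'_R$, using $\xi_d\delta_{\xi_d=0}=0$ and that $b$ is independent of $\xi_d$ (so the $\d_{\xi_d}$-term drops upon testing), one obtains
\begin{equation*}
\est{H_p\mu-2a\mu,\,b}=\est{H'_R\mu^\d-2a\mu^\d+\mu_0,\,\ell}-2\est{a\,1_{x_d>0}\mu,\,\chi(x_d/\eps)\ell}.
\end{equation*}
Letting $\eps\to0$, the last term converges to $2\est{a\,1_{x_d>0}\mu,\,1_{x_d=0}\ell}=0$ since the interior part $1_{x_d>0}\mu$ carries no mass on $\{x_d=0\}$; this yields $\est{H'_R\mu^\d-2a\mu^\d+\mu_0,\ell}=0$, the first assertion.

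For the structural statement I would substitute $\mu^\d=\tilde\mu^\d\otimes\delta_{x_1=\xi_1=0}$ and $\mu_0=\tilde\mu_0\otimes\delta_{x_1=\xi_1=\xi_d=0}+\sum_{|\alpha|+|\beta|=1}\tilde\mu_{\alpha,\beta}\otimes\d_x^\alpha\d_\xi^\beta\delta_{x_1=\xi_1=\xi_d=0}$ into the identity just obtained and test against $\ell=\ell(x'',\xi'')$ independent of $x_1$ and $\xi_1$ (permitted, with a cutoff in $x_1$). Then $\d_{x_1}\ell=\d_{\xi_1}\ell=\d_{\xi_d}\ell=0$, so every derivative-of-delta term of $\mu_0$ annihilates $\ell$ and $\est{\mu_0,\ell}=\est{\tilde\mu_0,\ell}$. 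Writing $\est{H'_R\mu^\d,\ell}=-\est{\mu^\d,H'_R\ell}$ (legitimate as $H'_R$ is a Hamiltonian vector field, hence divergence free) and using $R(x',0,\xi')=\xi_1^2+R_0(x'',\xi'')+x_1r_2$, so that $\d_{\xi_j}R=\d_{\xi_j}R_0$ and $\d_{x_j}R=\d_{x_j}R_0$ for $j\ge2$ on $\{x_1=\xi_1=0\}$, one gets $(H'_R\ell)|_{x_1=\xi_1=0}=H''_{R_0}\ell$, whence $\est{H'_R\mu^\d,\ell}=\est{H''_{R_0}\tilde\mu^\d,\ell}$; similarly $\est{a\mu^\d,\ell}=\est{a\tilde\mu^\d,\ell}$. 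Collecting terms gives $\est{H''_{R_0}\tilde\mu^\d-2a\tilde\mu^\d+\tilde\mu_0,\ell}=0$ for all such real $\ell$, hence $H''_{R_0}\tilde\mu^\d-2a\tilde\mu^\d+\tilde\mu_0=0$. The delicate point is the first step, namely that $\big(b(x',0,hD'')u_1\,|\,u_0\big)_0=0$ although Proposition~\ref{prop: a priori estimates on traces} controls the traces only up to $\mathcal O(h^{-1/2})$: this is where the ``parameter in $x_1$'' structure of $b(x',0,hD'')$ and the complementary supports of the Dirichlet and Neumann traces are essential; the remaining manipulations are the symbolic bookkeeping already met in Lemma~\ref{lem: propagation on boundary if mu0 supported on xi-d equal 0}.
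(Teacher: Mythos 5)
Your proof is correct and follows essentially the same route as the paper: you invoke Proposition~\ref{lemma: first propagation formula} with a $\xi_1$-independent symbol, kill the boundary pairing by the density/support argument (the operator $\ell(x',hD'')$ acts only in $x''$ with $x_1$ as a parameter, so it preserves the complementary $x_1$-supports of the Dirichlet and Neumann traces, exactly as at the end of the proof of Lemma~\ref{lem: measure hyperbolic points}), then expand $H_p\mu$ via Lemmas~\ref{lem: measure on boundary} and \ref{lem: Hp mu xd positive is a measure}, let $\eps\to0$, and do the same symbolic bookkeeping on $\{x_1=\xi_1=0\}$. The only cosmetic difference is that the paper obtains the boundary pairing exactly equal to $0$ for each $h$ rather than $\mathcal O(h^\infty)$, which changes nothing in the limit.
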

\begin{proof}
Let $b\in\Con_0^\infty(\R^d\times \R^{d-2})$ be such that $b_{|x_d=0}=\ell$.
We apply
Proposition~\ref{lemma: first propagation formula}  to $ b(x,\xi'')$. 
As in the end of the proof of Lemma~\ref{lem: measure hyperbolic points} we  have 
\(
\big(  \ell(x',hD'') (v_h)_{|x_d=0} | (hD_{x_d}v_h)_{|x_d=0}  \big)_0=0.
\)
Then we have $\est{H_p\mu-2a\mu, b(x,\xi'')}=0$. 
We follow the same ideas of the proof of 
Lemma~\ref{lem: propagation on boundary if mu0 supported on xi-d equal 0}.
From Formula~\eqref{eq: Hp form propagation boundary D and N bis} we have
\begin{align*}
\est{H'_R\mu^\d-2a\mu^\d+\mu_0, \ell}-2\est{a 1_{x_d>0}\mu, b}=0.
\end{align*}
Let 
\(
\chi\in\Con_0^\infty(\R)
\)
be such that 
\(\chi(\sigma)=1\)
in a \nhd of $0$.
Taking $b_\eps(x,\xi'')=\chi(x_d/\eps) \ell(x',\xi'')$ and letting $\eps$ goes to 0, we obtain
 $\est{H'_R\mu^\d-2a\mu^\d+\mu_0, \ell}=0$.
We have $H'_R=2\xi_1\d_{x_1}  +H''_{R_0}+ H_{x_1r_2}'$, and $H_{x_1r_2}' 
\ell(x',\xi'')=x_1H'_{r_2}\ell(x',\xi'')$, as $\ell$ independent 
of $\xi_1$.
We deduce from the form of $\mu^\d$ that 
\(
H'_R\mu^\d=  H''_{R_0}\tilde\mu^\d \otimes\delta_{x_1=\xi_1=0}.
\)
Taking 
\(
\ell(x',\xi'')=\chi(x_1)\tilde\ell(x'',\xi''),
\)
we deduce from the form of $\mu_0$
\(
\est{H'_R\mu^\d-2a\mu^\d+\mu_0, \ell}
 = \est{ H''_{R_0}\tilde\mu^\d-2a\tilde\mu^\d+ \tilde\mu_0, \ell}=0.
\)
This implies the result.
\end{proof}

\section{Support propagation results}
\label{sec: propagation proofs}

In this section we prove propagation of support of semiclassical measure under the assumption 
mGCC, see Definition~\ref{def: mGCC}.
\begin{proposition}
	\label{prop: propagation support semiclassical measure}
We assume that 	\( P\),  \(a(x)\) and  \(\Omega\) satisfy mGCC. Let  \(\mu\) the semiclassical
measure constructed from \((v_h)_h\) and satisfying \eqref{def: semiclassical defect measure}, 
we have \(\mu=0\).
\end{proposition}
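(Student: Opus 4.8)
The plan is to derive a contradiction from the standing assumptions collected in \eqref{properties: sequence and measure}: we have constructed a nonzero semiclassical measure $\mu$ with $p\mu=0$ and $a\mu=0$, and we will show that under mGCC the support of $\mu$ must be empty. The overall strategy is the standard Bardos--Lebeau--Rauch propagation argument, but carried through with the generalized bicharacteristic flow $\Gamma(s,\rho)$ of Section~\ref{sec: Geometry} and using the refined measure-propagation lemmas of Section~\ref{sec: properties microlocal defect measure}. First I would fix $\rho_0\in\supp\mu\cap\operatorname{char}(P)$; since $\mu$ is supported in $\overline\Omega\times\R^d$ (Proposition~\ref{prop: measure supported on the characteristic set}) and $a\mu=0$, such $\rho_0$ can be taken with $\pi(\rho_0)\notin\omega$. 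The goal is to show that the set $\operatorname{char}(P)\setminus\supp\mu$ is invariant under the generalized flow in the relevant sense, i.e. that $\supp\mu$ is a union of generalized bicharacteristics; combined with mGCC, which forces every such bicharacteristic (or, at points of $\Gamma$, the singular curve $\gamma_{\mathrm{sing}}$) to meet $\omega$ in finite time, and with $a\mu=0$, this yields $\rho_0\notin\supp\mu$, a contradiction with $\mu\not\equiv0$.

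The heart of the proof is a case analysis according to where a bicharacteristic through $\rho_0$ sits relative to the boundary. In the interior, Proposition~\ref{prop: interior formula for Hp mu} gives $H_p\mu=2a\mu=0$ on $\Omega\times\R^d$, so $\mu$ restricted to the interior is invariant under $H_p$ and its support propagates along bicharacteristics until one hits $x_d=0$. At a hyperbolic point $\rho\in\mathcal H$ of $\d\Omega_D\cup\d\Omega_N$, Lemma~\ref{lem: measure hyperbolic points} gives $\mu_0=\mu^+\otimes\delta_{\xi_d=+\sqrt{1-R}}-\mu^-\otimes\delta_{\xi_d=-\sqrt{1-R}}$ with $\mu^+=\mu^-$, so incoming and outgoing rays carry the same mass and the support passes transversally through the boundary; near $\Gamma$ the weaker statement ``$\mu^+=0\iff\mu^-=0$'' of the same lemma suffices, because mGCC only requires bicharacteristics hitting $\Gamma$ to be hyperbolic there. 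At a diffractive point over $\d\Omega_D\cup\d\Omega_N$, Lemma~\ref{lem: diffractive points non in measure support} says $\rho\notin\supp\mu^\d$, so no mass can get stuck there. At a gliding point (or more generally a point of tangential contact over $\d\Omega_D\cup\d\Omega_N$), Lemma~\ref{lem: propagation on boundary if mu0 supported on xi-d equal 0} shows the boundary measure propagates along the integral curves of $H'_R$ on $\Sigma\cong T^*\d\Omega$ (with the damping term, which vanishes since $a\mu=0$), i.e. along $\gamma_g$; one must check that a ray arriving tangentially with $\mu$ supported in the corresponding tangential set indeed has $\mu_0$ supported on $\xi_d=0$, which is the content of the hypothesis of that lemma, verified using $p\mu=0$ and the structure of $\mu$ near a gliding point. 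Finally, for $\pi(\rho_0)\in\Gamma$, Lemma~\ref{lem: propagation boundary Zaremba} gives $H''_{R_0}\tilde\mu^\d-2a\tilde\mu^\d+\tilde\mu_0=0$, i.e. propagation along $\gamma_{\mathrm{sing}}$, which is exactly the curve appearing in the second bullet of Definition~\ref{def: mGCC}.

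Putting the cases together, I would argue as follows. Suppose $\rho_0\in\supp\mu$ with $\pi(\rho_0)\notin\omega$. If $\pi(\rho_0)\notin\Gamma$, mGCC gives $s_0$ with $\pi\Gamma(s_0,\rho_0)\in\omega$ and with $\Gamma(s,\rho_0)\in\mathcal H$ whenever $\pi\Gamma(s,\rho_0)\in\Gamma$ for $s\in[0,s_0]$. Travelling along $\Gamma(\cdot,\rho_0)$ from $s=0$ to $s=s_0$, each segment in the interior keeps $\rho$ in $\supp\mu$ by interior propagation; each transversal crossing of $\d\Omega_D\cup\d\Omega_N$ or of $\Gamma$ (the latter being hyperbolic by hypothesis) preserves membership in $\supp\mu$ by the hyperbolic lemma; each tangential segment on $\d\Omega_D\cup\d\Omega_N$ preserves it by the gliding/boundary-propagation lemma, while diffractive points over those components are excluded from $\supp\mu^\d$ altogether, so the ray cannot terminate there — it must re-enter the interior, consistently with the definition of $\Gamma$. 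Hence $\Gamma(s_0,\rho_0)\in\supp\mu$, but $\pi\Gamma(s_0,\rho_0)\in\omega$ means $a>0$ there, contradicting $a\mu=0$. If instead $\pi(\rho_0)\in\Gamma$, then $\mu$ is (locally, by $p\mu=0$ and Lemma~\ref{lem: measure on boundary}) supported on the fiber over that point, Lemma~\ref{lem: propagation boundary Zaremba} applies and propagates $\tilde\mu^\d$ along $\gamma_{\mathrm{sing}}$ until $\pi\gamma_{\mathrm{sing}}(s_0,\rho_0)\in\omega$, again contradicting $a\mu=0$. Either way $\supp\mu\cap\operatorname{char}(P)=\emptyset$, and since $p\mu=0$ forces $\supp\mu\subset\operatorname{char}(P)$, we get $\mu\equiv0$, contradicting the nontriviality of $\mu$ proved in Section~\ref{sec: he semiclassical  measure is not identically null}; hence the original estimate \eqref{eq: semiclassical estimation} holds.

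The main obstacle, and the point requiring the most care, is the bookkeeping of the boundary cases near $\Gamma$: one must make sure that the only way a bicharacteristic interacts with $\Gamma$ is in the hyperbolic regime (guaranteed by mGCC), because the paper explicitly cannot propagate $\mu$ through a tangential or diffractive encounter with $\Gamma$, and one must also confirm that the hypotheses of Lemmas~\ref{lem: propagation on boundary if mu0 supported on xi-d equal 0} and~\ref{lem: propagation boundary Zaremba} (that $\mu_0$, resp.\ $\mu^\d$, is supported on the appropriate $\xi_d=0$, resp.\ $x_1=\xi_1=0$, set) are actually met along the ray — this is where one uses that the incoming ray is tangential and that $p\mu=0$ pins $\mu$ to $\{\xi_d^2+R-1=0\}$. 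A secondary technical point is handling the finitely many contact points of finite order $k\ge3$ listed in Definition~\ref{def: contact fini}, where one checks directly from the four cases there that $\Gamma(s,\rho_0)$ is continuous and the support-propagation statements glue across $s=0$; since $\mu$ is a locally finite measure and the flow is continuous for the $T^*_b\Omega$ topology, the set of $s$ for which $\Gamma(s,\rho_0)\in\supp\mu$ is closed, which lets one propagate up to $s_0$ by a connectedness argument rather than checking every point individually.
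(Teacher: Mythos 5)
Your proposal follows essentially the same route as the paper: interior invariance under $H_p$, the hyperbolic/diffractive/gliding case analysis at $\d\Omega_D\cup\d\Omega_N$ via Lemmas~\ref{lem: measure hyperbolic points}, \ref{lem: diffractive points non in measure support} and \ref{lem: propagation on boundary if mu0 supported on xi-d equal 0}, the induction over the finite-order tangential contacts, the separate treatment of $\Gamma$ via Lemma~\ref{lem: propagation boundary Zaremba}, and the closedness/first-hitting-time argument along $\Gamma(\cdot,\rho_0)$ combined with $a\mu=0$ to reach the contradiction. This matches the paper's proof in structure and in every key lemma invoked.
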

We prove the propagation result, first in interior which is a classical result, second in a 
\nhd of a point on 
the boundary with Dirichlet 
 or Neumann conditions and third in a \nhd of a point on $\Gamma$.

\subsection{Propagation in interior domain $\Omega$}

Here we use the fact that $a\mu=0$.
From Proposition~\ref{prop: interior formula for Hp mu}, we have $H_p\mu=0$. It is then classical that $\mu $ in invariant by 
the flow of $H_p$. More precisely, let $\rho_0\in T^*\Omega$ and  we assume that $\gamma(s,\rho_0)\in  T^*\Omega$ for $s\in[0,t]$.
Let  $b\in\Con^\infty_0(\Omega\times\R^d)$ be such that  $b(\gamma(-s,.))$ is supported in $T^*\Omega$ for $s\in[0,t]$,
we have $\est{\gamma_*(t,.)\mu,b}=\est{\mu,b(\gamma(-t,.))}=\est{\mu,b}$.

 \subsection{Propagation at boundary: hyperbolic points}
 
 The propagation results given in this section are classical for Dirichlet boundary condition. 
 We prove a propagation result for Neumann boundary and in a \nhd of $\Gamma$
 which is new in context of semiclassical measure.
 We use the geometry context defined in section~\ref{sec: Geometry}, in particular $j$ and 
 the definition of the different flows.
 
 We prove that  the support of measure is locally empty in the future  assuming that in the past the support 
 of measure is locally empty  but by symmetry we can deduce that the support of measure is locally empty in the past if we assume that the 
 support of measure is locally empty in the future.
 
 Recall that we choose coordinates such that $p(x,\xi)=\xi_d^2+R(x,\xi')-1$ and locally $\Omega=\{ x_d>0 \}$. In this section we use that 
 $a\mu=0$.

 We recall that a point $(x_0',\xi_0')\in T^*\d\Omega$ is in $\cal H$, if $R(x_0',0,\xi_0')-1<0$. We apply  
 Lemmas~\ref{lem: measure on boundary} 
 and \ref{lem: measure hyperbolic points}. We have $\mu=1_{x_d>0} \mu$ and $H_p \mu=  
 \mu^+\otimes \delta_{\xi_d=\sqrt{1-R(x',0,\xi')}}-\mu^-\otimes \delta_{\xi_d=-\sqrt{1-R(x',0,\xi')}}$ with $\mu^+=\mu^-$. 
 
 We call $\gamma^\pm$ 
 the integral curve of $H_p$   starting 
 from  $ (x_0',x_d,\xi'_0,\pm\sqrt{1-R(x',0,\xi')})$.  If we assume that  the support of $\mu=1_{x_d>0} \mu$ is empty in a \nhd of 
 $ \gamma^-$(s) for $s<0$ and $|s|$ sufficiently small, then $\mu^-=0$. 
 This implies $\mu^+=0$ and $H_p\mu=0$. 
 As $\mu_{|x_d<0}=0 $ 
 and $\gamma^+(s)$ is in $x_d<0$ for $s<0$ and $|s|$ sufficiently small, this implies that  $\mu=0$ in a \nhd of $\gamma^+(0)$.

 \begin{remark}
 In a \nhd of points in 
 \(
 \d\Omega_D\cup\d\Omega_N
 \)
 we can prove a propagation of measure because we have proved $\mu_+=\mu_-$ but 
 we do not know if this property is true for points in $\Gamma$.
 \end{remark}

 \subsection{Propagation at $\d\Omega_D \cup \d\Omega_N$}
 
 \subsubsection{Propagation at   gliding points}
 
We recall that a point $(x_0',\xi_0')\in T^*\d\Omega$ is in ${\cal G}_g $, if $R(x_0',0,\xi_0')-1=0$ and 
$\d_{x_d}R(x_0',0,\xi_0')>0$.  
Let $\gamma$ be the integral curve of $H_p$ starting from $(x'_0,0,\xi_0',0)$.  Then  $\gamma(s)$ 
into $\{x_d<0\}$ for $s\not=0$ and $|s|$ sufficiently small.
In a \nhd of $(x_0',\xi_0')$ in $  T^*\d\Omega$ all the point are either hyperbolic, or  gliding.
We assume that $j^{-1}\big( \gamma_g(s_0;x_0',\xi_0')\big) \cap \supp \mu =\emptyset $   for $s_0<0$ where $|s_0|$ 
is sufficiently small.
Here $\gamma_g(s;x_0',\xi_0')=\Gamma(s; x_0',\xi_0')$, then all the point $\rho$ in a 
\nhd of $ j^{-1}\big( \gamma_g(s_0;x_0',\xi_0')\big)$ 
are not in the support of $\mu$.  By continuity of $\Gamma$ the curve $\Gamma(s;\rho)$ hit the boundary at $\rho'$
 in a \nhd of $(x_0',\xi_0')$. If $\rho'$  is an hyperbolic point, by the previous result  the point $j^{-1}\big( \Gamma(s;\rho)\big)$ 
 are not in the support of $\mu$. If $\rho'$ is a  gliding point, all the points $\Gamma(s;\rho)$ are strictly gliding. In particular this
 implies that $\mu$ is supported on $x_d=0$, then $1_{x_d>0}\mu=0$ and $\mu_0=0$.
We can apply 
Lemma~\ref{lem: propagation on boundary if mu0 supported on xi-d equal 0} and $\mu^\d$ satisfied
$H'_R\mu^\d=0$. Let $\gamma_g$ be the integral curve of $H'_R$ starting from $(x_0',\xi_0')$. 
As by assumption $\mu^\d  \otimes \delta _{x_d=0}\otimes\delta_{\xi_d=0} =\mu$ is 0 in a 
\nhd of $j^{-1}\big( \gamma_g(s_0;x_0',\xi_0')\big)$, we have $\mu^\d=0$ in a \nhd of $\gamma_g(s_0)$ and 
$H'_R\mu^\d=0$, this implies that $\gamma_g(s)$ is not in the support of $\mu^\d$ in a \nhd of $s=0$. As 
$\mu=\mu^\d\otimes \delta _{x_d=0}\otimes\delta_{\xi_d=0}$ we have $\mu=0$ in a \nhd of $j^{-1} (x_0',\xi_0')$.

 \subsubsection{Propagation at  diffractive points}

We recall that a point $(x_0',\xi_0')\in T^*\d\Omega$ is in ${\cal G}_g $, 
if $R(x_0',0,\xi_0')-1=0$ and
 $\d_{x_d}R(x_0',0,\xi_0')<0$.  We keep the previous notation 
for $\gamma$. For a point $\rho$ in a \nhd of $(x_0',\xi_0')\in T^*\d\Omega\cup T^*\Omega$,
there are three cases, first the integral curve  passing through $\rho$  hits $x_d=0$ 
at an hyperbolic point and by previous result the integral
curve is not in support of $\mu$, second it does not hit $x_d=0$ and the integral curve is not 
in the support of 
$\mu$ by propagation result in interior, third the integral
curve hits $x_d=0$ at a diffractive point.
Then the support of $\mu^\d$ is in ${\cal G}_g $ and the support of $1_{x_d>0}\mu$ is into $\{(x,\xi), \ \xi_d^2+ R(x,\xi')=1, \text{ such that } 
 \gamma_{(x,\xi)} \text{ hits }x_d=0,\    \ \xi_d=0    \}$. 

If $(x'_0,0)\in\d\Omega_D\cup\d\Omega_N$ we can apply
Lemma~\ref{lem: diffractive points non in measure support}, 
then
 $\mu^\d=0$  in a \nhd of \((x_0',0,\xi_0')\). As the  integral curves hitting $x_d=0$ at an 
hyperbolic points are not in the support of $1_{x_d>0}\mu$, then
$\mu_0$ is supported on $\xi_d=0$. We can apply 
Lemma~\ref{lem: propagation on boundary if mu0 supported on xi-d equal 0} to obtain $\mu_0=0$. Then
$H_p\mu=H_p(1_{x_d>0}\mu)=0$ and as, by assumption, $\gamma(s)$ is not in the support of $\mu$ for $s<0$, $|s|$ sufficiently small, 
we deduce that $\gamma(0)$ is not in the support of $\mu$.

 \subsubsection{Propagation at boundary: integral curves with high contact order}
 
\medskip
We recall that if $(x'_0,\xi'_0) $ is such that $R(x_0', 0, \xi'_0, 0)=1$, 
$\d_{x_d}R(x_0', 0, \xi'_0, 0)=0$ and if we denote by 
$\gamma(s)=(x'(s),x_d(s), \xi'(s),\xi_d(s))$ the integral curve of $H_p$ starting from 
$(x_0', 0, \xi'_0, 0)$. By the assumption made (see Definition~\ref{def: contact fini}) 
there exist \(k\in\N\), \(k\ge 3\) and \(\alpha\ne0\) such that
$x_d(s) = \alpha s^k+{\cal O}(s^{k+1})$.  We denote $\gamma_g(s)=(x'_g(s),\xi'_g(s))$ the 
integral curve of $H'_R$, starting from $(x'_0,\xi'_0) $.
For each $k$ we assume that we  have already proved that the integral curves hitting $x_d=0$ 
at a point in ${\cal G}^j$ for $j<k$ or $\cal H$ are not 
in the support of $\mu$.

\paragraph{Case $k$ even, $\alpha<0$} 
The integral curve of $H_p$ starting from a point belonging to $  T^*\Omega$  in a \nhd of  $(x'_0,\xi'_0) $  in $T^*\d\Omega\cup T^*\Omega$ 
eventually hits $x_d=0$ at a point $\rho'$, 
in $\cal H$ or ${\cal G}^j$ for 
$j\le k$ (see Section~\ref{sec: Geometry} for definition of ${\cal G}^j$). 
By assumptions and by induction this integral curve is not in the support of $\mu$ except if 
$\rho'$ is 
in ${\cal G}^k$, but in this case this integral curve is in $x_d\le 0$. 
This implies that $1_{x_d>0}\mu=0$, then $\mu_0=0$. 
By Lemma~\ref{lem: propagation on boundary if mu0 supported on xi-d equal 0}, 
we have $H'_R\mu^\d=0$ and as, by assumption, 
$\gamma_g(s)$ is not in the support of $\mu^\d$ for $s<0$, $|s|$ sufficiently small, 
we deduce that 
$\gamma_g(0)$ is not in the support of $\mu^\d$.
 
\paragraph{Case $k$ odd, $\alpha<0$}  
By the same argument as in previous case, the integral curve of $H_p$ starting from a point belonging to $  T^*\Omega$ 
in a \nhd of  $(x'_0,\xi'_0) $ in $T^*\d\Omega\cup T^*\Omega$
hits ${\cal G}^k $ or is not in the support of $\mu$. Denote by $\rho'$ the point of this integral curve hitting $x_d=0$. 
The generalized bicharacteristic starting 
from $\rho'$ is on $x_d=0$ for $s>0$ and in $x_d>0$ for $s<0$, and for $s>0$ all the points on 
the integral curve of $H'_R$ are in ${\cal G}_g$, if $|s|$ is sufficiently small. As, by assumption
the  generalized bicharacteristic is not in support of $\mu $ in the past, this means that $1_{x_d>0}\mu=0$ then $\mu_0=0$. 
We can apply Lemma~\ref{lem: propagation on boundary if mu0 supported on xi-d equal 0} then 
$H'_R\mu^\d=0$. But $\gamma_g(s)$ is not in 
support of $\mu=\mu^\d$ for $s<0$ and $|s|$ sufficiently small as $\gamma_g(s)\in{\cal G}_d$, then $\gamma_g(0)$ is 
not in the support of $\mu^\d$.
 
\paragraph{Case $k$ even, $\alpha >0$}

By induction, only the generalized bicharacteristics with the same order of contact  $k$ and the same sign condition 
$\alpha>0$ can be in the support of 
$\mu$. 
Applying Lemma~\ref{lem: propagation on boundary if mu0 supported on xi-d equal 0} we have 
\(
-\d_{x_d}R(x',0,\xi') \mu^\d\otimes  \delta_{x_d=0}\otimes \delta_{\xi_d=0}'=0
\)
as by induction, $ \mu^\d=0$ when $\d_{x_d}R(x',0,\xi')\ne 0$. We deduce
 $H_p\mu=0$. Then the propagation the support of $\mu$ is invariant by the flow of $H_p$.

 \paragraph{Case $k$ odd, $\alpha>0$}

By induction, only the generalized bicharacteristics with the same order of contact  $k$ and the same sign condition 
$\alpha>0$ can be in the support of 
$\mu$.  We can apply
Lemma~\ref{lem: propagation on boundary if mu0 supported on xi-d equal 0}, and by the same argument used in the previous case, we have $H_p\mu=0$ and as 
$\gamma(s)$ is in $x_d<0$ for $|s|<0$ sufficiently small, $\gamma(s)$  is not in the support of $\mu $ for $s<0$ 
and by propagation $\gamma(0)$ is not in support of $\mu$.

\begin{proof}[Proof of Proposition~\ref{prop: propagation support semiclassical measure}, first case]
By assumption for a point 
\(
\rho \in  T_b\Omega=T^*\Omega\cup T^*\d\Omega
\)
with 
\(
\pi(\rho)\in \Omega\cup \d\Omega_D\cup  \d\Omega_N, 
\)
we have assumed that 
\(
\pi(\Gamma(s_0,\rho) )\in \{x\in\overline{\Omega}, a(x)>0\}
\)
for some 
\(s_0\in \R \) and for every 
\(s\in [0,s_0], \)
if
\(\pi \Gamma(s,\rho)\in \Gamma\)
then 
\(
 \Gamma(s,\rho)\in \cal H.
\)
As 
\(
\supp\mu 
\)
is a closed set, if 
\(
\rho \in\supp\mu
\)
there exist \(s_1\in [0,s_0]\) such that 
\(
j^{-1}\Gamma(s_1,\rho)\cap \supp\mu\ne\emptyset
\) 
and 
\(
j^{-1}\Gamma(s,\rho)\cap \supp\mu=\emptyset
\) 
for 
\( s\in[s_0,s_1).\)
At 
\(
\Gamma(s_1,\rho) 
\)
we can apply the results obtained in this section to prove that 
\(
j^{-1}\Gamma(s_1,\rho)\cap \supp\mu=\emptyset,
\)
and reach a contradiction.
\end{proof}

 \subsection{Propagation on $\Gamma$} \label{sec: Propagation in a nhd of Gamma}

 Now we prove Proposition~\ref{prop: propagation support semiclassical measure} in the 
 second case, i.e.  \(\pi(\rho)\in\Gamma\).
  We recall that we can change the coordinates such that locally in a \nhd of $\Gamma$ 
 we have $\Omega=\{ x_d>0 \}$ and 
 $\Gamma=\{ x_d=x_1=0\}$, moreover, in the coordinates $(x_1,x'',x_d)=(x',x_d)$, we 
 have $R(x',0,\xi')= \xi_1^2+R_0(x'',\xi'')+x_1r_2(x',\xi')$.
   By the result obtained from the previous section, 
  \(
  \supp \mu\subset \{x_1=0, x_d=0, \ \xi_d=0\},
  \)
  in particular 
  \(1_{x_d>0}\mu=0\)
  and \(\mu_0=0.\)
Let $\rho=(x'_0,\xi'_0) \in T^*\d \Omega$ such that $x'_0\in\Gamma$.
Then the measure verifies $\mu=\mu^\d\otimes  \delta_{x_d=0}\otimes  \delta_{\xi_d=0}$ 
and $ \mu^\d$ is supported on $x_1=0$.

We have to distinguish two cases, first if $R_0(x'',\xi'')-1<0$ and second  $R_0(x'',\xi'')=1$ even if the result is the same in both cases.

 \subsubsection{Case $R_0(x'',\xi'')-1<0$}
 
 Let $\xi_1(x'',\xi'')$ the positive solution in $\xi_1$ of $\xi_1^2+R_0(x'',\xi'')-1=0$. 
 There exist $\mu^\pm(x'',\xi'')$ measures such that $\mu^\d= \mu^+(x'',\xi'')\otimes  \delta_{x_1=0}\otimes \delta_{\xi_1=\xi_1(x'',\xi'') } 
 + \mu^-(x'',\xi'')\otimes  \delta_{x_1=0}\otimes \delta_{\xi_1=-\xi_1(x'',\xi'') }$. 
 Lemma~\ref{lem: propagation boundary Zaremba} implies that 
 $\est{H'_R\mu^\d,\ell(x',\xi'')}=0$.

 Let $S(x',\xi')=R_0(x'',\xi'')+x_1r_2(x',\xi')$,
 we have on $ x_d=\xi_d=0$, 
 \begin{equation}
 \label{eq: hamiltonian boundary gamma}
  H'_R=2\xi_1\d_{x_1}-(\d_{x_1}S(x',\xi'))\d_{\xi_1}+H''_R,
 \end{equation}
  where $H''_R=\sum_{j=2}^{d-1}  
 \big(
 \d_{\xi_j}R(x',0,\xi')\d_{x_j}- \d_{x_j}R(x',0,\xi')\d_{\xi_j}
\big)$. Observe that on $x_1=0$, $H''_R=H''_{R_0}$.
 \begin{align*}
 H'_R\mu^\d&= 2\xi_1(x'',\xi'')\mu^+(x'',\xi'')\otimes \delta'_{x_1=0}\otimes \delta_{\xi_1=\xi_1(x'',\xi'')} \\
&\quad - 2\xi_1(x'',\xi'')\mu^-(x'',\xi'')\otimes \delta'_{x_1=0}\otimes \delta_{\xi_1=-\xi_1(x'',\xi'')}\\
 &\quad -(\d_{\xi_1}S(x',\xi'))\mu^+(x'',\xi'')\otimes \delta_{x_1=0}\otimes \delta'_{\xi_1=\xi_1(x'',\xi'')}\\
 &\quad -(\d_{\xi_1}S(x',\xi'))\mu^-(x'',\xi'')\otimes \delta_{x_1=0}\otimes \delta'_{\xi_1=-\xi_1(x'',\xi'')} \\
 &\quad + H''_{R_0} \mu^+(x'',\xi'')\otimes \delta_{x_1=0}\otimes \delta_{\xi_1=\xi_1(x'',\xi'')}  
 + H''_{R_0}  \mu^-(x'',\xi'')\otimes \delta_{x_1=0}\otimes \delta_{\xi_1=-\xi_1(x'',\xi'')}  .%\\
 \end{align*}
 Let $\chi\in\Con_0^\infty(\R)$ be such that  $\chi(\sigma)=1$ for $\sigma$ in a \nhd of 0.
 Let $\ell(x',\xi'')=x_1\chi(x_1) b(x',\xi'')$ where $b\in\Con_0^\infty (\R^{d-1}\times\R^{d-2})$.  We have    
 \begin{align*}
 \est{H'_R\mu^\d,\ell(x',\xi'')}
 =-2\est{\xi_1(x'',\xi'')\mu^+(x'',\xi''), b(0,x'',\xi'')}+2\est{\xi_1(x'',\xi'')\mu^-(x'',\xi''),b(0,x'',\xi'')}
 =0.
 \end{align*}
 Then $\mu^+=\mu^-$ as $\xi_1(x'',\xi'')\not=0$ in a \nhd of $ (x''_0,\xi''_0)$. 
 
 Now we take $\ell(x',\xi'')= b(x'',\xi'')\chi(x_1)$. 
  Observe that $(\d_{\xi_1}S(x',\xi'))=x_1\d_{\xi_1}r_2(x',\xi')$ is null on $x_1=0$, 
 we deduce that
 \begin{align*}
  \est{H'_R\mu^\d,\ell(x',\xi'')}= \est{ H''_{R_0}( \mu^+(x'',\xi'')+\mu^-(x'',\xi'')), b(x'',\xi'')   } 
  =0,
 \end{align*}
 then, as  
 $\mu^+=\mu^-$, $H''_{R_0} \mu^+ = H''_{R_0} \mu^- =0$ with the previous equation.
 Then the support of $\mu^+$, 
 $\mu^-$ and $\mu^\d$ propagate along the integral curves of $H''_{R_0}$. By assumption mGCC (see Definition~\ref{def: mGCC})
  all these curves hit the set $a\ge\delta>0$, 
 we obtain that $\mu^\d=0$ in a \nhd of such a point $(x'_0,\xi'_0)$.
 
 \subsubsection{Case $R_0(x'',\xi'')=1$}
 
By the result obtained in previous section the measure $\mu^\d$ is supported on
 $x_1=\xi_1=0$, then we have 
 $\mu^\d=\tilde\mu(x'',\xi'')\otimes  \delta_{x_1=0}\otimes \delta_{\xi_1=0 } $, where $\tilde\mu $ is a non negative Radon measure.
 We deduce from~\eqref{eq: hamiltonian boundary gamma}, as $\d_{\xi_1}S(x',\xi')=0$ on $x_1=0$,
 \begin{align*}
  H'_R\mu^\d=  H''_{R_0} \tilde\mu(x'',\xi'')\otimes \delta_{x_1=0}\otimes \delta_{\xi_1=0}  .
 \end{align*}
 Taking $\ell(x',\xi'')= b(x'',\xi'')\chi(x_1)$, where 
 $\chi\in\Con_0^\infty(\R)$ and $\chi(s)=1$ for $s$ in a \nhd of 0,  by  Lemma~\ref{lem: propagation boundary Zaremba}, 
 and arguing as in the previous case, we have
 $H''_{R_0}\tilde\mu=0$, 
 then the supports of $\tilde\mu$ and $\mu^\d$ propagate along the integral curves of $H''_{R_0}$. 
 As in the   
 previous case we obtain  $\mu^\d=0$ in a \nhd of such a point $(x'_0,\xi'_0)$.
 
 %%%%%%%%%%%%%%%%%%%%%%
 %
 %   Appendix
 %
 %%%%%%%%%%%%%%%%%%%%%%%
 
 \appendix
 
 \section{Proof of Lemma~\ref{Lem: regularity H s}, Zaremba regularity result}
 \label{Appendix : lemma regularity H s}
 It is well known that the solution of elliptic equation of second order with Zaremba boundary condition is in $H^s$ with $s<3/2$ for a data 
 in $L^2$, see for instance Shamir~\cite{Sh}, Savar\'e~\cite{Savare-1997} . 
 Here we have to prove that the solution is in semiclassical Sobolev spaces.
 
 We start from~\eqref{properties: sequence and measure} and equation $h^2Pv_h-v_h+ih av_h=hq_h$.
 We have $h^2Pv_h+v_h=r_h$ where $r_h= 2v_h-ih av_h+hq_h$ and we deduce $\| r_h\|_{L^2(\Omega)}\le C$. We observe that 
 $h^2P+1$ is a semiclassical elliptic operator. To prove the result we follow the method used in 
 Section~\ref{subsubsection: elliptic points} with the advantage that the operator is globally elliptic 
 and we keep more or less the same notations introduced in this section.
 In particular we do not have to use microlocal cutoff. 
 
 We work in a \nhd of the boundary in coordinate $(x',x_d)$ and $\Omega$ is 
 given by $x_d>0$ and $\Gamma$ by $x_1=0$. The symbol of the operator is given by $\xi_d^2+R(x,\xi')+1$ and 
 $R(x',0,\xi')=\xi_1^2+R_0(x'',\xi'') +x_1r_2(x',\xi')$.
 
Let $\tilde\chi$ a cutoff in a \nhd of a point of $\Gamma$, it will fix to $0$ in what follows. 
We set $w_h=\tilde\chi_\delta v_h$
where $\tilde\chi_\delta(x)=\tilde\chi(x/\delta)$. Let $\chi$ another cutoff function such that $\chi(x)=1$ if $x$ is contained in a 
\nhd of $ \supp\tilde\chi$ and we set $\chi_\delta(x)=\chi(x/\delta)$.
As $v_h$ is uniformly in $H^1_{sc}(x_d>0)$ and $r_2$ is a differential operator, we have
\begin{align*}
h^2D_{x_d}^2w_h+ h^2D_{x_1}^2w_h+ \op_{sc}(R_0(0,x'',x_d,\xi''))w_h + x_1\chi_\delta(x) \op_{sc}(r_2(x,\xi')) w_h= r^0_h,
\end{align*}
where $ \| r_h^0\|_{L^2(\Omega)}\le C$.
Let $\rho(x,\xi')=(\xi_d^2+\xi_1^2+R_0(0,x'',x_d,\xi'')    +   x_1\chi_\delta(x)  r_2(x,\xi') +1)^{1/2}$. 
 By symbol calculus, we have in $x_d>0$
\begin{align}
\label{eq: regularity Hs equation on z}
(hD_{x_d}+i\op_{sc}(\rho))(hD_{x_d}+i\op_{sc}(\rho)) w_h=r_h^1 \text{ where } \| r_h^1\|_{L^2(\Omega)}\le C .
\end{align}
Let $z=(hD_{x_d}+i\op_{sc}(\rho)) w_h$, we then have
\begin{align*}
(hD_{x_d}+i\op_{sc}(\rho)) z=r^1_h  .
\end{align*}
\begin{align}
	\label{eq: est elliptic zaremba}
2\Re ((hD_{x_d}+i\op_{sc}(\rho)) z|i\op_{sc}(\rho) z)
\le 2 \| r^1_h\|_{L^2(\Omega)}  \| \op_{sc}(\rho)  z\|_{L^2_{sc}(x_d>0)}.
\end{align}
Integrating by parts (see \eqref{form: integ. parts}) we have
\[
 (  i\op_{sc}(\rho) z |  hD_{x_d} z)
 = (i hD_{x_d} \op_{sc}(\rho)z | z)-ih( i\op_{sc}(\rho) z_{|x_d=0} | z_{|x_d=0})_0.
\]
We deduce
\[
2\Re (hD_{x_d} z | i\op_{sc}(\rho) z)
=( i [hD_{x_d} , \op_{sc}(\rho)]z | z)+h( \op_{sc}(\rho) z_{|x_d=0} | z_{|x_d=0})_0.
\]
As 
\(
|( i [hD_{x_d} ,\op_{sc}(\rho)]z | z)|\lesssim h  \| z\|_{L^2(0,+\infty, H^1_{sc})}^2,
\)
we deduce from~\eqref{eq: est elliptic zaremba}
\[
 \| \op_{sc}(\rho)  z\|_{L^2(x_d>0)}^2+h( \op_{sc}(\rho) z_{|x_d=0} | z_{|x_d=0})_0
 \lesssim  
  \| r^1_h\|_{L^2(\Omega)}  \| \op_{sc}(\rho) z\|_{L^2(x_d>0)} +h  \| z\|_{L^2(0,+\infty, H^1_{sc})}^2.
\]
As 
\(
 \op_{sc}(\rho^{-1})  \op_{sc}(\rho) =Id+h \op_{sc}(r_1),
\)
where \(r_1\in S_{\rm tan}^{-1}\)
 we have 
\[
 \| z\|_{L^2(0,+\infty, H^1_{sc})}^2\lesssim  \| \op_{sc}(\rho)  z\|_{L^2(x_d>0)}^2  + h\| z\|_{L^2}.
\]
We deduce 
\[
  h( \op_{sc}(\rho) z_{|x_d=0} | z_{|x_d=0})_0
 \lesssim  
   \| r^1_h\|_{L^2(\Omega)} ^2.
\]
And by tangential G\aa rding inequality~\eqref{eq: Garding inequality R d} (in \(\R^{d-1}\) instead of
\(\R^d\))
applied to 
\[
\big(\op_{sc}(\est{\xi'}^{-1/2})   \op_{sc}(\rho) \op_{sc}(\est{\xi'}^{-1/2})   \op_{sc}(\est{\xi'}^{1/2})     z_{|x_d=0} | \op_{sc}(\est{\xi'}^{1/2})   z_{|x_d=0}\big)_0
\]
we obtain, as 
\(
\est{\xi'}^{-1}\rho\ge C>0,
\)
\(
 h| z_{|x_d=0} |_{H^{1/2}_{sc}}
 \lesssim  
  \| r^1_h\|_{L^2(\Omega)} ^2.
\)
By definition of $z$, we have
 \begin{align}
 	\label{eq: regularity Hs trace relation}
 (hD_{x_d} w_h)_{|x_d=0}+i\op_{sc}(\rho_0) (w_h)_{|x_d=0}= h^{-1/2} r^2_h, \text{ where } |r^2_h|_{H^{1/2}_{sc}}\le C,
 \end{align}
 where $\rho_0(x',\xi')=(R(x',0,\xi')+1)^{1/2}$.

 Let $\beta(x'',\xi'')=( R_0(x'',\xi'') +1)^{1/2}$. We have 
 $\rho_0(x',\xi')=(\xi_1^2+\beta^2(x'',\xi'') +x_1\chi_\delta r_2(x',\xi'))^{1/2}$. Let $u_0= (hD_{x_d} w_h)_{|x_d=0}$ 
 and $u_1=(w_h)_{|x_d=0}$,
 we recall that $\supp u_0\subset \{ x_1\le 0   \} $ and $\supp u_1\subset \{ x_1\ge 0   \} $. As in Section~\ref{subsubsection: elliptic points},
 we have
   \begin{align*}
 &\xi_1\mapsto \big(\xi_1  + i \beta(x'',\xi'')  \big)^{\pm 1/2}  \text{ are  holomorphic functions on }   \Im \xi_1>0,\\
  &\xi_1\mapsto \big(\xi_1  - i \beta(x'',\xi'')  \big)^{\pm 1/2}  \text{ are  holomorphic functions on }    \Im \xi_1<0.
 \end{align*}
  Let $v_0=\op_{sc}(\xi_1+i\beta)^{-1/2}u_0$. As $u_0$ is supported in $x_1\le 0$ and $(\xi_1+i\beta)^{-1/2}$ 
is a holomorphic function on $\Im \xi_1>0$, $v_0$ is supported in $x_1\le 0$. 

We have $\xi_1-i\beta(x'',\xi'')\in S(\est{\xi'},(dx')^2+(d\xi')^2)$, then $(\xi_1-i\beta(x'',\xi''))^{\pm1/2}\in  
S(\est{\xi'}^{\pm1/2},(dx')^2+(d\xi')^2)$. This implies by symbol calculus that 
\begin{align}
	\label{eq: definition s0}
 \op_{sc}(\xi_1-i\beta)^{-1/2}  \op_{sc}(\xi_1-i\beta)^{1/2}  =Id+h\op_{sc}(s_0),
\end{align}
where $s_0\in S(1 ,(dx')^2+(d\xi')^2)$.
If $h$ is sufficiently small, $Id+h\op_{sc}(s_0)$ is invertible on  $H^s_{sc}$ for every $s$.
 Let $v_1=\op_{sc}\big(\xi_1  - i \beta(x'',\xi'')  \big)^{ 1/2}  (Id+h\op_{sc}(s_0) )^{-1} u_1$, thus we have
 $\op_{sc}\big(\xi_1  - i \beta(x'',\xi'')  \big)^{ -1/2}  v_1=u_1$. Moreover from~\eqref{eq: definition s0},
$\op_{sc}(s_0)$ map distribution supported on  $x_1\ge0$ 
 to distribution supported on  $x_1\ge0$, then by Neumann series, $ (Id+h\op_{sc}(s_0) )^{-1}$ 
 also satisfies this property. This implies that $v_1$ is supported on  $x_1\ge0$.
 From~\eqref{eq: regularity Hs trace relation}, we obtain
 \begin{align}
 	\label{eq: regularity Hs relation between v0 v1}
\op_{sc}(\xi_1+i\beta)^{-1/2} v_0+i\op_{sc}(\xi_1+i\beta)^{-1/2}\op_{sc}(\rho_0) \op_{sc}(\xi_1-i\beta)^{-1/2}v_1
= h^{-1/2} r^3_h,
 \end{align}
where $|r^3_h|_{H^{1}_{sc}}\le C$.
The principal symbol of $\op_{sc}(\xi_1+i\beta)^{-1/2}\op_{sc}(\rho_0) \op_{sc}(\xi_1-i\beta)^{-1/2}$  is by a simple computation
\begin{align*}
(\xi_1+i\beta)^{-1/2}\rho_0 (\xi_1-i\beta)^{-1/2}= 1+ \frac{x_1 \chi_\delta r_2}{ (\xi_1^2+\beta^2)^{1/2}\big( \rho_0 
+ (\xi_1^2+\beta^2)^{1/2} \big)}=1+x_1 \chi_\delta r_3,
\end{align*}
 where $r_3\in S(1,(dx')^2+(d\xi')^2)$. Formula~\eqref{eq: regularity Hs relation between v0 v1} reads
 \begin{align*}
 \op_{sc}(\xi_1+i\beta)^{-1/2} v_0+i v_1 +ix_1 \chi_\delta \op_{sc}(r_3)v_1=  h^{-1/2} r^3_h.
 \end{align*}
 We restrict this equation on $x_1>0$, as $ \op_{sc}(\xi_1+i\beta)^{-1/2} v_0$ is supported on $x_1\le0$, we obtain
 \begin{align*}
 i (v_1)_{|x_1>0} +ix_1 \chi_\delta (\op_{sc}(r_3)v_1)_{|x_1>0} =  h^{-1/2} (r^3_h)_{|x_1>0} .
 \end{align*}
As $|w_{|x_1>0}|_{H^s_{sc} }\le |w|_{H^s_{sc}} $, we obtain that
\begin{align}
	\label{eq: regularity Hs v1 estimate}
| ( v_1)_{|x_1>0}  |_{H^s_{sc}(x_1>0)}    \le   |  x_1 \chi_\delta \op_{sc}(r_3)v_1  |_{H^s_{sc}}     + h^{-1/2}  | r^3_h   |_{H^s_{sc}}  .
\end{align}
%%%%%%%%%%%%%%%%%%%%%%%
%
%   Lemma
%
%%%%%%%%%%%%%%%%%%%%%%%
\begin{lemma}
	\label{lem: regularity Hs 1}
Let $\chi\in\Con_0^\infty(\R^d)$ and $\chi_\delta(x)=\chi(x/\delta)$, where $\delta>0$. Then there exist $C>0$ such that for 
every $\delta>0$, $\|x_1 \chi_\delta\|_{C^\alpha}\le C \delta^{1-\alpha}$, for $\alpha\in(0,1)$ and $\| x_1 \chi_\delta\|_{L^\infty}\le C\delta$.
\end{lemma}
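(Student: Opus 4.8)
The plan is to reduce the statement to two elementary scaling facts---a sup-norm bound for $x_1\chi_\delta$ that is of size $\delta$, and a Lipschitz bound for $x_1\chi_\delta$ that is \emph{uniform} in $\delta$---and then to obtain the H\"older estimate by the standard device of splitting according to whether two points lie closer together than $\delta$ or farther apart. Throughout one may assume $\delta\in(0,1]$, which is the only regime used in the application and which guarantees $\delta\le\delta^{1-\alpha}$.

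First I would fix $R>0$ with $\supp\chi\subset\{|x|\le R\}$, so that $\supp\chi_\delta\subset\{|x|\le R\delta\}$; on this set $|x_1|\le R\delta$, hence $\|x_1\chi_\delta\|_{L^\infty}\le R\,\|\chi\|_{L^\infty}\,\delta$, which is the second assertion of the lemma.

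Next I would differentiate, $\nabla\bigl(x_1\chi_\delta\bigr)(x)=e_1\,\chi(x/\delta)+\delta^{-1}x_1\,(\nabla\chi)(x/\delta)$. The first term is bounded by $\|\chi\|_{L^\infty}$; in the second term, on the support of $(\nabla\chi)(x/\delta)$ one has $|x_1|\le R\delta$, so the factor $\delta^{-1}x_1$ is bounded by $R$. Thus $\|\nabla(x_1\chi_\delta)\|_{L^\infty}\le C_0$ with $C_0$ independent of $\delta$. This cancellation of the dangerous factor $\delta^{-1}$ against the vanishing factor $x_1$ is the only point where anything has to be checked, and it is also the closest thing to an obstacle---if one dropped the factor $x_1$ one would only get $\|\nabla\chi_\delta\|_{L^\infty}\sim\delta^{-1}$, which is useless here.

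Finally, writing $g=x_1\chi_\delta$ and taking $x\ne y$: if $|x-y|\le\delta$ I would use the Lipschitz bound, $|g(x)-g(y)|\le C_0|x-y|=C_0|x-y|^{1-\alpha}|x-y|^\alpha\le C_0\,\delta^{1-\alpha}|x-y|^\alpha$; if $|x-y|>\delta$ I would use the sup bound, $|g(x)-g(y)|\le 2\|g\|_{L^\infty}\le C\delta\le C\delta\,(|x-y|/\delta)^\alpha=C\,\delta^{1-\alpha}|x-y|^\alpha$. Taking the supremum over $x\ne y$ gives $[g]_{C^\alpha}\le C\delta^{1-\alpha}$, and since $\|g\|_{L^\infty}\le C\delta\le C\delta^{1-\alpha}$ one concludes $\|g\|_{C^\alpha}\le C\delta^{1-\alpha}$, as claimed.
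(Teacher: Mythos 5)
Your proof is correct and follows essentially the same route as the paper: the sup bound of size $\delta$, the $\delta$-uniform gradient bound coming from the cancellation of $\delta^{-1}$ against the vanishing factor $x_1$ on the support, and then the interpolation between the two (your split at $|x-y|\lessgtr\delta$ is just the explicit form of the paper's "interpolating both estimates"). Your remark restricting to $\delta\le 1$ for the $L^\infty$ part of the $C^\alpha$ norm is a harmless refinement consistent with how the lemma is used.
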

\begin{proof}
First, we have $|x_1\chi(x/\delta)|\le C \delta$. Second, $|(x_1+y_1)\chi((x+y)/\delta)-x_1\chi(x/\delta)|\le 2C \delta $ and 
$|\d_x (x_1\chi(x/\delta))|\le C$, then we have $|(x_1+y_1)\chi((x+y)/\delta)-x_1\chi(x/\delta)|\le  C |y|$. 
For $\alpha \in(0,1)$, interpolating both estimates, we have $|(x_1+y_1)\chi((x+y)/\delta)-x_1\chi(x/\delta)|\le  C \delta^{1-\alpha}   |y|^\alpha$. 
Which gives the result.
\end{proof}
%%%%%%%%%%%%%%%%%%%%%%
%
%   LEMMA
%
%%%%%%%%%%%%%%%%%%%%%%%%
\begin{lemma}
	\label{lem: product Cs Hs}
Let $s\in(0,1)$ and $0<s<\alpha<1$, there exists $C>0$, such that for every $f\in C^\alpha_{sc}(\R^d)$, and $g\in H^s_{sc}(\R^d)$, 
$ fg\in H^s_{sc}(\R^d)$ and we have
\begin{equation*}
\| fg\|_{H^s_{sc}(\R^d)}\le C \| f\|_{C^\alpha_{sc}(\R^d)}\| g\|_{H^s_{sc}(\R^d)}.
\end{equation*}
\end{lemma}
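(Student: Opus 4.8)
The statement to prove is a standard product estimate in semiclassical Sobolev spaces:
\begin{equation*}
\|fg\|_{H^s_{sc}(\R^d)}\le C\|f\|_{C^\alpha_{sc}(\R^d)}\|g\|_{H^s_{sc}(\R^d)},\qquad 0<s<\alpha<1.
\end{equation*}

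The plan is to reduce the semiclassical estimate to its classical counterpart by a rescaling in frequency, and to prove the classical estimate by a direct difference-quotient (Gagliardo seminorm) argument. First I would recall that the semiclassical norms are related to the usual ones by $\|u\|_{H^s_{sc}}=\|u(h\,\cdot)\|_{H^s}$ up to the $h$-dependent normalization built into $\Op_{sc}$, and similarly $\|f\|_{C^\alpha_{sc}}$ corresponds to the H\"older norm of $f(h\,\cdot)$ after the same dilation; since the constant $C$ in the classical product estimate does not depend on the particular function, it is uniform in $h$. So it suffices to establish, with a constant independent of $h$ (hence, after unscaling, genuinely independent of $h$), the inequality $\|fg\|_{H^s}\le C\|f\|_{C^\alpha}\|g\|_{H^s}$ for the non-semiclassical spaces, where I use the characterization
\begin{equation*}
\|u\|_{H^s}^2\simeq \|u\|_{L^2}^2+\iint_{\R^d\times\R^d}\frac{|u(x)-u(y)|^2}{|x-y|^{d+2s}}\,dx\,dy.
\end{equation*}

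The key steps for the classical estimate: the $L^2$ part is immediate, $\|fg\|_{L^2}\le\|f\|_{L^\infty}\|g\|_{L^2}\le\|f\|_{C^\alpha}\|g\|_{H^s}$. For the Gagliardo seminorm I would write the elementary splitting
\begin{equation*}
f(x)g(x)-f(y)g(y)=f(x)\bigl(g(x)-g(y)\bigr)+\bigl(f(x)-f(y)\bigr)g(y),
\end{equation*}
square, and use $|a+b|^2\le 2|a|^2+2|b|^2$ to get two double integrals. The first is bounded by $\|f\|_{L^\infty}^2$ times the Gagliardo seminorm of $g$, hence by $C\|f\|_{C^\alpha}^2\|g\|_{H^s}^2$. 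The second is
\begin{equation*}
\iint\frac{|f(x)-f(y)|^2\,|g(y)|^2}{|x-y|^{d+2s}}\,dx\,dy\le \|f\|_{\dot C^\alpha}^2\iint_{|x-y|\le1}\frac{|g(y)|^2}{|x-y|^{d+2s-2\alpha}}\,dx\,dy+4\|f\|_{L^\infty}^2\iint_{|x-y|>1}\frac{|g(y)|^2}{|x-y|^{d+2s}}\,dx\,dy;
\end{equation*}
because $s<\alpha$ the exponent $d+2s-2\alpha<d$, so the inner integral over $x$ near $y$ converges, and because $d+2s>d$ the far integral converges as well, each to a finite constant; integrating in $y$ then bounds everything by $C\|f\|_{C^\alpha}^2\|g\|_{L^2}^2\le C\|f\|_{C^\alpha}^2\|g\|_{H^s}^2$. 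Adding the two contributions gives the seminorm bound, and combining with the $L^2$ bound yields the claim.

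The main obstacle — and the only point that genuinely requires care rather than routine bookkeeping — is tracking the $h$-uniformity through the dilation. One must verify that the semiclassical H\"older space $C^\alpha_{sc}$ used implicitly above is indeed normalized so that $\|f\|_{C^\alpha_{sc}}=\|f_h\|_{C^\alpha}$ with $f_h(x)=f(hx)$ scales correctly against $\|g\|_{H^s_{sc}}$; since the semiclassical Sobolev norm is $\|g\|_{H^s_{sc}}=\|\Op_{sc}(\est\xi^s)g\|_{L^2}$ and the Fourier multiplier $\est{h\xi}^s$ is exactly $\est\xi^s$ applied after the dilation $x\mapsto x/h$, the dilation is an isometry (up to the usual Jacobian factor that is absorbed on both sides) and the scaling matches; the classical constant then transfers verbatim. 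I would also remark that, because in the paper this lemma is applied only locally near $\Gamma$, one may equally well prove the estimate on a ball and use that compactly supported cutoffs lie in $C^\alpha_{sc}$ uniformly in $h$ by Lemma~\ref{lem: regularity Hs 1}; this avoids any subtlety about the behaviour of $f$ at infinity. Everything else is the elementary Gagliardo computation above.
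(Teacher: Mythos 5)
Your proof is correct, but it takes a genuinely different route from the paper. The paper works directly in the semiclassical calculus with a Littlewood--Paley decomposition at scale $2^k/h$ and Bony's paraproduct splitting $fg=\sum_k S_k(f)\Delta_k(g)+\sum_k S_{k-1}(g)\Delta_k(f)$, estimating the dyadic blocks of each sum via the block characterizations of $C^\alpha_{sc}$ and $H^s_{sc}$. You instead reduce to the classical ($h=1$) estimate by the dilation $x\mapsto hx$ --- and your bookkeeping is right: $\|g(h\,\cdot)\|_{H^s}=h^{-d/2}\|g\|_{H^s_{sc}}$ while $\|f(h\,\cdot)\|_{C^\alpha}=\|f\|_{C^\alpha_{sc}}$ exactly with the paper's normalization, so the Jacobian factors cancel and the classical constant transfers uniformly in $h$ --- and then prove the classical bound with the Gagliardo seminorm, where the splitting $fg(x)-fg(y)=f(x)(g(x)-g(y))+(f(x)-f(y))g(y)$ and the convergence of $\int_{|z|\le1}|z|^{2\alpha-d-2s}\,dz$ (valid precisely because $\alpha>s$) and of $\int_{|z|>1}|z|^{-d-2s}\,dz$ (because $s>0$) do all the work. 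Your argument is more elementary (no paraproduct machinery) but is tied to the range $s\in(0,1)$ where the Gagliardo characterization of $H^s$ holds; that is exactly the range of the lemma, so nothing is lost here, whereas the paper's paraproduct argument would extend to other values of $s$ with only notational changes. The closing remark about localizing near $\Gamma$ is unnecessary --- the global estimate you prove is what the paper uses --- but it does no harm.
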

Here we say that $f\in C^\alpha_{sc}(\R^d)$ if $f$ is bounded and 
$ h^\alpha |f(x+y)-f(x)|\le C|y|^\alpha$. The norm on $ C^\alpha_{sc}(\R^d)$ is 
$\| f\|_{L^\infty(\R^d)}+\sup _{x,y\in\R^d}h^\alpha |f(x+y)-f(x)| |y|^{-\alpha}$.
\begin{proof}
We can follow the classical proof that the multiplication by $C^\alpha$ functions are bounded operators on $H^s$, 
using Littlewood-Paley theory and para-product in spirit of Bony~\cite{Bony-1981}.   

We recall the Littlewood-Paley decomposition (see~\eqref{eq: decomposition Littlewood-Paley} for notations), 
we have $w=\sum_{k\ge-1}\Delta_k w $
where ${\cal F} (\Delta_k(w))(\xi)=\phi(2^{-k}h\xi) {\cal F}(w)(\xi)$ for $k\ge0$ and ${\cal F} (\Delta_{-1}(w))(\xi)=\psi(h\xi) {\cal F}(w)(\xi)$. 
Let $S_k=\sum_{-1\le j\le k}\Delta_j$.

By assumptions on $f$, we have $\| S_k f\|_{L^\infty(\R^d)} \le C\| f\|_{L^\infty(\R^d)}$ and 
by the usual proof of characterization of $C^\alpha$ functions with Littlewood-Paley decomposition
we have
$\| \Delta_k f\|_{L^\infty(\R^d)} \le C2^{-k\alpha}\| f\|_{C^\alpha_{sc}(\R^d)}$ for $k\ge 0$. 
By assumptions on $g$, we have $\| \Delta_k g\|_{L^2(\R^d)} \le c_k2^{-sk}$ for $k\ge 0$, where 
$\| (c_k)\|_{\ell^2(\N)} \le C\| g \|_{H^s_{sc}(\R^d)}$, $\| \Delta_{-1} g\|_{L^2(\R^d)} \le C\| f\|_{L^2(\R^d)}$ 
and $\| S_k g\|_{L^2(\R^d)} \le C\| g \|_{L^2(\R^d)}$.

The product
 $fg= \sum_{k\ge -1} S_k(f)\Delta_k(g)
+ \sum_{k\ge 0}S_{k-1}(g)\Delta_k(f)$. We estimate each term in previous formula.
For $j\ge 2$ we have  
\begin{align*}
\| \Delta_j \big( \sum_{k\ge -1} S_k(f)\Delta_k(g) \big)\|_{L^2(\R^d)}  &\lesssim \|  \Delta_j \big( \sum_{k\ge j-2} 
S_k(f)\Delta_k(g)  \big) \|_{L^2(\R^d)}\\
&\lesssim \sum_{k\ge j-2} \| S_k(f)\|_{L^\infty(\R^d)}\| \Delta_k(g) \|_{L^2(\R^d)}\\
&\lesssim 2^{-sj}  \| f\|_{L^\infty(\R^d)}\sum_{k\ge j-2} c_k  2^{-(k-j)s},
\end{align*}
and $d_j=\sum_{k\ge j-2} c_k  2^{-(k-j)s}\in \ell^2*\ell^1\subset \ell^2$, where $\|(d_j) \|_{\ell^2}\lesssim \| g \|_{H^s_{sc}(\R^d)}$.

\begin{align*}
\| \Delta_j \big(     \sum_{k\ge 0}S_{k-1}(g)\Delta_k(f)    \big)\|_{L^2(\R^d)} &\le 
\| \Delta_j \big(     \sum_{k\ge j-2}S_{k-1}(g)\Delta_k(f)    \big)\|_{L^2(\R^d)} \\
& \le  \sum_{k\ge j-2} \|   S_{k-1}(g)\|_{L^2(\R^d)} \| \Delta_k(f)  \|_{L^\infty(\R^d)}\\
&\lesssim     \| g \|_{L^2(\R^d)} \| f\|_{C^\alpha_{sc}(\R^d)} \sum_{k\ge j-2} 2^{-k\alpha} \\
&\lesssim    \| g \|_{L^2(\R^d)} \| f\|_{C^\alpha_{sc}(\R^d)}  2^{-js} 2^{-j(\alpha-s)}.
\end{align*}
As $(2^{-j(\alpha-s))}$ is in $\ell^2$, and as the result is obvious for $j\le 1$ we obtain the result.
\end{proof}

Observe that 
\(
\| f \|_{C^\alpha_{sc}(\R^d)}  \lesssim \| f \|_{C^\alpha(\R^d)} .
\)
Then by Lemmas~\ref{lem: regularity Hs 1} and \ref{lem: product Cs Hs} we have $|   x_1 \chi_\delta w |_{H^s_{sc}}\le C \delta^{1-\alpha }| w|_{H^s_{sc}}$.

From \eqref{eq: regularity Hs v1 estimate} we have
\begin{align*}
| ( v_1)_{|x_1>0}  |_{H^s_{sc}(x_1>0)}    \le C  \delta^{1-\alpha }   | v_1  |_{H^s_{sc}}   + h^{-1/2}  | r^3_h   |_{H^s_{sc}} ,
\end{align*}
 for $\le s <\alpha<1/2$. As  $ | v_1  |_{H^s_{sc}}  =| ( v_1)_{|x_1>0}  |_{H^s_{sc}(x_1>0)} $ for $s\in[0,1/2)$, we have 
 \begin{align*}
 | ( v_1)_{|x_1>0}  |_{H^s_{sc}(x_1>0)} \le Ch^{-1/2}  | r^3_h   |_{H^s_{sc}}\le  Ch^{-1/2}  ,
 \end{align*}
 for $\delta$ sufficiently small. We obtain
 \begin{align*}
 |  u_1|_{H^{s+1/2}(x_1> 0)}\le | \op_{sc}\big(\xi_1  - i \beta(x'',\xi'')  \big)^{ -1/2}  v_1|_{H^{s+1/2}}\le C| v_1|_{H^{s}} \le Ch^{-1/2}.
 \end{align*}
 From~\eqref{eq: regularity Hs trace relation} we deduce that 
 \begin{align*}
  |  u_0|_{H^{s-1/2}} \le Ch^{-1/2}.
 \end{align*}
 The solution $w_h$ of semiclassical elliptic problem with boundary condition satisfying
 \begin{align*}
 | (w_h)_{| x_d=0}|_{H^{s+1/2}_{sc}}\le Ch^{-1/2} \text{ and }    |  (hD_{x_d}w_h)_{| x_d=0} |_{H^{s-1/2}} \le Ch^{-1/2},
 \end{align*}
is in $H^{1+s}_{sc}(x_d>0)$ and $\| w_h\|_{H^{1+s}(x_d>0)}\le C$. This result is 
 well-known and it is a consequence of Formula~(60) in~\cite{Robbiano-2013}. This achieves the 
 proof of Lemma~\ref{Lem: regularity H s}.
 
%%%%%%%%%%%%%%%%%%%%%%%%%%%%%%%
%
%   Section  
%
%%%%%%%%%%%%%%%%%%%%%%%%%%%%%%%% 
\section{A priori estimate for the trace of solution for Neumann boundary condition}
	\label{sec: trace Neumann boundary condition}
	We begin this section by recall some result on semiclassical Fourier Integral Operator.
%%%%%%%%%%%%%%%%%%%%%%%%%%%%%%%
%
%   Lemma
%
%%%%%%%%%%%%%%%%%%%%%%%%%%%%%%%% 
\begin{lemma}
	\label{lem: Symplectic transformation}
Let $(x_0', \xi_0' )$ be such that $R(x_0', 0, \xi_0' )-1=0$. For all $x_d$ in a \nhd of 0,
there exist a  smooth symplectic transformation $ \kappa :U_0\to U_1$ where $U_0$  and $U_1$ are some open set 
respectively of  $\R^{d-1}_{x'}\times \R^{d-1}_{\xi'}$  and of  $\R^{d-1}_{y'}\times \R^{d-1}_{\eta'}$,
satisfying $ (x_0', \xi_0' )\in U_0 $,  $(0,0) \in U_1$,  $\kappa(x_0', \xi_0' ) = (0,0)$, and $ \kappa^*(\eta_1)= R-1$.
 Moreover $x_d$ acts as a parameter and $\kappa$ 
is smooth with respect $x_d=y_d$.
\end{lemma}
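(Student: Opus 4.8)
The statement is a standard reduction-to-normal-form result for a real principal-type symbol, and the natural tool is the Darboux/Hörmander theorem on symplectic equivalence of hypersurfaces together with the semiclassical analogue of the theory of Fourier integral operators. The function $R(x',0,\xi')-1$ is a smooth function on $T^*\partial\Omega$ (with $x_d$ frozen as a parameter, $R=R(x',x_d,\xi')$ on a larger chart) and, at the point $(x_0',\xi_0')$ where it vanishes, its differential is nonzero: indeed $\partial_{\xi'}R\ne 0$ since $(p''_{\xi\xi})$ is positive definite (this was already used in Section~\ref{sec: Geometry} to see that $\Sigma$, $\Sigma'$ are symplectic). Hence $R-1$ is a submersion near $(x_0',\xi_0')$, its zero set is a smooth hypersurface of $T^*\partial\Omega$, and the Hamilton vector field $H_{R-1}=H_R$ does not vanish there. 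The plan is therefore: (i) first straighten $R-1$ so that its zero set becomes $\{\eta_1=0\}$ and $H_{R-1}$ becomes $\partial_{y_1}$ on that set; (ii) then use that any such ``half-normal form'' can be completed to a full symplectic change of coordinates with $\kappa^*\eta_1 = R-1$ identically.

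\textbf{Key steps.} First I would invoke the classical normal form for a function $f$ with $df\ne 0$: there are local symplectic coordinates $(y',\eta')$ with $f=\eta_1$. Concretely, one builds such coordinates by starting from a function $y_1$ with $\{f,y_1\}=1$ near $(x_0',\xi_0')$ — solvable by the method of characteristics since $H_f\ne 0$ — then extending $(f,y_1)$ to a full system of symplectic coordinates by the Darboux theorem with prescribed ``partial'' coordinates (Hörmander, \emph{The Analysis of Linear Partial Differential Operators}, Vol.~I, Theorem~21.1.6, or the Carathéodory–Jacobi–Lie theorem). Setting $\eta_1 := f = R-1$ gives a symplectomorphism $\kappa^{-1}:(y',\eta')\mapsto(x',\xi')$ from a neighborhood $U_1$ of $(0,0)$ onto a neighborhood $U_0$ of $(x_0',\xi_0')$ with $\kappa(x_0',\xi_0')=(0,0)$ and $\kappa^*(\eta_1)=R-1$. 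Second, I would address the $x_d$-dependence: the coordinates constructed depend smoothly on the extra parameter $x_d$ because the construction (solving the transport equation for $y_1$, then Darboux) is done by solving ODEs and implicit-function-theorem inversions whose data depend smoothly on $x_d$; shrinking the $x_d$-interval if necessary ensures all the neighborhoods can be taken uniform. Setting $y_d := x_d$ then gives the claimed statement, with $x_d$ entering $\kappa$ only as a smooth parameter. One should also note that $\kappa$ is homogeneous-free here (we are in the bounded-frequency, semiclassical regime where $\xi'$ ranges over a compact set on the relevant microlocal region), so there is no need to worry about conic structure — this simplifies matters compared with the classical homogeneous FIO setup.

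\textbf{Main obstacle.} The genuinely delicate point is not the existence of the pointwise symplectomorphism — that is bookkeeping with Darboux — but rather ensuring (a) smooth, uniform dependence on the parameter $x_d$ over a fixed interval, and (b) that the neighborhoods $U_0,U_1$ can be chosen so that the subsequent quantization (an associated semiclassical FIO $U_\kappa$, used later to conjugate the operator $h^2D_{x_d}^2 + R - 1$ to $h^2D_{x_d}^2 + hD_{y_1}$) is elliptic on the microlocal region of interest. I expect the parameter-dependence to be the step requiring the most care in the write-up: one has to run the characteristics argument for $\{R-1,y_1\}=1$ with $x_d$ as a passive variable and check that the solution, and the completion to Darboux coordinates, are jointly smooth in $(x',\xi',x_d)$; this follows from smooth dependence of solutions of ODEs on parameters, but the verification that the same $U_0$ works for all nearby $x_d$ needs a compactness/continuity argument. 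Once the symplectic map is in hand, the construction of the corresponding semiclassical FIO realizing $\kappa$ is standard (Egorov's theorem in the semiclassical category), so I would state it as a known fact and refer to the literature cited in Section~\ref{Notations and pseudo-differential calculus}.
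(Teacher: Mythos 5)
Your proposal is correct and follows essentially the same route as the paper, which simply observes that the lemma is classical and cites H\"ormander's Darboux theorem with prescribed partial coordinates (\cite[Theorem 21.1.6]{HormanderV3-2007}) to ``complete the coordinate $R-1$ in a symplectic manner''; your write-up supplies the standard construction behind that citation (nonvanishing of $d(R-1)$ at the glancing point -- note this uses $R=1\neq 0$ there, so $\xi_0'\neq0$ and Euler's identity gives $\partial_{\xi'}R\neq0$, not positive definiteness alone -- then solving $\{R-1,y_1\}=1$ by characteristics and completing to symplectic coordinates, with smooth dependence on the parameter $x_d$). The only slip is bibliographic: Theorem 21.1.6 is in Volume III of H\"ormander, not Volume I.
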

This lemma is classical. We can find a proof in H\"ormander~\cite[Theorem 21.1.6]{HormanderV3-2007}. 
This means we can complete the coordinate $R-1$ in a  symplectic manner.

To avoid ambiguity even if $x_d=y_d$ we denote $x_d$ when we work in $(x,\xi')$ variables and $y_d$ otherwise. 

We call a symbol of order 0 a symbol $a\in S(1,|dx|^2+|d\xi'|^2)    
$ or in $S(1,|dy|^2+|d\eta'|^2)$. 
In this section we only use tangential symbol, but as in what follows we have to use different classes of symbols,
we prefer use everywhere the same kind of notation.
 %%%%%%%%%%%%%%%%%%%%%%%%%%%%%%%
%
%   Lemma
%
%%%%%%%%%%%%%%%%%%%%%%%%%%%%%%%% 
\begin{lemma}
	\label{lem: FIO}
Associated with $\kappa$, there exists $F$ a semiclassical  Fourier Integral Operator 
 satisfying the following properties,
\begin{description}
\item[i)] $F$ is a unitary operator uniformly with respect $x_d$.
\item[ii)] For all $\tilde a\in\Con_0^\infty( U_0) $, $F^{-1}\op_{sc}(\tilde a)F=\op_{sc}(a)$, where $a=\kappa^*\tilde a+hb$ where
$b$ is a symbol of order 0. In particular we have
$F^{-1} \op_{sc}(\eta_1\ct^2(y,\eta')) F= \op_{sc}(\cc^2(R-1)) +h\op_{sc}(b)$, where  
$\ct\in\Con_0^\infty (U_0)$, $\ct\ge0$  and $b$ a symbol of order 0.
\item[iii)]  there exist $\theta$ a symbol of order 0, $B$ a bounded operator on $L^2$ such that
$\op_{sc} (\theta)^*=\op_{sc} (\theta)$, $\kappa^*\tilde\chi=\chi$ and
$(\d_{x_d} F) F^{-1}=ih^{-1}\op_{sc} (\theta)+hB$.
\item[iv)] 
If the operators $A $ and $\tilde A$ are such that \( A=F^{-1}\tilde AF \) then 
\[
\d_{x_d}A=F^{-1}\big(  \d_{y_d} \tilde A+ih^{-1}[\tilde A,\op_{sc} (\theta) ]+  h [  \tilde A , B ] 
\big)F
\]
where $B$ is the operator defined previously.
\item[v)]  In particular we have $\kappa^* \{ \eta_1,\theta \}= \d_{x_d}R$ in a \nhd of $(x'_0,0,\xi_0')$. 
\end{description}
\end{lemma}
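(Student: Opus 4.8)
The statement to prove is Lemma~\ref{lem: FIO}, which constructs a semiclassical Fourier integral operator $F$ adapted to the symplectic transformation $\kappa$ of Lemma~\ref{lem: Symplectic transformation} and lists five properties. The plan is to construct $F$ by Egorov-type quantization of $\kappa$ and then verify each item by symbol calculus, treating $x_d=y_d$ throughout as a smooth parameter. Concretely, since $\kappa$ is a smooth family (parametrized by $x_d$) of symplectomorphisms between open sets of $T^*\R^{d-1}$ defined near $(x_0',\xi_0')$, one can connect $\kappa$ to the identity by a smooth family of symplectomorphisms $\kappa_t$, generate the flow by a (time-dependent, compactly supported near the relevant point) Hamiltonian $g_t(x',\xi',x_d)$, and define $F=F(x_d)$ as the solution at $t=1$ of the Heisenberg-type equation $h\d_t F_t = i\,\op_{sc}(g_t)F_t$, $F_0=\mathrm{Id}$. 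This is the standard construction of an FIO associated with a canonical transformation (see H\"ormander~\cite[Chapter 18, 25]{HormanderV3-2007} or Martinez~\cite{Martinez:02}); one multiplies by a cutoff so that everything is global and $F$ is unitary. I would first recall this construction, then establish the five items.

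\textbf{Items i)--ii).} Unitarity (item i)) is immediate from the construction: $\op_{sc}(g_t)$ is self-adjoint up to $O(h)$, so by correcting the generator with a lower-order self-adjoint term one arranges $F_t^*F_t=\mathrm{Id}$ exactly for all $x_d$; alternatively one checks $\d_t(F_t^*F_t)=0$. For item ii), the Egorov theorem gives $F^{-1}\op_{sc}(\tilde a)F=\op_{sc}(a)$ with principal symbol $a_0=\kappa^*\tilde a$ and remainder $a-a_0\in hS(1,|dx|^2+|d\xi'|^2)$; this follows by differentiating $F_t^{-1}\op_{sc}(\tilde a)F_t$ in $t$ and matching symbols, using $\kappa_t^*\tilde a$ as the leading term. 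The special case is then obtained by plugging $\tilde a=\eta_1\ct^2(y,\eta')$ and using $\kappa^*\eta_1=R-1$ from Lemma~\ref{lem: Symplectic transformation}, together with $\kappa^*\ct^2=\chi^2$; positivity of $\chi$ is inherited from positivity of $\ct$ since $\kappa$ is a diffeomorphism. Here one must be a little careful: $\eta_1$ is not a symbol of order $0$, but $\eta_1\ct^2$ is a compactly supported symbol (because $\ct$ has compact support in $U_0$), so the composition calculus applies in the class $S(1,|dy|^2+|d\eta'|^2)$.

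\textbf{Items iii)--v).} For item iii), differentiate the defining equation of $F$ in $x_d$: since $F=F(x_d)$ solves a Heisenberg equation whose generator depends smoothly on $x_d$, the operator $(\d_{x_d}F)F^{-1}$ is again a pseudodifferential operator (of the form $ih^{-1}\op_{sc}(\theta)+hB$) with $\theta$ a real symbol of order $0$ and $B$ bounded on $L^2$ uniformly in $x_d$; self-adjointness of $\op_{sc}(\theta)$ comes from the fact that $\d_{x_d}(F^*F)=0$ forces $(\d_{x_d}F)F^{-1}+ (F^{-1})^*(\d_{x_d}F)^*=0$, i.e. $ih^{-1}\op_{sc}(\theta)$ is anti-self-adjoint modulo $O(h)$, which can be arranged exactly after absorbing lower order terms into $B$. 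The relation $\kappa^*\tilde\chi=\chi$ is just a labelling of the transported cutoff. Item iv) is a purely formal computation: if $A=F^{-1}\tilde AF$ then $\d_{x_d}A=(\d_{x_d}F^{-1})\tilde AF+F^{-1}(\d_{y_d}\tilde A)F+F^{-1}\tilde A(\d_{x_d}F)$, and substituting $\d_{x_d}F^{-1}=-F^{-1}(\d_{x_d}F)F^{-1}$ together with $(\d_{x_d}F)F^{-1}=ih^{-1}\op_{sc}(\theta)+hB$ yields exactly
\[
\d_{x_d}A=F^{-1}\big(\d_{y_d}\tilde A+ih^{-1}[\tilde A,\op_{sc}(\theta)]+h[\tilde A,B]\big)F.
\]
Finally item v) is obtained by applying item iv) with $\tilde A=\op_{sc}(\eta_1\ct^2)$ (or directly $\tilde A$ with symbol $\eta_1$ microlocally): on one hand $\d_{x_d}(F^{-1}\op_{sc}(\eta_1\ct^2)F)=\d_{x_d}\op_{sc}(\chi^2(R-1))+h(\dots)$ has principal symbol $\chi^2\d_{x_d}R$ near $(x_0',0,\xi_0')$; on the other hand item iv) gives principal symbol $\kappa^*\{\eta_1\ct^2,\theta\}=\kappa^*(\ct^2\{\eta_1,\theta\})=\chi^2\,\kappa^*\{\eta_1,\theta\}$ there (the $\d_{y_d}\tilde A$ contribution being lower order or vanishing at the base point since $\ct$ is independent of $y_d$). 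Comparing, $\chi^2\kappa^*\{\eta_1,\theta\}=\chi^2\d_{x_d}R$, hence $\kappa^*\{\eta_1,\theta\}=\d_{x_d}R$ on a neighborhood of $(x_0',0,\xi_0')$ where $\chi\ne 0$.

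\textbf{Main obstacle.} The only genuinely delicate point is the careful bookkeeping of symbol classes and of the dependence on the parameter $x_d=y_d$: one must make sure that all pseudodifferential remainders lie in the stated uniform classes $S(1,|dx|^2+|d\xi'|^2)$ and $S(1,|dy|^2+|d\eta'|^2)$ \emph{uniformly in $x_d$} and that the cutoffs are chosen so that the whole construction is global (so that $F$ is genuinely unitary on $L^2(\R^{d-1})$, not merely microlocally). Handling $\eta_1$, which is not bounded, requires always pairing it with a compactly supported cutoff before invoking the composition calculus; this is routine but is where one could slip. Everything else is a direct transcription of the standard Egorov/FIO machinery recalled in H\"ormander~\cite{HormanderV3-2007}.
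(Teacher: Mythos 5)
Your proposal is correct and follows essentially the same route as the paper: $F$ is built as the time-one map of a Heisenberg equation generated by quantizing a Hamiltonian isotopy $\kappa_t$, items i)--ii) are the standard unitarity/Egorov statements (the paper cites Zworski, Theorem 11.5), item iii) comes from differentiating the defining equation in $x_d$ (the paper makes this explicit via a Duhamel formula and a two-step Egorov iteration to reach the $hB$ remainder), iv) is the same formal computation, and v) is obtained by the same comparison of principal symbols after applying iv) to the operator with symbol $\eta_1$ times a cutoff. No substantive difference.
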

\begin{remark}
 Zworski states the result  for Weyl quantification. It is clear that we can deduce
 the result for classical quantification. In the proof of Lemma~\ref{lem: FIO} we use Weyl quantification but in the rest of this section
 we shall use classical quantification to be coherent with notation used in this article.
\end{remark}
A proof of Lemma~\ref{lem: FIO} is given in Section~\ref{sec: proof of lemmas}.

Here we adapt, in the framework of semiclassical analysis, the results obtained by Tataru~\cite{Tataru-1998} especially 
Lemma~4.3, Propositions~4.5 and 4.7. We essentially keep the notation used in that paper.

From now we shall use two semiclassical quantifications of symbol, one with parameter $h$ and
the other with parameter  $h^{1/3}$. 
To avoid ambiguity or confusion between both, we do not use the notation $\op_{sc}$
but we use classical quantification. For instance, for $a$ a symbol of order 0 we have $\op_{sc} (a)=\op(a(x,h\xi'))$ 
that is we keep 
the $h$ or $h^{1/3}$ in the notation. 

Let $g=|dy|^2+h^{2/3}\est{h^{1/3}\eta_1}^{-2}|d\eta'|^2$, this metric gives  symbol classes
essentially as semiclassical symbol classes with $h^{1/3}$ for
semiclassical parameter. 
We let to the reader to check that $g$ is slowly varying and $\sigma$ temperate. The 
"$h$" defined by H\"ormander associated with $g$ is $h^{1/3}\est{h^{1/3}\eta_1}^{-1}$.
It is the quantity we gain in the asymptotic expansion 
for the 
symbol calculus.
In particular the function $\est{h^{1/3}\eta_1}^{\nu}$ is a $g$ continuous and $\sigma, g$ temperate for every $\nu\in\R$.
 We refer to \cite[Chapter 18, Sections 4 and 5]{HormanderV3-2007} for  definitions used freely here.
 
 From~\eqref{properties: sequence and measure} we have $-h^2\d_{x_d}^2v_h+\op(R(x,h\xi')-1)v_h=hq_h$. 
Let $\vv=\op(  \chi_1   (x,h\xi'))v_h$, where $ \chi_1 $ is supported where $\cc=1$ and $\cc$ is the cutoff function define in Lemma~\ref{lem: FIO}. By symbol calculus we have 
\begin{equation}
	\label{eq: on rm v}
	-h^2\d_{x_d}^2\vv+\op(\cc^2(x,h\xi')(R(x,h\xi')-1))\vv=h\qq
\end{equation}
 where  
 \begin{align*}
 \qq&= \op( \chi_1(x,h\xi'))q_h   -h^{-1}[h^2D_{x_d}^2, \op( \chi_1(x,h\xi'))]v_h   \\
 &\quad + h^{-1} \big(  - \op( \chi_1(x,h\xi'))\op(R(x,h\xi')-1)
 +\op(\cc^2(x,h\xi') (R(x,h\xi')-1))\op(\chi_1(x,h\xi'))\big)
 \end{align*}
  is bounded on $L^2(x_d>0)$ from symbol calculus and support properties of $\cc$ and $\chi_1$.
  Moreover we have $|\vv|_{L^2(\R^{d-1})} \lesssim |v_h|_{L^2(\R^{d-1})}$. 
 
We recall some properties of the Airy function which is denoted by $\Ai$. It verifies the equation 
$\Ai''(z) -z\Ai(z)=0$ for $z\in\C$, 
 $\Ai $ is real on the real axis and $\overline{\Ai(z)}=\Ai(\bar z)$. 
Let $\omega=e^{2i\eps \pi/3}$ for $\eps=\pm1 $. 

For $x\in\R$, let $\alpha(x)=-\omega\Ai'(\omega x)/\Ai(\omega x)\in\Con^\infty(\R)$. As the zero of $\Ai$ are on the negative 
real axis, the function $\alpha $ is well defined for $x\in\R$ and smooth.
 The function $\alpha$ 
satisfies the following properties.
%%%%%%%%%%%%%%%%%%%%%%%%%%%%%%%
%
%   Lemma
%
%%%%%%%%%%%%%%%%%%%%%%%%%%%%%%%% 
\begin{lemma}
	\label{lem: alpha properties}
We have
\begin{description}
\item[i)] $ \alpha(x)=-\sqrt{x}+\frac{1}{4x}+b_1(x) $ for $x>0$
\item[ii)] $ \alpha(x)=\eps i\sqrt{-x}+\frac{1}{4x}+b_2(x) $ for $x<0$
\item[iii)] $\Re \alpha(x)<0 $ for all $x\in\R$,
\item[iv)]  $\alpha$ satisfies the differential equation $\alpha'(x)= \alpha^2(x)-x$.
\end{description}	
where $b_j\in S( \est{x}^{-5/2}, |dx|^2)$ for \(j=1,2\). 
\end{lemma}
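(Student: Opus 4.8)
The statement to prove is Lemma~\ref{lem: alpha properties}, the asymptotic and qualitative properties of $\alpha(x)=-\omega\Ai'(\omega x)/\Ai(\omega x)$ with $\omega=e^{2i\eps\pi/3}$.

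\bigskip

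The plan is to derive each item from the classical asymptotics of the Airy function together with the differential equation it satisfies. First I would record that $\Ai''=z\Ai$, from which item (iv) is immediate: writing $\alpha(x)=-\omega\Ai'(\omega x)/\Ai(\omega x)$ and differentiating the quotient,
\[
\alpha'(x)=-\omega^2\frac{\Ai''(\omega x)}{\Ai(\omega x)}+\omega^2\frac{\Ai'(\omega x)^2}{\Ai(\omega x)^2}
=-\omega^2\cdot\omega x+\alpha(x)^2=\alpha(x)^2-x,
\]
using $\omega^3=1$. So (iv) is a one-line computation. The real content is (i), (ii), (iii), which I would extract from the behaviour of $\Ai$ in the sectors where $\omega x$ ranges as $x$ runs over $\R$. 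Since $\eps=\pm1$, as $x\to+\infty$ the argument $\omega x$ tends to infinity along the rays $\arg=\pm 2\pi/3$, which lie in the sector $|\arg z|<\pi$ where $\Ai(z)\sim \tfrac12\pi^{-1/2}z^{-1/4}e^{-\frac23 z^{3/2}}$, and as $x\to-\infty$ the argument $\omega x$ tends to infinity along the rays $\arg=\pm\pi/3$ (resp.\ $\mp\pi/3$), still in that sector but now $z^{3/2}$ is purely imaginary up to sign, producing oscillation. Differentiating the asymptotic expansion term by term gives $\Ai'(z)\sim -\tfrac12\pi^{-1/2}z^{1/4}e^{-\frac23 z^{3/2}}$, so
\[
-\omega\frac{\Ai'(\omega x)}{\Ai(\omega x)}\sim -\omega (\omega x)^{1/2}\bigl(1+O((\omega x)^{-3/2})\bigr)^{\pm1},
\]
and one then computes $\omega(\omega x)^{1/2}$ on each ray. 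For $x>0$, $\omega^{3/2}=e^{\pm i\pi}=-1$, giving the leading term $-(-1)\cdot(-1)\sqrt x$... here I must be careful with branches; the point is that the branch of the square root and the choice of $\omega$ conspire so that the leading term is $-\sqrt x$, and the next term, coming from the full asymptotic series of $\Ai$ and $\Ai'$ (whose ratio has an expansion in powers of $z^{-3/2}$), is $\tfrac1{4x}$, with remainder in $S(\est x^{-5/2},|dx|^2)$ since each further correction drops by a factor $x^{-3/2}$ and the symbol bounds on derivatives follow from differentiating the convergent asymptotic series. For $x<0$ the same computation with $\omega(-|x|)$ on the rays $\pm\pi/3$ yields the oscillatory leading term $\eps i\sqrt{-x}$ together with the same $\tfrac1{4x}$ correction, which I would check absorbs the $\eps$-dependence correctly.

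\bigskip

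For item (iii), that $\Re\alpha(x)<0$ for \emph{all} $x\in\R$ (not merely asymptotically), I would give a separate argument rather than relying on the expansion. Set $f(x)=\Ai(\omega x)$; this is a nonvanishing smooth function on $\R$ (the zeros of $\Ai$ are on the negative real axis, and $\omega x$ never hits them for $x\in\R$ and $\eps=\pm1$). Then $\alpha=-f'/f=-(\log f)'$ where we may pick a continuous branch of $\log f$. A clean way is to use the Wronskian/energy identity: from $\alpha'=\alpha^2-x$ one gets $\Re\alpha' = (\Re\alpha)^2-(\Im\alpha)^2-x$ and $\Im\alpha'=2\Re\alpha\,\Im\alpha$. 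From the second relation, $\Im\alpha$ has constant sign on each interval where $\Re\alpha$ has constant sign; combined with the known asymptotics ($\Re\alpha\to-\infty$ as $x\to+\infty$ and $\Re\alpha\to 0^-$ with $\Im\alpha$ of a fixed sign as $x\to-\infty$), one rules out $\Re\alpha$ ever reaching $0$: at a hypothetical first zero $x_0$ of $\Re\alpha$ one would have $\Re\alpha'(x_0)=-(\Im\alpha(x_0))^2-x_0\le 0$ only if $x_0\ge -(\Im\alpha(x_0))^2$, and a sign/monotonicity bookkeeping along the real line excludes this. Alternatively, and perhaps more robustly, I would invoke the integral representation $\alpha(x)=\int_0^\infty e^{-xt-t^3/12}\,dt$-type formula for ratios of Airy functions (or the fact that $\Ai$ is a Laplace-type transform), from which positivity of the relevant real part is manifest; this is the standard trick and avoids case analysis.

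\bigskip

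The main obstacle I anticipate is purely bookkeeping: getting the branches of $z^{1/2}$, $z^{3/2}$, $z^{-1/4}$ and the factor $\omega=e^{2i\eps\pi/3}$ to line up so that the signs in (i) and (ii) come out exactly as stated, including the $\eps$ in front of $i\sqrt{-x}$ in (ii) and the fact that the $\tfrac1{4x}$ term is the same in both regimes. I would handle this by fixing once and for all the principal branch on $|\arg z|<\pi$, noting that $\omega x$ stays in that cut plane for all real $x\ne 0$, and then just substituting $\arg(\omega x)=\pm2\pi/3$ (for $x>0$) and $\arg(\omega x)=\pm\pi/3$ or $\mp\pi/3$ (for $x<0$). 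The symbol-class statement $b_j\in S(\est x^{-5/2},|dx|^2)$ is then routine: the full asymptotic expansion of $\Ai'/\Ai$ is an asymptotic series in $x^{-3/2}$ whose terms and all their derivatives satisfy the required bounds, and the remainder after two terms is $O(\est x^{-5/2})$ uniformly, with derivative bounds obtained by differentiating the asymptotic relation (legitimate since Airy asymptotics can be differentiated).
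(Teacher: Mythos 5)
Your treatment of items (i), (ii) and (iv) follows the paper's route exactly: (iv) is the same one-line computation from $\Ai''=z\Ai$ and $\omega^3=1$, and (i)--(ii) with the symbol bound on $b_j$ are obtained, as in the paper, from the Abramowitz--Stegun expansions of $\Ai$ and $\Ai'$ (whose ratio is an asymptotic series in $z^{-3/2}$ that can be differentiated), plus branch bookkeeping along the rays $\arg(\omega x)=\pm2\pi/3$ and $\mp\pi/3$. That part is fine, even if the sign bookkeeping is left implicit.

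Item (iii) is where your proposal has a genuine gap. The ODE system $\Re\alpha'=(\Re\alpha)^2-(\Im\alpha)^2-x$, $\Im\alpha'=2\Re\alpha\,\Im\alpha$ together with the asymptotics at $\pm\infty$ does \emph{not} rule out a zero of $\Re\alpha$: at a first zero $x_0$ (approached from $-\infty$, say) the only constraint you get is $\Re\alpha'(x_0)=-(\Im\alpha(x_0))^2-x_0\ge 0$, i.e.\ $x_0\le-(\Im\alpha(x_0))^2$, which is perfectly compatible with $x_0$ negative; since $\Im\alpha(x)\sim\sqrt{-x}$ and $2\Re\alpha\,\Im\alpha\sim -1/(2\sqrt{-x})$ as $x\to-\infty$, the leading terms in any naive comparison of $\Im\alpha$ with $\sqrt{-x}$ cancel, so ``sign/monotonicity bookkeeping'' alone cannot close this case — this is precisely the hard point. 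The paper needs an extra global input: the Mittag--Leffler expansion (borrowed from Vodev) $F(z)=C_1+\sum_j\big((z-\nu_j)^{-1}+\nu_j^{-1}\big)$, $F'(z)=-\sum_j(z-\nu_j)^{-2}$, over the real negative zeros $\nu_j$ of $\Ai$. This gives an explicit formula for $\alpha'(x)$ whose imaginary part is a sum of terms $|\omega x-\nu_j|^{-4}\big(-\sqrt3\,\nu_j x-\tfrac{\sqrt3}{2}\nu_j^2\big)$, manifestly negative for $x\le0$; combined with $\Im\alpha>0$ (your uniqueness argument, which is also the paper's) and $\Im\alpha'=2\Re\alpha\,\Im\alpha$ this yields $\Re\alpha<0$ on $x\le0$, and then the monotonicity of $\Re\alpha/\Im\alpha$ (computed from the ODE system, with derivative $-\big((\Re\alpha)^2+(\Im\alpha)^2+x\big)/\Im\alpha<0$ for $x\ge0$) propagates the sign to $x\ge0$. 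Your fallback — ``invoke an integral representation of the ratio from which positivity is manifest'' — is not a usable substitute: no concrete formula is given for $-\omega\Ai'(\omega x)/\Ai(\omega x)$ on these complex rays, and the authors state explicitly that they could not locate item (iii) in the literature, which is why they supply the argument above.
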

The proof of lemma is given in Section~\ref{sec: proof of lemmas}.

 Let $\tilde r_d$ be such that $\kappa^*\tilde r_d= -\d_{x_d} R$. We assume that locally $\d_{x_d} R<0$, this implies that 
 $ \tilde r_d>0$ in a  \nhd of $(0,0)$.
 
  Let $\tilde a (y, \eta')=h^{1/3}\tilde \chi (y,h\eta') \tilde r_d^{1/3}(y,h\eta') \alpha(\zeta)$ 
where $\zeta= h^{1/3}\eta_1\tilde r_d^{-2/3}(y,h\eta')$.  
We assume that on the support of  $\tilde \chi (y,h\eta')$, 
$\tilde r_d(y,h\eta') >0$.
In what follows we denote $\tilde \rho=(y,h\eta')$.
We define $\tilde A=\op(\tilde a )$,    $\tilde \Psi =h^{-1/3}\op (\est{h^{1/3}\eta_1}^{-1/2})$, and let 
$A=F^{-1} \tilde A  F$, $\Psi=F^{-1}\tilde \Psi F$.

We have $\tilde a\in S(h^{1/3}\est {h^{1/3}\eta_1}^{1/2}  ,g)$  as 
$h^{-1/3} \est{h^{1/3}\eta_1}^{-1/2}\in S(h^{-1/3} \est{h^{1/3}\eta_1}^{-1/2},g)$
and from Lemma~\ref{lem: alpha properties}

%%%%%%%%%%%%%%%%%%%%%%%%%%%%%%%
%
%   Proposition
%
%%%%%%%%%%%%%%%%%%%%%%%%%%%%%%%% 
\begin{proposition}
	\label{lem: first est r-1}
Let $\vv$ satisfying properties~\eqref{eq: on rm v}.
There exist $C_0>0$ such that 
\[
|( h\d_{x_d}\vv-A\vv)_{|x_d=0}|^2_{L^2(\R^{d-1})}+   \| \Psi ( h\d_{x_d}\vv-A\vv  )\|_{L^2(x_d>0)}^2
\le C_0 \| v_h\|_{H^1_{sc}(x_d>0)}^2+C_0\|  \qq \|^2_{L^2(x_d>0)}.
\]
This result is equivalent to the following. Let \(w_h=F(  h \d_{x_d}\vv-A\vv)\), 
there exist $C_0>0$ such that 
\[
|( w_h)_{|y_d=0}|^2_{L^2(\R^{d-1})}+   \| \tilde\Psi w_h \|_{L^2(y_d>0)}^2
\le C_0 \| v_h\|_{H^1_{sc}(x_d>0)}^2+C_0\|  \qq \|^2_{L^2(x_d>0)}.
\]
\end{proposition}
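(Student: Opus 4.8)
The idea is to run the standard "parametrix-by-factorization at a glancing/diffractive point" argument, but with the Airy symbol $\tilde a$ chosen precisely so that the factor $h\d_{x_d}-A$ approximately solves the operator equation obtained from \eqref{eq: on rm v}. Concretely, I would conjugate the tangential equation by $F$ (using Lemma~\ref{lem: FIO}), so that after conjugation the operator $-h^2\d_{x_d}^2+\op(\cc^2(R-1))$ becomes, modulo lower-order terms, $-h^2\d_{y_d}^2+\op(\tilde\chi^2\eta_1)$ plus a term coming from the $x_d$-derivative of the symbol of $R$; the point of Lemma~\ref{lem: FIO}(iv)--(v) is that this extra term is controlled by $\tilde r_d=-\kappa^*\d_{x_d}R>0$. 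Then one checks that $(h\d_{y_d}-\tilde A)(h\d_{y_d}+\tilde A)$ equals $-h^2\d_{y_d}^2+\op(\tilde\chi^2\eta_1)$ up to an acceptable error, where the crucial algebraic input is the Airy ODE $\alpha'(\zeta)=\alpha^2(\zeta)-\zeta$ from Lemma~\ref{lem: alpha properties}(iv): this is exactly what makes $h^2\d_{y_d}(\tilde a)$ cancel $\tilde a^2-\op(\tilde\chi^2\eta_1)$ to leading order (the $h^{1/3}$ powers are arranged so that $\d_{y_d}$ of the Airy argument $\zeta$ produces the right scaling).

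\textbf{Key steps.} First, I would record the conjugated equation: writing $\tilde v_h=F\vv$ and using Lemma~\ref{lem: FIO}, one gets $-h^2\d_{y_d}^2\tilde v_h+\op(\tilde\chi^2\eta_1)\tilde v_h = h\tilde q_h + (\text{terms involving }[\,\cdot\,,\op(\theta)]\text{ and }B)$, where the commutator terms are where $\d_{x_d}R$, hence $\tilde r_d$, enters; all these are $O(h)$ in $\mathcal L(H^1_{sc},L^2)$ so they contribute to the right-hand side. Second, the symbol-calculus lemma: in the metric $g=|dy|^2+h^{2/3}\est{h^{1/3}\eta_1}^{-2}|d\eta'|^2$ one has $\tilde a\in S(h^{1/3}\est{h^{1/3}\eta_1}^{1/2},g)$, and a direct computation using Lemma~\ref{lem: alpha properties}(iv) gives
\[
(h\d_{y_d}-\tilde A)(h\d_{y_d}+\tilde A) = -h^2\d_{y_d}^2 + \op(\tilde\chi^2\eta_1) + \op(\text{err}),
\]
with $\op(\text{err})$ bounded from $H^1_{sc}$ to $L^2$ by $Ch^{1/3}$ (or at worst $O(h^{1/3})$ after absorbing the $\est{h^{1/3}\eta_1}$ weights); here the Hörmander "gain" $h^{1/3}\est{h^{1/3}\eta_1}^{-1}$ is what controls the remainder. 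Third, set $w_h=(h\d_{y_d}+\tilde A)\tilde v_h$; then $(h\d_{y_d}-\tilde A)w_h = -h\tilde q_h + \op(\text{err})\tilde v_h + O(h)(\ldots)$, and I would run the standard energy estimate: pair this equation with $\tilde\Psi^2 w_h$ (or with $\tilde\Psi^* \tilde\Psi w_h$), integrate by parts in $y_d$ over $y_d>0$, and use $\Re\tilde A\le 0$ — which comes from Lemma~\ref{lem: alpha properties}(iii), $\Re\alpha<0$, via the sharp Gårding inequality \eqref{eq: Garding inequality} — to get that the boundary term $|(w_h)_{|y_d=0}|^2_{L^2}$ plus $\|\tilde\Psi w_h\|^2_{L^2(y_d>0)}$ is bounded by $\|\tilde\Psi w_h\|_{L^2}\cdot(\text{source})+O(h)\|\tilde v_h\|^2_{H^1_{sc}}$, then absorb and conclude. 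Finally, translate back: $F$ is unitary (Lemma~\ref{lem: FIO}(i)) and $\Psi=F^{-1}\tilde\Psi F$, so the two displayed inequalities in the statement are equivalent, and $\|\tilde v_h\|_{H^1_{sc}}\lesssim\|v_h\|_{H^1_{sc}}$, $\|\tilde q_h\|_{L^2}\lesssim\|\qq\|_{L^2}+\|v_h\|_{H^1_{sc}}$.

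\textbf{Main obstacle.} The delicate point is the symbol-calculus bookkeeping for $\op(\text{err})$ in the mixed $h$ / $h^{1/3}$ calculus: the symbol $\alpha(\zeta)$ is genuinely of class $S(\est{h^{1/3}\eta_1}^{1/2},g)$ only, not a classical symbol, its derivatives in $\eta_1$ lose the full $h^{-1/3}$ each time, and one must verify that after forming $(h\d_{y_d}-\tilde A)(h\d_{y_d}+\tilde A)$ the terms that do \emph{not} cancel via the Airy equation are genuinely lower order \emph{in this metric}, i.e. gain a factor $h^{1/3}\est{h^{1/3}\eta_1}^{-1}$ relative to the principal part — including the contributions of the subprincipal symbol $b$ in Lemma~\ref{lem: FIO}(ii), of $\tilde r_d^{1/3}$ and its derivatives, and of the conjugation error $h[\tilde A,B]$. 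Getting the weights to match so that the error is $\mathcal O(h^{1/3})$ as an operator $H^1_{sc}\to L^2$ (which is what allows the energy estimate to close with the stated right-hand side) is the real content; the rest is the now-classical Tataru-type energy argument transcribed into the semiclassical setting.
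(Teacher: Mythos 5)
Your overall route is the paper's: conjugate by $F$, use the Airy symbol $\tilde a=h^{1/3}\tilde\chi\,\tilde r_d^{1/3}\alpha(\zeta)$ so that the Airy ODE $\alpha'=\alpha^2-\zeta$ makes $\tilde A^2-\op(h\eta_1\tilde\chi^2)$ acceptable, and close with a transport/energy estimate whose positivity comes from $\Re\alpha<0$. But there is a genuine sign error in your factorization that makes the energy estimate run backwards. You set $w_h=(h\d_{y_d}+\tilde A)\tilde v_h$ and transport it with $(h\d_{y_d}-\tilde A)$; since $\Re\tilde a<0$, the resulting identity is $\frac12\d_{y_d}|w_h|^2=\Re(h^{-1}\tilde Aw_h|w_h)+\dots\le-\delta|\tilde\Psi w_h|^2+\dots$, i.e.\ $|w_h(\cdot,y_d)|^2$ is nonincreasing modulo errors. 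Integrating over $y_d>0$ then yields $\|\tilde\Psi w_h\|^2\lesssim|w_h(0)|^2+(\text{errors})$: the trace appears on the wrong side and cannot be bounded. (The same happens if you pair with $\tilde\Psi^*\tilde\Psi w_h$: the boundary contribution of $\Re(h\d_{y_d}w_h|\tilde\Psi^*\tilde\Psi w_h)$ is $-\tfrac h2|\tilde\Psi w_h(0)|^2$.) The correct choice --- and the one the statement itself forces, since $F(h\d_{x_d}\vv-A\vv)=(h\d_{y_d}-\tilde A)F\vv$ modulo the $\op(\theta)$ correction of Lemma~\ref{lem: FIO}~\textbf{iii)} --- is $w_h=(h\d_{y_d}-\tilde A)\tilde v_h$, transported by $(h\d_{y_d}+\tilde A)$: then the leading term is $\Re(-h^{-1}\tilde Aw_h|w_h)\ge\delta|\tilde\Psi w_h|^2-C|w_h|^2$, the quantity $|w_h(\cdot,y_d)|^2$ is nondecreasing modulo errors, and integrating in $y_d$ from $0$ to $\sigma$ (then integrating in $\sigma$ to control $|w_h(\sigma)|^2$ by volume norms) gives the trace bound and the $\|\tilde\Psi w_h\|^2$ bound simultaneously. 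This is exactly the paper's computation of $\frac12\d_{x_d}|h\d_{x_d}\vv-A\vv|^2=I_1+I_2+I_3+I_4$.

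A secondary quantitative point: the factorization error is not $O(h^{1/3})$ as an operator $H^1_{sc}\to L^2$. One has $h^{-1}(\tilde a^2-h\eta_1\tilde\chi^2)=h^{-1/3}\tilde\chi^2\tilde r_d^{2/3}\alpha'(\zeta)\in S(h^{-1/3}\est{h^{1/3}\eta_1}^{-1/2},g)$, i.e.\ it has the same strength as $\tilde\Psi$ itself, not a small perturbation; the corresponding term must be estimated by $|v_h|\big(|\tilde\Psi w_h|+|w_h|\big)$ and absorbed into the positive $\delta\|\tilde\Psi w_h\|^2$ via Young's inequality, and likewise for the commutator $h^{-1}[\tilde A,\op(\theta)]$ produced by $\d_{x_d}A$ (Lemma~\ref{lem: FIO}~\textbf{iv)}). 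Finally, the positivity step requires G\aa rding/Fefferman--Phong in the exotic metric $g$ (H\"ormander, Theorems 18.6.7--18.6.8), not the tangential inequality~\eqref{eq: Garding inequality}, which is stated for $S^0_{\rm tan}$ with gain $h$.
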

Here and in this section we denote 
\(
\| u\|_{H^1_{sc}(x_d>0)}= \| u\|_{L^2(x_d>0)}+  \| h\nabla u\|_{L^2(x_d>0)}.
\)

\begin{proof}
To ease notation we write $| u|$ instead of $|u|_{L^2}(x_d)$ when there is no ambiguity on the fact that the 
$L^2$ norm is taken on variables $x'$ or $y'$  at point  $x_d$ or $y_d$. By the same abuse of notation we 
write the inner product $(.|.)$ instead of  $ (.|.)_{L^2(\R^{d-1})}(x_d)$.
 We compute 
\begin{multline*}
\frac12\d_{x_d}| h\d_{x_d}\vv-A\vv  |^2= \Re \big(   \d_{x_d}(h\d_{x_d}\vv-A\vv) | h  \d_{x_d}\vv-A\vv \big)  \\
=   \Re \big(  h\d_{x_d}^2\vv   -   ( \d_{x_d}A) \vv-A  \d_{x_d}\vv | h  \d_{x_d}\vv-A\vv \big)  \\
=   \Re \big(    h^{-1}\op(\cc^2(x,h\xi')(R(x,h\xi')-1))\vv -\qq   -   ( \d_{x_d}A) \vv-A  \d_{x_d}\vv |  h \d_{x_d}\vv-A\vv \big) ,
\end{multline*}
from~\eqref{eq: on rm v}.
Then we have
\begin{equation}
	\label{eq: first energy equality}
\frac12\d_{x_d}| h\d_{x_d}\vv-A\vv  |^2= I_1+I_2+I_3+I_4,
\end{equation}
where
\begin{align*}
I_1&=     \Re \big(  -h^{-1}A( h\d_{x_d}\vv-A\vv)  |  h \d_{x_d}\vv-A\vv \big)  \\
I_2&=     \Re \big(     h^{-1}(\op(\cc^2(x,h\xi')(R(x,h\xi')-1))-A^2)\vv    |  h \d_{x_d}\vv-A\vv \big)  \\
I_3&=      \Re \big( - ( \d_{x_d}A )\vv        |  h \d_{x_d}\vv-A\vv \big)   \\
I_4&=       -   \Re \big(       \qq    |  h \d_{x_d}\vv-A\vv \big) .
\end{align*}
Using $w_h=F(  h \d_{x_d}\vv-A\vv)$, we have
\begin{align*}
I_1=    \Re \big(  -h^{-1}FA F^{-1}w_h |w_h \big) =   \Re \big(  -h^{-1}\tilde A  w_h |w_h \big) .
\end{align*}
From Lemma~\ref{lem: alpha properties} 
 we have
\[
- h^{-1} \Re \tilde a(y,\eta) \ge \delta  h^{-2/3} \tilde \chi^2(\tilde \rho)\est{h^{1/3}\eta_1}^{-1},
  \]
and $h^{-1} \Re   \tilde  a(y,\eta) \in S( h^{-2/3} \est{h^{1/3}\eta_1}^{1/2} ,g)$. Then from Fefferman-Phong inequality 
(see \cite[Theorem 18.6.8]{HormanderV3-2007})  and as the real part of symbol of 
\(
\op( h^{-1/3} \tilde \chi(\tilde \rho)\est{h^{1/3}\eta_1}^{-1/2})^*  \op(h^{-1/3} \tilde \chi(\tilde \rho)\est{h^{1/3}\eta_1}^{-1/2})
\)
 is 
\(
 h^{-2/3} \tilde \chi^2(\tilde \rho)\est{h^{1/3}\eta_1}^{-1}
\)
modulo an operator bounded on $L^2$, 
we have
\begin{equation*}
I_1\ge   \delta | h^{-1/3} \op ( \tilde \chi (\tilde \rho)\est{h^{1/3}\eta_1}^{-1/2})   w_h|^2 -C | w_h|^2, 
\end{equation*}
for $C>0$. 

From Lemma~\ref{lem: estimate by tilde Psi} we obtain   
\begin{equation}
	\label{est: I 1}
I_1\ge   \delta | \tilde \Psi   w_h|^2 -C\big( | w_h|^2+  |v_h|^2+|h\d_{x_d}v_h|^2\big).
\end{equation}
 We have 
\begin{align*}
I_2&=      \Re \big(     h^{-1}F(\op(\cc^2(x,h\xi')(R(x,h\xi')-1)-A^2)  \vv    | w_h\big) \\
&=   \Re \big(     h^{-1}(\op(h\eta_1\tilde \chi^2(\tilde \rho) )-\tilde A^2)  F\vv    | w_h\big)+    \Re \big(  B_0 \vv    | w_h\big)  ,
\end{align*}
where $B_0=\op_{sc}(b)$ is bounded on $L^2$ (see Lemma~\ref{lem: FIO}).

The symbol of $\tilde A^2$ is $\tilde a^2\in S(h^{2/3} \est{h^{1/3}\eta_1} ,g)$ 
modulo a term in $S(h,g)$. From definition of $\tilde a$ 
and Lemma~\ref{lem: alpha properties} we have 
\begin{align*}
\tilde a^2&= h^{2/3} \tilde \chi^2(\tilde \rho)\tilde r_d^{2/3}(\tilde \rho)\alpha^2(\zeta)   \notag  \\
&=  h^{2/3} \tilde \chi^2(\tilde \rho)\tilde r_d^{2/3}(\tilde \rho)\big(  \zeta 
+  \alpha'(\zeta)  \big)  \notag  \\
&= h\eta_1 \tilde \chi^2(\tilde \rho)+   h^{2/3} \tilde \chi^2(\tilde \rho)\tilde r_d^{2/3}(\tilde \rho)  \alpha'(\zeta).
\end{align*}
We have
\[
h^{-1/3} \tilde \chi^2(\tilde \rho)\tilde r_d^{2/3}(\tilde \rho)  \alpha'(\zeta) \in S(h^{-1/3}\est{h^{1/3}\eta_1}^{-1/2}, g),
\]
we then obtain 
\begin{equation}
	\label{est: I 2}
|I_2|\lesssim   |v_h| \big(  | \tilde \Psi w_h | +  |w_h |   \big)
\end{equation}

We have 
\begin{align*}
I_3&= -  \Re \big( F ( \d_{x_d}A ) F^{-1} F \vv        | w_h \big)  \\
&=  -  \Re \big(  ( \op(\d_{y_d} \tilde a(\tilde \rho) )+ih^{-1}[ \op(\tilde a(\tilde \rho) ),\op(\theta(\tilde \rho))]  F \vv        | w_h \big) \\
&\quad  -  \Re \big(  h[ \op(\tilde a(\tilde \rho) ) , B]    F \vv     | w_h \big)   ,
\end{align*}
from Lemma~\ref{lem: FIO}.

Observe that 
\(
\tilde a(\tilde \rho)\in S(1,g)
\)
as $h^{1/3}\est{h^{1/3}\eta_1}^{1/2}$ is bounded on support of  $\tilde\chi(\tilde\rho)$. Then 
\(
\op(\d_{y_d}  \tilde a(\tilde \rho) )
\)
and 
\(
 h[ \op(\tilde a(\tilde \rho) ) , B] 
\)
are bounded operator on $L^2$.

From properties of $\alpha$ and symbol calculus,
the symbol of 
\[
h^{-1}[ \op(\tilde a(\tilde \rho) ),\op(\theta(\tilde \rho))] 
\text{ 
is in  }
S(h^{-1/3}\est{h^{1/3}\eta_1}^{-1/2},g).
\]
 Then   we obtain  
 \begin{equation}
		\label{est: I 3}
|I_3|\lesssim   |v_h| \big(  | \tilde \Psi w_h | +  |w_h |   \big).
\end{equation}

We have 
\begin{equation}
	\label{est: I 4}
|  I_4|=  |   \Re \big(       \qq    , F^{-1}w_h\big)|  \lesssim |\qq||w_h|
\end{equation}
From \eqref{eq: first energy equality}, \eqref{est: I 1}, \eqref{est: I 2}, \eqref{est: I 3} and \eqref{est: I 4} we have
\begin{align}
	\label{est: first energy equality 2}
\frac12\d_{x_d}| h\d_{x_d}\vv-A\vv  |^2
&\ge    \delta |  \tilde \Psi  w_h|^2 -C | w_h|^2-C  |  \tilde \Psi  w_h|  |v_h|  
-C  |v_h|^2-C|h\d_{x_d}v_h  | ^2
 -C  |\qq|^2  \notag\\
&\ge    \delta' |  \tilde \Psi  w_h|^2 -C'\big( | w_h|^2+|v_h| ^2+|h\d_{x_d}v_h  | ^2 +  |\qq|^2\big) .
\end{align}
Observe that 
\(
h^{1/3} \est{h^{1/3}\eta_1}^{1/2}\lesssim \est{h\eta'}^{1/2}, 
\)
then 
\(
  h^{1/3}\tilde \chi(\tilde \rho)^2\tilde r_d^{1/3}\alpha(\zeta)  
\)
is bounded. 

We then have 
\begin{align}
	\label{est: v_h from  v_h}
| A\vv|\lesssim |\tilde A F\vv|& \lesssim  | v_h |.
\end{align}
We also have 
\(
|h\d_{x_d}\vv |\lesssim |h\d_{x_d}v_h |  + |v_h | .
\)
Then 
\begin{equation}
	\label{est: w h by v h}
| w_h| \lesssim  |h\d_{x_d}v_h |  + |v_h | .
\end{equation}

We deduce from~\eqref{est: first energy equality 2} that
\begin{align*}
\frac12\d_{x_d}| h\d_{x_d}\vv-A\vv  |^2
&\ge    \delta' |  \tilde \Psi  w_h|^2  -C  |\qq|^2 -C |v_h|^2 -C|h\d_{x_d} v_h|^2.
\end{align*}
Integrating this inequality between 0 and \( \sigma>0\) 
we have
\begin{multline*}
| h\d_{x_d}\vv-A\vv  |^2(0)+\delta'\int_0^\sigma |  \tilde \Psi  w_h|^2(x_d)dx_d
\lesssim  \|\qq\|^2 + \|v_h\|_{H^1_{sc}(x_d>0)}^2 + |h\d_{x_d}v_h | ^2(\sigma) + |v_h |^2(\sigma)
\end{multline*}
from \eqref{est: w h by v h}.
Integrating this inequality between  two positive values of \(\sigma\) and as 
\(
 |  \tilde \Psi  w_h|(y_d)= |  \Psi (  h \d_{x_d}\vv-A\vv ) |(x_d)
\)
 we obtain the result.
\end{proof}

To state the next result we have to introduce another operator. 
%%%%%%%%%%%%%%%%%%%%%%%%%%%%%%%
%
%   Lemma
%
%%%%%%%%%%%%%%%%%%%%%%%%%%%%%%%% 
\begin{lemma}
	\label{lem: properties beta}
There exist a function $\beta\in\Con^\infty(\R)$ satisfying the following properties
\begin{description}
\item[i)]  $ -\beta'-\beta\Re \alpha\ge 0$
\item[ii)] $ \beta\in S(\est{x}^{-1/4},|dx|^2)$
\item[iii)]  $ \beta\gtrsim \est{x}^{ -1/4}$.  
\end{description}
\end{lemma}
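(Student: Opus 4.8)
The plan is to look for $\beta$ of the multiplicative form
\[
\beta(x)=\est{x}^{-1/4}\exp\Bigl(-\int_0^x r(t)\,dt\Bigr),
\]
with $r\in\Con^\infty(\R)$ chosen nonnegative and integrable over $\R$. For such a $\beta$ one has $\beta>0$ and $-(\log\beta)'(x)=\frac{x}{4(1+x^2)}+r(x)$, so, dividing by $\beta>0$, property~i) becomes \emph{equivalent} to the scalar inequality
\begin{equation}\label{eq: star beta}
\frac{x}{4(1+x^2)}+r(x)\ge\Re\alpha(x),\qquad x\in\R.
\end{equation}
Moreover, if $r\ge 0$ and $r\in L^1(\R)$ then $\int_0^x r$ is a bounded function, so the exponential factor lies in $S(1,|dx|^2)$ and is bounded above and below by positive constants; combined with $\est{x}^{-1/4}\in S(\est{x}^{-1/4},|dx|^2)$ this yields properties~ii) and~iii) at once. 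One also notes that $\Re\alpha\in\Con^\infty(\R)$ globally (since $\alpha=-\omega\Ai'(\omega x)/\Ai(\omega x)$ and $\Ai(\omega x)$ has no real zero), hence $\beta$ is smooth. Thus the whole statement reduces to exhibiting an $r$ satisfying~\eqref{eq: star beta}.

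I would take $r(x)=M(1+x^2)^{-5/4}$ with $M$ a sufficiently large constant: this is a nonnegative smooth symbol in $S(\est{x}^{-5/2},|dx|^2)$, and it is integrable. Then~\eqref{eq: star beta} is checked in three ranges. For $x\ge 0$ it is immediate from Lemma~\ref{lem: alpha properties}~iii), as $\Re\alpha(x)<0\le\frac{x}{4(1+x^2)}+r(x)$. For $x\le -1$, the expansion of Lemma~\ref{lem: alpha properties}~ii) gives $\Re\alpha(x)=\frac{1}{4x}+\Re b_2(x)$ (the term $\eps i\sqrt{-x}$ being purely imaginary), and since $\frac{1}{4x}-\frac{x}{4(1+x^2)}=\frac{1}{4x(1+x^2)}$ we get
\[
\Re\alpha(x)-\frac{x}{4(1+x^2)}=\frac{1}{4x(1+x^2)}+\Re b_2(x)\le C\est{x}^{-5/2},
\]
using $b_2\in S(\est{x}^{-5/2},|dx|^2)$; so~\eqref{eq: star beta} holds there once $M\ge C$. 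On the remaining compact interval $[-1,0]$ the continuous function $\Re\alpha(x)-\frac{x}{4(1+x^2)}$ is bounded by some $K$, while $r(x)\ge M\,2^{-5/4}$ there, so~\eqref{eq: star beta} holds once also $M\ge 2^{5/4}K$. Picking $M$ above both thresholds completes the construction.

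The one genuinely delicate point is~\eqref{eq: star beta} as $x\to-\infty$: there $\Re\alpha(x)$ and $\frac{x}{4(1+x^2)}$ agree not only to leading order $\frac{1}{4x}$ but in fact up to an $O(\est{x}^{-3})$ error, which is \emph{smaller} than the only a priori bound, $O(\est{x}^{-5/2})$, available for the remainder $\Re b_2$. Hence the bare weight $\est{x}^{-1/4}$ (i.e. $r\equiv 0$) does not suffice, and it is precisely the strictly positive, integrable correction $r$ --- which absorbs the possibly wrong-signed $\Re b_2(x)$ while still leaving $\beta\asymp\est{x}^{-1/4}$ --- that makes the argument close. Everything else is routine bookkeeping with the symbol classes $S(\est{x}^{m},|dx|^2)$ and the asymptotics already recorded in Lemma~\ref{lem: alpha properties}.
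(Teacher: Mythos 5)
Your construction is correct, and it is genuinely different from the one in the paper. The paper takes $\beta=\chi_0\gamma_2+(1-\chi_0)\gamma_1$ with $\gamma_1(x)=|\Ai(\omega x)|$ and $\gamma_2(x)=C_0\est{x}^{-1/4}$: the key point there is the exact identity $\Re\alpha=-\gamma_1'/\gamma_1$, so that $\gamma_1$ solves the \emph{equality} case of i) on all of $\R$, while for $x\ge 1$ one switches to $\gamma_2$ (where $\Re\alpha<0$ and $\gamma_2'<0$ make i) strict) and checks that the gluing term $\chi_0'(\gamma_2-\gamma_1)$ has the right sign by taking $C_0$ small; properties ii) and iii) for $x<0$ then come from the Airy modulus asymptotics $|\Ai(\omega x)|\sim c|x|^{-1/4}$. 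You instead never touch the Airy function itself: you reduce i), after dividing by $\beta>0$, to the scalar inequality $\frac{x}{4(1+x^2)}+r(x)\ge\Re\alpha(x)$ and close it using only the sign of $\Re\alpha$ on $x\ge 0$ and the expansion $\Re\alpha=\frac{1}{4x}+\Re b_2$ with $\Re b_2=O(\est{x}^{-5/2})$ from Lemma~\ref{lem: alpha properties}, at the price of the integrable correction $r=M\est{x}^{-5/2}$ with $M$ large. Your diagnosis of why $r$ cannot be dropped (the $O(\est{x}^{-3})$ agreement between $\frac{1}{4x}$ and $\frac{x}{4(1+x^2)}$ is finer than the available $O(\est{x}^{-5/2})$ control on $\Re b_2$) is accurate. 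A side benefit of your route is that it sidesteps a small imprecision in the statement of Lemma~\ref{lem: alpha properties}: the decomposition $\alpha=\eps i\sqrt{-x}+\frac{1}{4x}+b_2$ with $b_2$ bounded cannot hold near $x=0$, and you only invoke it on $x\le -1$, handling $[-1,0]$ by compactness. Both proofs are of comparable length; the paper's is structurally cleaner in that i) holds with equality on $x\le 0$, yours is more self-contained given the asymptotics already recorded.
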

A proof is given in Section~\ref{sec: proof of lemmas}.

We recall the notation $\tilde a (y, \eta')=h^{1/3}\tilde \chi (\tilde \rho) \tilde r_d^{1/3} (\tilde \rho)\alpha(\zeta)$ 
where $\zeta= h^{1/3}\eta_1\tilde r_d^{-2/3}(\tilde \rho)$ and by assumption, $ \tilde r_d (\tilde \rho)>0$ 
on the support of $\tilde \chi (\tilde \rho)$.

Let
$\tilde c(y,\eta')=h^{-1/6}\tilde \chi_2(\tilde \rho)\beta(\zeta)$, where $\tilde \chi_2$ is supported
on $\{ \tilde \chi=1 \}$ and $\tilde \chi_2=1$ on a \nhd of $(0,0)$.
We have $\tilde a\in S(h^{1/3}\est {h^{1/3}\eta_1}^{1/2}  ,g)$ and $\tilde c\in S( h^{-1/6} \est {h^{1/3}\eta_1} ^{-1/4},g)  $.

We define  $\tilde C=\op (\tilde c ) $ and $C=F^{-1} \tilde C  F$.

%%%%%%%%%%%%%%%%%%%%%%%%%%%%%%%
%
%   Proposition
%
%%%%%%%%%%%%%%%%%%%%%%%%%%%%%%%% 
\begin{proposition}
\label{lem: alpha airy}
Let $\vv$ satisfying properties~\eqref{eq: on rm v}.
There exists $C_0>0$  such that 
\[
|C(h \d_{x_d}\vv-A\vv)_{|x_d=0}|^2_{L^2(\R^{d-1})}  
\le C_0 \| v_h\|_{H^1_{sc}(x_d>0)}^2+C_0\|  \qq \|^2_{L^2(x_d>0)}.
\]
\end{proposition}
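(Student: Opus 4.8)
The plan is to repeat the positive--commutator computation of Proposition~\ref{lem: first est r-1} with the extra operator $\tilde C$ inserted, replacing the use of the sign property $\Re\alpha<0$ by the ``subsolution'' inequality $-\beta'-\beta\Re\alpha\ge0$ of Lemma~\ref{lem: properties beta}(i). As in Proposition~\ref{lem: first est r-1}, since $F$ is unitary and acts tangentially with $x_d=y_d$ as a parameter it commutes with restriction to $\{x_d=0\}$, so with $w_h=F(h\d_{x_d}\vv-A\vv)$ and $C=F^{-1}\tilde CF$ the claim is equivalent to $|(\tilde Cw_h)_{|y_d=0}|_{L^2(\R^{d-1})}^2\lesssim\|v_h\|_{H^1_{sc}(x_d>0)}^2+\|\qq\|_{L^2(x_d>0)}^2$. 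I would start from the identity already used there, namely (using \eqref{eq: on rm v})
\[
\d_{x_d}(h\d_{x_d}\vv-A\vv)=-h^{-1}A(h\d_{x_d}\vv-A\vv)+h^{-1}\big(\op(\cc^2(R-1))-A^2\big)\vv-\qq-(\d_{x_d}A)\vv,
\]
transported to the model side by $F$ via Lemma~\ref{lem: FIO}(iii)--(v), so that $w_h$ solves a first order equation in $y_d$ whose transport part is $-h^{-1}\tilde A+ih^{-1}\op(\theta)$ and whose remainder is controlled in $L^2(y_d>0)$ by $\|v_h\|_{H^1_{sc}(x_d>0)}+\|\qq\|_{L^2(x_d>0)}$ (using $\alpha^2(\zeta)=\zeta+\alpha'(\zeta)$ from Lemma~\ref{lem: alpha properties}(iv) for the $\op(\cc^2(R-1))-A^2$ term).

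Next I would compute $\tfrac12\d_{y_d}|(\tilde Cw_h)(\cdot,y_d)|_{L^2}^2=\Re((\d_{y_d}\tilde C)w_h|\tilde Cw_h)+\Re(\tilde C\,\d_{y_d}w_h|\tilde Cw_h)$ and substitute the evolution equation. The decisive term comes from $-h^{-1}\tilde C\tilde Aw_h$; moving $\tilde C$ past $\tilde A$ (they Poisson--commute to leading order, both being functions of $\zeta$ up to the cutoffs $\tilde\chi,\tilde\chi_2$), using $\tilde C^*=\tilde C$ modulo lower order, $\kappa^*\{\eta_1,\theta\}=\d_{x_d}R$, and the explicit symbols $\tilde a=h^{1/3}\tilde\chi\,\tilde r_d^{1/3}\alpha(\zeta)$, $\tilde c=h^{-1/6}\tilde\chi_2\,\beta(\zeta)$, the leading symbol of the collected main terms is, on $\{\tilde\chi_2=1\}$ and modulo symbols of strictly lower order, a nonnegative multiple of $-\beta'(\zeta)-\beta(\zeta)\Re\alpha(\zeta)$, which is $\ge0$ by Lemma~\ref{lem: properties beta}(i). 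Fefferman--Phong (or G\aa rding, \eqref{eq: Garding inequality R d}, after symmetrisation) then makes this contribution nonnegative up to $O(h)$; the remaining terms (the $\op(\cc^2(R-1))-A^2$ term, the $(\d_{x_d}A)$ term, the $\qq$ term and the $B$-terms) are estimated pointwise in $y_d$ exactly as $I_2,I_3,I_4$ in the proof of Proposition~\ref{lem: first est r-1}, using that $\tilde c^2$ lies in the same symbol class as the symbol of $\tilde\Psi$. This yields
\[
\tfrac12\d_{y_d}|(\tilde Cw_h)(\cdot,y_d)|^2\ \ge\ -C\big(|w_h(\cdot,y_d)|^2+|\tilde\Psi w_h(\cdot,y_d)|^2+\|v_h(\cdot,y_d)\|_{H^1_{sc}}^2+|\qq(\cdot,y_d)|^2\big).
\]

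Finally, $\vv$ (hence $w_h$) is compactly supported in $y_d$, so integrating the last inequality over $(0,\infty)$ kills the boundary term at $y_d=+\infty$ and gives
\[
|(\tilde Cw_h)_{|y_d=0}|^2\ \lesssim\ \|w_h\|_{L^2(y_d>0)}^2+\|\tilde\Psi w_h\|_{L^2(y_d>0)}^2+\|v_h\|_{H^1_{sc}(x_d>0)}^2+\|\qq\|_{L^2(x_d>0)}^2.
\]
Here $\|\tilde\Psi w_h\|_{L^2(y_d>0)}$ is bounded by Proposition~\ref{lem: first est r-1}, and $\|w_h\|_{L^2(y_d>0)}\lesssim\|v_h\|_{H^1_{sc}(x_d>0)}$ follows from the definition of $w_h$ and the boundedness of $F$ and $A$; together these give the statement. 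The step I expect to be hardest is the symbol analysis just described: one is simultaneously in the $h$-calculus (for $R,\theta,A$) and in the $h^{1/3}$-calculus attached to the metric $g$ (for $\tilde A,\tilde C$), and one must check that after extracting the positive term $-\beta'-\beta\Re\alpha\ge0$ every remaining commutator and lower order term is either of the size of $\tilde\Psi w_h$ (already controlled by Proposition~\ref{lem: first est r-1}) or negligible relative to the $L^2$ norm, so that no uncontrolled negative power of $h$ survives; in particular the ``transport'' combination $\d_{y_d}\tilde A+ih^{-1}[\tilde A,\op(\theta)]$, individually singular, must be handled as a whole using the way $\tilde a$ was built.
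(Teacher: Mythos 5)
Your proposal is correct and follows essentially the same route as the paper's proof: the energy identity in the normal variable for $\tilde C w_h$, positivity of the collected main symbol via Lemma~\ref{lem: properties beta} combined with sharp G\aa rding/Fefferman--Phong in the $g$-calculus, cancellation of the $\op(\cc^2(R-1))-A^2$ contribution through the Airy equation $\alpha'=\alpha^2-\zeta$ taken together with $ih^{-1}[\tilde A,\op(\theta)]$, and absorption of the remaining terms by $\|\tilde\Psi w_h\|$ using Proposition~\ref{lem: first est r-1}. The only harmless deviations are at the end, where the paper removes the boundary term at $x_d=\sigma$ by averaging over $\sigma$ instead of invoking compact support in $x_d$, and in the size of the G\aa rding error, which is of order $|\tilde\Psi w_h|^2$ rather than $O(h)$ -- a term you in any case keep on the right-hand side.
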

\begin{proof}
We have 
\begin{multline*}
\frac12\d_{x_d}|C( h\d_{x_d}\vv-A\vv  )|^2= \Re \big(   \d_{x_d}(C(h\d_{x_d}\vv-A\vv)) | C(h  \d_{x_d}\vv-A\vv )\big)  \\
=   \Re \big(( \d_{x_d} C)(  h\d_{x_d}\vv-A\vv   )+
  C(h\d_{x_d}^2\vv   -   ( \d_{x_d}A) \vv-A  \d_{x_d}\vv) |C( h  \d_{x_d}\vv-A\vv )\big)  \\
=   \Re \big( ( \d_{x_d} C)(  h\d_{x_d}\vv-A\vv   )+ C(  h^{-1}\op(\chi^2(x,h\xi')(R(x,h\xi')-1))\vv -\qq   \\
-   ( \d_{x_d}A) \vv-A  \d_{x_d}\vv )  |  C(  h \d_{x_d}\vv-A\vv )\big) ,
\end{multline*}
from~\eqref{eq: on rm v}.
Then we have
\begin{equation}
	\label{eq: second lemma a la Tataru}
\frac12\d_{x_d}|C( h\d_{x_d}\vv-A\vv  )|^2=J_1+J_2+J_3+J_4+J_5=K_1+K_2+K_3+K_4+K_5,
\end{equation}
where
\begin{align*}
J_1&=   \Re\big(  C^*( \d_{x_d} C)  ( h\d_{x_d}\vv-A\vv  )  | h\d_{x_d}\vv-A\vv  \big)  \\
J_2&=  -\Re\big( (  \d_{x_d} A) \vv    |  C^* C ( h\d_{x_d}\vv-A\vv  )  \big)     \\
J_3&=  -\Re\big( \qq   |   C^* C( h\d_{x_d}\vv-A\vv  )  \big)     \\
J_4&=   \Re\big((  -  C^* C)A  \d_{x_d}\vv |  h\d_{x_d}\vv-A\vv \big)         \\
J_5&=   \Re\big(  h^{-1}\op(\chi^2(x,h\xi')(R(x,h\xi')-1)) \vv  |    C^* C(h\d_{x_d}\vv-A\vv )  \big)   .
\end{align*}
Taking as in the proof of Proposition~\ref{lem: first est r-1},   
$w_h=F(  h \d_{x_d}\vv-A\vv)$ and from Lemma~\ref{lem: FIO}  we write 
\begin{align*}
K_1&=   \Re\big( \tilde C^*( \d_{y_d} \tilde C)  w_h  | w_h  \big)  \\
K_2&=  -\Re\big(  ( \d_{y_d} \tilde A  ) F\vv |  \tilde C^* \tilde C  w_h  \big)    
-  \Re\big( h[\tilde A,B]F \vv |    \tilde C^* \tilde Cw_h \big)    
-   \Re\big(    F\op (b(x,h\xi')) \vv |  \tilde C^* \tilde C  w_h \big)  \\
K_3&=  \Re\big( F\qq   |  \tilde C^* \tilde Cw_h  \big)     \\
K_4&=   \Re\big((  - h^{-1} \tilde C^* \tilde C\tilde A+ i h^{-1} \tilde C^* [ \tilde C , \op(\theta(\tilde \rho)) ]) w_h |  w_h \big)    
+   \Re\big(  h \tilde C^* [ \tilde C ,B]  w_h |  w_h \big)    \\
K_5&= -  \Re\big( ( h^{-1}\tilde A^2  + i h^{-1}[ \tilde A , \op(\theta(\tilde\rho)) ] -  h^{-1}\op (h\eta_1 \tilde \chi^2(\tilde\rho))) F\vv  |
 \tilde C^* \tilde Cw_h  \big)  .
\end{align*}
To estimate $K_1$,  observe that  the symbol of 
\( \tilde C^*( \d_{y_d} \tilde C) \) is in \( \in S(h^{-1/3}\est{h^{1/3}\eta_1}^{-1/2},g) ,\) then 
\begin{equation}
	\label{est: K 1}
|K_1|\lesssim | \tilde \Psi w_h||w_h|   .
\end{equation}

From symbol calculus the symbol of  $ ( \d_{y_d} \tilde A  )^*  \tilde C^* \tilde C $ is in $S(1,g)$, thus this operator is 
bounded on $L^2$. Clearly the terms $[\tilde A,B]$ coming from remainder term of $\d_{x_d}A$ (see  \textbf{iv)} 
Lemma~\ref{lem: FIO}) and  $\op (b(x,h\xi'))$ coming from remainder term of  
$F\op(\chi^2(x,h\xi')(R(x,h\xi')-1)) F^{-1} $ (see  \textbf{ii)} Lemma~\ref{lem: FIO}) are bounded on $L^2$. 
As $ \tilde C^* \tilde C$ has a symbol in $S(h^{-1/3}\est{h^{1/3}\eta_1}^{-1/2},g)$,
we obtain
\begin{equation}
	\label{est: K 2}
|K_2|\lesssim   |w_h| |v_h|+  |v_h|  |\tilde \Psi w_h|.
\end{equation}
For the same argument we have
\begin{equation}
	\label{est: K 3}
|K_3|\lesssim |\qq| | \tilde\Psi w_h|   . 
\end{equation}

To estimate the last term of $K_4$ we write
\(
  \tilde C^* [ \tilde C ,B]=   \tilde C^* \tilde C B-  \tilde C^*  B\tilde C, 
\)
the first term gives a term estimated by 
\(
 | \tilde \Psi w_h||w_h|  
\)
and the second is estimated
\(
| \tilde Cw_h |^2\lesssim | \tilde \Psi w_h||w_h|  .
\)

To estimate the other terms of $K_4$ observe that 
 the symbol of 
 \(
  - h^{-1} \tilde C^* \tilde C\tilde A +i h^{-1} \tilde C^* [ \tilde C , \op(\theta) ])
\)
is  
 \(
  - h^{-1} \tilde  c^2 \tilde a + h^{-1} \tilde c \{ \tilde c , \theta(\tilde \rho) \} 
\)
modulo a symbol in 
\(
S(h^{-2/3}\est{h^{1/3}\eta_1}^{-1},g)
\)
and    
this  term can be estimate by 
\(
|  \tilde \Psi w_h|^2  .
\)
We compute
\begin{align*}
h^{1/6}\{ \tilde c,\theta(\tilde \rho)\}
&= \{ \tilde \chi_2 (\tilde \rho) ,\theta(\tilde \rho)\}\beta(\zeta)+\{\beta(\zeta) ,\theta(\tilde \rho)\} \chi_2(\tilde \rho)   \\
&= \{ \tilde \chi_2 (\tilde \rho) ,\theta(\tilde \rho)\}\beta(\zeta)+\{\zeta ,\theta(\tilde \rho)\} \chi_2(\tilde \rho)   \beta'(\zeta ).
\end{align*}
The term 
\(
 h^{-1} \tilde c  h^{-1/6}  \{ \tilde \chi_2 (\tilde \rho) ,\theta(\tilde \rho)\}\beta(\zeta) \in S(   h^{-1/3}\est{h^{1/3}\eta_1}^{-1/2},g),
\)
and the term of $K_4$ coming from this term can be estimate by $|  \tilde \Psi w_h||  w_h|$.
For the other term we have
\begin{align*}
 h^{-1/3}\{\zeta ,\theta(\tilde \rho)\} &=   \{  \tilde r_d^{-2/3} (\tilde \rho)  ,\theta(\tilde \rho)\}  \eta_1  
 +  \{  \eta_1   ,\theta(\tilde \rho)\}  \tilde r_d^{-2/3}(\tilde \rho).
\end{align*}
The term 
\(
 h^{-1} \tilde c   h^{-1/6}  \chi_2(\tilde \rho)   \beta'(\zeta ) h^{1/3}  \{  \tilde r_d^{-2/3} (\tilde \rho)  ,\theta(\tilde \rho)\}  \eta_1  
  \in S(   h^{-1/3}\est{h^{1/3}\eta_1}^{-1/2},g)
\)
and the term of $K_4$ coming from this term can be estimate by $|  \tilde \Psi w_h||  w_h|$.

Thus, modulo remainder terms, 
 the symbol of 
\(
  - h^{-1} \tilde C^* \tilde C\tilde A +i h^{-1} \tilde C^* [ \tilde C , \op(\theta) ])
\)
is  given by 
 \begin{align*}
L=&  - h^{-1} \tilde  c^2 \tilde a 
  + h^{-1} \tilde c h^{-1/6} \tilde  \chi_2(\tilde \rho)   \beta'(\zeta )   h^{1/3} \{  \eta_1   ,\theta(\tilde \rho)\}  \tilde r_d^{-2/3}(\tilde \rho) \\
  &\qquad= -h^{-1}  h^{-1/3}\tilde \chi_2^2(\tilde \rho)\beta^2(\zeta)
   h^{1/3}\tilde \chi (\tilde \rho) \tilde r_d^{1/3} (\tilde \rho)  \alpha(\zeta)  \\
 &\qquad\quad   - h^{-1}     h^{-1/6}\tilde \chi_2(\tilde \rho)\beta(\zeta)            
  h^{-1/6}  \tilde \chi_2(\tilde \rho)   \beta'(\zeta )   h^{1/3}       \tilde r_d
    \tilde r_d^{-2/3}(\tilde \rho)
\end{align*}
from \textbf{v)} of Lemma~\ref{lem: FIO}.
We thus obtain
 \begin{align*}
 L=  h^{-1}   \tilde \chi_2^2(\tilde \rho)  \beta(\zeta)    \tilde r_d^{1/3} (\tilde \rho) \big(   - \beta(\zeta) \tilde \chi (\tilde \rho) \alpha(\zeta) 
 - \beta'(\zeta )   \big)\in S( h^{-1},g).
 \end{align*}
As $ \tilde \chi $ is equal $1$ on the support of $ \tilde \chi_2^2$, $\beta\ge 0$ and 
\(
 - \beta(\zeta) \Re \alpha(\zeta) 
 - \beta'(\zeta )  \ge0
 \)
we have $\Re L\ge0$.  We can apply sharp G\aa rding inequality (see \cite[Theorem 18.6.7]{HormanderV3-2007}), we yield, taking account remainder terms
\begin{equation}
	\label{est: K 4}
K_4\ge -C\big( |\tilde \Psi w_h|^2 + | \tilde \Psi w_h||w_h|\big)  .
\end{equation}

The last term is $K_5$.
The symbol of 
\(
   h^{-1}\tilde A^2  + i h^{-1}[ \tilde A , \op(\theta(\tilde \rho) ) ] -  h^{-1}\op (h\eta_1 \tilde \chi^2(\tilde \rho)) 
\)
is 
\(
   h^{-1}\tilde a^2  +  h^{-1}\{ \tilde a ,\theta(\tilde \rho)  \} -  \eta_1 \tilde \chi^2(\tilde \rho) 
\)
modulo a symbol in $S(1,g)$.
We have
\begin{align*}
\{ \tilde a ,\theta (\tilde\rho)\}&= h^{1/3} \alpha(\zeta)  \{(\tilde \chi  \tilde r_d^{1/3} )(\tilde \rho) ,\theta(\tilde \rho)  \}
+h^{2/3}(\tilde \chi  \tilde r_d^{1/3} )(\tilde \rho)  \alpha'(\zeta)   \eta_1 \{    \tilde r_d^{-2/3}(\tilde \rho)  ,\theta(\tilde \rho)  \}  \\
&\quad+h^{2/3}(\tilde \chi  \tilde r_d^{-1/3} )(\tilde \rho) \alpha'(\zeta) \{   \eta_1  ,\theta(\tilde \rho)  \}.
\end{align*}
The first two terms give a term  estimated by 
\(
h^{1/3}\est{h^{1/3}\eta_1}^{1/2} \lesssim 1
\)
as  on the support of $\tilde \chi$ we have $|\eta_1|\lesssim h^{-1}$. 
Then both terms give  associated operators bounded on $L^2$.  Modulo a bounded operator on $L^2$ we have to consider
the symbol, taking account  \textbf{v)} of Lemma~\ref{lem: FIO}
\begin{align*}
&  h^{-1/3}   \tilde \chi ^2(\tilde \rho)  \tilde r_d^{2/3}(\tilde \rho)  \alpha^2(\zeta) 
-   h^{-1/3} (\tilde \chi  \tilde r_d^{2/3} )(\tilde \rho) \alpha'(\zeta)  
 -    \eta_1 \tilde \chi^2(\tilde \rho)  \\
 &=   h^{-1/3}   \tilde \chi ^2(\tilde \rho)  \tilde r_d^{2/3}(\tilde \rho)\big(
\alpha^2(\zeta) -\alpha'(\zeta) - h^{1/3}   \tilde r_d^{-2/3}(\tilde \rho) \eta_1
 \big)  \\
&\quad  -   h^{-1/3} (\tilde \chi  \tilde r_d^{2/3} ) (\tilde \rho)    (1-\tilde \chi(\tilde \rho) )   \alpha'(\zeta) .
 \end{align*}
The first term is null from differential equation satisfying by $\alpha$ and the value of $\zeta$.
We claim that 
\begin{equation}
	\label{claim: estimate on K5}
|\op \big(   h^{-1/3} (\tilde \chi  \tilde r_d^{2/3} ) (\tilde \rho)    (1-\tilde \chi(\tilde \rho) )   \alpha'(\zeta)   \big)  F\vv |	\lesssim |v_h|.
\end{equation}
The proof of the claim is given below.
With this claim, \eqref{est: v_h from  v_h} and what we do above, the operator
\(
 h^{-1}\tilde A^2  + i h^{-1}[ \tilde A , \op(\theta(\tilde \rho)) ] -  h^{-1}\op (h\eta_1 \tilde \chi^2(\tilde \rho)))
\)
gives a term bounded by 
\(
|v_h|.
\)
As the symbol of 
\(
 \tilde C^* \tilde C
\)
is in 
\(
S(h^{-1/3} \est{h^{1/3}\eta_1}^{-1/2} ,g),
\)
we obtain that 
\begin{equation}
	\label{est: K 5}
|K_5|\lesssim  |v_h|  |\tilde \Psi w_h|   . 
\end{equation}
From \eqref{est: w h by v h}, \eqref{eq: second lemma a la Tataru},  \eqref{est: K 1},  \eqref{est: K 2}, \eqref{est: K 3}, 
\eqref{est: K 4} and  \eqref{est: K 5} we obtain
\begin{equation*}
\frac12\d_{x_d}|C( h\d_{x_d}\vv-A\vv  )|^2\gtrsim -\big( |\qq|^2+|\tilde \Psi w_h|^2 +  |v_h|^2    +| h\d_{x_d}v_h|^2  \big).
\end{equation*}
Integrating this inequality between 0 and \( \sigma>0\),
we have, estimating  the term coming from \( |\tilde \Psi w_h| \)  by Proposition~\ref{lem: first est r-1}, 
\begin{equation}
	\label{est: fin second lemma a la Tataru}
|C(  h\d_{x_d}\vv-A\vv )  |^2(0)
\lesssim  \|\qq\|^2 + \|v_h\|_{H^1_{sc}(x_d>0)}^2 +  |C(  h\d_{x_d}\vv-A\vv )  |^2(\sigma).
\end{equation}
As 
\[
  |C(  h\d_{x_d}\vv-A\vv )  |^2(\sigma)= (\tilde C ^*\tilde C  w_h,w_h),
\]
we have from~\eqref{est: w h by v h}
\[
  |C(  h\d_{x_d}\vv-A\vv )  |^2(\sigma)\lesssim |\tilde \Psi w_h |^2(\sigma)+ |v_h|^2 (\sigma)+ | h\d_{x_d}v_h|^2(\sigma)
\]
Integrating estimate~\eqref{est: fin second lemma a la Tataru}  between  two positive values of \(\sigma\) and estimating 
as above the term \(  |\tilde \Psi w_h |^2(\sigma)\), we obtain the conclusion of Proposition~\ref{lem: alpha airy}.
\end{proof}
\begin{proof}[Proof of Claim~\eqref{claim: estimate on K5}]
As   $\kappa^* \tilde  \chi_1 =  \chi_1$, from Lemma~\ref{lem: FIO} we thus have 
\[
F^{-1}\op( \tilde  \chi_1(\tilde \rho))F= \op(  \chi_1(x,h\xi')) +h K,
\]
where $K$ is bounded on $L^2$. We then have
\(
F \vv =\op( \tilde  \chi_1(\tilde \rho))F v_h+ hK' v_h,
\)
where $K'$ is bounded on $L^2$. Then
\begin{align*}
&\op \big(   h^{-1/3} (\tilde \chi  \tilde r_d^{2/3} ) (\tilde \rho)    (1-\tilde \chi(\tilde \rho) )   \alpha'(\zeta)   \big)  F\vv \\
&\quad= \op \big(   h^{-1/3} (\tilde \chi  \tilde r_d^{2/3} ) (\tilde \rho)    (1-\tilde \chi(\tilde \rho) )   \alpha'(\zeta)   \big)
\big( \op( \tilde  \chi_1(\tilde \rho))F v_h+ hK' v_h  \big).
\end{align*}
The first term coming from \(  \op( \tilde  \chi_1(\tilde \rho)) \) gives an operator with null symbol.
As 
\[
  h^{-1/3} (\tilde \chi  \tilde r_d^{2/3} ) (\tilde \rho)    (1-\tilde \chi(\tilde \rho) )   \alpha'(\zeta)\in
  S(h^{-1/3},g),
\]
 then the second term is also bounded by \( |v_h| \). This proves the claim.
\end{proof}

%%%%%%%%%%%%%%%%%%%%%%%%%%%%%%%
%
%   Proposition
%
%%%%%%%%%%%%%%%%%%%%%%%%%%%%%%%% 
\begin{proposition}
	\label{prop: boundary term diffractive Neumann}
Let $(x_0',0)\in \d\Omega_N$.
Let $\chi_4\in \Con_0^\infty(\R)$ be supported on a \nhd of 0 and $\chi_4=1 $ in a  \nhd of 0.
Let $(x_0',0,\xi_0')$ and  $U_0$ be as in the statement of Lemma~\ref{lem: Symplectic transformation}.
Let $\ell\in \Con_0^\infty(\R^{d}\times \R^{d-1} ) $ supported on $\{ \chi_2=1\}$  for every $x_d$, where 
$\kappa^*\tilde \chi_2= \chi_2$. We moreover assume  $\d_{x_d} R(x,\xi')<0$ on support of $\ell$. 
We have 
\[
\lim_{\eps\to 0} \lim_{h\to 0}  \Big| \Big( 
\op\big(  \ell(x',0,h\xi') \chi_4\big(( R(x',0,h\xi')-1)/\eps\big)  ( R(x',0,h\xi')-1 )\big)  (v_h)_{|x_d=0}|  (v_h)_{|x_d=0}  \Big)_0  \Big|=0
\]
\end{proposition}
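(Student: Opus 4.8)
The statement concerns a trace quantity localized near a diffractive point of $\d\Omega_N$, and the goal is to show it is $o(1)$ as $h\to 0$ and then $\eps\to0$. The strategy is to reduce the boundary term to the operators $A$ and $C$ built in Propositions~\ref{lem: first est r-1} and \ref{lem: alpha airy} via the Fourier Integral Operator $F$ of Lemma~\ref{lem: FIO}, and then to exploit the positivity properties of the Airy-type symbols $\alpha$ and $\beta$ together with the Neumann condition $(hD_{x_d}v_h)_{|x_d=0}=0$ on the support of $\ell$. First I would replace $(v_h)_{|x_d=0}$ by $(\vv)_{|x_d=0}$ where $\vv=\op(\chi_1(x,h\xi'))v_h$ (harmless by symbol calculus and the support condition relating $\ell$, $\chi_2$, $\chi_1$), so that equation~\eqref{eq: on rm v} is available. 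Then I would transport the relevant pseudodifferential operator $\op\big(\ell\,\chi_4((R-1)/\eps)(R-1)\big)$ through $F$; by Lemma~\ref{lem: FIO}~\textbf{ii)} its conjugate is, modulo $h$-small terms, $\op\big(\tilde\ell\,\chi_4(\eta_1/\eps)\,\eta_1\big)$ on the $(y',\eta')$ side.

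Next, the key algebraic point: on the support of $\ell$ we have $\d_{x_d}R<0$, hence $\tilde r_d>0$, and the factorization identity $\alpha^2(\zeta)-\alpha'(\zeta)=\zeta$ together with $\zeta=h^{1/3}\eta_1\tilde r_d^{-2/3}$ gives, at the symbol level, $\tilde a^2 = h\eta_1\tilde\chi^2 + (\text{lower order})$. This means $\tilde A^2$ realizes the operator $\op(h\eta_1\tilde\chi^2)$ up to controlled errors, so that $A\vv_{|x_d=0}$ encodes precisely the "$\sqrt{R-1}$" part of the boundary data. Since $(h\d_{x_d}\vv-A\vv)_{|x_d=0}$ is controlled in $\tilde\Psi$-weighted norm by $\|v_h\|_{H^1_{sc}}$ and $\|\qq\|$ (Proposition~\ref{lem: first est r-1}), and $C(h\d_{x_d}\vv-A\vv)_{|x_d=0}$ is $O(1)$ as well (Proposition~\ref{lem: alpha airy}), and since $(h\d_{x_d}v_h)_{|x_d=0}=0$ on the support of $\ell$, I can write $(A\vv)_{|x_d=0}= -(h\d_{x_d}\vv-A\vv)_{|x_d=0}$ modulo terms from the commutator $[\op(\chi_1),h D_{x_d}]$ which are again $o(h^{-1/2})$ in the right norms by~\eqref{properties: sequence and measure} and Proposition~\ref{lemma: trace goes to 0}. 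Then the boundary quantity becomes $\big(\op(\tilde\ell\,\chi_4(\eta_1/\eps)\eta_1)F\vv_{|y_d=0}\,|\,F\vv_{|y_d=0}\big)_0$, and using $\eta_1 \tilde\chi^2 \approx \tilde a^2$, a square-root/Fefferman--Phong argument (as in the proof of Proposition~\ref{lem: first est r-1}) expresses the $\eta_1$-weight in terms of $\tilde A$ and $\tilde\Psi$. The factor $\chi_4(\eta_1/\eps)$ forces $|\eta_1|\le 2\eps/h$, i.e. restricts to $|\zeta|\lesssim \eps^{1/3}h^{-1/3}\cdot\ldots$; more usefully $|h\eta_1|\le 2\eps$, so that $|\tilde a|\lesssim h^{1/3}\est{h^{1/3}\eta_1}^{1/2}\lesssim \eps^{1/2}$ on that range. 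Hence the resulting form is bounded by $C\eps^{1/2}$ times weighted norms that are $O(1)$, plus $C_\eps$ times genuinely $o(h)$ terms from the FIO remainders and the $H^{1/2}_{sc}$-smallness of the trace.

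The precise bookkeeping is: after the reductions the quantity in question is $\le C\eps\,|\tilde\Psi F\vv_{|y_d=0}|^2 + C_\eps\big(h\,|(v_h)_{|x_d=0}|^2_{H^{1/2}_{sc}} + h^{-1}\|\qq\|^2\cdot o(1)\big)$; the first term is $\le C\eps$ by Propositions~\ref{lem: first est r-1}--\ref{lem: alpha airy} combined with the $H^1_{sc}$-bound and $\|\qq\|\to0$, and the second tends to $0$ as $h\to0$ for each fixed $\eps$ by Proposition~\ref{lemma: trace goes to 0} and~\eqref{properties: sequence and measure}. Taking $h\to 0$ first then $\eps\to 0$ gives the claim. \textbf{The main obstacle} will be the careful tracking of which operators on the $(y,\eta)$-side are genuinely bounded on $L^2$ versus only bounded by $\tilde\Psi$-weighted norms, and in particular verifying that the $\chi_4(\eta_1/\eps)$-localized piece can be absorbed into the $\tilde A$/$\tilde\Psi$ calculus so that the $\eps^{1/2}$ gain is actually extracted rather than lost in a remainder of size $\eps^{-1/2}$ — this is exactly the point where the restriction $|h\eta_1|\le2\eps$ must be used in the symbol estimate $|\tilde a|\lesssim\eps^{1/2}$, and one must check the metric $g$ and the Fefferman--Phong/sharp Gårding applications remain uniform in $\eps$ up to explicit $C_\eps$ factors multiplying only $h$-powers.
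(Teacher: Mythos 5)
Your overall architecture matches the paper's: reduce to $\vv=\op(\chi_1(x,h\xi'))v_h$, conjugate the boundary operator through the Fourier Integral Operator $F$ of Lemma~\ref{lem: FIO} to obtain $\op\big(h\eta_1\tilde\ell\,\tilde\chi_4(h\eta_1/\eps)\big)$ acting on $z_h=(F\vv)_{|y_d=0}$, use the Neumann condition to identify $\tilde A z_h$ with $-w_h(0)$ up to admissible errors, and try to extract smallness from the localization $|h\eta_1|\le 2\eps$. The decisive quantitative step, however, does not close as written. First, the quantities your bookkeeping invokes are not among those controlled by Propositions~\ref{lem: first est r-1} and \ref{lem: alpha airy}: these bound the interior norm $\|\tilde\Psi w_h\|_{L^2(y_d>0)}$, the plain trace $|w_h(0)|_{L^2}$ and $|\tilde C w_h(0)|_{L^2}$ (hence $|\tilde C\tilde A z_h|$ after Neumann), but \emph{not} $|\tilde\Psi F\vv_{|y_d=0}|$, which is a priori only $O(h^{-1/3})\cdot|(v_h)_{|x_d=0}|_{L^2}$ and hence far from $O(1)$; so the term $C\eps\,|\tilde\Psi F\vv_{|y_d=0}|^2$ in your final display is not $O(\eps)$.

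Second, and this is the missing idea: factoring the weight $h\eta_1\tilde\chi^2\approx\tilde a^2$ through $\tilde A$ alone leaves a middle symbol comparable to $\zeta/\alpha^2(\zeta)$ with $\zeta=h^{1/3}\eta_1\tilde r_d^{-2/3}$. On the support of $\tilde\chi_4(h\eta_1/\eps)$ one only gets $|\zeta|\lesssim\eps h^{-2/3}$, which is large as $h\to0$, and $\zeta/\alpha^2(\zeta)\to 1$ there; so this route yields $O(1)$, not $o_\eps(1)$. (Your pointwise bound $|\tilde a|\lesssim\sqrt\eps+h^{1/3}$ is correct, but applying G\aa rding to $\tilde A$ directly costs a factor $|z_h|_{L^2}=o(h^{-1/2})$, which destroys the gain.) The paper's device is a \emph{symmetric} factorization through $\tilde\gamma=\op\big(h^{1/6}\tilde\chi_2\,\beta(\zeta)\,\tilde r_d^{1/3}\alpha(\zeta)\big)\approx\tilde C\tilde A$, i.e.\ one must insert the second Airy weight $\beta$ of Lemma~\ref{lem: properties beta} on both sides. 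Then both outer factors act on the genuinely bounded quantity $\tilde C\tilde A z_h$, while the middle symbol $L=h^{2/3}\eta_1\tilde\ell\,\tilde\chi_4\,\beta^{-2}\tilde r_d^{-2/3}|\alpha|^{-2}$ lies in $S(1,g)$ with $|L|\lesssim |h\eta_1|^{1/2}\est{h^{1/3}\eta_1}^{1/2}\cdot\est{h^{1/3}\eta_1}^{-1/2}\lesssim\sqrt\eps$, so G\aa rding gives operator norm $C\sqrt\eps+C_\eps h^{1/3}$. Without the factor $\beta^{-2}|\alpha|^{-2}\sim\est{h^{1/3}\eta_1}^{-1/2}$ the middle symbol is only $O(1)$, and without routing both outer factors through $\tilde C\tilde A$ the trace factors are unbounded; you need to add this factorization for the argument to go through.
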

\begin{proof}
We can assume that the support of $\ell$ is contained in $\{ \chi_1=1 \}$. We then have from symbol calculus
\begin{align*}
&\lim_{\eps\to 0} \lim_{h\to 0} \Big( \op\big(  \ell(x',0,h\xi') \chi_4\big(( R(x',0,h\xi')-1)/\eps\big) 
 ( R(x',0,h\xi')-1 )\big)  (v_h)_{|x_d=0}|  (v_h)_{|x_d=0}  \Big)_0\\
&\qquad=\lim_{\eps\to 0} \lim_{h\to 0} \Big( \op \big(  \ell(x',0,h\xi') \chi_4\big(( R(x',0,h\xi')-1)/\eps\big) 
 ( R(x',0,h\xi')-1 )\big)  (\vv)_{|x_d=0}| ( \vv)_{|x_d=0}  \Big)_0
\end{align*}
Let $z_h=(F\vv)_{|x_d=0}$. From  Proposition~\ref{lem: alpha airy}  we obtain
\(
|\tilde C \tilde Az_h| 
\)
is bounded.  By symbol calculus and the support properties of $\tilde \chi$ and $\tilde \chi_2$
\[
\tilde C \tilde A= h^{1/6}\op\big(\tilde \chi_2(\tilde \rho) \beta (\zeta ) 
 \tilde r_d^{1/3}(\tilde \rho)  \alpha(\zeta ) \big)
\]
modulo an operator with symbol in $S(h^{1/2}\est{ h^{1/3}\eta_1}^{-3/4}  ,g)$. From properties of traces, 
see \eqref{properties: sequence and measure}, this remainder  term goes to 0 as $h$ to 0.

Let $\tilde \ell$  be such that $\kappa^*\tilde \ell=\ell$ and 
$\kappa^*   \chi_4(\eta_1/\eps)= \chi_4( (R(x',0,\xi')-1)/\eps)$.  In what follow, to be 
coherent with our notation we define $\tilde \chi_4=\chi_4$ and we use the notation $\tilde \chi_4$ when the function 
is defined in $(y,\eta)$ variables.

From   Lemma~\ref{lem: FIO} we have 
\begin{multline*}
F^{-1}\op(h\eta_1 \tilde \ell (\tilde \rho)  \tilde \chi_4  (h\eta_1/\eps  ))F\\
= \op \big(  \ell(x',0,h\xi') \chi_4\big(( R(x',0,h\xi'-1)/\eps\big) 
 ( R(x',0,h\xi')-1 )\big) +h\op(r_0(x,h\xi')),
\end{multline*}
where  $r_0$ is of order 0. The term coming from $r_0$ goes to 0 as $h$ to 0, from  properties of traces.
Then it is sufficient to prove that 
\(
\lim_{\eps\to 0} \lim_{h\to 0} \Big( \op(h\eta_1 \tilde \ell (\tilde \rho)  \tilde \chi_4  (h\eta_1/\eps  ))  z_h| z_h \Big)_0=0.
\)
Considering the symbol 
\(
h\eta_1 \tilde \ell (\tilde \rho)  \tilde \chi_4  (h\eta_1/\eps  )\in S(h^{2/3}\est{h^{1/3}\eta_1},g)
\)
and from support properties of $  \tilde \ell  $ and $\tilde \chi_2$, we have
\begin{align*}
 \op(h\eta_1 \tilde \ell (\tilde \rho)  \tilde \chi_4  (h\eta_1/\eps  )) 
=\tilde\gamma^*
\op\big(h^{2/3}\eta_1 \tilde \ell (\tilde \rho)  \tilde \chi_4  (h\eta_1/\eps  )      
\beta^{-2} (\zeta )  \tilde r_d^{-2/3}(\tilde \rho) | \alpha(\zeta ) |^{-2} \big)
 \tilde\gamma
\end{align*}
where 
\(
 \tilde\gamma = \op\big( h^{1/6}    \tilde \chi_2 (\tilde \rho)   \beta (\zeta )  \tilde r_d^{1/3}(\tilde \rho)  \alpha(\zeta )\big),
\)
modulo an operator with symbol in $S(h,g)$ then this last term involves a term going to 0 as $h$ to 0.
Then we obtain an estimation, modulo a term going to 0 as $h$ to 0, 
\def\zz{{\textrm{z}}_h}
\begin{multline}
	\label{est: Neumann diffractif}
| \Big( \op(h\eta_1 \tilde \ell (\tilde \rho)  \tilde \chi_4  (h\eta_1/\eps  ))  z_h| z_h \Big)_0| 
\\
\lesssim  | \op\big(h^{2/3}\eta_1 \tilde \ell (\tilde \rho)  \tilde \chi_4  (h\eta_1/\eps  )      
\beta^{-2} (\zeta )  \tilde r_d^{-2/3}(\tilde \rho) | \alpha(\zeta ) |^{-2} \big)
\zz\big)|  | \zz|,
\end{multline}
where 
\(
\zz= \op\big( h^{1/6}    \tilde \chi_2 (\tilde \rho)   \beta (\zeta )  \tilde r_d^{1/3}(\tilde \rho)  \alpha(\zeta )\big)z_h.
\)
Observe from Proposition~\ref{lem: alpha airy}, $|\zz|$ is bounded.

We claim that 
\(
h^{2/3}\eta_1 \tilde \chi_4  (h\eta_1/\eps  )\in S(\est{h^{1/3}\eta_1}^{1/2}, g).
\)
Indeed 
\(
h^{2/3}|\eta_1|\lesssim \est{ h\eta_1 }^{1/2} \est{h^{1/3}\eta_1}^{1/2},    
\)
and this gives the sought estimate.
We have for $k\ge1$, 
\[
\d_{\eta_1}^k\big(  h^{2/3}\eta_1 \tilde \chi_4  (h\eta_1/\eps  ) \big) =h^{2/3}  (h/\eps)^{k-1}    \tilde \chi_4 ^{(k-1)} (h\eta_1/\eps  )
+ h^{2/3} \eta_1 (h/\eps)^{k}    \tilde \chi_4 ^{(k)} (h\eta_1/\eps  ).
\]
As $h\eta_1$ is bounded on the support of $  \tilde \chi_4 ^{(k)} (h\eta_1/\eps  )$ both terms are bounded by $h^{k-1/3}$.
From estimate
\(
h^{2/3}\est{h^{1/3}\eta_1}\lesssim  \est{h\eta_1}
\)
we have 
\[
h^{k-1/3}\lesssim h^{k/3} \big(   \est{h\eta_1}  /  \est{h^{1/3}\eta_1}   \big)^{k-1/2},
\]
which proves the claim. Observe that the constant of estimation are not uniform with respect $\eps$.

With the previous claim and as 
\(
\beta^{-2} (\zeta )  | \alpha(\zeta ) |^{-2}\lesssim \est{h^{1/3}\eta_1}^{-1/2}, 
\)
we have 
\[
L(y',\eta')=h^{2/3}\eta_1 \tilde \ell (\tilde \rho)  \tilde \chi_4  (h\eta_1/\eps  )      
\beta^{-2} (\zeta )  \tilde r_d^{-2/3}(\tilde \rho) | \alpha(\zeta ) |^{-2} \in S(1,g).
\]
As 
\(
h^{2/3}|\eta_1|\lesssim | h\eta_1 |^{1/2} \est{h^{1/3}\eta_1}^{1/2}\lesssim \sqrt{\eps}\est{h^{1/3}\eta_1}^{1/2}, 
\)
on the support of $  \tilde \chi_4  (h\eta_1/\eps  )   $, we deduce from G\aa rding inequality 
(see \cite[Theorem 18.6.7]{HormanderV3-2007})
that the operator norm from $L^2$ to $L^2$ of
$\op(L)$ is bounded by $ C \sqrt{\eps} + C_\eps h^{1/3}$ where $C$ is independent of $\eps$ and $ C_\eps $ may depend on 
$\eps$. From that and \eqref{est: Neumann diffractif} we deduce the result.
\end{proof}
%%%%%%%%%%%%%%%%%%%%%
%
%   Section   Proof of Lemmas
%
%%%%%%%%%%%%%%%%%%%%%%%
\section{Proof of Lemmas}
	\label{sec: proof of lemmas}

%%%%%%%%%%%%%%%%%%%%%%%%%%%%%%%
%
%   Lemma
%
%%%%%%%%%%%%%%%%%%%%%%%%%%%%%%%% 
\begin{lemma}
	\label{lem: estimate by tilde Psi}
Let $\tilde \chi_3\in\Con_0^\infty(U_1)$ be such that $(1-\tilde \chi_3)\tilde \chi_1=0$ where $\kappa^* \tilde \chi_1=\chi_1$.
  We have 
\begin{align*}
& |\tilde  \Psi w_h | \lesssim  |h^{-1/3}\op(\tilde \chi_3(\tilde \rho) \est{h^{1/3}\eta_1}^{-1/2})w_h| + |v_h|+|h\d_{x_d}v_h| . \\
\end{align*}
\end{lemma}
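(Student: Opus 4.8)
The statement to prove is Lemma~\ref{lem: estimate by tilde Psi}:
\[
 |\tilde  \Psi w_h | \lesssim  |h^{-1/3}\op(\tilde \chi_3(\tilde \rho) \est{h^{1/3}\eta_1}^{-1/2})w_h| + |v_h|+|h\d_{x_d}v_h| ,
\]
where $\tilde\Psi=h^{-1/3}\op(\est{h^{1/3}\eta_1}^{-1/2})$ and $w_h=F(h\d_{x_d}\vv-A\vv)$. The idea is simply to split the identity operator as $1=\op(\tilde\chi_3(\tilde\rho))+\op(1-\tilde\chi_3(\tilde\rho))$ inside $\tilde\Psi w_h$ and to control each piece. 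The first piece will be, up to a commutator, the first term on the right-hand side; the second piece must be absorbed into $|v_h|+|h\d_{x_d}v_h|$ by using that $w_h$ is microlocalized by $\tilde\chi_1$ and that $(1-\tilde\chi_3)\tilde\chi_1=0$.

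\textbf{Step 1: recall the microlocalization of $w_h$.} Since $\vv=\op(\chi_1(x,h\xi'))v_h$ and $\kappa^*\tilde\chi_1=\chi_1$, applying $F$ and using item \textbf{ii)} of Lemma~\ref{lem: FIO}, one gets $F\vv=\op(\tilde\chi_1(\tilde\rho))Fv_h+hK'v_h$ with $K'$ bounded on $L^2$ (this is exactly the computation used in the proof of Claim~\eqref{claim: estimate on K5}). Since $A=F^{-1}\tilde AF$, we also have $FA\vv=\tilde A F\vv$, so $w_h=\tilde\Psi^{-1}$-free expression $h\d_{x_d}(F\vv)-\ldots$; more directly, $w_h=h\d_{y_d}(F\vv)+(\d_{x_d}F)F^{-1}(F\vv)-\tilde A F\vv$ which by item \textbf{iii)}--\textbf{iv)} of Lemma~\ref{lem: FIO} is, modulo operators applied to $v_h$ or $h\d_{x_d}v_h$ bounded on $L^2$, of the form $\op(\tilde\chi_1(\tilde\rho))\,(\text{something})+h(\text{bounded})(v_h,h\d_{x_d}v_h)$. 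In any case the key fact I would isolate is: $w_h=\op(\tilde\chi_1(\tilde\rho))w_h'+h r_h$, where $|w_h'|\lesssim|v_h|+|h\d_{x_d}v_h|+|w_h|$-type bound is not needed; what I actually need is weaker, namely that $\op(1-\tilde\chi_3(\tilde\rho))w_h$ is $\mathcal O(h^{2/3}\est{h^{1/3}\eta_1}^{-1/2})$ applied to combinations of $v_h, h\d_{x_d}v_h$.

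\textbf{Step 2: the cut piece.} Write $\tilde\Psi w_h=h^{-1/3}\op(\est{h^{1/3}\eta_1}^{-1/2})\op(\tilde\chi_3(\tilde\rho))w_h+h^{-1/3}\op(\est{h^{1/3}\eta_1}^{-1/2})\op(1-\tilde\chi_3(\tilde\rho))w_h$. By symbol calculus in the metric $g$ (both symbols are $g$-continuous and $\sigma,g$-temperate), $\op(\est{h^{1/3}\eta_1}^{-1/2})\op(\tilde\chi_3(\tilde\rho))=\op(\tilde\chi_3(\tilde\rho)\est{h^{1/3}\eta_1}^{-1/2})+h^{1/3}\op(r)$ with $r\in S(\est{h^{1/3}\eta_1}^{-3/2},g)$, so $h^{-1/3}$ times this is $h^{-1/3}\op(\tilde\chi_3(\tilde\rho)\est{h^{1/3}\eta_1}^{-1/2})+\op(r)$, the first term being exactly the first term in the statement and $\op(r)$ being bounded on $L^2$ and hence controlled by $|w_h|\lesssim|v_h|+|h\d_{x_d}v_h|$ via \eqref{est: w h by v h}.

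\textbf{Step 3: the remainder piece — the main obstacle.} The term $h^{-1/3}\op(\est{h^{1/3}\eta_1}^{-1/2})\op(1-\tilde\chi_3(\tilde\rho))w_h$ must be $\mathcal O(|v_h|+|h\d_{x_d}v_h|)$. Using Step 1, $w_h$ agrees with $\op(\tilde\chi_1(\tilde\rho))(\cdots)$ modulo $h\cdot(\text{bounded operators applied to }v_h, h\d_{x_d}v_h)$; since $(1-\tilde\chi_3)\tilde\chi_1=0$, the composition $\op(1-\tilde\chi_3(\tilde\rho))\op(\tilde\chi_1(\tilde\rho))$ has symbol in $h^{\infty}S(\est{h^{1/3}\eta_1}^{-\infty},g)$ (or at least $hS(\est{h^{1/3}\eta_1}^{-1/2},g)$, which suffices), so $h^{-1/3}\op(\est{h^{1/3}\eta_1}^{-1/2})\op(1-\tilde\chi_3(\tilde\rho))\op(\tilde\chi_1(\tilde\rho))$ is bounded on $L^2$ by a power of $h$ times a constant. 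For the $h r_h$ remainder, $h^{-1/3}\op(\est{h^{1/3}\eta_1}^{-1/2})\op(1-\tilde\chi_3(\tilde\rho))\cdot h=h^{2/3}\cdot(\text{bounded})$, again $\mathcal O(|v_h|+|h\d_{x_d}v_h|)$. The only genuinely delicate point is making Step 1 precise: one must check that $w_h=F(h\d_{x_d}\vv-A\vv)$ is microlocalized by $\tilde\chi_1(\tilde\rho)$ up to the stated error, which requires commuting $F$, $\d_{x_d}$ and the cutoff $\op(\chi_1)$ carefully using items \textbf{i)}--\textbf{iv)} of Lemma~\ref{lem: FIO}; the terms where a derivative falls on the FIO phase produce the operator $\op(\theta)$, whose symbol is of order $0$ in $g$ after multiplication by $h$, and these are handled by the same $g$-symbol calculus. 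Assembling Steps 2 and 3 gives the inequality.
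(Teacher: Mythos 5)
Your overall strategy is the same as the paper's: split $\tilde\Psi$ through $1=\tilde\chi_3+(1-\tilde\chi_3)$, keep the $\tilde\chi_3$ piece as the first term of the right-hand side, and kill the $(1-\tilde\chi_3)$ piece using that $w_h$ is microlocalized where $\tilde\chi_1=1\subset\{\tilde\chi_3=1\}$. Your Step 2 is fine (indeed the splitting is even exact by linearity of the quantization, so the commutator you introduce is unnecessary). For the $FA\vv$ contribution you plan to invoke the support argument, whereas the paper needs none: since $h^{-1/3}(1-\tilde\chi_3)\est{h^{1/3}\eta_1}^{-1/2}\in S(h^{-1/3}\est{h^{1/3}\eta_1}^{-1/2},g)$ and $\tilde a\in S(h^{1/3}\est{h^{1/3}\eta_1}^{1/2},g)$, the composition is already in $S(1,g)$, hence bounded, and $\lesssim|v_h|$ follows by pure order counting.

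The one genuine gap is in your Step 1 as it feeds into Step 3. You reduce to ``$w_h=\op(\tilde\chi_1(\tilde\rho))(\cdots)+h\,r_h$ with $r_h$ controlled by $v_h,h\d_{x_d}v_h$'', and in Step 3 you crucially use the factor $h$ to write $h^{-1/3}\op(\est{h^{1/3}\eta_1}^{-1/2})\op(1-\tilde\chi_3)\cdot h=h^{2/3}\cdot(\text{bounded})$. But the route you sketch, namely $Fh\d_{x_d}\vv=h\d_{y_d}(F\vv)-(h\d_{x_d}F)F^{-1}(F\vv)$ with $(\d_{x_d}F)F^{-1}=ih^{-1}\op(\theta)+hB$, produces the term $i\op(\theta)F\vv$, which is $O(1)$ applied to $v_h$, \emph{not} $O(h)$. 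A crude operator-norm bound on $h^{-1/3}\op((1-\tilde\chi_3)\est{h^{1/3}\eta_1}^{-1/2})$ only gives $O(h^{-1/3})$, so this term is not absorbed as you claim; you must run the disjoint-support argument on it as well (write $F\vv=\op(\tilde\chi_1)Fv_h+O(h)$ and use that $\op((1-\tilde\chi_3)\est{h^{1/3}\eta_1}^{-1/2})\op(\theta)\op(\tilde\chi_1)$ has vanishing asymptotic expansion). The paper avoids the issue altogether by never differentiating $F$: it first uses the exact commutation $h\d_{x_d}\op(\chi_1(x,h\xi'))=\op(\chi_1)h\d_{x_d}+h\op(\d_{x_d}\chi_1)$, then conjugates only the cutoff, $F\op(\chi_1)=\big(\op(\tilde\chi_1)-hFKF^{-1}\big)F$, and applies the resulting operator to $Fh\d_{x_d}v_h$, so $\op(\theta)$ never enters. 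Your proof is repairable along these lines, but as written the absorption of the remainder in Step 3 does not go through.
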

\begin{proof}
We write 
\(
\tilde \Psi = h^{-1/3}\op(\tilde \chi_3(\tilde \rho) \est{h^{1/3}\eta_1}^{-1/2})
+ \op( h^{-1/3} (1-\tilde \chi_3(\tilde \rho))\est{h^{1/3}\eta_1}^{-1/2}),
\)
it suffices to estimates terms coming from the second operator.
Recall that  $w_h=F(  h \d_{x_d}\vv-A\vv)$, we have 
\[
 \op(h^{-1/3} (1-\tilde \chi_3(\tilde \rho))\est{h^{1/3}\eta_1}^{-1/2})FA= \op(h^{-1/3} (1-\tilde \chi_3(\tilde \rho))\est{h^{1/3}\eta_1}^{-1/2})\tilde AF,
\]
as 
\(
h^{-1/3} (1-\tilde \chi_3(\tilde \rho))\est{h^{1/3}\eta_1}^{-1/2}\in S(h^{-1/3} \est{h^{1/3}\eta_1}^{-1/2}  ,g)
\)
and 
\(
\tilde a \in S(h^{1/3} \est{h^{1/3}\eta_1}^{1/2}  ,g),
\)
we have
\[
| \op(h^{-1/3} (1-\tilde \chi_3(\tilde \rho))\est{h^{1/3}\eta_1}^{-1/2})FA\vv   | \lesssim |v_h|.
\]   
We have 
\begin{equation}
	\label{eq: commut d x b}
 F h\d_{x_d} \op (\chi_1(x,h\xi'))
 = F \op( \chi_1(x,h\xi'))h\d_{x_d} 
 + F h \op(\d_{x_d}\chi_1(x,h\xi')).
\end{equation}
The second term gives
\[
| \op(h^{-1/3} (1-\tilde \chi_3(\tilde \rho))\est{h^{1/3}\eta_1}^{-1/2}) F h \op(\d_{x_d}\chi_1(x,h\xi'))v_h|\lesssim h^{1/3} |v_h|.
\]
As   $\kappa^* \tilde  \chi_1 =  \chi_1$, from Lemma~\ref{lem: FIO} we thus have 
\(
F^{-1}\op( \tilde  \chi_1(\tilde \rho))F= \op(  \chi_1(x,h\xi')) +h K,
\)
where $K$ is bounded on $L^2$.
Then the first term at the right hand side of~\eqref{eq: commut d x b} gives
\begin{multline*}
| \op(h^{-1/3} (1- \tilde \chi_3(\tilde \rho))\est{h^{1/3}\eta_1}^{-1/2})  F  \op( \chi_1(x,h\xi'))h\d_{x_d} v_h|  \\
\lesssim | \op(h^{-1/3} (1-\tilde \chi_3(\tilde \rho))\est{h^{1/3}\eta_1}^{-1/2})  
\big( \op( \tilde \chi_1(\tilde \rho))  -hF K F^{-1}\big)
Fh\d_{x_d} v_h| \le |  h\d_{x_d} v_h |,
\end{multline*}
as the both terms are bounded on $L^2$ indeed the asymptotic expansion of the first symbol is null and  the second
is bounded by $h^{2/3}$ times an operator bounded on $L^2$. This concludes the proof of the lemma.
\end{proof}

\begin{proof}[Proof of Lemma~\ref{lem: FIO}]
We recall that Zworski use Weyl quantification. We give the proof in this context. From that, it is easy to obtain the result for classical
quantification.  We denote by $\op^w_{sc} (a)$ the operator associated with symbol $a$ by the Weyl quantification.
Items \textbf{i)} and  \textbf{ii)}  come for Zworski~\cite[Theorem 11.5]{Zworski-2012}.
To prove the others Items we have to use the construction of $F$ given by Zworski~\cite[Section 11.1 and 11.2]{Zworski-2012}.
Let $\kappa_t$ a smooth family of symplectic transformations, $t\in[0,1]$, $\kappa_0=Id$ and $\kappa_1=\kappa$ and 
$q_t\in\Con_0^\infty(U_0)$ be real valued,
such that $\d_t \kappa_t=(\kappa_t)_*H_{q_t}$ (see \cite[Theorem 11.4]{Zworski-2012}). Let $Q(t)=\op_{sc}^w(q_t)$ 
(here as we use 
the Weyl quantification,  $Q(t)$ is selfadjoint). Let $F(t) $ the solution of 
\begin{equation*}
\begin{cases}
hD_t F(t)+ F(t) Q(t)=0\\
F(0)=Id .
\end{cases}
\end{equation*}
The Fourier Integral Operator we search,   is $F=F(1)$. We then have  for $G(t)=\d_{x_d}F(t)$
\begin{equation*}
\begin{cases}
hD_t G(t)+ G(t) Q(t)=-F(t) \d_{x_d} Q(t)\\
G(0)=0.
\end{cases}
\end{equation*}
The Duhamel formula yields
\[
G(t)F^{-1}(t) =-ih^{-1}\int_0^tF(\sigma) \d_{x_d} Q(\sigma)F^{-1}(\sigma)d\sigma.
\]
Taking $\tilde q_0(t)$ such that  $\kappa_t^*\tilde q_0(t)= \d_{x_d} q(t)$ which it is possible as  $\kappa_t$ is a diffeomorphism, 
we have from Item \textbf{ii)}, 
\(
 F^{-1}(\sigma)  \op_{sc}^w(\tilde q_0(\sigma))F(\sigma) =  \d_{x_d} Q(\sigma) + h\op_{sc}^w(q_1).
\)
We can repeat the construction taking $\kappa_t^*\tilde q_1(t)= q_1(t)$ and we have
\(
 F^{-1}(\sigma)  \op_{sc}^w(\tilde q_0(\sigma)-h \tilde q_1(\sigma) )F(\sigma) 
 =  \d_{x_d} Q(\sigma) + h^2\op_{sc}^w(q_2(\sigma)).
\)
This implies that 
\(
F(\sigma) \d_{x_d} Q(\sigma)F^{-1}(\sigma) = \op_{sc}^w(\tilde q_0(\sigma)-h \tilde q_1(\sigma) ) +h^2  B(\sigma),
\)
where $B(\sigma)$ is bounded on $L^2$ uniformly with respect $\sigma$. From that and taking $t=1$ we deduce
Item \textbf{iii)}.

To prove Item~\textbf{iv)} we have  \(\d_{x_d } (F^{-1})=-F^{-1} \d_{x_d}  (F) F^{-1}\), we deduce
\begin{align*}
\d_{x_d} A&= -F^{-1} (\d_{x_d}  F) F^{-1} \tilde A F+ F^{-1}(   \d_{x_d}   \tilde A )F+  F^{-1}   \tilde A (\d_{x_d} F)  \\
&=  -F^{-1}\big(   ih^{-1}\op_{sc}^w (\theta)+hB  \big) \tilde A F  + F^{-1}(   \d_{x_d}   \tilde A )F
+  F^{-1}   \tilde A \big(   ih^{-1}\op_{sc}^w (\theta)+hB  \big) F  \\  
&=   F^{-1}\big(  (   \d_{x_d}   \tilde A )+ ih^{-1}  [  \tilde A ,\op_{sc} ^w(\theta)    ]+  h [  \tilde A , B ] \big) F,
\end{align*}
Which gives Item~\textbf{iv)} as there exist a symbol $\theta_1$ such that $\op_{sc}(\theta_1)=\op_{sc}^w(\theta)$. 

Let $\chi_1$ and $\chi_2$ be $\Con_0^\infty$ functions such that $\chi_1 =1$ and   $\chi_2 =1$ in a \nhd of $(x_0',0,\xi_0')$,
and we assume $\chi_2$ supported on $\{ \chi_1=1 \}$. Applying \textbf{iv)} to $A=(R-1)\chi_1$, taking account \textbf{ii)}, 
we obtain $\kappa^*(\{\eta_1,\theta\}\tilde\chi_2(y,\eta'))=\chi_2\d_{x_d}R$ where $\kappa^* \tilde\chi_2=\chi_2$.
We deduce  Item \textbf{v)} taking the previous formula on $\{\chi_2=1  \}$.
\end{proof}

\begin{proof}[Proof of Lemma~\ref{lem: alpha properties}]
The asymptotic expansion of $\Ai$ is well-known  (see \cite[Formula 10.4.59]{AS-1964}. 
We recall that for $z\in\C$ with $|\arg z|<\pi$ we have
\begin{align}
	\label{est: airy asympt.}
&\Ai(z)\sim 2^{-1} \pi^{-1/2}z^{-1/4}e^{-\zeta}\sum_{n=0}^\infty(-1)^nc_n\zeta^{-n}, \text{ with } \zeta= \frac23 z^{2/3}  \\
&\Ai'(z)\sim -2^{-1} \pi^{-1/2}z^{1/4}e^{-\zeta}\sum_{n=0}^\infty(-1)^nd_n\zeta^{-n},  \notag
\end{align}
where $c_0=d_0=1$,  $\DS c_1=\frac{ 5}{ 2^{3}3^{2}}$ and $\DS d_1= -\frac{7c_1}{5}$. From that we obtain
\begin{align*}
\frac{\Ai'(z)}{\Ai(z)}\sim -z^{1/2}\sum_{n=0}^\infty f_n\zeta^{-n}\sim -z^{1/2}\sum_{n=0}^\infty  \ell_n z^{-3n/2},
\end{align*}
where $f_0=\ell_0=1$, $f_1=1/6$ and $\ell_1=1/4$.

For $x>0$, as $ \omega^{3/2}=-1$,  we have,
\begin{equation*}
\alpha(x)\sim \omega \omega^{1/2} x^{1/2} \sum_{n=0}^\infty  \ell_n \omega^{-3n/2} x^{-3n/2}
\sim -  x^{1/2} \sum_{n=0}^\infty (-1)^n  \ell_n x^{-3n/2}.
\end{equation*}
As this asymptotic expansion is also valid for derivative with respect $x$ we deduce  Item \textbf{i)}.

For $x<0$, we have $\omega x = -e^{2i\eps \pi/3}|x|= e^{-i\eps \pi/3}|x|$ to have $|\arg (\omega x)|<\pi$.
As $-\omega e^{-i\eps \pi/6}= -\eps i$ we obtain 
\begin{equation*}
\alpha(x)\sim  \eps i |x|^{1/2} \sum_{n=0}^\infty  \ell_n (e^{-i\eps \pi/3})^{-3n/2} |x|^{-3n/2}
\sim  \eps i |x|^{1/2} \sum_{n=0}^\infty (\eps i)^n  \ell_n|x|^{-3n/2}.
\end{equation*}
This gives Item \textbf{ii)} from properties of this asymptotic expansion.

Let $F(z)= \Ai'(z)/\Ai(z)$ we have $F'(z)=z-A^2(z)$ for $z$ different of  a zero of $\Ai$ which are on the  negative real axis. 
As $\alpha(x)= -\omega F(\omega x)$ we have 
\begin{equation*}
\alpha'(x)=-\omega^2 F'(\omega x)=-\omega^2\big(  \omega x-F^2(\omega x) \big)= \alpha^2(x)-x.
\end{equation*}
This gives Item \textbf{iv)}.

Item~\textbf{iii)} is probably classical but we do not find this property in literature. Here we give a proof of that.
Let $\alpha_1(x)$ and $\alpha_2(x)$ real valued be such that $\alpha(x)=\alpha_1(x)+i\alpha_2(x)$. We have
\begin{equation}
	\label{syst: alpha}
\begin{cases}
\alpha_1'(x)=\alpha_1 ^2(x) - \alpha_2^2(x)-x\\
\alpha_2'(x)=2\alpha_1(x)\alpha_2(x).
\end{cases}
\end{equation}
We also use a nice formula given in \cite[Section 3]{Vodev-2015}
\begin{equation*}
F(z)= C_1+\sum_{j=1}^\infty\big(   (z-\nu_j )^{-1} +\nu_j^{-1}\big),
\end{equation*}
where $\nu_j$'s are the zeros of $\Ai$ (observe that $\nu_j<0$) and $C_1\in \R$ is an explicit negative constant. We deduce 
\begin{equation*}
F'(z)= -\sum_{j=1}^\infty   (z-\nu_j )^{-2} 
\end{equation*}
It is easy to prove that both series converge.

We fix $\eps=1$, observe that $\overline{\alpha(x)}$ is the $\alpha(x) $ defined with $\eps=-1$. 
Observe that $\alpha_2(x)>0 $, indeed if $\alpha_2(x_0)=0$ for some $x_0\in\R$, $\alpha_2$ is identically null by uniqueness of 
System~\eqref{syst: alpha}. Then $\alpha_2(x)>0 $ as it is true for $x\ll0$ from Item~\textbf{ii)}.

We have   
\begin{align*}
\alpha'(x)= -\omega^2F'(\omega x)&= \omega^2\sum_{j=1}^\infty   (\omega x-\nu_j )^{-2}  \\
&=\sum_{j=1}^\infty   |\omega x-\nu_j| ^{-4} \big(  x^2-2\omega \nu_j x + \omega^2\nu_j^2\big).
\end{align*}
We deduce that 
\begin{align*}
\alpha_2'(x)&=\sum_{j=1}^\infty   |\omega x-\nu_j| ^{-4} \big( - \sqrt{3} \nu_j x- \frac{\sqrt{3}}{2}\nu_j^2\big)   
\end{align*}
Assuming $x\le 0$ we have $\alpha_2'(x)<0$ and from~\eqref{syst: alpha},  $\alpha_1(x) <0$.

Now for $x\ge 0$ we compute 
\begin{equation*}
(\alpha_1/\alpha_2)'(x)=\frac{-\alpha_1^2(x)\alpha_2  (x) -\alpha_2^3(x)-x\alpha_2(x)}{\alpha_2^2(x)}< 0.
\end{equation*}
This implies $(\alpha_1/\alpha_2)(x)\le (\alpha_1/\alpha_2)(0)<0$ then $\alpha_1(x)<0$. This concludes Item~\textbf{iii)}.
\end{proof}

\begin{proof}[Proof of Lemma~\ref{lem: properties beta}]
Let $\gamma_1(x) = |\Ai(\omega x)|   $  
and $\gamma_2(x)= C_0\est x^{-1/4}$ where $C_0>0$ will be fixed below. 

We shall begin to prove $\gamma_1'+\gamma_1\Re\alpha=0$. Writing 
\(
\gamma_1(x) =\sqrt{\Ai(\omega x)\Ai(\bar\omega x)} 
\)
we obtain
\[
\gamma_1'(x)=\frac{\omega\Ai'(\omega x)\Ai(\bar\omega x)+ \bar\omega\Ai(\omega x)\Ai'(\bar\omega x)  }{2|\Ai(\omega x)|  },
\]
and 
\[
\Re\alpha(x)=-\Re\frac{\omega\Ai'(\omega x)\Ai(\bar\omega x)}{|\Ai(\omega x)|^2}=-\frac{\gamma_1'(x) }{\gamma_1(x) }.
\]
From definition of $\gamma_2$ we have for $x>0$,  $\gamma_2>0$,  $\gamma_2'<0$ and as $\Re \alpha<0$ we have
 $\gamma_2'+\gamma_2 \Re\alpha<0$ . 

Let $\chi_0\in\Con^\infty(\R)$ be such that $\chi_0(x)=0$ if $x\le 0$,  $\chi_0(x)=1$  if
$x\ge1$ and we assume $\chi_0'\ge0$ on $\R$. Let $\beta=\chi_0 \gamma_2+(1-\chi_0)\gamma_1$. Clearly $\beta$ is a  smooth
function. We have
\(
\beta'= \chi_0 \gamma_2'+(1-\chi_0)\gamma_1'+  \chi_0'( \gamma_2-\gamma_1).
\)
As $\gamma_1>0$, if $C_0$ is chosen sufficiently small, $ \gamma_2-\gamma_1<0$ on the support of $ \chi_0'$. 
As $ \chi_0'\ge0$,  $ \chi_0'( \gamma_2-\gamma_1)\le0$. This and above properties imply \textbf{i)}.
We only have to prove \textbf{ii)} for $x<0$. That is a consequence of asymptotic expansion of Airy 
function~\eqref{est: airy asympt.}. Indeed for $x<0$, $\omega x= |x|e^{-i\eps\pi/3}$, then $(\omega x)^{3/2}= -\eps i|x|^{3/2}$. We then
have
\(
\beta(x)= |\Ai(\omega x)|  \sim  2^{-1} \pi^{-1/2}|x|^{-1/4}\Big|\sum_{n=0}^\infty(-1)^nc_n\zeta^{-n}\Big|, 
\text{ with } \zeta= \frac23 (\omega x)^{2/3}.
\)
Clearly the asymptotic expansion satisfies symbol estimates. This asymptotic expansion also gives estimates 
\textbf{iii)} for $x<0$ 
with $|x|$ sufficiently large.  For $x>0$ \textbf{iii)} is obvious, and by construction $\beta>0$. This achieves the proof.
\end{proof}


\begin{thebibliography}{}


\bibitem{AS-1964} \textsc{Abramowitz, M.,  Stegun, I. A.,} Handbook of mathematical functions with formulas, graphs, and
              mathematical tables, National Bureau of Standards Applied Mathematics Series, 
              For sale by the Superintendent of Documents, U.S. Government
              Printing Office, Washington, D.C., 1964.

\bibitem{AKR-2016}   \textsc{Aloui, L., Khenissi, M., Robbiano, L.,}  The Kato Smoothing Effect 
for Regularized Schr\"odinger Equations in Exterior Domains.  \textit{Ann. Inst. H. Poincar\'e Anal. Non Lin\'eaire}, online,  2017

\bibitem{ALM-2016} \textsc{Anantharaman, N.,  L{\'e}autaud, M., Maci{\`a}, F.,} Wigner measures and
observability for the {S}chr\"odinger equation on the disk. \textit{Invent. Math.}  2016 \textbf{206}, 485--599



\bibitem{ALM-2016-2} \textsc{Anantharaman, N.,  L{\'e}autaud, M., Maci{\`a}, F.,} Delocalization 
of quasimodes on the disk. \textit{C. R. Math. Acad. Sci. Paris}, 2016 \textbf{354},  257--263

  \bibitem{AM-1977} \textsc{Andersson, K. G., Melrose, R. B.,} The propagation of singularities along gliding rays.  
  \textit{Invent. Math.}, 1997 \textbf{41}, 197--232
    
     
\bibitem{BD} \textsc{Batty, C., Duyckaerts, T.,} Non-uniform stability for bounded semi-groups on 
Banach spaces. 
\textit{J. Evol. Equ.}, 2008 \textbf{8}, no. 4, 765-780.

\bibitem{BLM-1999} \textsc{Bey, R., Loh{\'e}ac, J.-P.,  Moussaoui, M.,} Singularities of the solution of a 
mixed problem for a general second order elliptic equation and boundary stabilization of the wave 
equation. \textit{J. Math. Pures Appl.}, 1999 \textbf{78}, 1043--1067.



\bibitem{BLR1} \textsc{Bardos, C., Lebeau, G., Rauch, J.,}   Sharp
sufficient conditions for the observation, control and stabilization of
waves from the boundary.  \textit{SIAM J. Control Optim.}, 1992 \textbf{30}, no. 5,
1024-1065.

\bibitem{Bony-1981} \textsc{Bony, J-M.,} Calcul symbolique et propagation des singularit\'es pour les
              \'equations aux d\'eriv\'ees partielles non lin\'eaires. \textit{Ann. Sci. \'Ecole Norm. Sup. (4)}, 1981 \textbf{14}, 209--246


\bibitem{BT-2010} \textsc{Borichev, A., Tomilov, Y.,} Optimal polynomial decay of functions and operator semigroups.
\textit{Math. Ann.} 2010 \textbf{347}, 455--478.



\bibitem{Burq-1997} \textsc{Burq, N.,} Mesures semi-classiques et mesures de d\'efaut. 
S{\'e}minaire Bourbaki, Vol. 1996/97, \textit{Ast\'erisque}, 1997 \textbf{245}, 167--195.

\bibitem{Burq-Lebeau2001} \textsc{Burq, N., Lebeau, G.,} Mesures de d\'efaut de compacit\'e, application 
au syst\`eme de {L}am\'e. \textit{Ann. Sci. \'Ecole Norm. Sup.}, 2001 \textbf{34}, 817--870.
              
              
\bibitem{CLO} \textsc{Cornilleau, P., Loh\'eac, J.-P., Osses, A .,}
Nonlinear Neumann boundary stabilization of the wave equation using rotated multipliers. 
\textit{J. Dyn. Control Syst.},  2010 \textbf{16}, no. 2, 163-188. 


\bibitem{CR2014} \textsc{Cornilleau, P., Robbiano, L.,} 
Carleman estimates for the Zaremba Boundary Condition and Stabilization of Waves.
\textit{American Journal Math.}, 2014 \textbf{136}, 393-444. 


\bibitem{DA}   \textsc{Davies, E.B.,} \newblock  Spectral theory and
differential operators. Cambridge studies in advanced mathematics, 
\textbf{42} Cambridge Univers. press.

\bibitem{DBLR-2014} \textsc{Dehman, B., Le Rousseau, J.,  L{\'e}autaud, M.,} Controllability of two 
coupled wave equations on a compact manifold.  Arch. Ration. Mech. Anal. 2014 \textbf{211}, 113--187

\bibitem{Eskin-1977} \textsc{Eskin, G.,} Parametrix and propagation of singularities for the interior
              mixed hyperbolic problem. J. Analyse Math. 1977 \textbf{32}, 17--62



\bibitem{EN-2000} \textsc{Engel, K.-J., Nagel, R.,} One-parameter semigroups for linear evolution equations, 
Graduate Texts in Mathematics 194, Springer-Verlag, New York, {2000}

      
      
\bibitem{Fu-2015} \textsc{Fu, X.,} Stabilization of hyperbolic equations with mixed boundary
              conditions. Math. Control Relat. Fields, 2015 \textbf{5}, 761--780


\bibitem{Gearhart-1978} \textsc{Gearhart, L.,} Spectral theory for contraction semigroups on {H}ilbert space.  Trans. Amer. Math. Soc.,
1978, \textbf{236}, 385--394



\bibitem{Gerard-1990} \textsc{G{\'e}rard, P.,} 
Mesures semi-classiques et ondes de {B}loch
\textit{S\'eminaire sur les \'{E}quations aux {D}\'eriv\'ees
              {P}artielles, 1990--1991}, Exp.\ No.\ XVI, 19


\bibitem{Gerard-1991} \textsc{G{\'e}rard, P.,} Microlocal defect measures. \textit{Comm. Partial 
Differential Equations}, 1991 \textbf{16}, 1761--1794

\bibitem{GL-1993} \textsc{G{\'e}rard, P., Leichtnam, \'{E}.,} 
Ergodic properties of eigenfunctions for the Dirichlet problem
\textit{Duke Math. J.}, 1993, \textbf{71}, 559--607


\bibitem{Grigis-1976} \textsc{Grigis, A.,} Hypoellipticit\'e et param\'etrix pour des op\'erateurs
              pseudodiff\'erentiels \`a caract\'eristiques doubles, Journ\'ees: \'Equations aux {D}\'eriv\'ees {P}artielles de {R}ennes
              (1975),  Ast\'erisque,  1976, \textbf{34--35},  183--205


\bibitem{HS:1989}  \textsc{Helffer, B. and Sj\"{o}strand, J.} {\'{E}quation de 
{S}chr\"{o}dinger avec champ magn\'{e}tique et \'{e}quation de {H}arper}, Schr\"{o}dinger
operators, ({S}\o nderborg, 1988), Lecture Notes in Phys.
 \textbf{345}, 118--197

		

\bibitem{HormanderV3-2007} \textsc{H{\"o}rmander, L.,} The analysis of linear partial differential 
operators {vol. III}. Springer, Berlin


\bibitem{Huang85} \textsc{Huang, F. L.,} Characteristic conditions for exponential stability of linear
              dynamical systems in {H}ilbert spaces. \textit{Ann. Differential Equations}, 1985, \textbf{1}, 43--56
              
 \bibitem{LRL-2012} \textsc{Le Rousseau, J.,  Lebeau,G.,} On Carleman estimates for elliptic and parabolic operators. Applications to unique continuation and control of parabolic equations. \textit{ESAIM Control Optim. Calc. Var.} 2012, \textbf{18}, 712--747
 
              
\bibitem{LRLTT-2016} \textsc{Le Rousseau, J., Lebeau G., Terpolilli, P., Tr\'elat, E.,} Geometric 
control condition for the wave equation with a time-dependent observation domain.
\textit{Anal. PDE},  2017, \textbf{10}, 983--1015
         
         

%
\bibitem{Leb} \textsc{Lebeau, G.}  Contr\^{o}le et stabilisation hyperboliques. 
\textit{S\'eminaire \'Equations aux d\'eriv\'ees partielles (\'Ecole Polytechnique)}, 1990, Expos\'e \textbf{16}, 1-16.
%
\bibitem{Lebeau:96} {\sc Lebeau, G.,}  \newblock \'{E}quation des ondes amorties.
\newblock In {\em Algebraic and geometric methods in mathematical physics
({K}aciveli, 1993)}, {\bf 19}. \textit{Math. Phys. Stud.}, 73--109.
Kluwer Acad. Publ., Dordrecht.

\bibitem{LR2} \textsc{Lebeau, G., Robbiano, L.,}  Stabilisation de l'\'equation des ondes par le bord. \textit{Duke Math. J.},  1997
\textbf{86}, no. 3, 465--491.

\bibitem{LM:1968} \textsc{Lions, J.-L. and Magenes, E.} Probl{\`e}mes aux limites non homog{\`e}nes et applications. {V}ol. 1, Dunod, Paris, 1968


\bibitem{Martinez:02} \textsc{Martinez, A .,} 2002. An introduction to semiclassical and microlocal analysis. Universitext. Springer-Verlag, New York.


\bibitem{MS-1978} \textsc{Melrose, R.B., Sj\"ostrand, J.,} Singularities of boundary value problems {I}.
\textit{Comm. Pure Appl. Math.} 1978, \textbf{31},  593--617

\bibitem{MS-1982} \textsc{Melrose, R.B., Sj\"ostrand, J.,} Singularities of boundary value problems {II}.
\textit{Comm. Pure Appl. Math.} 1982, \textbf{35},  129--168


\bibitem{Miller-2000} \textsc{Miller, L.,} Refraction of high-frequency waves density by sharp interfaces 
and semiclassical measures at the boundary. \textit{J. Math. Pures Appl. (9)}, 2000, \textbf{79}, 227--269.


\bibitem{Pruss-1984} \textsc{Pr\"uss, J.,} On the spectrum of {$C_{0}$}-semigroups. \textit{Trans. Amer. Math. Soc.}, 1984 \textbf{284}, 847--857.

\bibitem{Robbiano-2013} \textsc{Robbiano, L.,} Spectral analysis of the interior transmission eigenvalue problem.   
\textit{Inverse Problems},  2013, \textbf{29}, 104001, 28


\bibitem{Rob-Zuily-2009} \textsc{Robbiano, L., Zuily, C.,} The {K}ato smoothing effect 
for {S}chr\"odinger equations with unbounded potentials in exterior domains. 
\textit{Int. Math. Res. Not. IMRN}, 
 2009, \textbf{9}, 1636--1698.

	
\bibitem{Savare-1997} \textsc{Savar{\'e}, G.,} Regularity and perturbation results for mixed second order
              elliptic problems. \textit{Comm. Partial Differential Equations}, 1997, \textbf{22}, 869--899.
              
\bibitem{Sh} \textsc{Shamir, E.,}  Regularization of mixed second order
elliptic problems. \emph{Israel J. Math.}, 1968,   \textbf{6}, 150-168.

		

\bibitem{Tartar-1990} \textsc{Tartar, L.,} {$H$}-measures, a new approach for studying homogenisation,
              oscillations and concentration effects in partial differential
              equations. \textit{Proc. Roy. Soc. Edinburgh Sect. A}, 1990, \textbf{115}, 93--230
              
 \bibitem{Tataru-1998} \textsc{Tataru, D., } On the regularity of boundary traces for the wave equation. 
  \textit{Ann. Scuola Norm. Sup. Pisa Cl. Sci.}, 1998, \textbf{26}, 185--206
              
              
\bibitem{Vodev-2015} \textsc{Vodev, G.,} Transmission eigenvalues for strictly concave domains,   Arxiv1501.00797
             

\bibitem{Zworski-2012} \textsc{Zworski, M.,} Semiclassical analysis. Graduate Studies in Mathematics \textbf{138},
American Mathematical Society, Providence, RI.  2012.

\end{thebibliography}
\end{document}